\definecolor{darkgreen}{rgb}{0.0, 0.6, 0.0}
\def\A{\mathcal{A}}
\def\C{\mathcal{C}}
\def\D{\mathcal{D}}
\def\E{\mathcal{E}}
\def\H{\mathcal{H}}
\def\I{\mathcal{I}}
\def\J{\mathcal{J}}
\def\K{\mathcal{K}}
\def\O{\mathcal{O}}
\def\P{\mathcal{P}}
\def\S{\mathcal{S}}
\def\T{\mathcal{T}}
\def\X{\mathcal{X}}
\def\Y{\mathcal{Y}}
\def\Z{\mathcal{Z}}
\DeclareMathOperator{\Mod}{\mathsf{Mod}}
\DeclareMathOperator{\md}{\mathsf{mod}}
\renewcommand{\mod}{\md}
\DeclareMathOperator{\proj}{\mathsf{proj}}
\DeclareMathOperator{\fl}{\mathsf{fl}}
\DeclareMathOperator{\add}{\mathsf{add}}
\DeclareMathOperator{\Coh}{\mathsf{Coh}}
\DeclareMathOperator{\per}{\mathsf{per}}
\DeclareMathOperator{\thick}{\mathsf{thick}}
\DeclareMathOperator{\pvd}{\mathsf{pvd}}
\DeclareMathOperator{\Loc}{\mathsf{Loc}}
\DeclareMathOperator{\simp}{sim}
\DeclareMathOperator{\Hom}{Hom}
\DeclareMathOperator{\End}{End}
\DeclareMathOperator{\Ext}{Ext}
\DeclareMathOperator{\op}{op}
\DeclareMathOperator{\Ker}{Ker}
\DeclareMathOperator{\im}{Im}
\renewcommand{\Im}{\im}
\DeclareMathOperator{\Spec}{Spec}
\DeclareMathOperator{\Pic}{Pic}
\DeclareMathOperator{\rad}{rad}
\DeclareMathOperator{\RHom}{\mathbb{R}Hom}
\DeclareMathOperator{\REnd}{\mathbb{R}End}
\DeclareMathOperator{\ch}{char}
\DeclareMathOperator{\Conv}{Conv}
\DeclareMathOperator{\Int}{Int}
\def\gl{\mathop{\rm gl.dim}\nolimits}
\def\pd{\mathop{\rm proj.dim}\nolimits}
\theoremstyle{definition}
\newtheorem{Thm}{Theorem}[section]
\newtheorem{Lem}[Thm]{Lemma}
\newtheorem{Prop}[Thm]{Proposition}
\newtheorem{Cor}[Thm]{Corollary}
\newtheorem{Def}[Thm]{Definition}
\newtheorem{Ex}[Thm]{Example}
\newtheorem{Rem}[Thm]{Remark}
\newtheorem{Ques}[Thm]{Question}
\newcommand{\FRAC}[2]{\leavevmode\kern.1em\raise.5ex\hbox{\the\scriptfont0 #1}\kern-.1em/\kern-.15em\lower.25ex\hbox{\the\scriptfont0 #2}}
\title[Higher RI algebras and toric Fano stacks of Picard number one or two]{Higher representation infinite algebras and toric Fano stacks of Picard number one or two}
\author{Ryu Tomonaga}
\address{Graduate School of Mathematical Sciences, The University of Tokyo, 3-8-1 Komaba, Meguro-ku, Tokyo, 153-8914, Japan}
\email{ryu-tomonaga@g.ecc.u-tokyo.ac.jp}
\begin{document}
\begin{abstract}
Tilting bundles translate geometry into non-commutative algebra via derived equivalences. Among them, $d$-tilting bundles on $d$-dimensional smooth proper stacks, namely tilting bundles whose endomorphism algebras have global dimension at most $d$, are especially significant: they provide natural examples of $d$-representation infinite algebras and a bridge to the derived McKay correspondence. In this paper, we prove the existence of and classify $d$-tilting bundles consisting of line bundles on $d$-dimensional smooth toric Fano stacks of Picard number one or two.

The classification is motivated by dimer models. An internal perfect matching of a dimer model gives a positive grading on its dimer algebra, whose degree-zero part yields a $2$-representation infinite algebra. The higher representation infinite algebras of type $\widetilde A$ introduced by Herschend--Iyama--Oppermann can be viewed as higher-dimensional analogues of this construction in the simplex case. In the next case, the same principle naturally leads to a new class of higher representation infinite algebras, which we call algebras of type $\widetilde A\widetilde A$.

Our main structural result is that upper sets provide a common framework for tilting bundles, toric non-commutative crepant resolutions (NCCRs), and cuts of higher-dimensional dimer-type quivers. In the Picard-number-one case, $d$-tilting bundles consisting of line bundles are in bijection with non-trivial upper sets in the Picard group, and their endomorphism algebras are precisely $d$-representation infinite algebras of type $\widetilde A$. In the Picard-number-two case, the upper-set construction becomes two-step: the first upper set determines the ambient toric NCCR, and the second selects an internal cut of its quiver. This gives a complete classification of such $d$-tilting bundles and realizes their endomorphism algebras precisely as $d$-representation infinite algebras of type $\widetilde A\widetilde A$.

Thus smooth toric Fano stacks of Picard number one and two serve as geometric models of algebras of type $\widetilde A$ and $\widetilde A\widetilde A$, respectively. Using these models, we show that both classes of algebras are closed under $d$-APR tilts via a combinatorial description of the corresponding $d$-APR tilting modules.
\end{abstract}

\maketitle
\tableofcontents

\section*{Introduction}

\subsection{Background from tilting theory for toric stacks and higher Auslander--Reiten theory}

Tilting theory is an indispensable tool for establishing derived equivalences and serves as a bridge among many areas of mathematics including representation theory, algebraic geometry and mathematical physics: a tilting bundle $\E$ on a variety $X$ yields a derived equivalence
\[
\D(X)\ \simeq\ \D(\End_X(\E)).
\]
For projective varieties, Beilinson first constructed tilting bundles on projective spaces $\mathbb{P}^d$ \cite{Bei}. Since then, many tilting bundles have been constructed on various spaces \cite{BH,HIMO,HP14,Kap,Tom25a}, including stacky varieties.

We recall some classical results on tilting theory for stacky curves. By Beilinson's result, the projective line $\mathbb{P}^1$ is derived equivalent to the path algebra $k[\xymatrix@C=15pt{\circ \ar@2[r] & \circ}]$ of the Kronecker quiver. As a generalization, Geigle and Lenzing \cite{GL87} showed that every weighted projective line (=root stack of $\mathbb{P}^1$ at rational points) admits a tilting bundle. In the Fano case, the endomorphism algebra of a suitable tilting bundle is the path algebra of an extended Dynkin quiver. Conversely, all such path algebras can be obtained in this way. In this sense, Fano weighted projective lines can be viewed as geometric models of path algebras of extended Dynkin type. We remark here that over algebraically closed fields of characteristic zero, Fano weighted projective lines are precisely smooth Fano stacky curves \cite{VZB22}.

\[
\adjustbox{center}{\small
\scalebox{1.7}[1.7]{
$\begin{xy}
(20,0)*+[F:<10pt>]{\begin{array}{c}\mbox{Smooth toric Fano}\\ \mbox{stacky curves}\end{array}}="0",
(80,0)*+[F:<10pt>]{\begin{array}{c}\mbox{Path algebras}\\ \mbox{of type $\widetilde A$}\end{array}}="1",
(20,20)*+[F:<10pt>]{\begin{array}{c}\mbox{Smooth Fano}\\ \mbox{stacky curves}\end{array}}="2",
(80,20)*+[F:<10pt>]{\begin{array}{c}\mbox{Path algebras}\\ \mbox{of extended Dynkin type}\end{array}}="3",

\ar@{}|-{\mathrel{\scalebox{3.0}[2]{$\sim$}}}_-{\begin{array}{c}
    \text{\tiny derived}\\[-6pt]
    \text{\tiny equivalent}
  \end{array}}"0";"1",
\ar@{}|-{\mathrel{\scalebox{3.0}[2]{$\sim$}}}_-{\begin{array}{c}
    \text{\tiny derived}\\[-6pt]
    \text{\tiny equivalent}
  \end{array}}"2";"3",
\ar@{}|-{\mathrel{\rotatebox[origin=c]{90}{$\subset$}}}"0";"2",
\ar@{}|-{\mathrel{\rotatebox[origin=c]{90}{$\subset$}}}"1";"3",
\end{xy}
$}}
\]

Among Fano weighted projective lines, those derived equivalent to the path algebras of type $\widetilde A$ are precisely smooth toric Fano stacky curves. This paper establishes higher-dimensional generalizations of this derived equivalence in the setting of toric stacks.

For smooth projective toric stacks, one of the basic guiding problems is the following.

\begin{Ques}\label{probtoric}
Let $\X$ be a smooth projective toric stack. Does $\X$ have a tilting bundle consisting of line bundles?
\end{Ques}

King conjectured an affirmative answer for smooth projective toric varieties \cite{Kin97}, but Hille and Perling found a counterexample \cite{HP06}. Borisov and Hua later proposed that Question \ref{probtoric} is true for smooth toric weak Fano stacks, and proved it for smooth toric Fano stacks of Picard number at most two and for smooth toric Fano stacks of dimension two \cite{BH}. Ishii and Ueda proved the weak Fano surface case by using dimer models \cite{IU09}. On the other hand, Efimov constructed infinitely many smooth toric Fano varieties of Picard number three which give counterexamples to Question \ref{probtoric} \cite{Efi}. We also recall that Kawamata proved that arbitrary smooth toric stacks admit full exceptional collections, not necessarily consisting of line bundles \cite{Kaw06}.

Another motivation for this paper comes from higher Auslander--Reiten theory. Iyama's higher Auslander--Reiten theory \cite{Iya07a,Iya07b} provides a higher-dimensional analogue of classical Auslander--Reiten theory, and provides a fundamental framework for studying higher structures of the module categories and the derived categories of algebras \cite{HIO,Iya11,IO11}. Moreover, it has deep connections with non-commutative crepant resolutions \cite{VdB04a}, Calabi--Yau dg algebras \cite{Gin06,Kel11} and additive categorification of cluster algebras \cite{Ami09,BMRRT}. In this framework, Herschend, Iyama and Oppermann introduced $d$-representation infinite algebras for $d\geq1$ as a higher-dimensional analogue of non-Dynkin path algebras in global dimension $d$ \cite{HIO}.

Dimer models provide one of the basic sources of higher representation infinite algebras. More precisely, under suitable consistency assumptions, one can associate a $2$-representation infinite algebra to the following data (\cite{AIR15,MM,Nak22}).
\begin{itemize}
\item A dimer model.
\item An internal perfect matching of this dimer model.
\end{itemize}
In this construction, the perfect matching plays the role of a cut: it gives a positive grading on the dimer algebra, and the corresponding degree-zero part is the desired finite dimensional algebra.

Herschend--Iyama--Oppermann \cite{HIO} introduced a higher-dimensional analogue of this picture in a special but fundamental situation. They considered periodic quivers equipped with bounding cuts, and the resulting algebras are called $d$-representation infinite algebras of type $\widetilde A$; for $d=1$, they recover the path algebras of quivers of type $\widetilde A$. In the present paper, we develop the next case of this philosophy. Namely, starting from a higher-dimensional dimer model and an internal cut, we introduce a new class of finite dimensional algebras, which we call $d$-representation infinite algebras of type $\widetilde A\widetilde A$.

Another source of higher representation infinite algebras is projective geometry. If a $d$-dimensional smooth proper (stacky) variety has a $d$-tilting bundle (that is, a tilting bundle whose endomorphism algebra has global dimension at most $d$), then its endomorphism algebra is $d$-representation infinite. For example, Beilinson's tilting bundle $\bigoplus_{i=0}^d\O_{\mathbb{P}^d}(i)\in\Coh\mathbb{P}^d$ is a $d$-tilting bundle. More generally, $d$-tilting bundles on smooth proper varieties and stacks have been studied systematically in \cite{BHI,Han24a,HI,HIMO,Nak22,Tom25c,Tom25a}, and they admit useful geometric interpretations \cite{BuH,Tom25a}. This leads to the following refinement of Question \ref{probtoric}.

\begin{Ques}\label{probtoricdtilt}
Let $\X$ be a $d$-dimensional smooth projective toric stack. Does $\X$ have a $d$-tilting bundle consisting of line bundles?
\end{Ques}

The purpose of this paper is to answer Question \ref{probtoricdtilt} affirmatively for smooth toric Fano stacks of Picard number at most two, and to classify all such $d$-tilting bundles.  The classification goes beyond existence: it identifies the endomorphism algebras with explicit classes of higher representation infinite algebras and describes their $d$-APR tilting mutations combinatorially. More precisely, the endomorphism algebras of $d$-tilting bundles consisting of line bundles on $d$-dimensional smooth toric Fano stacks of Picard number one (respectively, two) are exactly $d$-representation infinite algebras of type $\widetilde A$ (respectively, type $\widetilde A\widetilde A$).

We recall that there are infinitely many smooth toric Fano varieties of Picard number {\it three} without tilting bundles consisting of line bundles \cite{Efi}. Thus Picard number {\it two} is the natural borderline where a general positive theorem can still be expected, and it is precisely in this range that the present paper gives a complete classification.

\subsection{Picard number one}

Let $\X$ be a $d$-dimensional smooth toric Fano stack with Picard number one. If we put $G:=\Pic\X$, then by Gale duality, we have $d+1$ elements $\vec{x}_0,\cdots,\vec{x}_d\in G$. These elements define a partial order on $G$ as follows:
\[\vec{g}_1\geq\vec{g}_2\Leftrightarrow\vec{g}_1-\vec{g}_2\in\sum_{i=0}^d\mathbb{Z}_{\geq0}\vec{x}_i\subseteq G.\]
Let $\vec{p}=\sum_{i=0}^d\vec{x}_i\in G$. The first main result gives a complete classification of tilting bundles on $\X$ consisting of line bundles in terms of upper sets.

\begin{Thm}[Theorem \ref{classfitiltrk1}]\label{introclassfitiltrk1}
Let $\X$ be a $d$-dimensional smooth toric Fano stack of Picard number one. The assignment
\[
  I\longmapsto
  \bigoplus_{\vec g\in I\cap(I^c+\vec p)}\O_{\X}(\vec g)
\]
gives a bijection from the set of non-trivial upper sets in $G$ to the set of tilting bundles on $\X$ consisting of line bundles.  Moreover, all these tilting bundles are $d$-tilting bundles.
\end{Thm}

In this rank-one situation, the Stanley-Reisner locus is the origin, so the derived category of $\X$ can be controlled by the $G$-graded polynomial ring associated with the Gale dual data.

The rank-one classification has a representation-theoretic consequence.  Herschend, Iyama, and Oppermann introduced $d$-representation infinite algebras of type $\widetilde A$ associated with a cofinite subgroup $B$ of a fixed $d$-dimensional lattice and a bounding cut $C$ of a periodic quiver \cite{HIO}. Conceptually, this class of algebras is defined from the following data.
\begin{itemize}
\item The $d$-dimensional dimer model corresponding to a $d$-dimensional lattice simplex.
\item An internal cut of the dual quiver of this dimer model.
\end{itemize}
We prove that the same algebras appear as endomorphism algebras of the tilting bundles above. An important step is to construct a cut $C(I)$ of the periodic quiver from a non-trivial upper set $I\subseteq G$.

\begin{Thm}[Theorem \ref{enddtiltrk1}]
Let $\X$ be a $d$-dimensional smooth toric Fano stack with Picard number one.
\begin{enumerate}
\item For each non-trivial upper set $I\subseteq G$, we have an isomorphism
\[\End_\X\bigg(\bigoplus_{\vec{g}\in I\cap(I^c+\vec{p})}\O_\X(\vec{g})\bigg)\cong A(B,C(I)).\]
In particular, the endomorphism algebra of every tilting bundle on $\X$ consisting of line bundles is a $d$-representation infinite algebra of type $\widetilde A$.
\item Conversely, every $d$-representation infinite algebra of type $\widetilde{A}$ can be obtained in this way.
\end{enumerate}
\end{Thm}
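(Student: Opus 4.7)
The plan is to treat (1) by explicitly presenting $\End_\X(T_I)$ as a quiver with relations built from the Cox-ring data of $\X$, and to treat (2) by reversing this construction starting from the abstract combinatorial invariants of a type-$\widetilde{A}$ algebra.

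For (1), the previous theorem already gives that $T_I:=\bigoplus_{\vec{g}\in I\cap(I^c+\vec{p})}\O_\X(\vec{g})$ is a tilting bundle on $\X$. A direct toric computation identifies $\Hom_\X(\O_\X(\vec{g_1}),\O_\X(\vec{g_2}))$ with the space of monomials in $x_0,\ldots,x_d$ whose multidegree equals $\vec{g_2}-\vec{g_1}\in G$; together with Serre duality on the $d$-dimensional Fano stack $\X$ and the shape of $I\cap(I^c+\vec{p})$, this forces $\gl\End_\X(T_I)=d$, so $\End_\X(T_I)$ is $d$-representation infinite by the standard criterion for $d$-tilting bundles on $d$-Fano stacks. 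To identify the type as $\widetilde{A}$ I would then give the explicit presentation: vertices are the elements of $I\cap(I^c+\vec{p})$; for each $i\in\{0,\ldots,d\}$ and each vertex $\vec{g}$ there is an arrow $\vec{g}\to\vec{g}+\vec{x_i}$ whenever both endpoints are vertices; and the relations are the commutativity relations $x_ix_j=x_jx_i$ inherited from the Cox ring. This exhibits $\End_\X(T_I)$ as a corner of the skew group algebra $k[x_0,\ldots,x_d]\#G^\vee$ (with $G^\vee$ the Cartier dual of $G$), which is the standard toric/skew-group model of $d$-representation infinite algebras of type $\widetilde{A}$ as used in \cite{HIO,HIMO}.

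For (2), I would reverse the construction. An abstract $d$-representation infinite algebra $A$ of type $\widetilde{A}$ is, by its characterisation via the higher preprojective algebra, Morita equivalent to an idempotent subalgebra of $k[x_0,\ldots,x_d]\#H$ for a finite abelian group $H$ acting by characters $\chi_0,\ldots,\chi_d$ with $\sum_i\chi_i=0$. Dualising, this produces a finitely generated abelian group $G$ of rank one together with elements $\vec{x_0},\ldots,\vec{x_d}\in G$ satisfying the Gale duality conditions for a smooth toric Fano DM stack $\X$ of Picard number one. I would then check that the finite subset $J\subseteq G$ cut out by the chosen idempotent is automatically of the form $I\cap(I^c+\vec{p})$ for a unique non-trivial upper set $I$, and invoke the bijection in the previous theorem to obtain the realisation $A\cong\End_\X(T_I)$.

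The main obstacle will be the precise matching between the abstract notion of type $\widetilde{A}$ used in \cite{HIO,HIMO} and the toric/Cox-ring presentation that arises geometrically. The forward direction in (1) is essentially a clean Koszul-plus-Cox-ring calculation, but in (2) one needs to verify that admissible idempotents in the skew group algebra correspond exactly to subsets of the form $I\cap(I^c+\vec{p})$ rather than to arbitrary finite subsets of $G$; this combinatorial injectivity is exactly the place where the rank-one (Picard number one) hypothesis becomes indispensable, and it is the step requiring the most care.
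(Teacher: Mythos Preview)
Your outline for (1) is close to the paper's in spirit: the quiver-with-relations description (vertices $J=I\cap(I^c+\vec p)$, arrows $\vec g\to\vec g+\vec x_i$ when both lie in $J$, commutativity relations from the Cox ring) is exactly what the paper writes down to identify $\End_\X(T_I)\cong A(B,C(I))$. The weak point is your sentence ``Serre duality \ldots\ forces $\gl\End_\X(T_I)=d$'': this is asserted, not argued. The paper does not prove $\gl\le d$ by a direct Serre-duality trick; it invokes the Minamoto--Mori--type machinery developed in Section~3 (Theorem~\ref{MMcorr}(3)), which in turn rests on the $G$-graded Beilinson decomposition and the Gorenstein-parameter structure of $S$. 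If you want to do it by hand you must also use that on a Picard-rank-one toric Fano stack every line bundle has vanishing intermediate cohomology (the $d$-simplex case of Proposition~\ref{calcoh}); without that, the combinatorics of $J$ alone only kills $\Ext^d$, not $\Ext^{1},\dots,\Ext^{d-1}$.

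For (2) your route is genuinely different from the paper's and runs into two problems. First, the paper's definition of ``type $\widetilde A$'' is the quiver-with-relations algebra $A(B,C)$ for a cofinite $B\subseteq L$ and a bounding cut $C$; it never passes through a skew group algebra $k[x_0,\dots,x_d]\#H$. That description is only available when $k$ is algebraically closed of characteristic prime to $|H|$, whereas the paper works over an arbitrary field (and even uses the present theorem to give a field-independent proof that $A(B,C)$ is $d$-representation infinite). Second, and more substantively, the step you flag as ``requiring the most care''---showing that the idempotent picks out a set of the form $I\cap(I^c+\vec p)$---is not a verification but the entire content of the cut--upper-set correspondence (Theorem~\ref{cutupcorr} together with Proposition~\ref{GJX}). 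The paper's argument for (2) is purely combinatorial: from $(B,C)$ read off the type $\gamma$ of $C$, use the bijection of Theorem~\ref{3corr} to produce $(G,(\vec x_i))$ and hence the stack $\X$, and then apply Theorem~\ref{cutupcorr} to find $I\in\I_G$ with $C=C(I)$. No Morita theory, no higher preprojective algebras, and no field hypotheses are needed.
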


\vspace{10pt}

\[
\adjustbox{center}{\small
\scalebox{1.7}[1.7]{
$\begin{xy}
(20,0)*+[F:<10pt>]{\begin{array}{c}\mbox{Smooth toric Fano stacks}\\ \mbox{of Picard number one}\end{array}}="0",
(80,0)*+[F:<10pt>]{\begin{array}{c}\mbox{Higher representation infinite}\\ \mbox{algebras of type $\widetilde A$}\end{array}}="1",
\ar@{}|-{\mathrel{\scalebox{3.0}[2]{$\sim$}}}_-{\begin{array}{c}
    \text{\tiny derived}\\[-6pt]
    \text{\tiny equivalent}
  \end{array}}"0";"1",
\end{xy}
$}}
\]

\vspace{10pt}

Thus smooth toric Fano stacks of Picard number one can be regarded as geometric models of higher representation infinite algebras of type $\widetilde A$. This viewpoint gives a transparent description of $d$-APR tilting for these algebras.  A cut of the periodic quiver has a type, and cuts of the same type correspond to upper sets related by mutations.

\begin{Thm}[Theorem \ref{dAPRAtilde}, \ref{dAPRconnAtilde}]
Let $A=A(B,C)$ be a $d$-representation infinite algebra of type $\widetilde A$.
\begin{enumerate}
\item The endomorphism algebra of a $d$-APR tilting module of $A$ is again a $d$-representation infinite algebra of type $\widetilde A$, of the form $A(B,C')$ where $C'$ has the same type as $C$.
\item If $C'$ is any cut of $Q$ with the same type as $C$, then $A(B,C)$ and $A(B,C')$ can be connected by a finite sequence of iterated $d$-APR tilts. In particular, they are derived equivalent.
\end{enumerate}
\end{Thm}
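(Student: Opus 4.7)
The plan is to reduce everything to the geometric side via Theorem \ref{classfitiltrk1}. Every $d$-representation infinite algebra $A = A(B,C)$ of type $\tilde{A}$ is realized as $\End_\X(T_I)$, where $\X$ is a $d$-dimensional smooth toric Fano DM stack with $G := \Pic\X$ of rank one (determined by $B$), $\vec{p} = \sum_{i=0}^d \vec{x_i}$ is the anticanonical class, $I \subseteq G$ is a non-trivial upper set encoding the cut $C$, and $T_I := \bigoplus_{\vec{g} \in I \cap (I^c + \vec{p})} \O_\X(\vec{g})$. Since $\omega_\X \cong \O_\X(-\vec{p})$, the higher Auslander--Reiten translate $\tau_d$ on $\mod A$ transports via the derived equivalence $\D^b(\Coh\X) \simeq \D^b(\mod A)$ to the functor $-\otimes\omega_\X$ (up to homological shift), so it acts on indecomposable summands of $T_I$ by $\O_\X(\vec{g}) \mapsto \O_\X(\vec{g}-\vec{p})$.

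For (1), I would show that the $d$-APR tilt at a simple projective $S_i$ of $A$ corresponds geometrically to removing the summand $\O_\X(\vec{g_0})$ of $T_I$ (for a minimal $\vec{g_0} \in I$) and adjoining $\tau_d^{-1}\O_\X(\vec{g_0}) = \O_\X(\vec{g_0}+\vec{p})$. A direct combinatorial check establishes that the resulting collection of line bundles is exactly $T_{I'}$ for the upper set $I' := I \setminus \{\vec{g_0}\}$: the removal of a minimal element still yields an upper set, and the index set $I' \cap ((I')^c + \vec{p})$ differs from $I \cap (I^c + \vec{p})$ exactly by the swap $\vec{g_0} \leftrightarrow \vec{g_0}+\vec{p}$. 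The new algebra is therefore $\End_\X(T_{I'}) = A(B, C')$, where $B$ is unchanged because $\X$ is the same, so $C'$ is a cut of the same type as $C$. A dual argument, using $d$-APR tilts at simple injectives and maximal elements of $I^c$, handles the reverse direction.

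For (2), given two cuts $C, C'$ of the same type, the upper sets $I_C$ and $I_{C'}$ label tilting bundles on the same stack $\X$, and the finite index sets $I_C \cap (I_C^c + \vec{p})$ and $I_{C'} \cap (I_{C'}^c + \vec{p})$ have equal cardinality (the common rank of the tilting bundles). I would induct on the cardinality of their symmetric difference: at each step, select a minimal element of $I_C$ lying outside $I_{C'}$ (or a maximal element of $I_C^c$ lying inside $I_{C'}^c$), perform the single-element move of (1), and verify that this strictly reduces the symmetric difference while preserving the upper-set property. Each step is a $d$-APR tilt by (1), hence a derived equivalence, yielding both the connectivity and the derived equivalence of $A(B,C)$ with $A(B,C')$.

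The main obstacle will be matching the admissibility conditions for a $d$-APR tilt (that the chosen vertex is a simple projective and that $\tau_d^{-1}$ of the associated simple is again simple) with the upper-set minimality of $\vec{g_0}$, together with verifying that $I \setminus \{\vec{g_0}\}$ remains a non-trivial upper set so Theorem \ref{classfitiltrk1} applies. The inductive argument for (2) further requires that at every intermediate stage one can always find a minimal or maximal element whose modification makes progress toward the target while keeping the cut type fixed, which rests on a careful analysis of how upper sets of the same type differ under the partial order defined by $\vec{x_0}, \ldots, \vec{x_d}$.
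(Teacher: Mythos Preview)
Your approach to (1) matches the paper's exactly: transport the $d$-APR tilt through the derived equivalence $\D^b(\Coh\X)\simeq\per A$, use that $\nu_d^{-1}$ corresponds to $(\vec{p})$, and identify the result with the tilting bundle attached to the mutated upper set $\mu^-_{\vec{g_0}}(I)=I\setminus\{\vec{g_0}\}$, then invoke Theorem~\ref{classfitiltrk1}. Your worry about ``keeping the cut type fixed'' is unnecessary: by the cut--upper set correspondence (Theorem~\ref{cutupcorr}), \emph{every} non-trivial upper set in $G=G(B,\gamma)$ yields a cut of type $\gamma$, so the type is preserved automatically under any mutation.

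For (2) there is a small but genuine gap. Your inductive quantity, the symmetric difference of the finite index sets $J(I_C)$ and $J(I_{C'})$, need not strictly decrease under the move you describe: removing a minimal $\vec{g_0}\in I_C\setminus I_{C'}$ replaces $\vec{g_0}$ by $\vec{g_0}+\vec{p}$ in the $J$-set, and if $\vec{g_0}+\vec{p}\notin J(I_{C'})$ the symmetric difference is unchanged. The fix is to measure instead the symmetric difference of the upper sets $I_C$ and $I_{C'}$ themselves, which is finite (both upper sets agree outside a bounded strip in $G$) and strictly decreases by one at each step. Equivalently, and this is what the paper does (citing \cite[1.9]{Tom25d}), pass through the intermediate upper set $I_C\cap I_{C'}$: it is again non-trivial, $I_C\setminus(I_C\cap I_{C'})$ is finite, and each removal of a minimal element of $I_C$ not in $I_{C'}$ moves one step toward $I_C\cap I_{C'}$; then reverse the roles to reach $I_{C'}$. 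With this adjustment your argument goes through and coincides with the paper's.
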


This recovers and gives a geometric proof of the expected connectivity statement for algebras of type $\widetilde A$; compare also \cite{DG}.

The upper-set construction appearing in Theorem \ref{introclassfitiltrk1} is not merely a feature of commutative toric geometry. It is a manifestation of a general Beilinson-window phenomenon for graded Calabi--Yau dg algebras. To make this point precise, we include Appendix \ref{appenBei}, where we prove a Beilinson-type theorem for $G_{\ge 0}$-graded dg rings with $G$ a finitely generated abelian group of rank one with a suitable partial order. In this framework, a non-trivial upper set $I\subseteq G$ determines a window $J(I)=I\cap (I^c+p)$, and this window gives a set of generators for the corresponding graded cluster category.

We also prove a $G$-graded version of the Minamoto--Mori correspondence for connective Calabi--Yau dg algebras. In particular, under suitable finiteness and Calabi--Yau assumptions, the object $\bigoplus_{g\in J(I)}\mathcal O_\Gamma(-g)$ is a $d$-tilting object, and its endomorphism algebra is $d$-representation infinite. Thus Appendix \ref{appenBei} gives a non-commutative conceptual explanation of the rank-one classification theorem: Theorem \ref{introclassfitiltrk1} is the commutative toric instance of this general window construction.

\subsection{Picard number two}

We next consider smooth toric Fano stacks of Picard number two. We classify $d$-tilting bundles consisting of line bundles on them. Then we introduce higher representation infinite algebras of type $\widetilde{A}\widetilde{A}$. Conceptually, as in the type $\widetilde A$ case, this class of algebras is defined from the following data.
\begin{itemize}
\item The $d$-dimensional dimer model corresponding to a $d$-dimensional simplicial lattice polytope with $d+2$ vertices.
\item An internal cut of the dual quiver of this dimer model.
\end{itemize}
We show that these algebras are precisely the endomorphism algebras of the $d$-tilting bundles we classified.

Let $\X$ be a $d$-dimensional smooth toric Fano stack of Picard number two, and put $G:=\Pic\X$.  By Gale duality, there are $d+2$ elements
\[
  \vec x_0,\cdots,\vec x_l,\vec x'_0,\cdots, \vec x'_{l'}\in G,
\]
where $l+l'=d$. Put $\vec p:=\sum_{i=0}^l\vec x_i+\sum_{i'=0}^{l'}\vec x'_{i'}\in G$ and $H:=G/\mathbb Z\vec p$. Let $q\colon G\to H$ and $\pi\colon H\to H/H_{\rm tors}\cong\mathbb Z$ be the quotient maps. The simpliciality of the corresponding polytope implies that after reordering the $\vec x_i$, the images in $H/H_{\rm tors}\cong\mathbb Z$ split into positive and negative parts:
\[
\pi(q(\vec x_i))>0\quad(0\leq i\leq l),
\qquad
\pi(q(\vec x'_{i'}))<0\quad(0\leq i'\leq l'),
\]
and $l,l'\geq1$.  The group $H$ carries the partial order
\[
  h_1\geq h_2
  \quad\Longleftrightarrow\quad
  h_1-h_2\in
  \sum_{i=0}^{l}\mathbb Z_{\geq0}q(\vec x_i)
  +
  \sum_{i'=0}^{l'}\mathbb Z_{\geq0}q(-\vec x'_{i'}).
\]
Put $s:=\sum_{i=0}^{l}q(\vec x_i)=\sum_{i'=0}^{l'}q(-\vec x'_{i'})\in H$. For an upper set $I\subseteq H$, set $J(I):=I\cap(I^c+s)$.  The Picard-number-two classification is obtained by applying the rank-one upper-set construction twice: first to $H$, and then to the partially ordered set $q^{-1}(J(I))\subseteq G$.

\begin{Thm}[Theorem \ref{classfitiltrk2}]\label{introclassfitiltrk2}
Let $\X$ be a $d$-dimensional smooth toric Fano stack of Picard number two.  There is a bijection between the following two sets.
\begin{enumerate}
\item Pairs $(I,I')$ such that $I$ is a non-trivial upper set in $H$ and $I'$ is a non-trivial upper set in the partially ordered set $q^{-1}(J(I))$.
\item $d$-tilting bundles on $\X$ consisting of line bundles.
\end{enumerate}
The corresponding $d$-tilting bundle is
\[
  \bigoplus_{\vec g\in I'\cap(I'^c+\vec p)}\O_{\X}(\vec g),
\]
where $I'^c$ denotes the complement of $I'$ in $q^{-1}(J(I))$.
\end{Thm}

Theorem \ref{introclassfitiltrk2} strengthens the known existence result of Borisov and Hua for Picard number two \cite{BH}: our theorem gives a classification and, at the same time, proves that the resulting tilting bundles are $d$-tilting.

The corresponding endomorphism algebras form a second explicit family of higher representation infinite algebras.  The first upper set $I\subseteq H$ determines a cut $C:=C(I)$ of a quiver associated with a cofinite subgroup $B\subseteq L\oplus L'$.  The algebra $\Gamma(B,C)$ is a non-commutative crepant resolution of a Gorenstein toric singularity with divisor class group of rank one \cite{Tom25d} whose quiver $Q(C)$ can be viewed as the dual quiver of a higher-dimensional generalization of dimer models. A cut $C'$ of the quiver $Q(C)$ gives a $\mathbb{N}$-grading on $\Gamma(B,C)$, and we define
\[
  A(B,C,C'):=\Gamma(B,C)_0.
\]
Here, a cut $C'$ should be thought of as a higher-dimensional analogue of a perfect matching. Its type is defined to be a lattice point in the corresponding polytope. This type lies in the interior of the polytope exactly when the algebra $A(B,C,C')$ is finite dimensional (Proposition \ref{doubleAtildeacy}). If this is the case, then one can show that $A(B,C,C')$ is $d$-representation tame.

\begin{Thm}[Theorem \ref{AAdtame}]
Let $C'$ be a cut of $Q(C)$ whose type lies in the interior of $P$. 
\begin{enumerate}
\item The finite dimensional algebra $A(B,C,C')$ is a $d$-representation tame algebra.
\item We have the following isomorphism as a graded algebra.
\[\Pi_{d+1}(A(B,C,C'))\cong\Gamma(B,C)\]
\end{enumerate}
\end{Thm}

This motivates the terminology ``$d$-representation infinite algebra of type $\widetilde A\widetilde A$''. An important step is to construct a cut $C'(I')$ of $Q(C(I))$ whose type lies in the interior of the polytope from the second upper set $I'\subseteq q^{-1}(J(I))$. Conversely, every internal cut $C'$ can be obtained in this way (cut-upper set correspondence, Theorem \ref{cutupcorr2}).

\begin{Thm}[Theorem \ref{enddtiltrk2}]
Let $\X$ be a $d$-dimensional smooth toric Fano stack of Picard number two.
\begin{enumerate}
\item For every pair $(I,I')$ as in Theorem \ref{introclassfitiltrk2}, we have an isomorphism
\[
  \End_{\X}\bigg(
    \bigoplus_{\vec g\in I'\cap(I'^c+\vec p)}\O_{\X}(\vec g)
  \bigg)
  \cong
  A(B,C(I),C'(I')).
\]
In particular, the endomorphism algebra of every $d$-tilting bundle on $\X$ consisting of line bundles is a $d$-representation infinite algebra of type $\widetilde A\widetilde A$.
\item Conversely, every $d$-representation infinite algebra of type $\widetilde A\widetilde A$ can be obtained in this way.
\end{enumerate}
\end{Thm}

\vspace{10pt}

\[
\adjustbox{center}{\small
\scalebox{1.7}[1.7]{
$\begin{xy}
(20,0)*+[F:<10pt>]{\begin{array}{c}\mbox{Smooth toric Fano stacks}\\ \mbox{of Picard number two}\end{array}}="0",
(80,0)*+[F:<10pt>]{\begin{array}{c}\mbox{Higher representation infinite}\\ \mbox{algebras of type $\widetilde A\widetilde A$}\end{array}}="1",
\ar@{}|-{\mathrel{\scalebox{3.0}[2]{$\sim$}}}_-{\begin{array}{c}
    \text{\tiny derived}\\[-6pt]
    \text{\tiny equivalent}
  \end{array}}"0";"1",
\end{xy}
$}}
\]

\vspace{10pt}

Thus smooth toric Fano stacks of Picard number two can be regarded as geometric models of higher representation infinite algebras of type $\widetilde A\widetilde A$. Basic examples of algebras of type $\widetilde A\widetilde A$ are given by tensor products of algebras of type $\widetilde A$ (Proposition \ref{tensorAandA}). We emphasize that the class of algebras of type
$\widetilde A\widetilde A$ is not merely a tensor-product construction: the second cut $C'$ may be chosen
independently inside the quiver $Q(C)$, and this produces genuinely new algebras. In Examples \ref{ExHirz}, \ref{Exstackysurface}, and \ref{Ex3fold}, we compute explicit non-product examples, including examples coming from Hirzebruch surfaces $\mathbb{P}^1\times\mathbb{P}^1$ and $\Sigma_1$, and a
three-dimensional example on the $\mathbb P^1$-bundle
$\mathbb P_{\mathbb P^2}(\mathcal O_{\mathbb P^2}\oplus
\mathcal O_{\mathbb P^2}(-2))$.

The geometric model also controls $d$-APR tilting mutations.

\begin{Thm}[Theorems \ref{dAPRdoubleAtilde}, \ref{dAPRconndoubleAtilde}]
Let $A=A(B,C,C')$ be a $d$-representation infinite algebra of type $\widetilde A\widetilde A$.
\begin{enumerate}
\item The endomorphism algebra of a $d$-APR tilting module of $A$ is again of type $\widetilde A\widetilde A$, with the same $B$, the same first cut $C$, and a second cut with the same lattice-point type.
\item Any two algebras $A(B,C,C'_1)$ and $A(B,C,C'_2)$ with the same lattice-point type are connected by a finite sequence of iterated $d$-APR tilts.  In particular, they are derived equivalent.
\end{enumerate}
\end{Thm}

\subsection*{Conventions}
Throughout this paper, $k$ denotes an arbitrary field. All algebras and categories are defined over $k$. For an abelian group $G$ and a $G$-graded ring $A$, let $\mod^GA$ and $\proj^GA$ denote the categories of finitely generated $G$-graded right $A$-modules and finitely generated $G$-graded projective right $A$-modules respectively. For a $G$-graded dg ring $\Gamma$, we write $\D^G(\Gamma)$ and $\per^G\Gamma$ for the unbounded derived category of $G$-graded right dg $\Gamma$ modules and the perfect derived category of $G$-graded right dg $\Gamma$ modules respectively.

\subsection*{Use of AI}
The author used ChatGPT-5.5 Pro and ChatGPT-5.5 Thinking as auxiliary tools for discussing possible approaches to specific arguments, including Propositions \ref{pathindep} and \ref{conversecut}, for language polishing and for improving the exposition of preliminary drafts. All mathematical arguments were independently verified and finalized by the author. The author takes full responsibility for the accuracy, originality, and integrity of the paper.

\section*{Acknowledgements}
The author is grateful to Osamu Iyama for fruitful discussions. This work was supported by the WINGS-FMSP program at the Graduate School of Mathematical Sciences, the University of Tokyo, and JSPS KAKENHI Grant Number JP25KJ0818.

\section{Preliminaries}

\subsection{Higher representation infinite algebras}

First, we recall the definition of higher representation infinite algebras introduced by \cite{HIO}.

\begin{Def}\cite[2.7]{HIO}
Let $A$ be a finite dimensional algebra and $d\geq1$ an integer.
\begin{enumerate}
\item $A$ is called {\it $d$-representation infinite} if $\gl A\leq d$ and 
\[\nu_d^{-n}A\in\mod A\subseteq\per A\]
holds for all $n\geq0$.
\item A $d$-representation infinite algebra $A$ is called {\it $d$-representation tame} if its $(d+1)$-preprojective algebra $\Pi_{d+1}(A)$ is a module-finite algebra over some commutative noetherian ring.
\end{enumerate}
\end{Def}

This is a generalization of non-Dynkin path algebras to higher global dimension from the viewpoint of higher Auslander--Reiten theory. As in the case of non-Dynkin path algebras, we have a $d$-preprojective component $\P:=\add\{\nu_d^{-n}A\mid n\geq0\}\subseteq\mod A$ and a $d$-preinjective component $\I:=\add\{\nu_d^n(DA)\mid n\geq0\}\subseteq\mod A$. For other beautiful properties of higher representation infinite algebras, see \cite{HIO}. In order to show a systematic way to give examples of higher representation infinite algebras, we introduce the following terminology.

\begin{Def}
Let $\T$ be a triangulated category. Take an object $X\in\T$.
\begin{enumerate}
\item $X$ is called {\it pretilting} if $\T(X,X[\neq0])=0$ holds.
\item $X$ is called {\it tilting} if it is pretilting and $\thick X=\T$ holds.
\item For $d\geq1$, $X$ is called {\it d-tilting} if it is tilting and $\gl\End_{\T}(X)\leq d$ holds.
\end{enumerate}
\end{Def}

The following proposition provides a systematic way to construct examples of higher representation infinite algebras by studying tilting objects for certain abelian categories \cite{BuH}. For a proof, see \cite{Tom25a}.

\begin{Prop}\cite{BuH}\label{SerreRI}
Let $\A$ be a Hom-finite abelian category and $T\in\A$ a $d$-tilting object of $\D^b(\A)$. If $\A$ has an auto-equivalence $F\curvearrowright\A$ such that $F[d]\curvearrowright\D^b(\A)$ gives a Serre functor, then $\End_\A(T)$ becomes $d$-representation infinite.
\end{Prop}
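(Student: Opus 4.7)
The plan is to transport the question through the tilting equivalence to $\D^b(\mod A)$, where $A := \End_\A(T)$, and then to produce two complementary cohomological bounds on $\nu_d^{-n}A$ whose intersection forces concentration in degree $0$.

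First, I would invoke the tilting theorem to obtain a triangle equivalence
\[\phi := \RHom_\A(T, -) \colon \D^b(\A) \xrightarrow{\sim} \D^b(\mod A)\]
sending $T$ to $A$. Since $\gl A \leq d$ by the $d$-tilting hypothesis, the Serre functor of $\D^b(\mod A)$ is $\nu = - \otimes_A^L DA$, and by uniqueness of Serre functors under triangle equivalences $\phi \circ F[d] \cong \nu \circ \phi$; hence $\phi \circ F \cong \nu_d \circ \phi$ and in particular $\phi(F^{-n}T) \cong \nu_d^{-n}A$. It therefore suffices to prove $\nu_d^{-n}A \in \mod A$ for every $n \geq 0$.

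For the upper bound I would compute $H^i(\nu_d^{-n}A) \cong \Ext^i_\A(T, F^{-n}T)$. Because $F$ is an auto-equivalence of the heart $\A$, both $F^{-n}T$ and $F^{n+1}T$ lie in $\A$. This group vanishes for $i < 0$ for the trivial reason, and for $i > d$ Serre duality with $S = F[d]$ gives $\Ext^i_\A(T, F^{-n}T) \cong D\Ext^{d-i}_\A(T, F^{n+1}T)$, which vanishes because $d - i < 0$ and both arguments lie in the heart. Hence $\nu_d^{-n}A$ has cohomology concentrated in degrees $[0, d]$.

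For the complementary bound, $\gl A \leq d$ forces $\pd_A DA \leq d$, so $\nu_d^{-1} = \RHom_A(DA, -)[d]$ has cohomological amplitude $[-d, 0]$: it sends a complex with cohomology in $[a, b]$ to one with cohomology in $[a-d, b]$. Iterating from $A$ gives that $\nu_d^{-n}A$ has cohomology in $[-nd, 0]$. Intersecting the two bounds yields $[0, d] \cap [-nd, 0] = \{0\}$, so $\nu_d^{-n}A$ lies in $\mod A$, completing the proof. The conceptual crux, and the step I would expect to be the most delicate, is that the hypothesis that $F$ preserves the heart — rather than merely the weaker statement that $F[d]$ is a Serre functor on $\D^b(\A)$ — is exactly what activates the Serre-duality bound from above; neither bound suffices on its own, and the effective step is that they are forced to collide in a single degree.
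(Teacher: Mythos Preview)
Your proof is correct. Note, however, that the paper does not supply its own proof of this proposition: it is stated with external citations \cite{BuH,Tom25a} and no proof environment follows. So there is no in-paper argument to compare against.

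For what it is worth, your argument is the standard one and matches the approach in the cited literature: transport through the tilting equivalence, use uniqueness of Serre functors to identify $\nu_d$ with $F$, and then trap the cohomology of $\nu_d^{-n}A$ between the amplitude $[0,d]$ coming from $F^{-n}T\in\A$ (plus Serre duality for the top) and the amplitude $(-\infty,0]$ coming from iterating $\nu_d^{-1}$ on the module side. Your emphasis that the hypothesis ``$F$ preserves the heart $\A$'' is precisely what activates the upper bound is the right point to highlight. One small remark: the intermediate assertion that $\D^b(\A)\simeq\D^b(\mod A)$ (rather than merely $\per A$) and that $\D^b(\mod A)$ carries a Serre functor both use that $A$ is finite dimensional of finite global dimension; you implicitly use Hom-finiteness of $\A$ and $\gl A\leq d$ here, which is fine but worth making explicit.
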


For example, if $\A=\Coh X$ for a $d$-dimensional smooth projective variety $X$, then we can apply this proposition. A typical example of a $d$-tilting bundle is Beilinson's tilting bundle $\bigoplus_{i=0}^d\O_{\mathbb{P}^d}(i)\in\Coh\mathbb{P}^d$. Thus $\End_{\mathbb{P}^d}(\bigoplus_{i=0}^d\O_{\mathbb{P}^d}(i))$ is a $d$-representation infinite algebra by Proposition \ref{SerreRI} (see also \cite[2.15]{HIO}), which turns out to be of type $\widetilde A$ (Theorem \ref{classfitiltrk1}). For further connections between higher representation infinite algebras and projective geometry, see \cite{BuH,Tom25a}.

Next, we recall a family of higher representation infinite algebras, which are called higher representation infinite algebras of type $\widetilde A$, introduced by \cite{HIO}. Let $e_i\in\mathbb{Z}^{d+1}$ be the $i$-th unit vector for $0\leq i\leq d$. Put $\alpha_i:=e_i-e_{i-1}$ for $1\leq i\leq d$ and $\alpha_0:=e_0-e_d$. Let $L:=\{v=(v_i)_{i=0}^d\in\mathbb{Z}^{d+1}\mid\sum_{i=0}^dv_i=0\}=\sum_{i=0}^d\mathbb{Z}\alpha_i\subseteq\mathbb{Z}^{d+1}$ be a $d$-dimensional lattice and $B\subseteq L$ a cofinite subgroup. Put $m:=\sharp(L/B)$. As in \cite{DG}, let $\hat{Q}:=(L,\bigsqcup_{i=0}^d\{x\to x+\alpha_i\mid x\in L\})$ be an infinite quiver. We say that an arrow $x\to x+\alpha_i$ in $\hat{Q}$ has {\it type} $i$. A cycle of length $d+1$ in $\hat{Q}$ consisting of arrows of $d+1$ distinct types is called an {\it elementary cycle}. A subset $\hat{C}\subseteq\hat{Q}_1$ is called a {\it cut} if every elementary cycle has exactly one arrow in $\hat{C}$. A cut $\hat{C}\subseteq\hat{Q}_1$ is said to be $B$-{\it periodic} if $\hat{C}$ is invariant under $B$-translation.

Similarly, let $Q:=(L/B,\bigsqcup_{i=0}^d\{x+B\to x+\alpha_i+B\mid x\in L\})$ be a finite quiver which may have multiple arrows. We define a cut of $Q$ similarly. For a cut $C\subseteq Q_1$, we call $\gamma(C):=(\sharp\{a\in C\mid\text{The type of $a$ is }i.\})_{i=0}^d\in\mathbb{Z}^{d+1}_{\geq0}$ the {\it type} of $C$. For a cut $C\subseteq Q_1$ of type $\gamma=(\gamma_i)_{i=0}^d$, we have $\sum_{i=0}^d\gamma_i=m$. Observe that cuts of $Q$ correspond bijectively to $B$-periodic cuts of $\hat{Q}$. In what follows, we identify $B$-periodic cuts of $\hat{Q}$ with cuts of $Q$ freely. For a cut $C\subseteq Q_1$, we define a quiver $Q_C:=(Q_0,Q_1\setminus C)$. A cut $C\subseteq Q_1$ is called {\it bounding} if the quiver $Q_C$ is acyclic.

\begin{Def}\cite[5.6(2)]{HIO}
Let $C\subseteq Q_1$ be a bounding cut. Consider the relation $I_C$ in path algebra $kQ_C$ which is generated by 
\[(x+B\to x+\alpha_i+B\to x+\alpha_i+\alpha_j+B)=(x+B\to x+\alpha_j+B\to x+\alpha_i+\alpha_j+B)\]
for $x\in L, 0\leq i,j\leq d$ such that the four arrows exist in $Q_C$. We call $A(B,C):=kQ_C/I_C$ a $d$-{\it representation infinite algebra of type} $\widetilde A$.
\end{Def}

In \cite{HIO}, it is proved that this $A(B,C)$ is indeed $d$-representation infinite when $k$ is an algebraically closed field of characteristic zero. Below, we give another proof of this fact which is valid for arbitrary field $k$ (Theorem \ref{enddtiltrk1}).

\subsection{Lattice polytopes}\label{polytope}

Let $N\cong\mathbb{Z}^d$ be a free abelian group of rank $d$ and $P$ a convex lattice polytope in $N_\mathbb{R}:=N\otimes_\mathbb{Z}\mathbb{R}$. Let $\{v_i\}_{i=1}^n$ denote the set of the vertices of $P$. Put $\widetilde{N}:=N\oplus\mathbb{Z}$ and $\widetilde{v}_i:=(v_i,1)\in\widetilde{N}$. This $\{\widetilde{v}_i\}_{i=1}^n$ defines a group homomorphism $\phi\colon\mathbb{Z}^n\to\widetilde{N}$ with finite cokernel. Define an abelian group $H$ by the following exact sequence, where $F:=(\mathbb{Z}^n)^*\cong\bigoplus_{i=1}^n\mathbb{Z}e_i$ denotes a free abelian group of rank $n$.
\[0\to\widetilde{N}^*\xrightarrow{\phi^*}F\xrightarrow{\deg_H}H\to0\]
For $1\leq i\leq n$, we write $\overline{\vec{x}}_i\in H$ for the image of $e_i\in F$.

If we put $M:=\Hom_{\mathbb Z}(N,\mathbb Z)$, then we can view $\phi^*\colon M\oplus\mathbb{Z}\to F$. Remark that we have
\[\mathbf{p}:=\phi^*(0,1)=\sum_{i=1}^ne_i\in F.\]
We define $A_P\colon M\longrightarrow F$ so that $\phi^*=(A_P,-\cdot\mathbf{p})$. That is, we have
\[
  A_P(m)=
  \sum_{i=1}^n\langle m,v_i\rangle e_i.
\]
Then we have the short exact sequence
\[
  0\longrightarrow A_P(M)+\mathbb Z\mathbf p
  \longrightarrow F
  \xrightarrow{\deg_H}
  H
  \longrightarrow 0.
\]
We also have
\[A_P(M)\cap \mathbb Z\mathbf p=0.\]

\begin{Def}
Let $G$ be a finitely generated abelian group of rank $n-d$ and $\vec{x}_1,\cdots,\vec{x}_n\in G$ its elements. Let $q\colon G\to H$ be a group homomorphism. A tuple
\[
  (G,(\vec{x}_i)_{i=1}^n,q)
\]
is called a \emph{lift} of $(H,(\overline{\vec{x}}_i)_{i=1}^n)$ if the following conditions are satisfied.
\begin{enumerate}
\item $G=\sum_{i=1}^n\mathbb{Z}\vec{x}_i$
\item $q(\vec{x}_i)=\overline{\vec{x}}_i$
\item If we put $\vec p:=\sum_{i=1}^n\vec{x}_i\in G$, then we have
\[\Ker q=\mathbb Z\vec p.\]
\end{enumerate}
\end{Def}

We define subsets $F_{>0},F_{\ge0}\subseteq F$ as
\[F_{>0}:=\left\{\sum_{i=1}^na_ie_i\mid a_i>0\text{ for all }i\right\}\quad\text{and}\quad F_{\ge0}:=\left\{\sum_{i=1}^na_ie_i\mid a_i\ge0\text{ for all }i\right\}.\]
We see that isomorphism classes of lifts of $(H,(\overline{\vec{x}}_i)_{i=1}^n)$ are in bijection with lattice points $u\in N$. Moreover, we can characterize which lifts correspond to (interior) lattice points of the polytope $P$.

\begin{Prop}\label{liftlatticepoint}
For a lift $(G,(\vec{x}_i)_i,q)$ of $(H,(\overline{\vec{x}}_i)_i)$, consider the group homomorphism $F\to G$ sending $e_i$ to $\vec x_i$ and put
\[K:=\Ker(F\to G).\]
\begin{enumerate}
\item There is a natural bijection between the following two sets.
\begin{enumerate}
\item Lattice points $u\in N$.
\item Isomorphism classes of lifts $(G,(\vec{x}_i)_i,q)$ of $(H,(\overline{\vec{x}}_i)_i)$.
\end{enumerate}
Here, an isomorphism of lifts is required to commute with $q$ and to
send each $\vec{x}_i$ to the corresponding element.
\item The bijection in (1) restricts to the bijection between the followings.
\begin{enumerate}
\item Lattice points $u\in P\cap N$.
\item Isomorphism classes of lifts $(G,(\vec{x}_i)_i,q)$ of $(H,(\overline{\vec{x}}_i)_i)$ with
\[K\cap F_{>0}=\emptyset.\]
\end{enumerate}
\item The bijection in (2) restricts to the bijection between the followings.
\begin{enumerate}
\item interior lattice points $u\in\Int(P)\cap N$.
\item Isomorphism classes of lifts $(G,(\vec{x}_i)_i,q)$ of $(H,(\overline{\vec{x}}_i)_i)$ with
\[K\cap F_{\ge0}=0.\]
\end{enumerate}
\end{enumerate}
\end{Prop}

\begin{proof}
(1) First take $u\in N$.  Define
\[
  A_u\colon M\longrightarrow F,
  \qquad
  A_u(m):=A_P(m)-\langle m,u\rangle\mathbf p.
\]
Put
\[
  G_u:=F/A_u(M).
\]
Let $\vec{x}_{i,u}$ be the image of $e_i$ and in $G_u$, and put
\[
  \vec p_u:=\sum_{i=1}^n\vec{x}_{i,u}\in G_u.
\]
Since $A_u(M)+\mathbb Z\mathbf p=A_P(M)+\mathbb Z\mathbf p$, the quotient
map $F\to H$ factors through a surjective homomorphism
\[
  q_u\colon G_u\to H.
\]
Moreover, we have
\[
  \Ker q_u=\mathbb Z\vec p_u.
\]
Thus $(G_u,(\vec{x}_{i,u})_i,q_u)$ is a lift.

Conversely, let $(G,(\vec{x}_i)_i,q)$ be a lift and let
$K:=\Ker(F\to G)$.  Since $\Ker q=\mathbb Z\vec p$ and
$H\cong F/(A_P(M)+\mathbb Z\mathbf p)$, we have
\[
  K+\mathbb Z\mathbf p=A_P(M)+\mathbb Z\mathbf p.
\]
Since $\vec p$ has infinite order, $K\cap\mathbb Z\mathbf p=0$ holds. Hence for
any $m\in M$, there exists a unique integer $c(m)$ such that
\[
  A_P(m)-c(m)\mathbf p\in K.
\]
The uniqueness immediately implies that $c\colon M\to\mathbb Z$ is a group
homomorphism.  Hence there is a unique $u\in N$ satisfying
\[
  c(m)=\langle m,u\rangle
\]
for all $m\in M$.  Thus
\[
  K=\{A_P(m)-\langle m,u\rangle\mathbf p\mid m\in M\}=A_u(M)
\]
holds and the given lift is isomorphic to the lift constructed from $u$.

(2) If $u\notin P$, then by the separating hyperplane theorem, there exists
$m\in M$ such that
\[
  \langle m,u\rangle<\min_i\langle m,v_i\rangle
\]
holds. This means $A_P(m)-\langle m,u\rangle\mathbf p\in K\cap F_{>0}$. 

Conversely, assume that the lift $(G_u,(\vec{x}_{i,u})_i,q_u)$ satisfies $K\cap F_{>0}\ne\emptyset$. Take $A_u(m)\in F_{>0}$.  Then
\[
  \langle m,v_i-u\rangle>0
\]
holds for every vertex $v_i$ of $P$.  This means
\[
  \langle m,u\rangle<\min_i\langle m,v_i\rangle.
\]
Therefore we have $u\notin P$.

(3) The same argument as in (2) applies.
\end{proof}

\begin{Rem}
The condition $K\cap F_{\ge0}=0$ in (3) is equivalent to the following conditions.
\begin{enumerate}
\item $\vec x_i\ne0$ for $1\le i\le n$.
\item If we put $G_{\geq0}:=\sum_{i=1}^n\mathbb{Z}_{\geq0}\vec{x}_i\subseteq G$, then $G_{\ge0}\cap(-G_{\ge0})=0$.
\end{enumerate}
\end{Rem}

\subsection{Smooth toric Fano stacks}

In this subsection, we recall the definition and basic properties of smooth toric Fano stacks from \cite{BCS}. We use the notations in Subsection \ref{polytope}. We assume that the lattice polytope $P$ is simplicial and contains the origin as an interior point. Define an abelian group $G$ by the following exact sequence.
\[0\to M\xrightarrow{A_P}F\to G\to0\]
This $G$ is denoted by $G_0$ in Proposition \ref{liftlatticepoint}. For $1\leq i\leq n$, we write $\vec{x}_i\in G$ for the image of $e_i\in F$. Then the polynomial ring $S:=k[x_1,\cdots, x_n]$ can be viewed as a $G$-graded $k$-algebra by $\deg x_i:=\vec{x}_i$. This grading induces an action of a group scheme $\Spec k[G]$ on $\mathbb{A}_k^n$. We define a Stanley-Reisner locus $SR(P)\subseteq\mathbb{A}_k^n$ as a closed subscheme defined by the reduced monomial ideal $(\prod_{v_i\notin Q}x_i\mid Q\subsetneq P\text{ is a proper face})\subseteq S$. Now we can associate to the polytope $P$ a smooth toric Fano stack $\X(P)$ as the quotient stack
\[\X(P):=[(\mathbb{A}_k^n\backslash SR(P))/\Spec k[G]].\]
Remark that $\X(P)$ becomes a Deligne-Mumford stack if $\ch k=0$ \cite[3.2]{BCS}. For Fano-ness, see \cite[3.11,3.12]{BH}.

\begin{Rem}
For a proper face $Q\subsetneq P$, we define a cone $\sigma_Q:=\sum_{v_i\in Q}\mathbb{R}_{\geq0}v_i\subseteq N_\mathbb{R}$. Then one gets a complete fan $\Sigma:=\{\sigma_Q\mid Q\subsetneq P\text{ is a proper face}\}$ in $N_\mathbb{R}$ and a data of a complete stacky fan $\boldsymbol{\Sigma}=(\Sigma, \{v_i\}_{i=1}^n)$. In this notation, our $\X(P)$ coincides with $\X(\boldsymbol{\Sigma})$ in \cite{BCS}.
\end{Rem}

Put $\X:=\X(P)$. We have a categorical equivalence
\[\Coh\X\simeq\Coh^{\Spec k[G]}\mathbb{A}_k^n\backslash SR(P)\simeq\mod^G\!S/\mod^G_{SR(P)}S,\]
where $\mod^G_{SR(P)}S\subseteq\mod^G\!S$ is a full subcategory consisting of modules supported by $SR(P)$. We put $\widetilde{(-)}:=(\mod^G\!S\to\mod^G\!S/\mod^G_{SR(P)}S\xrightarrow[\simeq]{}\Coh\X)$. For $\vec{g}\in G$, the auto-equivalence $(\vec{g})\colon\mod^G\!S\to\mod^G\!S$ induces an auto-equivalence $(\vec{g})\colon\Coh\X\to\Coh\X$. If we put $\vec{p}:=\vec{x}_1+\cdots\vec{x}_n\in G$, then since $\omega_\X:=\O_\X(-\vec{p})$ is the canonical bundle, $(-\vec{p})[d]\colon\D^b(\Coh\X)\to\D^b(\Coh\X)$ gives a Serre functor. In addition, the group homomorphism
\[G\to\Pic\X;\vec{g}\mapsto\O_\X(\vec{g})\]
is an isomorphism.

For $c=(c_i)_{i=1}^n\in\mathbb{Z}^n$, put
\[\Delta_c:=\{I\subsetneq\{1,\cdots,n\}\mid\Conv\{v_i\}_{i\in I}\text{ is a face of }P\text{ and }c_i\geq0\text{ holds for all }i\in I\}\]
and define a subspace $X_c\subseteq P$ as
\[X_c:=\bigcup_{I\in\Delta_c}\Conv\{v_i\}_{i\in I}.\]
Observe that our definition of $\Delta_c$ (and so $X_c$) differ from those of \cite[2.6]{Tom25d} when $c\in\mathbb{Z}_{\geq0}^n$. In this terminology, the cohomology groups of the line bundles can be computed as follows.

\begin{Prop}\label{calcoh}\cite[4.1]{BH}
Assume $\X$ is Fano. For $\vec{g}\in G$, we have
\[H^r(\X,\O_\X(\vec{g}))\cong\bigoplus_{\substack{c\in\mathbb{Z}^n \\ \sum_ic_i\vec{x}_i=\vec{g}}}\widetilde H_{d-r-1}(X_c;k),\]
where $\widetilde H_{d-r-1}(X_c;k)$ denotes the $(d-r-1)$-th reduced singular homology of $X_c$ with coefficients in $k$. Remark that we set $\widetilde H_{-1}(X;k)\left\{
\begin{array}{ll}
=0 & (X\neq\emptyset)\\
\cong k & (X=\emptyset)
\end{array}
\right.$.
\end{Prop}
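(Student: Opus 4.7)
The plan is to compute $H^r(\X, \O_\X(\vec g))$ via \v{C}ech cohomology with respect to the affine open cover of $\X$ indexed by the facets of $P$, and to identify each multidegree component of the resulting complex with a reduced chain complex of $X_a$. Using the equivalence $\Coh \X \simeq \mod^G S / \mod^G_{SR(P)} S$ recalled above, the sheaf $\O_\X(\vec g)$ corresponds to the free $G$-graded module $S(\vec g)$. For a proper face $Q \subsetneq P$ with corresponding cone $\sigma_Q$, write $U_{\sigma_Q} \subseteq \X$ for the associated affine open substack; under the equivalence one reads off $\Gamma(U_{\sigma_Q}, \O_\X(\vec g)) = S[x_i^{-1} : v_i \notin Q]_{\vec g}$. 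The collection $\{U_{\sigma_F}\}$ as $F$ runs over the facets of $P$ is an affine open cover, and the intersections of these opens are precisely the $U_{\sigma_Q}$ for arbitrary proper faces $Q$.

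Next, I would exploit the finer $\mathbb{Z}^n$-grading on $S$ refining the $G$-grading to split the \v{C}ech complex $\check{C}^\bullet$ as a direct sum indexed by $a \in \mathbb{Z}^n$ with $\sum_i a_i \vec{x_i} = \vec g$. In multidegree $a$, the local section at $U_{\sigma_Q}$ contributes one copy of $k$ exactly when $a_i \geq 0$ for every $i$ with $v_i \in Q$, i.e.\ exactly when $I_Q := \{i : v_i \in Q\}$ lies in $\Delta_a$, and vanishes otherwise. Consequently, $\check{C}^\bullet_a$ is identified with (a shift of) the augmented simplicial cochain complex of the subcomplex $K_a$ of the nerve of $\{F\}_F$ whose simplices are those tuples of facets whose common intersection is a face of $P$ contained in $X_a$.

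For the topological identification, note that since $\X$ is Fano the origin lies in the interior of $P$, so $\partial P$ is homeomorphic to $S^{d-1}$, and the nerve of the cover of $\partial P$ by its closed facets realizes $\partial P$ itself. The subcomplex $K_a$ is combinatorially Alexander dual, inside this cellular $(d-1)$-sphere, to the natural cellular decomposition of $X_a$ inherited from the face lattice of $P$. Alexander duality on $S^{d-1}$ then identifies $H^r(\check{C}^\bullet_a)$ with $\tilde H_{d-r-1}(X_a; k)$; the convention $\tilde H_{-1}(\emptyset) \cong k$ handles the edge case $X_a = \emptyset$ (which forces $r = d$). Summing over $a$ gives the stated formula.

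The main obstacle is precisely the last identification: matching the degree-$a$ \v{C}ech subcomplex with an Alexander-dual cellular complex computing $\tilde H_\ast(X_a)$, keeping straight the degree shift $r \leftrightarrow d-r-1$, the distinction between reduced and unreduced (co)homology, and the boundary cases $X_a = \emptyset$ and $X_a = \partial P$. The Fano hypothesis enters in an essential way at exactly this step, since it is what guarantees that $\partial P$ is a topological sphere and hence that Alexander duality applies.
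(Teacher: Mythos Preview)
The paper does not prove this proposition; it simply writes ``This can be proved in the same way as \cite[4.1]{BH}.'' Your \v{C}ech-plus-Alexander-duality outline is indeed the standard argument and is what Borisov--Hua carry out, so in spirit you are reproducing the cited proof.

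Two points deserve tightening. First, your $K_a$ as described is not a simplicial subcomplex: the condition ``$\bigcap_{F\in S}F$ is a face contained in $X_a$'' is preserved under \emph{enlarging} $S$, not under passing to subsets. What is a genuine subcomplex is the complement $L_a=\{S:\bigcap_{F\in S}F\text{ has a vertex }v_i\text{ with }a_i<0\}$, and the degree-$a$ \v{C}ech complex is the relative cochain complex $C^\bullet(\Delta_{\mathcal F},L_a)$ of the full simplex on the facet set modulo $L_a$. Since $\Delta_{\mathcal F}$ is contractible this yields $\tilde H^{r-1}(L_a;k)$, and it is $L_a$ (not $K_a$) that you should Alexander-dualize against $X_a$ inside the $(d-1)$-sphere $\partial P$.

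Second, your remark that the Fano hypothesis is what makes $\partial P$ a sphere is not quite right: the boundary of any full-dimensional convex polytope is a topological sphere. In the paper's setup $P$ is given from the outset as such a polytope (with the origin interior), and this is what allows the duality step; the Fano condition is a consequence of that setup rather than an additional input to the argument. The formula in fact holds for any complete simplicial toric DM stack once $X_a$ is interpreted as a subcomplex of the simplicial sphere associated to the fan.
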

\begin{proof}
This can be proved in the same way as \cite[4.1]{BH}.
\end{proof}

Finally, we see some properties of our group $G$. By construction, and since $P$ contains the origin as an interior point, $G$ and $\vec{x}_i\in G$ satisfy the following conditions (see Proposition \ref{liftlatticepoint}).
\begin{enumerate}
\item[(G1)] $\vec{x}_i\neq0$ for all $1\leq i\leq n$.
\item[(G2)] $G=\sum_{i=1}^n\mathbb{Z}\vec{x}_i$
\item[(G3)] If we put $G_{\geq0}:=\sum_{i=1}^n\mathbb{Z}_{\geq0}\vec{x}_i\subseteq G$, then we have $G_{\geq0}\cap(-G_{\geq0})=0$.
\item[(G4)] Put $\vec{p}:=\sum_{i=1}^n\vec{x}_i\in G$. For each $1\leq i_0\leq n$, there exists $m_i>0$ for $i\neq i_0$ such that $\sum_{i\neq i_0} m_i\vec{x}_i\in\mathbb{Z}\vec{p}$ holds.
\end{enumerate}
Conversely, if a finitely generated abelian group $G$ of rank $n-d$ and $\vec{x}_1,\cdots,\vec{x}_n\in G$ satisfying (G1), (G2), (G3) and (G4) are given, then we obtain $n$ lattice points $v_1,\cdots, v_n\in N$ whose convex hull $\Conv\{v_i\}_{i=1}^n$ contains the origin as an interior point, and each $v_i$ is a vertex of $\Conv\{v_i\}_{i=1}^n$.

Remark that if $G$ is of rank one, then the condition (G4) follows from (G1), (G2) and (G3) since $G/\mathbb{Z}\vec{p}$ is a torsion group.

We remark here that if a finitely generated abelian group $G$ and $\vec{x}_1,\cdots,\vec{x}_n\in G$ satisfy (G1), (G2), (G3) and (G4), then we can define a partial order on $G$ as
\[\vec{g}\geq\vec{h}:\Leftrightarrow\vec{g}-\vec{h}\in G_{\geq0}.\]
If we view $S:=k[x_1,\cdots,x_n]$ as a $G$-graded $k$-algebra by $\deg x_i=\vec{x}_i$, then for $\vec{g}\in G$, $S_{\vec{g}}\neq0$ holds if and only if $\vec{g}\geq0$ holds.

\subsection{Combinatorics of upper sets}

In this subsection, let $X$ be a partially ordered set.

\begin{Def}
We call a subset $I\subseteq X$ an {\it upper set} if for all $x\in I$ and $y\in X$ with $x\leq y$, $y\in I$ holds. An upper set $I\subseteq X$ is called {\it non-trivial} if $I\neq\emptyset,X$. We put $\I_X:=\{I\subseteq X\colon\text{non-trivial upper set}\}$.
\end{Def}

Assume $\mathbb{Z}$ acts on the set $X$ satisfying the following conditions. Here, we write $x+np:=n\cdot x$ for $n\in\mathbb{Z}$.
\begin{enumerate}
\item[(A1)] $x<x+p$ holds for all $x\in X$.
\item[(A2)] $x\leq y$ implies $x+np\leq y+np$ for all $x,y\in X$ and $n\in\mathbb{Z}$.
\item[(A3)] For any $x,y\in X$, there exists $n\in\mathbb{Z}$ such that $x+np\geq y$ holds.
\end{enumerate}

As in \cite{Tom25d}, we put
\[\widetilde{\J}_X:=\{J\subseteq X\mid\text{For any }x,y\in J,\text{ we have }x\ngeq y+p.\}\text{ and}\]
\[\J_X:=\{J\in\widetilde{\J}_X\colon\text{maximal with respect to inclusion}\}\subseteq\widetilde{\J}_X.\]
Then we have the following bijection between $\I_X$ and $\J_X$.

\begin{Thm}\cite[1.3]{Tom25d}\label{upJX}
Consider the following sets.
\[J(-)\colon\I_X\rightleftarrows\J_X :I(-)\]
Then $J(I):=I\cap(I^c+p)$ and $I(J):=\{x\in X\mid\text{There exists }y\in J\text{ with }x\geq y.\}$ give inverse maps to each other.
\end{Thm}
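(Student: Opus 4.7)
The plan is to first establish a key lemma about the $\mathbb{Z}$-orbits in $X$ and then verify well-definedness of the two maps together with the two inverse identities.

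I would begin with the following observation, which drives the whole argument: for every $I\in\I_X$ and every $x\in X$, the set $\{n\in\mathbb{Z}\mid x+np\in I\}$ is a non-empty proper upper subset of $\mathbb{Z}$, hence equals $[n_0(x,I),\infty)$ for a unique integer $n_0(x,I)$. Both non-emptiness and properness come from (A3) applied to $x$ together with a suitable element of $I$ or $I^c$, invoking the upper-set property of $I$ for the latter; stability under $n\mapsto n+1$ is (A1)+(A2). In particular $x+n_0(x,I)p\in I$ but $x+(n_0(x,I)-1)p\in I^c$, so $x+n_0(x,I)p\in J(I)$.

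Next I would verify well-definedness. For $J(I)\in\J_X$: if $x,y\in J(I)$ and $x\geq y+p$, then $x-p\geq y\in I$ forces $x-p\in I$ by the upper-set property, contradicting $x\in I^c+p$. For maximality, note that $z\notin J(I)$ is equivalent to $n_0:=n_0(z,I)\neq 0$, and in this case the pair $(z+n_0p,z)$ violates the $\widetilde{\J}_X$-condition (using (A1) and (A2) iteratively to compare elements that differ by integer multiples of $p$). For $I(J)\in\I_X$ with $J\in\J_X$: the upper-set property is immediate; $J\neq\emptyset$ because any singleton lies in $\widetilde{\J}_X$ by (A1), so $\emptyset$ is not maximal; and $I(J)\neq X$ because otherwise $y-p\in I(J)$ for any $y\in J$ would yield $y-p\geq y'$ for some $y'\in J$, hence $y\geq y'+p$, contradicting $J\in\widetilde{\J}_X$.

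For the two inverse identities, $I(J(I))\subseteq I$ is immediate from the upper-set property of $I$, and the reverse uses that any $x\in I$ has $n_0(x,I)\leq 0$, so $x\geq x+n_0(x,I)p\in J(I)$. For $J\subseteq J(I(J))$, one checks that $y-p\in I(J)$ would give $y\geq y'+p$ for some $y'\in J$. The subtle step, which I expect to be the main obstacle, is the reverse inclusion $J(I(J))\subseteq J$: given $z\in J(I(J))$, I would establish $J\cup\{z\}\in\widetilde{\J}_X$ by ruling out both $z\geq y+p$ for $y\in J$ (which via $z-p\geq y\in J$ would contradict $z\in I(J)^c+p$) and $y\geq z+p$ for $y\in J$ (which combined with some $y'\in J$ satisfying $z\geq y'$, coming from $z\in I(J)$, would yield $y\geq y'+p$, contradicting $J\in\widetilde{\J}_X$), and then invoke maximality of $J$ to conclude $z\in J$. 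The whole argument is elementary; the care lies in bookkeeping, namely keeping track of which of (A1)--(A3), the upper-set axiom of $I$, or the $\widetilde{\J}_X$-condition and maximality of $J$ is being invoked at each step.
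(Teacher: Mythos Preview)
Your proof is correct and complete. Note that the paper does not actually give a proof of this statement: it is quoted verbatim from \cite[1.3]{Tom25d} and used as a black box, so there is no in-paper argument to compare against. Your key lemma---that each $\mathbb{Z}$-orbit meets $I$ in a half-line $[n_0,\infty)$---is exactly the right organizing principle, and every step you outline (well-definedness of $J(-)$ and $I(-)$, the two inverse identities, the use of maximality of $J$ for $J(I(J))\subseteq J$) goes through as written; the bookkeeping with (A1)--(A3) is accurate throughout.
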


Next, we introduce the notion of mutation for upper sets.

\begin{Def}\cite[1.6]{Tom25d}
Let $I\in\I_X$ and take a minimal element $m\in I$. Then we define the {\it mutation} $\mu_{m}^-(I)$ of $I$ at $m$ as
\[\mu_m^-(I):=I\setminus\{m\}.\]
\end{Def}

Finally, we focus on the following explicit setting. Let $G$ be a finitely generated abelian group of rank one. Assume we are given elements $\vec{x}_0,\cdots,\vec{x}_d\in G$ satisfying (G1), (G2) and (G3). Put $\vec{p}:=\sum_{i=0}^d\vec{x}_i\in G$. Then $\mathbb{Z}$ acts on $G$ by $n\cdot\vec{g}:=\vec{g}+n\vec{p}$. This action satisfies the conditions (A1),(A2) and (A3). In this setting, we can describe $\J_G$ in the following way.

\begin{Prop}\label{GJX}
For a subset $J\subseteq G$, the following conditions are equivalent.
\begin{enumerate}
\item $J\in\J_G$
\item $J\subseteq G$ is a complete set of representatives for $G/\mathbb{Z}\vec{p}$ and for every $\vec{g}\in J$ and $0\leq i\leq d$, we have $\vec{g}+\vec{x}_i\in J\sqcup(J+\vec{p})$.
\end{enumerate}
\end{Prop}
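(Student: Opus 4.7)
The plan is to use Theorem \ref{upJX} to translate between $\J_G$ and the set $\I_G$ of non-trivial upper sets of $G$, then verify the two conditions of (2) against the explicit formula $J = I \cap (I^c + \vec{p})$.

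For (1)$\Rightarrow$(2), I would take the non-trivial upper set $I \in \I_G$ corresponding to $J$. To see that $J$ is a complete representative of $G/\mathbb{Z}\vec{p}$, I would fix a coset $\vec{h} + \mathbb{Z}\vec{p}$ and look at $N_{\vec{h}} := \{n \in \mathbb{Z} \mid \vec{h} + n\vec{p} \in I\}$. Since $I$ is an upper set and $\vec{p} \in G_{\geq 0}$, $N_{\vec{h}}$ is upward closed in $\mathbb{Z}$; (A3) then shows it is both non-empty and proper (if $N_{\vec{h}} = \mathbb{Z}$, then (A3) applied to any $\vec{g} \in G$ and $\vec{h}$ forces $\vec{g} \in I$, so $I = G$), so it has a minimum $n_0$, and $\vec{h} + n_0 \vec{p}$ is the unique element of $I \cap (I^c + \vec{p}) = J$ in this coset. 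For the closure condition, let $\vec{g} \in J$ and $0 \leq i \leq d$, and let $\vec{k} \in J$ be the unique representative with $\vec{g} + \vec{x_i} = \vec{k} + m\vec{p}$. Since $\vec{g} + \vec{x_i} \in I$ and $N_{\vec{k}} = \mathbb{Z}_{\geq 0}$, we have $m \geq 0$. The key step is to rule out $m \geq 2$: in that case
\[\vec{g} - (\vec{k} + \vec{p}) = (m-1)\vec{p} - \vec{x_i} = (m-2)\vec{p} + \sum_{j \neq i} \vec{x_j} \in G_{\geq 0}\]
would give $\vec{g} \geq \vec{k} + \vec{p}$, contradicting $J \in \widetilde{\J}_G$. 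Hence $m \in \{0, 1\}$, i.e., $\vec{g} + \vec{x_i} \in J \sqcup (J + \vec{p})$.

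For (2)$\Rightarrow$(1), I would set $I := \{\vec{h} \in G \mid \exists \vec{g} \in J,\, \vec{h} \geq \vec{g}\}$, the smallest upper set containing $J$. A crucial auxiliary claim is $I = \widehat{J} := \bigcup_{n \geq 0}(J + n\vec{p})$: the inclusion $\widehat{J} \subseteq I$ is immediate, and the reverse follows by iterating the closure assumption (2) along the $\vec{x_i}$'s starting from an element of $J$. I would then check $J = I \cap (I^c + \vec{p})$ directly. For $\vec{g} \in J$, if $\vec{g} - \vec{p} \in I = \widehat{J}$, then $\vec{g} - \vec{p} = \vec{g}' + n\vec{p}$ with $\vec{g}' \in J,\, n \geq 0$; since $J$ is a complete representative, this forces $\vec{g} = \vec{g}'$ and $(n+1)\vec{p} = 0$, contradicting the fact that $\vec{p}$ is non-torsion (a short consequence of (G1) and (G3)). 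Conversely, $\vec{g} \in I \cap (I^c + \vec{p})$ decomposes as $\vec{g}' + n\vec{p}$ with $\vec{g}' \in J,\, n \geq 0$, and $n \geq 1$ would give $\vec{g} - \vec{p} \in \widehat{J} = I$, so $n = 0$ and $\vec{g} \in J$. Non-triviality of $I$ follows since $J \neq \emptyset$ and $\vec{g} - \vec{p} \notin I$ for any $\vec{g} \in J$, so Theorem \ref{upJX} yields $J \in \J_G$.

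The main obstacle is the closure step in (1)$\Rightarrow$(2), where one must exploit the identity $(m-1)\vec{p} - \vec{x_i} = (m-2)\vec{p} + \sum_{j \neq i}\vec{x_j}$ to convert $m \geq 2$ into a violation of the $\widetilde{\J}_G$ condition; this is precisely what makes $\vec{p} = \sum_{i=0}^d \vec{x_i}$ the right shift to use. The remaining work is routine bookkeeping with the $\vec{p}$-action and the correspondence from Theorem \ref{upJX}.
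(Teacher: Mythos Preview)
Your proof is correct. Both directions share the same core computation in the closure step—rewriting $(m-1)\vec{p}-\vec{x_i}$ as $(m-2)\vec{p}+\sum_{j\neq i}\vec{x_j}$ to rule out $m\geq 2$—but the organization differs from the paper's. The paper leans twice on \cite[1.5]{Tom25d}: once to obtain the complete-representative property in (1)$\Rightarrow$(2), and once at the end of (2)$\Rightarrow$(1), where it shows directly that $J\in\widetilde{\J}_G$ (by assuming $\vec{h}\geq\vec{g}+\vec{p}$ for $\vec{g},\vec{h}\in J$ and iterating the closure hypothesis to a contradiction) and then invokes \cite[1.5]{Tom25d} to upgrade to $\J_G$. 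You instead route everything through Theorem~\ref{upJX}: in (1)$\Rightarrow$(2) you read off the complete-representative property from the explicit description $J=I\cap(I^c+\vec{p})$, and in (2)$\Rightarrow$(1) you build the candidate upper set $I$, identify it with $\widehat{J}=\bigcup_{n\geq0}(J+n\vec{p})$ by iterating closure, and verify $J=J(I)$ by hand. Your route is more self-contained (only Theorem~\ref{upJX} is needed), while the paper's is shorter by outsourcing the representative and maximality facts. One small remark: your claim that $\vec{p}$ is non-torsion is really a consequence of $\vec{p}>0$ together with (G3), not (G1) alone—if $k\vec{p}=0$ with $k\geq1$ then $-\vec{p}=(k-1)\vec{p}\in G_{\geq0}$, forcing $\vec{p}\in G_{\geq0}\cap(-G_{\geq0})=0$.
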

\begin{proof}
(1)$\Rightarrow$(2) $J\subseteq G$ is a complete set of representatives by \cite[1.5]{Tom25d}. Take $\vec{g}\in J$ and $0\leq i\leq d$. Then there exists unique $n\in\mathbb{Z}$ with $\vec{g}+\vec{x}_i\in J+n\vec{p}$. If $n>1$, then $\vec{g}\geq\vec{g}-(\vec{p}-\vec{x}_i)\geq(\vec{g}+\vec{x}_i-n\vec{p})+\vec{p}$ holds, but this contradicts to $\vec{g},\vec{g}+\vec{x}_i-n\vec{p}\in J$. If $n<0$, then $\vec{g}+\vec{x}_i-n\vec{p}\geq\vec{g}+\vec{p}$ holds, but this is a contradiction for the same reason. Thus we obtain $n=0$ or $n=1$.

(2)$\Rightarrow$(1) Suppose that there exist $\vec{g},\vec{h}\in J$ with $\vec{h}\geq\vec{g}+\vec{p}$. Then by the definition of the partial order on $G$, there exist $a_0,\cdots, a_d\in\mathbb{Z}_{\geq0}$ such that $\vec{h}=\vec{g}+\vec{p}+\sum_{i=0}^da_i\vec{x}_i$. By our assumption, there exists $m\geq0$ such that $\vec{g}+\sum_{i=0}^da_i\vec{x}_i\in J+m\vec{p}$ holds. This means $\vec{g}+\vec{p}+\sum_{i=0}^da_i\vec{x}_i\in J+(m+1)\vec{p}$ holds, but this contradicts to $\vec{h}\in J$. Thus $J\in\widetilde{\J}_G$ holds. By \cite[1.5]{Tom25d}, we obtain $J\in\J_G$.
\end{proof}

\section{Combinatorics for higher representation infinite algebras of type $\widetilde{A}$}

In this section, we discuss basic properties of cuts used to define higher representation infinite algebras of type $\widetilde{A}$. We introduce a viewpoint of upper sets and give a theorem which we call {\it cut-upper set correspondence} (Theorem \ref{cutupcorr}).

Remark that almost all the arguments in this section are parallel to \cite[Section 6]{Tom25d}, although neither set of results formally implies the other.

\subsection{Combinatorics of cuts}

Let $L:=\{v=(v_i)_{i=0}^d\in\mathbb{Z}^{d+1}\mid\sum_{i=0}^dv_i=0\}=\sum_{i=0}^d\mathbb{Z}\alpha_i\subseteq\mathbb{Z}^{d+1}$ be a $d$-dimensional lattice and $B\subseteq L$ a cofinite subgroup. Put $m:=\sharp(L/B)$.

First, we introduce a new object which we call cut detectors. This is an analogue of height functions.

\begin{Def}
A map $f\colon L/B\to\mathbb{Z}$ is called a {\it cut detector} of type $\gamma\in\mathbb{Z}^{d+1}_{\geq0}$ if it satisfies the following conditions.
\begin{enumerate}
\item $f(0)=0$
\item For every $x\in L$, we have $f(x+\alpha_i+B)\in\{f(x+B)+\gamma_i,f(x+B)+\gamma_i-m\}$.
\end{enumerate}
\end{Def}

We now prove that cut detectors correspond bijectively to cuts of $Q$.

\begin{Thm}\label{cutbij}
For $\gamma\in\mathbb{Z}^{d+1}_{\geq0}$, we have a bijection between the following sets.
\begin{enumerate}
\item The set of cut detectors $f\colon L/B\to\mathbb{Z}$ of type $\gamma$.
\item The set of cuts of $Q$ of type $\gamma$.
\end{enumerate}
\end{Thm}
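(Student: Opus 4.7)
The plan is to construct mutually inverse maps $\Psi\colon(2)\to(1)$ and $\Phi\colon(1)\to(2)$.

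For $\Phi$, given a cut detector $f$ of type $\gamma$, set
\[ C_f := \bigl\{\, a\colon x+B\to x+\alpha_i+B \,\big|\, f(x+\alpha_i+B)=f(x+B)+\gamma_i-m\,\bigr\}, \]
the arrows realizing the ``$-m$'' alternative. Since $m\ge 1$, exactly one of the two alternatives in the detector condition holds on each arrow, so $C_f$ is unambiguous. Telescoping $f$ around an elementary cycle $x+B\to x+\alpha_{i_0}+B\to\cdots\to x+B$ yields $0$ on one hand and $m\bigl(1-|C_f\cap\text{cycle}|\bigr)$ on the other, so each elementary cycle contains exactly one arrow of $C_f$, meaning $C_f$ is a cut. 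Telescoping around each $\alpha_i$-orbit of $L/B$ similarly forces $|C_f\cap O|=k_i\gamma_i/m$ for every such orbit $O$, so summing over orbits gives type $\gamma$.

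For $\Psi$, given a cut $C$, lift to a $B$-periodic cut $\hat C$ of $\hat Q$, assign the weight $w(a):=\gamma_i-m\chi_{\hat C}(a)$ to each arrow $a$ of type $i$, and define
\[ \hat f_C(x) := \sum_{a\in p} w(a) \]
for any path $p\colon 0\to x$ in $\hat Q$. Path-independence follows because the two basic moves relating paths both preserve $w$: elementary-cycle insertion/deletion has $w$-sum $\sum_i\gamma_i-m=0$, and commutation swaps $a_1a_2\leftrightarrow a_3a_4$ (across a type-$(i,j)$ square) are handled by the key trick of extending each side by a common $(d-1)$-arrow tail $q$ using the remaining types to form two elementary cycles sharing $q$; each has exactly one $\hat C$-arrow, so $|\hat C\cap\{a_1,a_2\}|=|\hat C\cap\{a_3,a_4\}|$ and hence $w(a_1)+w(a_2)=w(a_3)+w(a_4)$.

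The remaining step is to show $\hat f_C$ is $B$-invariant and hence descends to the required $f_C\colon L/B\to\mathbb Z$. Because $\hat C$ is $B$-periodic, path-independence gives $\hat f_C(x+\alpha)-\hat f_C(x)=\hat f_C(\alpha)$ for $\alpha\in B$, so $\hat f_C|_B\colon B\to\mathbb Z$ is a group homomorphism; it suffices to show it vanishes. Iterating the commutation-square trick between two parallel $\alpha_i$-orbits $O$ and $O+\alpha_j$ ($j\ne i$) gives $|C\cap O|=|C\cap(O+\alpha_j)|$, and since the $\alpha_j$-steps with $j\ne i$ connect all $\alpha_i$-orbits in $L/B$, one obtains the equidistribution $|C\cap O|=k_i\gamma_i/m$ for every $\alpha_i$-orbit. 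A straight-line path of $k_i$ steps in direction $\alpha_i$ therefore contributes $k_i\gamma_i-m\cdot(k_i\gamma_i/m)=0$, so $\hat f_C$ vanishes on the subgroup $\langle k_0\alpha_0,\ldots,k_d\alpha_d\rangle\subseteq B$. For a general $\beta\in B$, one chooses $(v_i)$ with $\beta=\sum v_i\alpha_i$ and all $v_i\ge 0$, reorders the corresponding path in $\hat Q$ by commutation swaps into successive $\alpha_i$-strips, and then reduces each strip modulo $k_i$ so that the residual sub-path becomes a closed walk in $L/B$ of bounded size, whose contribution is again forced to zero by equidistribution. The inverse checks $\Psi\circ\Phi=\mathrm{id}_{(1)}$ and $\Phi\circ\Psi=\mathrm{id}_{(2)}$ are immediate from the constructions. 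The main obstacle is this last argument for the full $B$-invariance: conceptually it is the statement that the cohomology class of $w$ vanishes in $H^1(Q^{(2)};\mathbb Z)\cong\mathrm{Hom}(B,\mathbb Z)$, where $Q^{(2)}$ is the $2$-complex built from $Q$ by attaching elementary cycles and commutation squares, whose universal cover is the simply connected $\tilde A_d$-alcove triangulation of $L\otimes\mathbb R$.
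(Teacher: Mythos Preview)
Your construction of the two maps $\Phi$ and $\Psi$ is exactly the one the paper uses, and your arguments for path-independence and for the equidistribution $|C\cap O|=k_i\gamma_i/m$ (which the paper establishes by the same orbit-comparison trick) are correct.

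The genuine gap is in the last step of $B$-invariance. After you reduce each $\alpha_i$-strip modulo $k_i$, the residual strip of length $r_i<k_i$ is a \emph{partial} $\alpha_i$-orbit, and equidistribution says nothing about how many $C$-arrows a partial orbit contains. So the claim that the residual closed walk ``is again forced to zero by equidistribution'' is unjustified; bounded size alone does not help, and your cohomological reformulation at the end is a restatement of the problem rather than a proof.

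The paper's fix is a one-line trick that you have already set up for. You observed that $\hat f_C|_B$ is a group homomorphism into the torsion-free group $\mathbb{Z}$, so it suffices to show $\hat f_C(m\beta)=0$. Writing $\beta=\sum_i y_i\alpha_i$, one has $m\beta=\sum_i (my_i)\alpha_i$ with each $my_i$ divisible by $k_i$ (since $k_i\mid m$). Hence in the strip decomposition for $m\beta$ every $\alpha_i$-strip is a union of full orbits, all intermediate endpoints lie in $B$, and your equidistribution gives $\hat f_C(m\beta)=\sum_i \frac{my_i}{k_i}\,\hat f_C(k_i\alpha_i)=0$. This replaces your ``reduce modulo $k_i$'' step entirely; no residual walk is needed.
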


First, we show that a cut $C$ of $Q$ induces cut detectors of the same type.

\begin{Def}
Let $\gamma$ be the type of $C$. For $a\in\hat{Q}_1$ of type $i$, we define
\[f_C(a):=
  \begin{cases}
    \gamma_i & a\notin C,\\
    \gamma_i-m & a\in C.
  \end{cases}
\]
For a path $p=a_n\cdots a_1$ in $\hat{Q}$, we define
\[f_C(p):=\sum_{i=1}^nf_C(a_i).\]
\end{Def}

\begin{Rem}
For a path $p$ in $\hat{Q}$ of length $0$, we set $f_C(p)=0$.
\end{Rem}

The following can be shown in the same way as \cite[2.5]{DG}.

\begin{Lem}
For paths $p,q$ in $\hat{Q}$ with same sources and targets, we have $f_C(p)=f_C(q)$.
\end{Lem}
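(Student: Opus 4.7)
The plan is to reduce the equality $f_C(p)=f_C(q)$ to the defining property of a cut, namely that every elementary cycle in $\hat{Q}$ meets $\hat{C}$ in exactly one arrow.

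First I compare the type profiles of $p$ and $q$. Let $n_i^p$ (resp.\ $n_i^q$) denote the number of type-$i$ arrows in $p$ (resp.\ $q$). Since $p$ and $q$ share a source and a target, $\sum_{i=0}^d(n_i^p-n_i^q)\alpha_i=0$ in $L$, and the kernel of the surjection $\mathbb{Z}^{d+1}\twoheadrightarrow L$, $e_i\mapsto\alpha_i$, is generated by $(1,1,\dots,1)$. Hence there is a unique integer $c$ with $n_i^p-n_i^q=c$ for every $i$, and the lengths of $p$ and $q$ differ by $(d+1)c$. An elementary cycle has $f_C$-value $\sum_{i=0}^d\gamma_i-m=m-m=0$, since it uses each type once and hits $\hat{C}$ exactly once. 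Inserting (or deleting) an elementary cycle at any intermediate vertex therefore leaves $f_C$ unchanged, so after $|c|$ such insertions into the shorter of the two paths I may assume that $p$ and $q$ have the same length, and consequently the same arrow-type multiset.

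Under this assumption the sequences of types appearing in $p$ and $q$ are permutations of one another, so $p$ can be transformed into $q$ by a finite sequence of adjacent transpositions that swap two consecutive arrows of distinct types (same-type swaps being trivial). It therefore suffices to show that one such swap preserves $f_C$. Consider two consecutive arrows $a\colon x\to x+\alpha_i$ and $b\colon x+\alpha_i\to x+\alpha_i+\alpha_j$ with $i\neq j$, together with their swapped counterparts $a'\colon x\to x+\alpha_j$ and $b'\colon x+\alpha_j\to x+\alpha_i+\alpha_j$. The idea is to complete both pairs into elementary cycles sharing a common tail: choose any ordering $k_1,\dots,k_{d-1}$ of $\{0,\dots,d\}\setminus\{i,j\}$ and let $\rho$ be the path starting at $x+\alpha_i+\alpha_j$ that follows arrows of types $k_1,\dots,k_{d-1}$ in order; since $\sum_{l=0}^d\alpha_l=0$, $\rho$ terminates at $x$. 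Both $ab\rho$ and $a'b'\rho$ are then elementary cycles at $x$, so applying the cut axiom to each and subtracting yields $|\{a,b\}\cap\hat{C}|=|\{a',b'\}\cap\hat{C}|$. This gives $f_C(a)+f_C(b)=f_C(a')+f_C(b')$, since both sides equal $\gamma_i+\gamma_j$ minus $m$ times this common number.

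The main obstacle is the bookkeeping of the length-equalization step together with the degenerate case $d=1$, where $\rho$ is empty and $\{a,b\}$ is itself an elementary cycle (so the conclusion follows immediately from the cut axiom). Once these are handled, the proof is driven entirely by the cut axiom applied to two elementary cycles with a common tail.
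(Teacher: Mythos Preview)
Your argument is correct. The length-equalization by inserting elementary cycles, followed by the reduction to adjacent swaps and the comparison of two elementary cycles sharing a common tail $\rho$, is a clean and complete way to deduce the claim directly from the cut axiom; the $d=1$ degenerate case is handled as you note.

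The paper does not actually prove this lemma: it simply records that it ``can be shown in the same way as \cite[2.5]{DG}'' and moves on. So there is nothing in the paper to compare against beyond that pointer. Your proof is essentially the standard height-function argument one would expect to find behind such a citation: the key point in both cases is that the cut axiom forces any two elementary cycles differing only in their first two arrows to meet $\hat{C}$ in the same number of arrows among those two, which is exactly your swap invariance. In short, your write-up supplies the details the paper omits, along the expected lines.
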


Thanks to this lemma, for $x\in L$, we can define
\[f_C(x):=f_C(p_x),\]
where $p_x$ is any path from $0$ to $x$.

\begin{Prop}
Our $f_C\colon L\to\mathbb{Z}$ induces a cut detector $f_C\colon L/B\to\mathbb{Z}$ of type $\gamma$.
\end{Prop}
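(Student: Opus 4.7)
The plan is to establish that $f_C$ descends from $L$ to $L/B$ and then verify the two cut-detector axioms on the quotient. Only the descent is non-trivial; the axioms follow at once from the construction of $f_C$ on a single arrow.

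To prove descent, I would first establish the additivity $f_C(x+b) = f_C(x) + f_C(b)$ for $x \in L$ and $b \in B$ from the $B$-periodicity of $\hat{C}$. Fixing paths $p_b$ from $0$ to $b$ and $p_x$ from $0$ to $x$ in $\hat{Q}$, the $b$-translate $p_x + b$ is a path from $b$ to $x+b$ each of whose arrows has the same type and the same $\hat{C}$-membership as the corresponding arrow of $p_x$; hence $f_C(p_x + b) = f_C(p_x) = f_C(x)$. Concatenating $p_b$ with $p_x + b$ and invoking the preceding lemma yields $f_C(x+b) = f_C(b) + f_C(x)$.

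The heart of the argument is the vanishing $f_C(b) = 0$ for every $b \in B$, which I would prove by averaging over $L/B$. Fix a path $p$ from $0$ to $b$ in $\hat{Q}$, and let $\bar{p}$ be its projection to $Q$, a closed walk at $0+B$ using $n_i$ arrows of type $i$. For each $v \in L/B$, the $v$-translate $\bar{p}_v$ is a closed walk at $v$ whose lift to $\hat{Q}$ starting at any representative $\tilde{v} \in L$ of $v$ is a path from $\tilde{v}$ to $\tilde{v} + b$; by the additivity just proved, $f_C(\bar{p}_v) = f_C(\tilde{v}+b) - f_C(\tilde{v}) = f_C(b)$, independently of $v$. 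Summing gives $\sum_{v \in L/B} f_C(\bar{p}_v) = m f_C(b)$. On the other hand, because $L/B$ acts transitively on $Q_0$ preserving types, the $k$-th arrow of $\bar{p}_v$ sweeps through all type-$i_k$ arrows of $Q$ exactly once as $v$ varies over $L/B$; consequently the total count of cut arrows across the translates is $\sum_i n_i \gamma_i$, and expanding $f_C(\bar{p}_v) = \sum_i n_i \gamma_i - m\cdot\#(\bar{p}_v \cap C)$ and summing gives $\sum_v f_C(\bar{p}_v) = m \sum_i n_i \gamma_i - m \sum_i n_i \gamma_i = 0$. Comparing the two expressions, $m f_C(b) = 0$, so $f_C(b) = 0$.

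The main obstacle is precisely this averaging identity: the key insight is that the global density of type-$i$ cut arrows in $Q$ equals $\gamma_i/m$, so averaging any closed walk over its $L/B$-translates exactly matches this density and forces the total contribution, which by additivity equals $m f_C(b)$, to vanish. Once descent is in hand, $f_C \colon L/B \to \mathbb{Z}$ satisfies $f_C(0+B) = 0$ tautologically and the difference condition $f_C(x+\alpha_i+B) - f_C(x+B) \in \{\gamma_i, \gamma_i - m\}$ directly from the arrow-level definition of $f_C$ and the cut-membership of $x \to x + \alpha_i$.
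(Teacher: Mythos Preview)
Your proof is correct. The additivity step is identical to the paper's, and your verification of the cut-detector axioms is straightforward as you note. The substantive difference is in how you establish $f_C(b)=0$ for $b\in B$.

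The paper proceeds indirectly: it first proves $f_C(o_i\alpha_i)=0$ for each $i$, where $o_i$ is the order of $\alpha_i+B$ in $L/B$, by observing that the number $\theta'_i$ of cut arrows along the straight type-$i$ path $x\to x+\alpha_i\to\cdots\to x+o_i\alpha_i$ is independent of $x$ (via additivity), and then partitioning $L/B$ into the $m/o_i$ cosets of $\langle\alpha_i+B\rangle$ to get $(m/o_i)\theta'_i=\gamma_i$, whence $f_C(o_i\alpha_i)=o_i\gamma_i-\theta'_i m=0$. For a general $y\in B$ it then passes to $my$, writes this as an integer combination of the $o_i\alpha_i\in B$, and uses additivity along $B$ to conclude. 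Your argument bypasses this reduction entirely: you average the chosen path over all of $L/B$ at once, using that $L/B$ acts simply transitively on the arrows of each fixed type, so every arrow position sees exactly $\gamma_{i_k}$ cut arrows across the $m$ translates. This is more direct and treats all types simultaneously; the paper's route has the minor side benefit of computing the explicit constants $\theta'_i=o_i\gamma_i/m$, but that is not used elsewhere. Both arguments rest on the same underlying density principle you identify.
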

\begin{proof}
It is enough to show that $f_C\colon L\to\mathbb{Z}$ is invariant under the action of $B$ on $L$. Take $x\in L$ and $y\in B$. Let $p_x$ be a path in $\hat{Q}$ from $0$ to $x$. Since $C$ is $B$-periodic, for the path $p_x+y$ from $y$ to $x+y$, we have $f_C(p_x)=f_C(p_x+y)$. Thus we obtain
\[f_C(x+y)=f_C(y)+f_C(p_x+y)=f_C(y)+f_C(x).\]
Therefore it is enough to show $f_C(y)=0$.

Our proof below is essentially the same as \cite[2.9]{DG}. Let $o_i$ be the order of $\alpha_i+B\in L/B$. First, we show $f_C(o_i\alpha_i)=0$. Consider the path $0\to\alpha_i\to\cdots\to o_i\alpha_i$, where each arrow is of type $i$ and put $\theta'_i:=\sharp\{1\leq j\leq o_i\mid((j-1)\alpha_i\to j\alpha_i)\in C\}$. Then we have $f_C(o_i\alpha_i)=o_i\gamma_i-\theta'_im$. Here, for any $x\in L$, we have
\[f_C(o_i\alpha_i)=f_C(x+o_i\alpha_i)-f_C(x)=f_C(x\to x+\alpha_i\to\cdots\to x+o_i\alpha_i).\]
This implies $\theta'_i=\sharp\{1\leq j\leq o_i\mid(x+(j-1)\alpha_i\to x+j\alpha_i)\in C\}$ holds. Take $x_1,\cdots,x_{\frac{m}{o_i}}\in L$ so that $\{x_l+B\}_l\subseteq L/B$ gives a complete set of representatives for $(L/B)/\mathbb{Z}(\alpha_i+B)$. Then each arrow of type $i$ in $Q$ appears exactly once in cycles
\[x_l\to x_l+\alpha_i\to\cdots\to x_l+o_i\alpha_i\ (1\leq l\leq\frac{m}{o_i}).\]
This means $\frac{m}{o_i}\theta'_i=\gamma_i$. Therefore we have
\[f_C(o_i\alpha_i)=o_i\gamma_i-\theta'_im=0.\]

Finally, consider arbitrary $y\in B$. Since $f_C(my)=mf_C(y)$, it is enough to show $f_C(my)=0$. If we write $y=\sum_{i=0}^dy_i\alpha_i$, then we have
\[f_C(my)=\sum_{i=0}^dy_i\frac{m}{o_i}f_C(o_i\alpha_i)=0.\qedhere\]
\end{proof}

Using this, we can recover \cite[2.13,2.14]{DG} easily.

\begin{Cor}
Let $C$ be a cut of $Q$ and $\gamma$ its type.
\begin{enumerate}
\item\cite[2.13]{DG} Take $(m_i)_{i=0}^d\in\mathbb{Z}^{d+1}$. If $\sum_{i=0}^dm_i\alpha_i\in B$ holds, then we have $\sum_{i=0}^dm_i\gamma_i\in m\mathbb{Z}$.
\item\cite[2.14]{DG} The cut $C$ is bounding if and only if $\gamma\in\mathbb{Z}^{d+1}_{>0}$ holds.
\end{enumerate}
\end{Cor}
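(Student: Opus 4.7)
The plan is to deploy the cut detector $f_C\colon L/B\to\mathbb{Z}$ constructed in the preceding proposition as the main tool in both parts, since it packages the cut data in an arithmetically transparent way.

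For (1), I would first note that the defining condition of a cut detector of type $\gamma$ gives the mod-$m$ recursion
\[f_C(x+\alpha_i+B)\equiv f_C(x+B)+\gamma_i\pmod m.\]
Iterating along a path $0\to\alpha_{i_1}\to\alpha_{i_1}+\alpha_{i_2}\to\cdots\to y$ with $y=\sum_{i=0}^d m_i\alpha_i$ (where negative $m_i$ are handled by reading arrows of type $i$ backwards, which contributes $-\gamma_i$ modulo $m$), one obtains
\[f_C(y+B)\equiv\sum_{i=0}^d m_i\gamma_i\pmod m.\]
If $y\in B$, then $y+B=0+B$, so $f_C(y+B)=f_C(0)=0$, and (1) follows.

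For (2), I translate the acyclicity of $Q_C$ into a statement about $f_C$. Any cycle in $Q_C$ based at $x+B$ lifts to a path in $\hat Q$ from $x$ to $x+y$, where $y=\sum_{i=0}^d n_i\alpha_i\in B$ with $n_i\in\mathbb{Z}_{\geq 0}$ and $\sum n_i\geq 1$; because no arrow on the lift lies in $C$, each arrow of type $i$ contributes exactly $\gamma_i$ to $f_C$, giving $f_C(\text{path})=\sum_i n_i\gamma_i$. On the other hand $f_C(x+y+B)-f_C(x+B)=0$ since $y\in B$, so $\sum_i n_i\gamma_i=0$. For the direction $(\Leftarrow)$, if all $\gamma_i>0$, this equation forces every $n_i=0$, contradicting $\sum n_i\geq 1$; hence $Q_C$ is acyclic. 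For $(\Rightarrow)$, suppose $\gamma_i=0$ for some $i$. Then by the definition of type, no arrow of type $i$ belongs to $C$, so for any $x\in L$ the elementary $\alpha_i$-cycle
\[x+B\to x+\alpha_i+B\to\cdots\to x+o_i\alpha_i+B=x+B\]
lies entirely in $Q_C$ (here $o_i$ is the order of $\alpha_i+B$ in the finite group $L/B$), contradicting acyclicity.

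The step I expect to require the most care is the lifting used in (2): cycles must be lifted from the finite quiver $Q$ to the infinite quiver $\hat Q$ in order to make sense of the vector $y=\sum n_i\alpha_i\in B$ and of the claim that each arrow of type $i$ is individually accounted for by $\gamma_i$, and signs must be tracked if one is tempted to concatenate forward and backward arrows of the same type when running the mod-$m$ argument in (1). Once the lift-to-$\hat Q$ viewpoint is fixed, both statements reduce to the additivity of $f_C$ modulo $m$ proved in the previous proposition.
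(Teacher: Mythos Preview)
Your proof is correct and follows essentially the same approach as the paper: both parts are driven by the cut detector $f_C$, with (1) reading off the mod-$m$ congruence $f_C(y+B)\equiv\sum_i m_i\gamma_i\pmod m$ and (2) using that arrows of $Q_C$ strictly increase $f_C$ when every $\gamma_i>0$. The only difference is cosmetic: the paper dispatches the necessity in (2) as ``obvious'' while you spell out the $\alpha_i$-cycle when $\gamma_i=0$, and the paper invokes the $L$-level identity $f_C(\sum m_i\alpha_i)-\sum m_i\gamma_i\in m\mathbb{Z}$ directly from the construction rather than re-iterating the recursion.
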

\begin{proof}
(1) By the definition of $f_C$, we have $f_C(\sum_{i=0}^dm_i\alpha_i)-\sum_{i=0}^dm_i\gamma_i\in m\mathbb{Z}$. Since $f_C(\sum_{i=0}^dm_i\alpha_i)=0$, we get the conclusion.

(2) The necessity is obvious. We prove the sufficiency. Take $x,y\in L/B$. Observe that if there exists a path in $Q_C$ from $x$ to $y$, then we have $f_C(x)<f_C(y)$ by the definition of $f_C$. This proves the conclusion.
\end{proof}

Now we prove Theorem \ref{cutbij}.
\begin{proof}[Proof of Theorem \ref{cutbij}]
Let $f\colon L/B\to\mathbb{Z}$ be a cut detector of type $\gamma$. We define a subset $C_f\subseteq Q_1$: for an arrow $a\colon x+B\to x+\alpha_i+B$ in $Q$,
\[a\in C_f\Leftrightarrow f(x+\alpha_i+B)=f(x+B)+\gamma_i-m.\]
Then this $C_f$ is a cut of $Q$. We show that the type of $C_f$ is $\gamma$. Let $o_i$ be the order of $\alpha_i+B\in L/B$. Then we have
\[0=f(o_i\alpha_i+B)=\sum_{j=1}^{o_i}(f(j\alpha_i+B)-f((j-1)\alpha_i+B))=o_i\gamma_i-\theta'_im,\]
where $\theta'_i=\sharp\{1\leq j\leq o_i\mid((j-1)\alpha_i\to j\alpha_i)\in C_f\}$. Here, for any $x\in L$, we have
\[0=f(x+o_i\alpha_i+B)-f(x+B)=\sum_{j=1}^{o_i}(f(x+j\alpha_i+B)-f(x+(j-1)\alpha_i+B)).\]
This implies $\theta'_i=\sharp\{1\leq j\leq o_i\mid(x+(j-1)\alpha_i\to x+j\alpha_i)\in C_f\}$. Thus by taking a complete set of representatives for $(L/B)/\mathbb{Z}(\alpha_i+B)$, we can calculate that the number of the arrows in $C_f$ of type $i$ is
\[\frac{m}{o_i}\theta'_i=\gamma_i.\]

By construction, it is easy to check that $f_{C_f}=f$ and $C_{f_C}=C$ hold. This completes the proof.
\end{proof}

In \cite{DG}, the following theorem is proved by constructing an explicit cut which is periodic with respect to another cofinite subgroup of $L$.

\begin{Thm}\label{chartype}\cite[3.5]{DG}
For $\gamma=(\gamma_i)_{i=0}^d\in\mathbb{Z}_{\geq0}^{d+1}$, $\gamma$ is a type of a $B$-periodic cut if and only if both of the following conditions are satisfied.
\begin{enumerate}
\item $\sum_{i=0}^d\gamma_i=m$
\item For any $(m_i)_{i=0}^d\in\mathbb{Z}^{d+1}$ with $\sum_{i=0}^dm_i\alpha_i\in B$, we have $\sum_{i=0}^dm_i\gamma_i\in m\mathbb{Z}$.
\end{enumerate}
\end{Thm}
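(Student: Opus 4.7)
My plan is to reduce the statement to Theorem \ref{cutbij} by constructing a cut detector $f\colon L/B\to\mathbb{Z}$ of type $\gamma$ directly from $\gamma$. The necessity of (1) is already recorded as a fact in the discussion of the type of a cut, and the necessity of (2) is exactly the preceding Corollary. So the entire content is sufficiency, and the natural approach is to guess the modulo-$m$ form of $f$, verify well-definedness from (1) and (2), and then carefully lift to $\mathbb{Z}$.

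First, assuming (1) and (2), I would introduce the candidate $\bar{f}\colon L\to\mathbb{Z}/m\mathbb{Z}$ defined by $\sum_{i=0}^{d}m_{i}\alpha_{i}\mapsto\sum_{i=0}^{d}m_{i}\gamma_{i}\pmod{m}$. Under the presentation $L=\mathbb{Z}^{d+1}/\mathbb{Z}(1,\ldots,1)$ sending $e_{i}\mapsto\alpha_{i}$, the only relation among the $\alpha_{i}$ is $\sum_{i}\alpha_{i}=0$, and so well-definedness on $L$ is equivalent to $\sum_{i}\gamma_{i}\equiv 0\pmod{m}$, which is condition (1). Condition (2) says precisely that $\bar{f}$ annihilates $B$, so $\bar{f}$ further descends to $\bar{f}\colon L/B\to\mathbb{Z}/m\mathbb{Z}$ satisfying $\bar{f}(x+\alpha_{i}+B)-\bar{f}(x+B)\equiv\gamma_{i}\pmod{m}$ for all $x$ and $i$.

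Next I would lift $\bar{f}$ to $f\colon L/B\to\mathbb{Z}$ by taking, for each $z\in L/B$, the unique representative of $\bar{f}(z)$ in $\{0,1,\ldots,m-1\}$. Then $f(0)=0$, and for any $x\in L$ and $0\leq i\leq d$ the difference $f(x+\alpha_{i}+B)-f(x+B)$ lies in $[-(m-1),m-1]$ and is congruent to $\gamma_{i}$ modulo $m$. The bound $0\leq\gamma_{i}\leq m$, which follows from (1) combined with the non-negativity of the $\gamma_{j}$, forces this difference to be either $\gamma_{i}$ or $\gamma_{i}-m$. Hence $f$ is a cut detector of type $\gamma$, and Theorem \ref{cutbij} converts it into a cut of $Q$ of type $\gamma$, equivalently a $B$-periodic cut of $\hat{Q}$ of type $\gamma$. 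The main, though mild, obstacle is precisely this lift step: without the bound $\gamma_{i}\leq m$, which itself invokes (1), there would be no guarantee that the principal-representative lift produces differences in the two-element admissible set. So condition (1) is used both for the well-definedness of $\bar{f}$ on $L$ and for the final lift, while condition (2) governs only the descent from $L$ to $L/B$.
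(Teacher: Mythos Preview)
Your proof is correct. The cut detector you build---taking $\bar f(x+B)\equiv\sum_i m_i\gamma_i\pmod m$ and lifting by principal representatives in $\{0,\ldots,m-1\}$---is exactly the one the paper produces, but the paper arrives at it less directly. The paper first introduces the auxiliary group $G=G(B,\gamma)\subseteq\mathbb{Z}\oplus L/B$ and the set $\J$ of ``complete representatives compatible with the $\vec{x_i}$'', proves a surjection $\J\to\{\text{cuts of type }\gamma\}$ (Proposition~\ref{cutJcorr}, itself built on Theorem~\ref{cutbij}), and then exhibits the element $J=\{x\in G\mid 0\le\pi(x)<m\}\in\J$. Unwinding $f_J$ for this $J$ gives precisely your lift of $\bar f$. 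So the two arguments coincide at the level of the actual cut detector constructed; the difference is that you invoke Theorem~\ref{cutbij} directly, while the paper routes through the $G$/$\J$ formalism because that machinery is reused later for the cut--upper set correspondence (Theorem~\ref{cutupcorr}). Your route is the more economical one for this statement in isolation.
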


The necessity of these conditions has already been proved. In the next subsection, we give a new proof of the sufficiency by introducing cut-upper set correspondence.

\subsection{Cut-upper set correspondence}

Let $\gamma=(\gamma_i)_{i=0}^d\in\mathbb{Z}_{\geq0}^{d+1}$ be an integer vector satisfying the conditions (1) and (2) in Theorem \ref{chartype}. We define a group homomorphism $\Phi\colon\mathbb{Z}^{d+1}\to\mathbb{Z}\oplus L/B$ by
\[\Phi(e_i):=(\gamma_i,\alpha_i+B)\]
and put $G=G(B,\gamma):=\Im\Phi$ and $\vec{x}_i:=\Phi(e_i)\in G$. Observe that $\vec{p}:=\sum_{i=0}^d\vec{x}_i=(m,0)$ holds. Thus the composition $G\hookrightarrow\mathbb{Z}\oplus L/B\twoheadrightarrow L/B$ induces a group homomorphism $\phi\colon G/\mathbb{Z}\vec{p}\to L/B$.

\begin{Lem}
The group homomorphism $\phi\colon G/\mathbb{Z}\vec{p}\to L/B$ is an isomorphism.
\end{Lem}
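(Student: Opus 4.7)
The plan is to verify well-definedness, then surjectivity, then injectivity, treating injectivity as the main content and the place where condition (2) of Theorem~\ref{chartype} is used essentially.

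First I would confirm well-definedness: since $\vec{p} = \sum_{i=0}^d \vec{x_i} = (m, 0) \in \mathbb{Z} \oplus L/B$, its image under the second projection is $0$, so the composition $G \hookrightarrow \mathbb{Z} \oplus L/B \twoheadrightarrow L/B$ does factor through $G/\mathbb{Z}\vec{p}$, giving $\phi$.

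Surjectivity is immediate: the image of $\vec{x_i}$ in $L/B$ is $\alpha_i + B$, and since $L = \sum_{i=0}^d \mathbb{Z}\alpha_i$, the classes $\{\alpha_i + B\}$ generate $L/B$.

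For injectivity, suppose $\vec{g} \in G$ satisfies $\phi(\vec{g} + \mathbb{Z}\vec{p}) = 0$. Write $\vec{g} = \Phi(m_0, \ldots, m_d) = \bigl(\sum_{i=0}^d m_i \gamma_i,\ \sum_{i=0}^d m_i \alpha_i + B\bigr)$ for some integers $m_i$. The vanishing of the second coordinate says $\sum m_i \alpha_i \in B$. At this point I would invoke condition (2) of Theorem~\ref{chartype}, which tells us precisely that then $\sum m_i \gamma_i \in m\mathbb{Z}$. Writing $\sum m_i \gamma_i = km$, we get $\vec{g} = (km, 0) = k\vec{p}$, so $\vec{g} \in \mathbb{Z}\vec{p}$.

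The only subtle point, and hence the ``hard part,'' is recognizing that condition (2) of Theorem~\ref{chartype} is exactly the obstruction to injectivity: without it one could produce nontrivial elements of $G$ in the kernel of the projection to $L/B$ that are not multiples of $\vec{p}$. Everything else is bookkeeping with the definition of $\Phi$.
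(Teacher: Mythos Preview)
Your proof is correct and follows essentially the same approach as the paper: surjectivity via the generators $\alpha_i+B$, and injectivity by writing $\vec g=\Phi(m_0,\dots,m_d)$, using that the second coordinate vanishing means $\sum m_i\alpha_i\in B$, and then invoking condition~(2) of Theorem~\ref{chartype} to get $\sum m_i\gamma_i\in m\mathbb{Z}$ so that $\vec g\in\mathbb{Z}\vec p$. The well-definedness check is already handled just before the lemma statement, but including it does no harm.
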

\begin{proof}
The surjectivity follows from $\phi(\vec{x}_i+\mathbb{Z}\vec{p})=\alpha_i+B$. Take $\vec{g}=\Phi(v)\in G$ with $\phi(\vec{g}+\mathbb{Z}\vec{p})=0$. If we put $v=(m_i)_{i=0}^d$, then we have $\sum_{i=0}^dm_i\alpha_i\in B$. Thus by our assumption, there exists $n\in\mathbb{Z}$ with $\sum_{i=0}^dm_i\gamma_i=mn$. This implies $\vec{g}=n\vec{p}$.
\end{proof}

We define
\[\J:=\{J\subseteq G\colon\text{a complete set of representatives for }G/\mathbb{Z}\vec{p}\mid \vec{g}+\vec{x}_i\in J\sqcup(J+\vec{p})\text{ for all }\vec{g}\in J\text{ and }0\leq i\leq d\}.\]
Let $\pi:=(G\hookrightarrow\mathbb{Z}\oplus L/B\twoheadrightarrow\mathbb{Z})$ denote the composition of natural group homomorphisms. The following proposition is key to prove Theorem \ref{chartype}.

\begin{Prop}\label{cutJcorr}
We have a surjective map
\[C(-)\colon\J\to\{\text{Cuts of $Q$ of type }\gamma\}.\]
For $J,J'\in\J$, $C(J)=C(J')$ holds if and only if $J=J'+n\vec{p}$ holds for some $n\in\mathbb{Z}$.
\end{Prop}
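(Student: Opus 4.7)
The plan is to convert $C(-)$ into the cut-detector language of Theorem~\ref{cutbij}. Given $J\in\J$, write $\vec{g}_x\in J$ for the unique element with $\phi(\vec{g}_x+\mathbb{Z}\vec{p})=x+B$, and define $C(J)\subseteq Q_1$ by declaring that the arrow $a\colon x+B\to x+\alpha_i+B$ lies in $C(J)$ iff $\vec{g}_x+\vec{x_i}\in J+\vec{p}$; the dichotomy $\vec{g}_x+\vec{x_i}\in J\sqcup(J+\vec{p})$ given by membership in $\J$ makes this well defined. Setting $\tilde{f}_J(x+B):=\pi(\vec{g}_x)$, a direct computation yields
\[
\tilde{f}_J(x+\alpha_i+B)-\tilde{f}_J(x+B)=\begin{cases}\pi(\vec{x_i})=\gamma_i,&a\notin C(J),\\ \pi(\vec{x_i}-\vec{p})=\gamma_i-m,&a\in C(J).\end{cases}
\]
Hence $f_J:=\tilde{f}_J-\tilde{f}_J(0+B)$ is a cut detector of type $\gamma$ and Theorem~\ref{cutbij} identifies $C_{f_J}=C(J)$, so $C(J)$ is a cut of $Q$ of type $\gamma$.

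For surjectivity, let $C$ be a cut of type $\gamma$ with cut detector $f_C$. For each $x+B\in L/B$ I claim there is a unique $\vec{g}_x\in G$ with $\phi(\vec{g}_x+\mathbb{Z}\vec{p})=x+B$ and $\pi(\vec{g}_x)=f_C(x+B)$. Uniqueness is immediate, since $\phi^{-1}(x+B)$ is a single $\mathbb{Z}\vec{p}$-coset on which $\pi$ shifts by $m$. For existence, lift to $\tilde{x}=\sum_{j=1}^k\alpha_{i_j}\in L$, set $\vec{g}:=\sum_{j=1}^k\vec{x}_{i_j}$, and let $N:=\#\{j\mid a_j\in C\}$ for the corresponding path $a_k\cdots a_1$ in $\hat{Q}$; by the very definition of $f_C$ we have $\pi(\vec{g})-f_C(x+B)=Nm$, so $\vec{g}_x:=\vec{g}-N\vec{p}$ does the job. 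Then $J:=\{\vec{g}_x\}_{x+B\in L/B}$ is a complete representative of $G/\mathbb{Z}\vec{p}$, and for any $\vec{g}_x\in J$, $0\le i\le d$, the equality $\pi(\vec{g}_x+\vec{x_i})=f_C(x+B)+\gamma_i$ together with $f_C(x+\alpha_i+B)\in\{f_C(x+B)+\gamma_i,\,f_C(x+B)+\gamma_i-m\}$ forces $\vec{g}_x+\vec{x_i}\in\{\vec{g}_{x+\alpha_i},\,\vec{g}_{x+\alpha_i}+\vec{p}\}\subseteq J\sqcup(J+\vec{p})$. Hence $J\in\J$, and inspecting the definitions gives $C(J)=C$.

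For the fibre description, replacing $J$ by $J+n\vec{p}$ replaces each $\vec{g}_x$ by $\vec{g}_x+n\vec{p}$ and shifts $\tilde{f}_J$ by the constant $nm$, so leaves $C(J)$ unchanged. Conversely, assume $C(J)=C(J')$, so $f_J=f_{J'}$. Since $\vec{g}_{0+B}^{J},\vec{g}_{0+B}^{J'}\in\phi^{-1}(0+B)=\mathbb{Z}\vec{p}$, we may write $\vec{g}_{0+B}^{J}=\vec{g}_{0+B}^{J'}+n\vec{p}$ for some $n\in\mathbb{Z}$, and then $\tilde{f}_J=\tilde{f}_{J'}+nm$ everywhere; uniqueness in each fibre (as in the surjectivity step) forces $\vec{g}_x^{J}=\vec{g}_x^{J'}+n\vec{p}$ for every $x+B$, i.e.\ $J=J'+n\vec{p}$. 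The only real subtlety is the bookkeeping between $\tilde{f}_J$ (which depends on $J$ itself) and the normalized cut detector $f_J$ to which Theorem~\ref{cutbij} applies; this is exactly what produces the $\mathbb{Z}\vec{p}$-ambiguity in the fibres.
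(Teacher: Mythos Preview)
Your proof is correct and follows essentially the same route as the paper: both translate the problem into the language of cut detectors and invoke Theorem~\ref{cutbij}, with the normalization $f_J=\tilde{f}_J-\tilde{f}_J(0+B)$ coinciding with the paper's subtraction of $\pi(n\vec{p})$ for the unique $n$ with $n\vec{p}\in J$. Your surjectivity argument is more explicit than the paper's---you build $\vec{g}_x$ via a path lift and count cut arrows, whereas the paper simply sets $J:=\{\vec{g}\in G\mid\pi(\vec{g})=f(\phi(\vec{g}+\mathbb{Z}\vec{p}))\}$---but the underlying construction is the same.
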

\begin{proof}
For $J\in\J$, let $C(J)\subseteq Q_1$ be a subset consisting of arrows which do not appear in the Cayley quiver of $J$. More precisely, we can describe $C(J)$ in terms of cut detectors as follows (see Theorem \ref{cutbij}). There exists a unique $n\in\mathbb{Z}$ with $n\vec{p}\in J$. Define a map $f_J\colon L/B\to\mathbb{Z}$ in the following way. For $x\in L$, take $\vec{g}\in J$ with $\phi(\vec{g}+\mathbb{Z}\vec{p})=x+B$. Then put $f_J(x+B):=\pi(\vec{g}-n\vec{p})=\pi(\vec{g})-nm$. Then we can check that $f_J$ is a cut detector of type $\gamma$ and put $C(J):=C_{f_J}$. By our definition, for $J,J'\in\J$, $f_J=f_{J'}$ holds if and only if $J=J'+n\vec{p}$ holds for some $n\in\mathbb{Z}$.

We prove the surjectivity of $C(-)$. We use Theorem \ref{cutbij}. Take a cut detector $f\colon L/B\to\mathbb{Z}$. Put $J:=\{\vec{g}\in G\mid\pi(\vec{g})=f(\phi(\vec{g}+\mathbb{Z}\vec{p}))\}\subseteq G$. Then we have $J\in\J$ and $f_J=f$.
\end{proof}

Now we can prove Theorem \ref{chartype}.

\begin{proof}[Proof of Theorem \ref{chartype}]
By Proposition \ref{cutJcorr}, it is enough to show $\J\neq\emptyset$. For example, if we put $J:=\{\vec{g}\in G\mid0\leq\pi(\vec{g})<m\}\subseteq G$, then  we have $J\in\J$.
\end{proof}

Finally, to state cut-upper set correspondence, we focus on the case of $\gamma\in\mathbb{Z}^{d+1}_{>0}$. In this case, our $G$ and $\vec{x}_i\in G$ satisfy the conditions (G1), (G2) and (G3).

\begin{Thm}\label{cutupcorr}(Cut-upper set correspondence)
Assume $\gamma\in\mathbb{Z}^{d+1}_{>0}$. Then we have a surjective map
\[C(-)\colon\I_G\to\{\text{Cuts of $Q$ of type }\gamma\}.\]
For $I,I'\in\I_G$, $C(I)=C(I')$ holds if and only if $I=I'+n\vec{p}$ holds for some $n\in\mathbb{Z}$.
\end{Thm}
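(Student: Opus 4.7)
The plan is to reduce Theorem \ref{cutupcorr} to Proposition \ref{cutJcorr} by combining the bijection $\mathcal{I}_G\cong\mathcal{J}_G$ from Theorem \ref{upJX} with the identification $\mathcal{J}_G=\mathcal{J}$ coming from Proposition \ref{GJX}. The key hypothesis $\gamma\in\mathbb{Z}^{d+1}_{>0}$ enters precisely to put us into the regime where those combinatorial results are available.

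First I would check that $(G,\vec{x_0},\dots,\vec{x_d})$ satisfies axioms (G1), (G2), (G3). Indeed (G1) is $\pi(\vec{x_i})=\gamma_i>0$, (G2) is immediate from $G=\Im\Phi$, and for (G3), if $\sum a_i\vec{x_i}=-\sum b_i\vec{x_i}$ with $a_i,b_i\geq0$, applying $\pi$ yields $\sum(a_i+b_i)\gamma_i=0$, which forces $a_i=b_i=0$ because $\gamma_i>0$. Next, I would verify that the translation action $\vec{g}\mapsto\vec{g}+\vec{p}$ satisfies (A1)--(A3): (A1) holds since $\pi(\vec{p})=m>0$, (A2) is translation invariance of the order, and for (A3) any $\vec{k}\in G$ may be written as $\sum a_i\vec{x_i}$ by (G2), after which $n\vec{p}-\vec{k}=\sum(n-a_i)\vec{x_i}\in G_{\geq0}$ once $n\geq\max_ia_i$.

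With these axioms in hand, Proposition \ref{GJX} identifies $\mathcal{J}_G$ with the set $\mathcal{J}$ defined just before Proposition \ref{cutJcorr}. Theorem \ref{upJX} then gives a bijection $J(-)\colon\mathcal{I}_G\xrightarrow{\sim}\mathcal{J}_G=\mathcal{J}$ via $I\mapsto J(I):=I\cap(I^c+\vec{p})$. Composing with the surjection $C(-)\colon\mathcal{J}\twoheadrightarrow\{\text{cuts of }Q\text{ of type }\gamma\}$ from Proposition \ref{cutJcorr} yields the surjection $C(-)\colon\mathcal{I}_G\twoheadrightarrow\{\text{cuts of }Q\text{ of type }\gamma\}$ of the statement.

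For the equivalence relation, the remaining task is the compatibility $J(I+n\vec{p})=J(I)+n\vec{p}$, which is a direct set-theoretic check: $(I+n\vec{p})\cap((I+n\vec{p})^c+\vec{p})=(I\cap(I^c+\vec{p}))+n\vec{p}$. Combining this with the bijectivity of $J(-)$ and the description of the fibers of $C(-)$ on $\mathcal{J}$ from Proposition \ref{cutJcorr}, one obtains the chain $C(I)=C(I')\Leftrightarrow J(I)=J(I')+n\vec{p}=J(I'+n\vec{p})\Leftrightarrow I=I'+n\vec{p}$ for some $n\in\mathbb{Z}$. Since the real combinatorial work has already been absorbed into Proposition \ref{cutJcorr}, there is no serious obstacle at this stage; the only point that requires care is to confirm that the hypothesis $\gamma\in\mathbb{Z}^{d+1}_{>0}$ is indeed what makes Proposition \ref{GJX} (and hence the use of Theorem \ref{upJX}) applicable.
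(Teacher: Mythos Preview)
Your proposal is correct and follows essentially the same route as the paper: invoke Proposition \ref{GJX} to identify $\mathcal{J}=\mathcal{J}_G$, compose the bijection $\mathcal{I}_G\xrightarrow{\sim}\mathcal{J}_G$ of Theorem \ref{upJX} with the surjection of Proposition \ref{cutJcorr}, and read off the fibers. The paper's proof is a two-line citation of exactly these three results; your version simply unpacks the verifications of (G1)--(G3), (A1)--(A3), and the translation compatibility $J(I+n\vec{p})=J(I)+n\vec{p}$ that the paper leaves implicit.
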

\begin{proof}
By Proposition \ref{GJX}, we have $\J=\J_G$. Thus the assertion follows from Theorem \ref{upJX} and Proposition \ref{cutJcorr}.
\end{proof}

\subsection{Starting from $G$}\label{startG}

Let $G$ be a finitely generated abelian group of rank one. Assume we are given elements $\vec{x}_0,\cdots,\vec{x}_d\in G$ satisfying (G1), (G2), and (G3). In this subsection, we construct a cofinite subgroup $B\subseteq L$ and a type of a cut from $G$.

Put $\vec{p}:=\sum_{i=0}^d\vec{x}_i\in G$. Then we can define a surjective group homomorphism $L\to G/\mathbb{Z}\vec{p}$ sending $\alpha_i$ to $\vec{x}_i+\mathbb{Z}\vec{p}$. Let $B\subseteq L$ be the kernel of this homomorphism. Put $m:=\#(G/\mathbb{Z}\vec{p})=\#(L/B)$. Let $\pi':G\to G/G_{\rm tors}\cong\mathbb{Z}$ and put $m':=\pi'(\vec{p})$. Then since a surjective group homomorphism $G/\mathbb{Z}\vec{p}\to\mathbb{Z}/m'\mathbb{Z}$ is induced, we have $m\in m'\mathbb{Z}$. Define $\pi:=\frac{m}{m'}\pi'\colon G\to\mathbb{Z}$. If we put $\gamma_i:=\pi(\vec{x}_i)$, then we can check that our $\gamma=(\gamma_i)_{i=0}^d\in\mathbb{Z}^{d+1}_{>0}$ satisfies the conditions in Theorem \ref{chartype}.

Define a group homomorphism $\Phi\colon\mathbb{Z}^{d+1}\to\mathbb{Z}\oplus L/B$ and $\Phi'\colon\mathbb{Z}^{d+1}\to G$ as
\[\Phi(e_i):=(\gamma_i,\alpha_i+B),\Phi'(e_i)=\vec{x}_i.\]
For $v=(m_i)_{i=0}^d\in\mathbb{Z}^{d+1}$, $\Phi(v)=0$ if and only if $\sum_{i=0}^dm_i\gamma_i=0$ and $\sum_{i=0}^dm_i\alpha_i\in B$ holds. $\sum_{i=0}^dm_i\gamma_i=0$ is equivalent to $\Phi'(v)\in G_{\rm tors}$. $\sum_{i=0}^dm_i\alpha_i\in B$ is equivalent to $\Phi'(v)\in\mathbb{Z}\vec{p}$ holds. Since $G_{\rm tors}\cap\mathbb{Z}\vec{p}=0$, we obtain $\Ker\Phi=\Ker\Phi'$. Thus we have an isomorphism $G\cong\Im\Phi=G(B,\gamma)$.

\section{Tilting theory for smooth toric Fano stacks of Picard number one}

In this section, we give a classification of tilting bundles consisting of line bundles on smooth toric Fano stacks of Picard number one. Moreover, we prove that they are $d$-tilting and their endomorphism algebras are $d$-representation infinite algebras of type $\widetilde A$. Furthermore, we can prove that all $d$-representation infinite algebras of type $\widetilde A$ arise in this way. Using this derived equivalence, we give a new combinatorial description of $d$-APR tilting mutations, $d$-preprojective components and $d$-preinjective components of $d$-representation infinite algebras of type $\widetilde A$.

\subsection{Classification of tilting bundles consisting of line bundles}

Let $N$ be a free abelian group of rank $d$ and $P$ a lattice $d$-simplex in $N_\mathbb{R}$ containing the origin as an interior point with vertices $\{v_i\}_{i=0}^d$. Then the resulting abelian group $G$ has rank one and $\vec{x}_0,\cdots,\vec{x}_d\in G$ satisfy (G1), (G2) and (G3). Conversely, let $G$ be a finitely generated abelian group of rank one. Assume we are given elements $\vec{x}_0,\cdots,\vec{x}_d\in G$ satisfying (G1), (G2) and (G3). Then the resulting lattice points $v_0,\cdots, v_d\in N$ become the vertex set of their convex hull. In summary, giving a lattice $d$-simplex in $N_\mathbb{R}$ containing the origin as an interior point is equivalent to giving a finitely generated abelian group $G$ of rank one and elements $\vec{x}_0,\cdots,\vec{x}_d\in G$ satisfying (G1), (G2) and (G3). We put $\vec p:=\sum_{i=0}^d\vec x_i\in G$.

Combining with the arguments in Subsection \ref{startG}, we obtain the following bijections.

\begin{Thm}\label{3corr}
We have bijections between the following three sets.
\begin{enumerate}
\item $\{(B,\gamma)\mid B\subseteq L\colon\text{cofinite subgroup},\gamma\in\mathbb{Z}_{>0}^{d+1}\text{ satisfies the conditions in Theorem \ref{chartype}}\}$
\item $\{(G,(\vec{x}_i)_{i=0}^d)\mid G\colon\text{finitely generated abelian group of rank one}, \\
\vec{x}_0,\cdots\vec{x}_d\in G\text{ satisfy (G1), (G2), and (G3)}\}/\cong$

Here, we write $(G,(\vec{x}_i)_i)\cong(G',(\vec{x}_i')_i)$ if there exists a group isomorphism $G\cong G'$ sending each $\vec{x}_i$ to $\vec{x}_i'$.
\item $\{P\subseteq N_\mathbb{R}\colon\text{lattice $d$-simplex containing the origin as an interior point}\}/GL(N)$
\end{enumerate}
\end{Thm}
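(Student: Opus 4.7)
The plan is to construct two bijections and chain them: (3)$\leftrightarrow$(2) via Gale duality, and (2)$\leftrightarrow$(1) by reading off, respectively, the constructions of Subsection \ref{startG} and the ``Cut-upper set correspondence'' subsection. Essentially every map in each direction has already been built in the preceding subsections, so the theorem amounts to checking well-definedness on the three equivalence classes and verifying that the two compositions are the identity.

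For (3)$\leftrightarrow$(2), given a lattice $d$-simplex $P\subseteq N_\mathbb{R}$ with vertex set $\{v_i\}_{i=0}^d$, the homomorphism $\phi\colon\mathbb{Z}^{d+1}\to N$ sending $e_i$ to $v_i$ has finite cokernel (because $P$ contains $0$ as an interior point), and the dual exact sequence $0\to N^*\to(\mathbb{Z}^{d+1})^*\to G\to0$ supplies $G$ and the elements $\vec{x_i}$. That $(G,(\vec{x_i}))$ satisfies (G1)--(G3) is the content of the last part of the preliminaries; conversely, given such $(G,(\vec{x_i}))$, one recovers $N^*$ as the kernel of $(\mathbb{Z}^{d+1})^*\twoheadrightarrow G$ (which is free of rank $d$ since $G$ has rank one) and the $v_i$ as the images of $e_i$ under the dual map $\mathbb{Z}^{d+1}\to N=N^{**}$. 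Since the $v_i$ are in general position (there being exactly $d+1$ of them and $P$ being a simplex with $0$ an interior point), they are all vertices of their convex hull. An automorphism of $N$ acts on the data only through an induced isomorphism on the cokernel $G$ preserving each $\vec{x_i}$, so the map descends to the stated quotients and is bijective.

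For (2)$\leftrightarrow$(1), Subsection \ref{startG} constructs a map (2)$\to$(1): set $B:=\Ker(L\twoheadrightarrow G/\mathbb{Z}\vec{p},\ \alpha_i\mapsto\vec{x_i}+\mathbb{Z}\vec{p})$ and $\gamma_i:=\pi(\vec{x_i})$ where $\pi=(m/m')\pi'$ is the canonical projection described there; that subsection verifies that $\gamma\in\mathbb{Z}^{d+1}_{>0}$ satisfies the conditions of Theorem \ref{chartype}. The reverse map (1)$\to$(2) is given by $(B,\gamma)\mapsto(G(B,\gamma),(\Phi(e_i)))$ via the construction preceding Proposition \ref{cutJcorr}; here (G1) is immediate from $\gamma_i>0$, (G2) holds by construction, and (G3) follows because $\pi\colon G\hookrightarrow\mathbb{Z}\oplus L/B\twoheadrightarrow\mathbb{Z}$ sends $G_{\geq0}$ into $\mathbb{Z}_{\geq0}$ with kernel $0$ on $G_{\geq0}$ (as $\sum a_i\gamma_i=0$ with $a_i\geq0$ and $\gamma_i>0$ forces all $a_i=0$). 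The main obstacle, and the only genuine computation, is verifying that these two maps are mutually inverse. One direction is already in Subsection \ref{startG}: the equality $\Ker\Phi=\Ker\Phi'$ proved there produces an isomorphism $G\cong G(B,\gamma)$ carrying $\vec{x_i}\mapsto\Phi(e_i)$. For the other composition, starting from $(B,\gamma)$ and passing through $G=G(B,\gamma)$, the identification $\phi\colon G/\mathbb{Z}\vec{p}\xrightarrow{\sim}L/B$ of the lemma preceding Proposition \ref{cutJcorr} shows that the recovered subgroup of $L$ is precisely $B$; meanwhile $\vec{p}=(m,0)$ gives $m'=m$ and the recovered $\pi$ agrees with the second projection onto $\mathbb{Z}$, so $\pi(\vec{x_i})=\gamma_i$. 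This closes the loop and completes the proof.
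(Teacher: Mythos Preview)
Your proposal is correct and follows exactly the approach the paper intends: the paper gives no explicit proof beyond the sentence ``Combining with the arguments in Subsection~\ref{startG}, we obtain the following bijections,'' together with the paragraph immediately preceding the theorem that sets up the Gale duality correspondence $(3)\leftrightarrow(2)$; you have simply fleshed out these references into an actual argument.

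One small slip: in the step ``$\vec{p}=(m,0)$ gives $m'=m$,'' this is not quite right in general, since $G(B,\gamma)/G(B,\gamma)_{\rm tors}$ identifies with $\gcd(\gamma_i)\mathbb{Z}\subseteq\mathbb{Z}$ rather than all of $\mathbb{Z}$, so $m'=m/\gcd(\gamma_i)$. However, the rescaling factor $m/m'=\gcd(\gamma_i)$ then exactly cancels, so the recovered $\pi$ is still the first projection $G(B,\gamma)\hookrightarrow\mathbb{Z}\oplus L/B\to\mathbb{Z}$ and your conclusion $\pi(\vec{x_i})=\gamma_i$ stands. This does not affect the validity of the proof.
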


Let $\X:=\X(P)$ be the smooth toric Fano stack corresponding to the polytope $P$. Then we have $\Pic\X\cong G$. First, we compute the cohomology groups of line bundles on $\X$ in terms of the $G$-graded $k$-algebra $S=k[x_0,\cdots,x_d]$.

\begin{Lem}\label{calcohPic1}
For $\vec g\in G$ and $i\ge0$, we have
\[
H^i(\X,\O_{\X}(\vec g))\cong
\begin{cases}
  S_{\vec g} & i=0, \\
  DS_{-\vec p-\vec g} & i=d, \\
  0 & i\ne0,d.
  \end{cases}
\]
\end{Lem}
\begin{proof}
We use Proposition \ref{calcoh}. Take $c=(c_i)_{i=0}^d\in\mathbb Z^{d+1}$. Recall that the topological space $X_c$ is defined as a union of proper faces of $P$. If $c\in\mathbb Z_{\ge0}^{d+1}$ holds, then we have $X_c=\partial P$, which is homeomorphic to the $d-1$ dimensional sphere $S^{d-1}$. If not, then $X_c$ itself is a simplex. In addition, $X_c=\emptyset$ holds exactly when $c\in\mathbb Z_{<0}^{d+1}$. This together with Proposition \ref{calcoh} prove the assertion.
\end{proof}

\begin{Rem}
Lemma \ref{calcohPic1} can also be shown using $Coh\X\simeq\mod^G\!S/\fl^G\!S$ and $(-p)[d]$ is a Serre functor of $\Coh\X$.
\end{Rem}

As an immediate corollary, among vector bundles consisting of line bundles, we can determine pretilting one.

\begin{Cor}\label{pretiltPic1}
For a finite subset $J\subseteq G$, the following conditions are equivalent.
\begin{enumerate}
\item $\E(J):=\bigoplus_{\vec{g}\in J}\O_\X(\vec{g})\in\D^b(\Coh\X)$ is a pretilting bundle.
\item $J\in\widetilde J_G$
\end{enumerate}
\end{Cor}
\begin{proof}
By Lemma \ref{calcohPic1}, $\E(J)$ is a pretilting bundle if and only if there is no $\vec g,\vec h\in J$ with $\vec g-\vec h\le-\vec p$.
\end{proof}

With these preparations, we give a classification of tilting bundles consisting of line bundles on smooth toric Fano stacks of Picard number one.

\begin{Thm}\label{classfitiltrk1}
Let $P\subseteq N_\mathbb{R}$ be a lattice $d$-simplex containing the origin as an interior point and $\X:=\X(P)$. Then for a subset $J\subseteq G\cong\Pic\X$, the following conditions are equivalent.
\begin{enumerate}
\item $\E(J):=\bigoplus_{\vec{g}\in J}\O_\X(\vec{g})\in\D^b(\Coh\X)$ is a tilting bundle.
\item $J\in\J_G$
\end{enumerate}
In particular, by Theorem \ref{upJX}, tilting bundles on $\X$ consisting of line bundles correspond bijectively to non-trivial upper sets in $G$.
\end{Thm}
\begin{proof}[Proof of Theorem \ref{classfitiltrk1}]
(2)$\Rightarrow$(1) By Corollary \ref{pretiltPic1}, we have only to check $\D^b(\Coh\X)=\thick\E(J)$. Since $\Coh\X$ is obtained as a Serre quotient of $\mod^G\!S$, every object in $\Coh\X$ can be lifted to some $M\in\mod^G\!S$. Since $\pd^G\!M\le\gl^G\!S=d+1<\infty$ holds, $M$ can be resolved by a bounded complex of $\proj^G\!S$. Since $S(\vec g)\in\mod^G\!S$ is sent to $\O_{\X}(\vec g)\in\Coh\X$, this implies $\D^b(\Coh\X)=\thick\{\O_{\X}(\vec g)\mid\vec g\in G\}$. Thus it is enough to show $\O_{\X}(\vec g)\in\thick\E(J)$ for every $\vec g\in G$. Observe that we have a Koszul complex
\[0\to\O_{\X}(-\vec p)\to\bigoplus_{i=0}^d\O_{\X}(-\vec p+\vec x_i)\to\cdots\to\bigoplus_{i=0}^d\O_{\X}(-\vec x_i)\to\O_{\X}\to0.\]
This exact sequence shows that we have 
\[\O_{\X}\in\thick\{\O_{\X}(-\vec h)\mid0<\vec h\le\vec p\}\quad\text{and}\quad\O_{\X}(-\vec p)\in\thick\{\O_{\X}(-\vec h)\mid0\le\vec h<\vec p\}.\]
Take $I\in\I_G$ with $J=J(I)$ (see Theorem \ref{upJX}) a minimal element $\vec g\in I$. Since any $\vec h\in G$ with $\vec g\le\vec h<\vec g+\vec p$ satisfies $\vec h\in J$, we have
\[\O_{\X}(\vec g+\vec p)\in\thick\{\O_{\X}(\vec h)\mid\vec g\le\vec h<\vec g+\vec p\}\subseteq\thick\E(J).\]
This with the dual argument shows
\[\thick\E(J(\mu_{\vec g}^-(I)))=\thick(\E(J)).\]
Therefore by \cite[1.10]{Tom25d}, we obtain the assertion.

(1)$\Rightarrow$(2) By Corollary \ref{pretiltPic1}, we have $J\in\widetilde\J_G$. Take $J\subseteq J'\in\J_G$ (see \cite[1.4]{Tom25d}). By (2)$\Rightarrow$(1), $\E(J')$ is a tilting bundle. Since $\E(J)$ is a direct summand of $\E(J')$, we obtain $J=J'$.
\end{proof}

\begin{Rem}
The implication (2)$\Rightarrow$(1) in Theorem \ref{classfitiltrk1} is a commutative toric instance of the Beilinson-type theorem proved in Appendix \ref{appenBei}. Indeed, Theorem \ref{GBei} shows that, for a $G_{\ge0}$-graded dg ring with generalized Gorenstein parameter $p$, the window $J(I)=I\cap(I^c+p)$ gives a generator of the corresponding graded cluster category. Theorem \ref{MMcorr} further upgrades this statement, under Calabi--Yau assumptions, to a $G$-graded Minamoto--Mori correspondence. Thus the upper-set classification in Theorem \ref{classfitiltrk1} should be regarded as the toric rank-one realization of a more general non-commutative Beilinson phenomenon.
\end{Rem}

Next, we see that the endomorphism algebras of tilting bundles obtained in Theorem \ref{classfitiltrk1} are $d$-representation infinite algebras of type $\widetilde A$.

\begin{Thm}\label{enddtiltrk1}
Let $P\subseteq N_\mathbb{R}$ be a lattice $d$-simplex containing the origin as an interior point and $\X:=\X(P)$. Let $B\subseteq L$ be a cofinite subgroup corresponding to $P$. Then for $J\in\J_G$, we have
\[\End_\X(\E(J))\cong A(B,C(J)).\]
In particular, the following statements hold.
\begin{enumerate}
\item The endomorphism algebra of a tilting bundle consisting of line bundles on a $d$-dimensional smooth toric Fano stack of Picard number one is a $d$-representation infinite algebra of type $\widetilde A$.
\item Conversely, every $d$-representation infinite algebra of type $\widetilde A$ can be obtained in this way.
\end{enumerate}
\end{Thm}
\begin{proof}
Since $\End_\X(\E(J))\cong\End_S^G(\bigoplus_{\vec{g}\in J}S(\vec{g}))$, the quiver of $\End_\X(\E(J))$ has $J$ as a vertex set and $\bigsqcup_{i=0}^d\{\vec{g}\to\vec{g}+\vec{x}_i\mid\vec{g},\vec{g}+\vec{x}_i\in J\}$ as an arrow set. The relation is generated by the commutative relations. Thus we obtain $\End_\X(\E(J))\cong A(B,C(J))$. The last statements follow by Theorem \ref{3corr}.
\end{proof}

This theorem shows that the smooth toric Fano stacks of Picard number one give geometric models of the higher representation infinite algebras of type $\widetilde A$. Moreover, together with Theorem \ref{MMcorr}(3) (or Proposition \ref{SerreRI} and Theorem \ref{MMcorr}(2)), this theorem gives an alternative proof that our $A(B,C)$ defined by quiver with relation is certainly $d$-representation infinite.

As an immediate corollary, we obtain the following. We strengthen this corollary in Theorem \ref{dAPRconnAtilde}.

\begin{Cor}
Let $B\subseteq L$ be a cofinite subgroup and $\gamma\in\mathbb{Z}^{d+1}_{>0}$ a vector satisfying the conditions in Theorem \ref{chartype}. Take two cuts $C_1,C_2\subseteq Q_1$ of common type $\gamma$. Then the two algebras $A(B,C_1)$ and $A(B,C_2)$ are derived equivalent.
\end{Cor}

Using the derived equivalence $\D^b(\Coh\X)\simeq\per A$ obtained by Theorem \ref{classfitiltrk1}, we can give a description of the $d$-preprojective component and the $d$-preinjective component $\P,\I\subseteq\mod A$. Remark that we have the following commutative diagram obtained by the uniqueness of the Serre functor.
\[\xymatrix{
\D^b(\Coh\X) \ar[r]_\simeq \ar[d]_{(\vec{p})} & \per A \ar[d]^{\nu_d^{-1}}\\
\D^b(\Coh\X) \ar[r]_\simeq & \per A
}\]

\begin{Prop}
Take $I\in\I_G$. Put $A:=\End_\X(\E(J(I)))$ in the notation of Theorem \ref{classfitiltrk1}. Then the derived equivalence $\D^b(\Coh\X)\simeq\per A$ restricts to equivalences
\[\add\{\O_\X(\vec{g})\mid\vec{g}\in I\}\simeq\P\text{ and }\add\{\O_\X(\vec{g})\mid\vec{g}\in I^c\}\simeq\I[-d].\]
In particular, we obtain an equivalence
\[\add\{\O_\X(\vec{g})\mid\vec{g}\in G\}\simeq\I[-d]\vee\P.\]
\end{Prop}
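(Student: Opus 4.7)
The plan is to transport everything through the derived equivalence $\Phi\colon\D^b(\Coh\X)\xrightarrow{\simeq}\per A$ supplied by Theorem \ref{classfitiltrk1}, under which $\E(J(I))$ maps to $A$. The commutative diagram displayed just above the statement identifies $(\vec{p})$ on the geometric side with $\nu_d^{-1}$ on the algebraic side, so iterating gives
\[\nu_d^{m}(A)\ \longleftrightarrow\ \E(J(I))(-m\vec{p})\qquad\text{for every }m\in\mathbb{Z}.\]
Since $A=\bigoplus_{\vec{h}\in J(I)}P_{\vec{h}}$ decomposes into indecomposable projectives indexed by $J(I)$, the right-hand side splits into line-bundle summands of the form $\O_\X(\vec{h}-m\vec{p})$ with $\vec{h}\in J(I)$.

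The preprojective side is now immediate: $\P=\add\{\nu_d^{-n}A\mid n\geq 0\}$ corresponds via $\Phi$ to $\add\{\O_\X(\vec{g})\mid\vec{g}\in\bigcup_{n\geq 0}(J(I)+n\vec{p})\}$. For the preinjective side, I would use the Serre-functor formula $\mathbb{S}_A=-\otimes_A^L DA=\nu_d[d]$ on $\per A$, which yields $\nu_d(A)=DA[-d]$ and hence $\nu_d^n(DA)=\nu_d^{n+1}(A)[d]$. This rewrites
\[\I[-d]=\add\{\nu_d^m(A)\mid m\geq 1\}\ \longleftrightarrow\ \add\{\O_\X(\vec{g})\mid\vec{g}\in\textstyle\bigcup_{m\geq 1}(J(I)-m\vec{p})\}.\]

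It then remains to establish the combinatorial partitions
\[I=\bigsqcup_{n\geq 0}\bigl(J(I)+n\vec{p}\bigr)\qquad\text{and}\qquad I^c=\bigsqcup_{m\geq 1}\bigl(J(I)-m\vec{p}\bigr).\]
Because $J(I)\in\J_G$, Proposition \ref{GJX} tells us $J(I)$ is a complete set of representatives of $G/\mathbb{Z}\vec{p}$, so each $\vec{g}\in G$ has a unique expression $\vec{g}=\vec{h}+n\vec{p}$ with $\vec{h}\in J(I)$ and $n\in\mathbb{Z}$. If $n\geq 0$ then $\vec{g}\geq\vec{h}\in I$ forces $\vec{g}\in I$; conversely, if $\vec{g}\in I$ but $n\leq -1$, then $\vec{h}-\vec{p}=\vec{g}+(-n-1)\vec{p}\geq\vec{g}\in I$ contradicts $\vec{h}\in I^c+\vec{p}$, a condition built into $J(I)=I\cap(I^c+\vec{p})$. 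Hence the sign of $n$ cleanly separates $I$ from $I^c$, giving the two partitions. Combining with the previous paragraph yields the first two claimed equivalences, and the ``\textit{In particular}'' assertion follows by taking the join since $G=I\sqcup I^c$.

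The only non-formal step is the dictionary $\nu_d(A)=DA[-d]$, which amounts to matching the two Serre functors $(-\vec{p})[d]$ and $-\otimes^L_A DA$ across $\Phi$; modulo this standard identification, the heart of the proof is the clean $\mathbb{Z}\vec{p}$-orbit decomposition of $G$ through the fundamental domain $J(I)$, which is essentially just Theorem \ref{upJX} combined with Proposition \ref{GJX}.
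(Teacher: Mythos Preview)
Your proof is correct and follows the same approach as the paper, which simply says ``the assertion follows from the above commutative diagram''; you have explicitly unpacked what that entails, including the $\mathbb{Z}\vec{p}$-orbit partition $I=\bigsqcup_{n\geq0}(J(I)+n\vec{p})$ and $I^c=\bigsqcup_{m\geq1}(J(I)-m\vec{p})$ that the paper leaves implicit. The identification $\nu_d(A)\cong DA[-d]$ you flag as the only non-formal step is indeed just the definition of $\nu_d$ and the fact that the Serre functor transports across the equivalence, so nothing further is needed.
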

\begin{proof}
The assertion follows from the above commutative diagram.
\end{proof}

Next, we investigate the $d$-APR tilts \cite{IO11} of $d$-representation infinite algebras of type $\widetilde A$ through their geometric models. First, we give a proof of the following folklore statement: the endomorphism algebra of a $d$-APR tilting module of a $d$-representation infinite algebra of type $\widetilde A$ is again a $d$-representation infinite algebra of type $\widetilde A$ with the same $B$ and $\gamma$.

\begin{Thm}\label{dAPRAtilde}
Let $B\subseteq L$ be a cofinite subgroup and $\gamma\in\mathbb{Z}^{d+1}_{>0}$ a vector satisfying the conditions in Theorem \ref{chartype}. Take $I\in\I_G$ and put $A:=A(B,C(I))$. Take a minimal element $\vec{m}\in I$ and let $T:=\nu_d^{-1}(e_{\vec{m}}A)\oplus\bigoplus_{\vec{g}\in J(I)\setminus\{\vec{m}\}}e_{\vec{g}}A\in\mod A$ be the $d$-APR tilting module with respect to $e_{\vec{m}}A$. Then we have
\[\End_A(T)\cong A(B,C(\mu^-_{\vec{m}}(I))).\]
\end{Thm}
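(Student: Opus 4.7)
The plan is to transfer the entire statement to the geometric side via the derived equivalence of Theorem \ref{classfitiltrk1}, where the $d$-APR tilting mutation becomes a line bundle twist and the combinatorics reduces to a direct comparison of the sets $J(I)$ and $J(\mu_{\vec{m}}^-(I))$.

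First, I would fix the smooth toric Fano DM stack $\X$ of Picard number one attached to $(G,(\vec{x_i})_i)$ via Theorem \ref{3corr}. By Theorem \ref{classfitiltrk1}, $\E(J(I)) = \bigoplus_{\vec{g}\in J(I)} \O_\X(\vec{g})$ is a tilting bundle on $\X$ with endomorphism algebra $A \cong A(B,C(I))$, inducing a triangle equivalence $\Phi\colon \D^b(\Coh\X)\xrightarrow{\simeq}\per A$ that sends the line bundle $\O_\X(\vec{g})$ to the indecomposable projective $e_{\vec{g}}A$ for each $\vec{g}\in J(I)$. By the commutative diagram preceding Proposition $4.\!\cdot$, the autoequivalence $(\vec{p}) \curvearrowright \D^b(\Coh\X)$ corresponds under $\Phi$ to $\nu_d^{-1} \curvearrowright \per A$. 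In particular,
\[
\Phi(\O_\X(\vec{m}+\vec{p})) \;\cong\; \nu_d^{-1}(\Phi(\O_\X(\vec{m}))) \;\cong\; \nu_d^{-1}(e_{\vec{m}}A),
\]
so $\Phi^{-1}(T)\cong\bigoplus_{\vec{g}\in J'}\O_\X(\vec{g})=\E(J')$, where $J':=(J(I)\setminus\{\vec{m}\})\sqcup\{\vec{m}+\vec{p}\}$. The disjointness here uses that $J(I)$ is a complete representative of $G/\mathbb{Z}\vec{p}$ by Proposition \ref{GJX}, so $\vec{m}\in J(I)$ forces $\vec{m}+\vec{p}\notin J(I)$.

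Second, I would verify the combinatorial identity $J' = J(\mu_{\vec{m}}^-(I))$. Set $I':=\mu_{\vec{m}}^-(I)=I\setminus\{\vec{m}\}$. Minimality of $\vec{m}$ in $I$ shows that $I'$ is again an upper set (any $y\ge x\in I\setminus\{\vec{m}\}$ with $y=\vec{m}$ would force $x=\vec{m}$), and it is non-trivial because an upper set containing $\vec{m}$ also contains the infinite chain $\vec{m}+n\vec{p}$, $n\ge1$. From $I'^c = I^c\cup\{\vec{m}\}$ I compute
\[
J(I') \;=\; I'\cap(I'^c+\vec{p}) \;=\; \bigl((I\setminus\{\vec{m}\})\cap(I^c+\vec{p})\bigr)\;\cup\;\bigl((I\setminus\{\vec{m}\})\cap\{\vec{m}+\vec{p}\}\bigr).
\]
Since $\vec{m}$ is minimal in $I$ we have $\vec{m}-\vec{p}\notin I$, hence $\vec{m}\in J(I)$; removing $\vec{m}$ from $J(I)=I\cap(I^c+\vec{p})$ gives the first piece $J(I)\setminus\{\vec{m}\}$. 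For the second piece, $\vec{m}+\vec{p}\in I\setminus\{\vec{m}\}$ since $I$ is an upper set and $\vec{m}+\vec{p}>\vec{m}$. Combining gives exactly $J'$.

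Finally, applying Theorem \ref{classfitiltrk1} to the upper set $I'$, $\E(J(I'))=\E(J')$ is again a tilting bundle on $\X$ with endomorphism algebra $\End_\X(\E(J'))\cong A(B,C(\mu_{\vec{m}}^-(I)))$. Because $\Phi$ is a triangle equivalence and $\Phi^{-1}(T)\cong\E(J')$,
\[
\End_A(T) \;\cong\; \End_{\per A}(T) \;\cong\; \End_{\D^b(\Coh\X)}(\E(J')) \;\cong\; A(B,C(\mu_{\vec{m}}^-(I))).
\]
The main obstacle is really conceptual rather than computational: I must be certain that the module-level $d$-APR tilting object of \cite{IO11} is correctly identified on the geometric side as the line-bundle tilting object $\E(J')$, which is exactly what the identification $(\vec{p})\leftrightarrow\nu_d^{-1}$ and the description of $T$ in terms of $\nu_d^{-1}(e_{\vec{m}}A)$ provide; everything else (upper set vs.\ $J$-set combinatorics, invariance of $\End$ under equivalence) is formal.
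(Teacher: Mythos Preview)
Your proof is correct and follows essentially the same approach as the paper's: transfer to the geometric side via Theorem \ref{classfitiltrk1}, use the commutative diagram identifying $(\vec{p})$ with $\nu_d^{-1}$ to recognize $T$ as $\E\bigl((J(I)\setminus\{\vec{m}\})\sqcup\{\vec{m}+\vec{p}\}\bigr)$, and conclude by reapplying Theorem \ref{classfitiltrk1}. Your write-up is in fact more detailed than the paper's two-line proof, since you spell out the combinatorial identity $J(\mu_{\vec{m}}^-(I))=(J(I)\setminus\{\vec{m}\})\sqcup\{\vec{m}+\vec{p}\}$ that the paper leaves to the reader.
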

\begin{proof}
If we consider the smooth toric Fano stack $\X$ constructed from $G$, we have
\[\End_A(T)\cong\End_\X\bigl(\E(J(I)\sqcup\{\vec{m}+\vec{p}\}\setminus\{\vec{m}\})\bigr)\]
by the above commutative diagram. Thus the assertion follows from Theorem \ref{classfitiltrk1}.
\end{proof}

We emphasize that Theorem \ref{dAPRAtilde} is difficult to prove without using these geometric models. Thanks to Theorem \ref{dAPRAtilde}, we can prove that all $d$-representation infinite algebras of type $\widetilde A$ having same $B$ and $\gamma$ are connected by a finite sequence of iterated $d$-APR tilts. Although a related statement appears in \cite[5.2, 5.12]{DG}, our proof below is independent and uses combinatorics of upper sets.

\begin{Thm}\label{dAPRconnAtilde}
Let $B\subseteq L$ be a cofinite subgroup and $\gamma\in\mathbb{Z}^{d+1}_{>0}$ a vector satisfying the conditions in Theorem \ref{chartype}. Take two cuts $C_1,C_2\subseteq Q_1$ of common type $\gamma\in\mathbb{Z}_{>0}^{d+1}$. Then the two algebras $A(B,C_1)$ and $A(B,C_2)$ are connected by a finite sequence of iterated $d$-APR tilts.
\end{Thm}
\begin{proof}
By Theorem \ref{cutupcorr}, we can take $I_1,I_2\in\I_{G(B,\gamma)}$ such that $C_i=C(I_i)$ holds for $i=1,2$. Then by \cite[1.9]{Tom25d}, $I_1$ and $I_2$ can be connected by a finite sequence of mutations by considering $I_1\cap I_2$. Thus the assertion follows from Theorem \ref{dAPRAtilde}.
\end{proof}

\subsection{Examples}

We see several examples. First, as the simplest case, we see that we can obtain a classification of tilting bundles consisting of line bundles on the projective space $\mathbb{P}^d$. 

\begin{Ex}\label{ExBei}
Put $G:=\mathbb{Z}$ and $\vec{x}_0=\cdots=\vec{x}_d=1\in G$. Then the resulting toric stack $\X$ is isomorphic to the projective space $\mathbb{P}^d$. If we equip $G$ with our partial order, then the quiver of $G$ becomes the following.
\[\xymatrix{
\cdots \ar@/^1pc/[r]^{x_0}_{\scalebox{0.7}{\vdots}}\ar@/_1pc/[r]_{x_d} & \circ \ar@/^1pc/[r]^{x_0}_{\scalebox{0.7}{\vdots}}\ar@/_1pc/[r]_{x_d} & \circ \ar@/^1pc/[r]^{x_0}_{\scalebox{0.7}{\vdots}}\ar@/_1pc/[r]_{x_d} & \circ \ar@/^1pc/[r]^{x_0}_{\scalebox{0.7}{\vdots}}\ar@/_1pc/[r]_{x_d} & \circ \ar@/^1pc/[r]^{x_0}_{\scalebox{0.7}{\vdots}}\ar@/_1pc/[r]_{x_d} & \circ \ar@/^1pc/[r]^{x_0}_{\scalebox{0.7}{\vdots}}\ar@/_1pc/[r]_{x_d} & \cdots
}\]
Then there is the following just one kind of non-trivial upper set in $G$ up to translations.
\[\xymatrix{
\circ \ar@/^1pc/[r]^{x_0}_{\scalebox{0.7}{\vdots}}\ar@/_1pc/[r]_{x_d} & \circ \ar@/^1pc/[r]^{x_0}_{\scalebox{0.7}{\vdots}}\ar@/_1pc/[r]_{x_d} & \circ \ar@/^1pc/[r]^{x_0}_{\scalebox{0.7}{\vdots}}\ar@/_1pc/[r]_{x_d} & \circ \ar@/^1pc/[r]^{x_0}_{\scalebox{0.7}{\vdots}}\ar@/_1pc/[r]_{x_d} & \cdots
}\]
Remark that we have $\vec{p}=d+1$. Therefore there is the following just one kind of tilting bundle up to translations where there are $d+1$ vertices.
\[\xymatrix{
\circ \ar@/^1pc/[r]^{x_0}_{\scalebox{0.7}{\vdots}}\ar@/_1pc/[r]_{x_d} & \circ \ar@/^1pc/[r]^{x_0}_{\scalebox{0.7}{\vdots}}\ar@/_1pc/[r]_{x_d} & \circ \ar@/^1pc/[r]^{x_0}_{\scalebox{0.7}{\vdots}}\ar@/_1pc/[r]_{x_d} & \circ \ar@/^1pc/[r]^{x_0}_{\scalebox{0.7}{\vdots}}\ar@/_1pc/[r]_{x_d} & \cdots \ar@/^1pc/[r]^{x_0}_{\scalebox{0.7}{\vdots}}\ar@/_1pc/[r]_{x_d} & \circ
}\]
\end{Ex}

Next, we see that even when $d=1$, Theorem \ref{classfitiltrk1} gives us a new description of APR tilting mutations.
\begin{Ex}($d=1$)
(1) Put $G:=\mathbb{Z}$ and $\vec{x}=2, \vec{y}=3\in G$. Then the resulting toric stack $\X$ is isomorphic to the weighted projective stack $\mathbb{P}(2,3)$. If we equip $G$ with our partial order, then the quiver of $G$ becomes the following.
\[\xymatrix{
\cdots \ar@/^18pt/[rr]^x \ar@/^-18pt/[rrr]_y & \circ \ar@/^18pt/[rr]^x \ar@/^-18pt/[rrr]_y & \circ \ar@/^18pt/[rr]^x \ar@/^-18pt/[rrr]_y & \circ \ar@/^18pt/[rr]^x \ar@/^-18pt/[rrr]_y & \circ \ar@/^18pt/[rr]^x \ar@/^-18pt/[rrr]_y & \circ \ar@/^18pt/[rr]^x \ar@/^-18pt/[rrr]_y & \circ \ar@/^18pt/[rr]^x & \circ & \cdots
}\]
Then there are the following two kinds of non-trivial upper sets in $G$ up to translations.
\[\xymatrix{
\circ \ar@/^18pt/[rr]^x \ar@/^-18pt/[rrr]_y & \circ \ar@/^18pt/[rr]^x \ar@/^-18pt/[rrr]_y & \circ \ar@/^18pt/[rr]^x \ar@/^-18pt/[rrr]_y & \circ \ar@/^18pt/[rr]^x \ar@/^-18pt/[rrr]_y & \circ \ar@/^18pt/[rr]^x \ar@/^-18pt/[rrr]_y & \circ \ar@/^18pt/[rr]^x & \circ & \cdots
}\]
\[\xymatrix{
\circ \ar@/^18pt/[rr]^x \ar@/^-18pt/[rrr]_y & & \circ \ar@/^18pt/[rr]^x \ar@/^-18pt/[rrr]_y & \circ \ar@/^18pt/[rr]^x \ar@/^-18pt/[rrr]_y & \circ \ar@/^18pt/[rr]^x \ar@/^-18pt/[rrr]_y & \circ \ar@/^18pt/[rr]^x & \circ & \cdots
}\]
Remark that we have $\vec{p}=5$. Therefore there are the following two kinds of tilting bundles up to translations. Observe that by mutations of non-trivial upper sets in $G$, they are mutated to each other, which correspond to APR tilting mutations.
\[\xymatrix{
\circ \ar@/^18pt/[rr]^x \ar@/^-18pt/[rrr]_y & \circ \ar@/^18pt/[rr]^x \ar@/^-18pt/[rrr]_y & \circ \ar@/^18pt/[rr]^x & \circ & \circ
}\]
\[\xymatrix{
\circ \ar@/^18pt/[rr]^x \ar@/^-18pt/[rrr]_y & & \circ \ar@/^18pt/[rr]^x & \circ \ar@/^-18pt/[rrr]_y & \circ \ar@/^18pt/[rr]^x & & \circ
}\]

(2) Put $G:=\mathbb{Z}\oplus(\mathbb{Z}/2\mathbb{Z})$ and $\vec{x}=(1,0), \vec{y}=(1,1)\in G$. If we equip $G$ with our partial order, then the quiver of $G$ becomes the following.
\[\xymatrix{
\cdots \ar[r]^x \ar[dr]^y & \circ \ar[r]^x \ar[dr]^y & \circ \ar[r]^x \ar[dr]^y & \circ \ar[r]^x \ar[dr]^y & \circ \ar[r]^x \ar[dr]^y & \cdots \\
\cdots \ar[r]_x \ar[ur]^y & \circ \ar[r]_x \ar[ur]^y & \circ \ar[r]_x \ar[ur]^y & \circ \ar[r]_x \ar[ur]^y & \circ \ar[r]_x \ar[ur]^y & \cdots
}\]
Then there are the following two kinds of non-trivial upper sets in $G$ up to translations.
\[\begin{array}{c c}
\xymatrix{
\circ \ar[r]^x \ar[dr]^y & \circ \ar[r]^x \ar[dr]^y & \circ \ar[r]^x \ar[dr]^y & \cdots \\
\circ \ar[r]_x \ar[ur]^y & \circ \ar[r]_x \ar[ur]^y & \circ \ar[r]_x \ar[ur]^y & \cdots
}&\xymatrix{
 & \circ \ar[r]^x \ar[dr]^y & \circ \ar[r]^x \ar[dr]^y & \cdots\\
\circ \ar[r]_x \ar[ur]^y & \circ \ar[r]_x \ar[ur]^y & \circ \ar[r]_x \ar[ur]^y & \cdots
}\end{array}\]
Remark that we have $\vec{p}=(2,1)$. Therefore there are the following two kinds of tilting bundles up to translations. Observe that by mutations of non-trivial upper sets in $G$, they are mutated to each other, which correspond to APR tilting mutations.
\[\begin{array}{c c}
\xymatrix{
\circ \ar[r]^x \ar[dr]^y & \circ \\
\circ \ar[r]_x \ar[ur]^y & \circ
}&\xymatrix{
 & \circ \ar[dr]^y \\
\circ \ar[r]_x \ar[ur]^y & \circ \ar[r]_x & \circ
}\end{array}\]
\end{Ex}

Finally, we see a $2$-dimensional example which cannot be obtained as a weighted projective space in the sense of \cite{HIMO}.

\begin{Ex}($d=2$)
Put $G:=\mathbb{Z}\oplus(\mathbb{Z}/2\mathbb{Z}), \vec{x}=\vec{y}=(1,0), \vec{z}=(1,1)\in G$. If we equip $G$ with our partial order, then the quiver of $G$ becomes the following.
\[\xymatrix{
\cdots \ar@2[r]^x_y \ar[dr]^z & \circ \ar@2[r]^x_y \ar[dr]^z & \circ \ar@2[r]^x_y \ar[dr]^z & \circ \ar@2[r]^x_y \ar[dr]^z & \circ \ar@2[r]^x_y \ar[dr]^z & \cdots \\
\cdots \ar@2[r]^x_y \ar[ur]^z & \circ \ar@2[r]^x_y \ar[ur]^z & \circ \ar@2[r]^x_y \ar[ur]^z & \circ \ar@2[r]^x_y \ar[ur]^z & \circ \ar@2[r]^x_y \ar[ur]^z & \cdots
}\]
Then there are the following two kinds of non-trivial upper sets in $G$ up to translations.
\[\begin{array}{c c}
\xymatrix{
\circ \ar@2[r]^x_y \ar[dr]^z & \circ \ar@2[r]^x_y \ar[dr]^z & \circ \ar@2[r]^x_y \ar[dr]^z & \circ \ar@2[r]^x_y \ar[dr]^z & \cdots \\
\circ \ar@2[r]^x_y \ar[ur]^z & \circ \ar@2[r]^x_y \ar[ur]^z & \circ \ar@2[r]^x_y \ar[ur]^z & \circ \ar@2[r]^x_y \ar[ur]^z & \cdots
}&\xymatrix{
 & \circ \ar@2[r]^x_y \ar[dr]^z & \circ \ar@2[r]^x_y \ar[dr]^z & \circ \ar@2[r]^x_y \ar[dr]^z & \cdots \\
\circ \ar@2[r]^x_y \ar[ur]^z & \circ \ar@2[r]^x_y \ar[ur]^z & \circ \ar@2[r]^x_y \ar[ur]^z & \circ \ar@2[r]^x_y \ar[ur]^z & \cdots
}\end{array}\]
Remark that we have $\vec{p}=(3,1)$. Therefore there are the following two kinds of tilting bundles up to translations. Observe that by mutations of non-trivial upper sets in $G$, they are mutated to each other, which correspond to $2$-APR tilting mutations.
\[\begin{array}{c c}
\xymatrix{
\circ \ar@2[r]^x_y \ar[dr]^z & \circ \ar@2[r]^x_y \ar[dr]^z & \circ \\
\circ \ar@2[r]^x_y \ar[ur]^z & \circ \ar@2[r]^x_y \ar[ur]^z & \circ
}&\xymatrix{
 & \circ \ar@2[r]^x_y \ar[dr]^z & \circ \ar[dr]^z \\
\circ \ar@2[r]^x_y \ar[ur]^z & \circ \ar@2[r]^x_y \ar[ur]^z & \circ \ar@2[r]^x_y & \circ
}\end{array}\]
\end{Ex}

\section{Higher representation infinite algebras of type $\widetilde{A}\widetilde{A}$}\label{doubleAtilde}

The aim of this section is to introduce higher representation infinite algebras of type $\widetilde{A}\widetilde{A}$ (Theorem \ref{AAdtame}, Definition \ref{defdrepinfAA}). For integers $l,l'\ge1$ and $d:=l+l'\ge2$, we construct a $d$-representation infinite algebra of type $\widetilde{A}\widetilde{A}$ from the following data.
\begin{enumerate}
\item A cofinite subgroup $B\subseteq L\oplus L'$, where $L$ and $L'$ are lattices of rank $l$ and $l'$ respectively.
\item A cut $C$ of a quiver $Q$.
\item A cut $C'$ of a quiver $Q(C)$.
\end{enumerate}

First, we give definitions and collect basic properties of these objects.

\subsection{Definition of $Q(C)$}

In \cite{Tom25d}, a quiver $Q(C)$ was introduced. This quiver is expected to arise as the dual quiver of a higher-dimensional generalization of dimer models. This quiver $Q(C)$, together with natural relations, gives a non-commutative crepant resolution of a Gorenstein toric singularity with divisor class group of rank one. In this subsection, we recall the definition and basic properties of $Q(C)$.

Let $l,l'\ge1$ be positive integers and put $d:=l+l'$. Let $e_i\in\mathbb{Z}^{l+1}$ be the $i$-th unit vector for $0\leq i\leq l$. Put $\alpha_i:=e_i-e_{i-1}$ for $1\leq i\leq l$ and $\alpha_0:=e_0-e_l$. Let $L:=\{v=(v_i)_{i=0}^l\in\mathbb{Z}^{l+1}\mid\sum_{i=0}^lv_i=0\}=\sum_{i=0}^l\mathbb{Z}\alpha_i\subseteq\mathbb{Z}^{l+1}$ be a $l$-dimensional lattice. Similarly, we define a $l'$-dimensional lattice $L'=\sum_{i'=0}^{l'}\mathbb{Z}\alpha'_{i'}\subseteq\mathbb{Z}^{l'+1}$. We define an infinite quiver $\widehat{Q}$ as
\[\widehat{Q}_0:=L\oplus L' \text{ and}\]
\[\widehat{Q}_1:=\bigsqcup_{i=0}^l\{x\to x+\alpha_i\mid x\in L\oplus L'\}\sqcup\bigsqcup_{i'=0}^{l'}\{x\to x+\alpha'_{i'}\mid x\in L\oplus L'\}.\]
We say that an arrow $x\to x+\alpha_i$ in $\widehat{Q}$ has {\it type} $\alpha_i$ and an arrow $x\to x+\alpha'_{i'}$ has {\it type} $\alpha'_{i'}$.

Let $H$ be a finitely generated abelian group of rank one. Put $\pi\colon H\to H/H_{\rm tors}\cong\mathbb{Z}$. Let $\overline{\vec{x}}_0,\cdots,\overline{\vec{x}}_l, \overline{\vec{x}}'_0,\cdots,\overline{\vec{x}}'_{l'}\in H$ be elements satisfying the following conditions.
\begin{enumerate}
\item[(H1)] $H=\sum_{i=0}^l\mathbb{Z}\overline{\vec{x}}_i+\sum_{i'=0}^{l'}\mathbb{Z}\overline{\vec{x}}'_{i'}$
\item[(H2)] $\sum_{i=0}^l\overline{\vec{x}}_i+\sum_{i'=0}^{l'}\overline{\vec{x}}'_{i'}=0$
\item[(H3)] $\pi(\overline{\vec{x}}_i)>0,\pi(\overline{\vec{x}}'_{i'})<0\ (0\le i\le l,0\le i'\le l')$
\end{enumerate}
These data are in bijection with $d$-dimensional simplicial lattice polytopes $P\subseteq N_\mathbb{R}$ with $d+2$ vertices.

We can define a partial order on $H$ as
\[h_1\geq h_2:\Leftrightarrow h_1-h_2\in\sum_{i=0}^l\mathbb{Z}_{\geq0}\overline{\vec{x}}_i+\sum_{i'=0}^{l'}\mathbb{Z}_{\geq0}(-\overline{\vec{x}}'_{i'})\]
for $h_1,h_2\in H$. Fix a non-trivial upper set $I\subseteq H$. Put $\vec{s}:=\sum_{i=0}^l\overline{\vec{x}}_i=-\sum_{i'=0}^{l'}\overline{\vec{x}}'_{i'}\in H$ and $J:=I\cap(I^c+\vec{s})\subseteq H$. Remark that we have a surjective group homomorphism
\[\Psi\colon L\oplus L'\to H/\mathbb{Z}\vec{s};\alpha_i\mapsto\overline{\vec{x}}_i+\mathbb{Z}\vec{s},\alpha'_{i'}\mapsto\overline{\vec{x}}'_{i'}+\mathbb{Z}\vec{s}.\]
By using $J$, we define a subset $\widehat C=\widehat C(J)\subseteq \widehat{Q}_1$ as
\[(x\to x+\alpha_i)\in\widehat{C}\Leftrightarrow\text{if we take }\vec{h}\in J\text{ with }\vec{h}+\mathbb{Z}\vec{s}=\Psi(x)\text{, then }\vec{h}+\overline{\vec{x}}_i\notin J\text{, and}\]
\[(x\to x+\alpha'_{i'})\in\widehat{C}\Leftrightarrow\text{if we take }\vec{h}\in J\text{ with }\vec{h}+\mathbb{Z}\vec{s}=\Psi(x)\text{, then }\vec{h}+\overline{\vec{x}}'_{i'}\notin J.\]
If we put $B:=\Ker\Psi\subseteq L\oplus L'$, then $\widehat{C}\subseteq \widehat{Q}_1$ is invariant under translation by $B$. This $\widehat{C}$ satisfies the following conditions. Thus $\widehat{C}\subseteq \widehat{Q}_1$ is a {\it cut} in the sense of \cite[6.1]{Tom25d}.

\begin{enumerate}
\item Every cycle of length $l+1$ in $\widehat{Q}$ consisting of arrows of type $\alpha_0,\cdots,\alpha_l$ has exactly one arrow in $\widehat{C}$.
\item Every cycle of length $l'+1$ in $\widehat{Q}$ consisting of arrows of type $\alpha'_0,\cdots,\alpha'_{l'}$ has exactly one arrow in $\widehat{C}$.
\item Every cycle of length $d+2$ in $\widehat{Q}$ consisting of arrows of distinct types has the same number of arrows in $\widehat{C}$ of type $\alpha$ and arrows in $\widehat{C}$ of type $\alpha'$.
\end{enumerate}

We define a new quiver $\widehat{Q}(\widehat{C})$ by deleting the arrows in $\widehat{C}$ from and adding new arrows to $\widehat{Q}$ as follows. For $0\leq i\leq l$ and $0\leq i'\leq l'$, let
\[V(\widehat{C})_{ii'}:=\{x\in L\oplus L'\mid(x\to x+\alpha_i),(x\to x+\alpha'_{i'})\in\widehat{C}\}\subseteq L\oplus L'\]
be a subset of $L\oplus L'$. Define $\widehat{Q}(\widehat{C})$ as
\[\widehat{Q}(\widehat{C})_0:=\widehat{Q}_0=L\oplus L'\text{ and}\]
\[\widehat{Q}(\widehat{C})_1:=(\widehat{Q}_1\setminus\widehat{C})\sqcup\bigsqcup_{\substack{0\le i\le l\\0\le i'\le l'}}\{x\to x+\alpha_i+\alpha'_{i'}\mid x\in V(\widehat C)_{ii'}\}.\]
Observe that $B$ acts on $\widehat{Q}(\widehat{C})$. Define a quiver $Q(C)$ as a quotient quiver $\widehat{Q}(\widehat{C})/B$, that is,
\[Q(C)_0:=\widehat{Q}(\widehat{C})_0/B=(L\oplus L')/B\cong H/\mathbb{Z}\vec{s}\text{ and}\]
\[Q(C)_1:=\widehat{Q}(\widehat{C})_1/B.\]
Here, we consider $C=C(J):=\widehat{C}/B$ as a cut of the quotient quiver $\widehat Q/B.$

We end this subsection with the following lemmas exhibiting standard and useful properties of the quivers $Q(C)$ and $\widehat Q(\widehat C)$.

\begin{Lem}\label{pathpropQC}
Let $h,h'\in J$ and let $a\in F_{\geq0}$ satisfy
\[
  \deg_H(a)=h'-h.
\]
Then there exists a path $\gamma:h\to h'$ in $Q(C)$ such that
$\lambda(\gamma)=a$.
\end{Lem}
\begin{proof}
This is shown in the proof of \cite[6.14]{Tom25d}.
\end{proof}

\begin{Lem}\label{connQC}
For any $x_1, x_2\in L\oplus L'$, there exists a path $\gamma\colon x_1\to x_2$ in $\widehat Q(\widehat C)$.
\end{Lem}
\begin{proof}
We may assume $x_2=x_1+\alpha_0$. If $\alpha_0\colon x_1\to x_2\notin\widehat C$, then there is nothing to prove. Assume $\alpha_0\colon x_1\to x_2\in\widehat C$. Then there exists a path $\alpha_l\cdots\alpha_1\colon x_2\to x_1$ in $\widehat Q(\widehat C)$. By Lemma \ref{pathpropQC}, there exists a path $\gamma$ in $\widehat Q(\widehat C)$ with $s(\gamma)=x_1$ such that the concatenation $\gamma\alpha_l\cdots\alpha_1$ is an elementary cycle. Then we have $t(\gamma)=x_2$.
\end{proof}

\subsection{Cuts of $Q(C)$}

As in Subsection \ref{polytope}, put
\[
  F:=\bigoplus_{i=0}^l\mathbb{Z}e_i\oplus\bigoplus_{i'=0}^{l'}\mathbb{Z}e'_{i'},
  \qquad
  \mathbf{p}:=\sum_{i=0}^le_i+\sum_{i'=0}^{l'}e'_{i'}\in F.
\]
We write
\[
  \deg_H\colon F\longrightarrow H
\]
for the homomorphism defined by $e_i\mapsto\overline{\vec{x}}_i$ and
$e'_{i'}\mapsto\overline{\vec{x}}'_{i'}$.  By (H2), we have
$\deg_H(\mathbf{p})=0$.  For an arrow $a\in Q(C)_1$, we define
$\lambda(a)\in F_{\geq0}$ as
\[\lambda(x+B\to x+\alpha_i+B):=e_i,\]
\[\lambda(x+B\to x+\alpha'_{i'}+B):=e'_{i'}\text{ and}\]
\[\lambda(x+B\to x+\alpha_i+\alpha'_{i'}+B):=e_i+e'_{i'}.\]
For a path $\gamma$ in $Q(C)$, we define
\[\lambda(\gamma):=\sum_{\text{arrows }\alpha\text{ of }\gamma}\lambda(\alpha)\in F_{\ge0}.\]
A cycle $c$ in $Q(C)$ is called an {\it elementary cycle} if $\lambda(c)=\mathbf{p}$ holds.

For a subset $C'\subseteq Q(C)_1$ and $\alpha\in Q(C)_1$, define
\[
  \varepsilon_{C'}(\alpha):=
  \begin{cases}
  1 & \alpha\in C',\\
  0 & \alpha\notin C'.
  \end{cases}
\]
For a path $\gamma$ in $Q(C)$, we define
\[\varepsilon_{C'}(\gamma):=\sum_{\text{arrows }\alpha\text{ of }\gamma}\varepsilon_{C'}(\alpha).\]

\begin{Def}\label{defcut}
A subset $C'\subseteq Q(C)_1$ is called a {\it cut} if the following conditions are satisfied.
\begin{enumerate}
\item Each elementary cycle in $Q(C)$ has exactly one arrow in $C'$.
\item For any paths $\gamma,\gamma'$ in $Q(C)$ with $s(\gamma)=s(\gamma'),t(\gamma)=t(\gamma')$ and $\lambda(\gamma)=\lambda(\gamma')$, we have
\[\varepsilon_{C'}(\gamma)=\varepsilon_{C'}(\gamma').\]
\end{enumerate}
\end{Def}

\begin{Rem}
When $d=2$, if a subset $C'\subseteq Q(C)_1$ satisfies (1) in Definition \ref{defcut}, then it automatically satisfies (2) since the natural $2$-dimensional cell complex attached to the quiver $\widehat Q(\widehat C)$ whose $2$-cells correspond to elementary cycles is homeomorphic to the Euclidean space $\mathbb{R}^2\cong(L\oplus L')_\mathbb{R}$, which is simply connected. When $d\ge3$, it is unclear whether condition (2) follows from condition (1).
\end{Rem}

\subsection{From complete representatives to cuts of $Q(C)$}

Fix $u\in P\cap N$ and write
\[
  (G_u,(\vec{x}_{i,u})_{i=0}^l,(\vec{x}'_{i',u})_{i'=0}^{l'},q_u)
\]
for the corresponding lift (see Proposition \ref{liftlatticepoint}(2)).  For simplicity, in this
subsection we omit the subscript $u$ and write
\[
  G,\quad \vec{x}_i,\quad \vec{x}'_{i'},\quad q,\quad \vec p.
\]
For a path $\gamma$ in $Q(C)$, define
\[
  \vec\lambda(\gamma)\in G
\]
by replacing $e_i$ with $\vec{x}_i$ and $e'_{i'}$ with $\vec{x}'_{i'}$ in
$\lambda(\gamma)$.  Thus for an arrow $\alpha\in Q(C)_1$,
\[
  \vec\lambda(\alpha)=\vec{x}_i,\quad \vec{x}'_{i'},\quad\text{or}\quad
  \vec{x}_i+\vec{x}'_{i'}.
\]

\begin{Def}
Let $J'\subseteq q^{-1}(J)$.  We say that $J'$ is a
\emph{$Q(C)$-complete representative} if the following conditions hold.
\begin{enumerate}
\item The restriction $q|_{J'}\colon J'\to J$ is bijective.
\item For every arrow $\alpha:h\to h'$ in $Q(C)$, if $\widetilde h\in J'$ and
$\widetilde h'\in J'$ are the unique lifts of $h$ and $h'$, then
\[
  \widetilde h+\vec\lambda(\alpha)\in \{\widetilde h',\widetilde h'+\vec p\}.
\]
\end{enumerate}
We denote by $\mathcal J_{u,J}$ the set of $Q(C)$-complete representatives in
$q_u^{-1}(J)$.
\end{Def}

If $J'\in\mathcal J_{u,J}$, define a subset
\[
  C'(J')\subseteq Q(C)_1
\]
by declaring that, for an arrow $\alpha:h\to h'$,
\[
  \alpha\in C'(J')
  \quad\Longleftrightarrow\quad
  \widetilde h+\vec\lambda(\alpha)=\widetilde h'+\vec p,
\]
where $\widetilde h,\widetilde h'\in J'$ are the unique lifts of $h,h'$.

\begin{Prop}\label{completeRepCut}
For every $u\in P\cap N$ and every $J'\in\mathcal J_{u,J}$, the subset
$C'(J')\subseteq Q(C)_1$ is a cut.
\end{Prop}

\begin{proof}
First, we check the condition (1) in Definition \ref{defcut}. Let
\[
  \omega=(h_0\xrightarrow{\alpha_1}h_1\xrightarrow{\alpha_2}\cdots
  \xrightarrow{\alpha_n}h_n=h_0)
\]
be an elementary cycle.  For each $j$, let $\widetilde h_j\in J'$ be the
unique lift of $h_j$.  By the definition of $C'(J')$, there exists
$\varepsilon_j\in\{0,1\}$ such that
\[
  \widetilde h_{j-1}+\vec\lambda(\alpha_j)
  =
  \widetilde h_j+\varepsilon_j\vec p,
\]
where $\varepsilon_j=1$ if and only if $\alpha_j\in C'(J')$. Summing these
equalities gives
\[
  \widetilde h_0+\sum_{j=1}^n\vec\lambda(\alpha_j)
  =
  \widetilde h_0+\left(\sum_{j=1}^n\varepsilon_j\right)\vec p.
\]
Since $\omega$ is elementary,
\[
  \sum_{j=1}^n\vec\lambda(\alpha_j)=\vec p.
\]
As $\vec p$ has infinite order, we get
\[
  \sum_{j=1}^n\varepsilon_j=1.
\]
Thus $\omega$ contains exactly one arrow of $C'(J')$.

Second, we check the condition (2). Take paths $\gamma,\gamma'$ in $Q(C)$ with $s(\gamma)=s(\gamma')$, $t(\gamma)=t(\gamma')$ and $\lambda(\gamma)=\lambda(\gamma')$. Let $\widetilde h_s\in J'$ be the lift of $s(\gamma)=s(\gamma')\in J$ and put $\widetilde h_t:=\widetilde h_s+\vec\lambda(\gamma)=\widetilde h_s+\vec\lambda(\gamma')\in q^{-1}(J)$. Then $\widetilde h_t$ is a lift of $t(\gamma)=t(\gamma')\in J$. Observe that there exists a unique integer $n\ge0$ such that $\widetilde h_t\in J'+n\vec{p}$ holds. Then by the definition of $C'(J')$, we have
\[\varepsilon_{C'(J')}(\gamma)=n=\varepsilon_{C'(J')}(\gamma')\qedhere\]
\end{proof}

If $u$ is an interior lattice point of $P$, then the corresponding group $G_u$
satisfies the positivity condition (G3) (see also Proposition \ref{liftlatticepoint}).
Thus $G_u$, as well as $q_u^{-1}(J)$, has a structure of partially ordered set.
In this case, we also have a statement like Proposition \ref{GJX}.

\begin{Prop}\label{QCGJX}
Let $u\in(\Int P)\cap N$ be an interior lattice point of $P$. Then we have
\[\J_{u,J}=\J_{q_u^{-1}(J)}.\]
\end{Prop}
\begin{proof}
This can be proved completely in the same way as Proposition \ref{GJX}.
\end{proof}

\subsection{Path independence of $\delta_{C'}$}

Let $C'\subseteq Q(C)_1$ be an arbitrary cut. For a path $\gamma$ in $Q(C)$, we define
\[
  \delta_{C'}(\gamma):=\lambda(\gamma)-\varepsilon_{C'}(\gamma)\mathbf p\in F.
\]
In what follows, we identify a path $\widehat Q(\widehat C)$ with its projection in $Q(C)$.

\begin{Lem}\label{Lem:label-power}
Let $c$ be an oriented cycle in $\widehat Q(\widehat C)$.  Then there exists an integer $n\ge0$ such that
\[
   \lambda(c)=n\mathbf{p}.
\]
\end{Lem}

\begin{proof}
Write
\[
   \lambda(c)=\sum_{i=0}^l a_i e_i+\sum_{i'=0}^{l'}b_{i'}e'_{i'}.
\]
Since $c$ is a cycle in the covering quiver $\widehat Q(\widehat C)$, its displacement in
$L\oplus L'$ is zero.  Hence
\[
   \sum_{i=0}^l a_i\alpha_i=0,
   \qquad
   \sum_{i'=0}^{l'} b_{i'}\alpha'_{i'}=0.
\]
Hence there exist integers $A,B\ge0$ such that
\[
   a_0=\cdots=a_l=A,
   \qquad
   b_0=\cdots=b_{l'}=B.
\]
Thus
\[
   \lambda(c)=A\sum_i e_i+B\sum_{i'}e'_{i'}.
\]
It remains to prove $A=B$.

Expand each diagonal arrow $x\to x+\alpha_i+\alpha'_{i'}$ of $c$ formally to the two deleted
primitive arrows
\[
   x\to x+\alpha_i,
   \qquad
   x+\alpha_i\to x+\alpha_i+\alpha'_{i'}.
\]
This gives a cycle in the original quiver $\widehat Q$. We use the cut detector $f_C:(L\oplus L')/B\to\mathbb{Z}$ corresponding to $C$ (see \cite[6.4,6.7]{Tom25d}). Since the total $f_C$-increment of this cycle is
zero (directly by \cite[6.6]{Tom25d}), we have
\[
   0=A\sum_i\gamma_i-B\sum_j\gamma'_j.
\]
Since
\[
   \sum_i\gamma_i=\sum_j\gamma'_j=m,
\]
we obtain $A=B$.  Setting $n:=A=B$ gives
\[
   \lambda(c)=n\mathbf{p}.\qedhere
\]
\end{proof}

Then we can prove the path-independence of $\delta_{C'}$.

\begin{Prop}\label{pathindep}
Take paths $\gamma,\gamma'$ in $\widehat Q(\widehat C)$ with $s(\gamma)=s(\gamma')$ and $t(\gamma)=t(\gamma')$. Then we have
\[\delta_{C'}(\gamma)=\delta_{C'}(\gamma').\]
\end{Prop}
\begin{proof}
By Lemma \ref{connQC}, we can take a path $\gamma''$ from $t(\gamma)=t(\gamma')$ to $s(\gamma)=s(\gamma')$ in $\widehat Q(\widehat C)$. By considering the concatenations $\gamma''\gamma$ and $\gamma''\gamma'$, it is enough to show the following: for any cycle $c$ in $\widehat Q(\widehat C)$, we have $\delta_{C'}(c)=0$. By Lemma \ref{Lem:label-power}, there exists an integer $n\ge0$ such that $\lambda(c)=n\mathbf{p}$. Let $\omega$ be an elementary cycle at the same vertex as $c$, which exists by Lemma \ref{pathpropQC}. Then
\[\lambda(\omega^n)=n\mathbf{p}=\lambda(c)\]
holds. Thus by the definition of cuts, we obtain
\[\varepsilon_{C'}(c)=\varepsilon_{C'}(\omega^n)=n.\]
This implies
\[\delta_{C'}(c)=n\mathbf{p}-n\mathbf{p}=0.\qedhere\]
\end{proof}

As an application of this path-independence of $\delta_{C'}$, we define a group homomorphism $\kappa_{C'}\colon B\to F$. We need the following lemma.

\begin{Lem}
Take $b\in B$ and $x_1,x_2\in L\oplus L'$. Take paths $\gamma_i\colon x_i\to x_i+b$ in $\widehat Q(\widehat C)$ $(i=1,2)$. Then we have
\[\delta_{C'}(\gamma_1)=\delta_{C'}(\gamma_2).\]
\end{Lem}
\begin{proof}
Take a path $\gamma\colon x_1\to x_2$ in $\widehat Q(\widehat C)$, which is possible by Lemma \ref{connQC}. Then its translation $\gamma+b\colon x_1+b\to x_2+b$ also gives a path. Applying Proposition \ref{pathindep} to the concatenations $\gamma_2\gamma$ and $(\gamma+b)\gamma_1$ from $x_1$ to $x_2+b$, we obtain
\[\delta_{C'}(\gamma_2)+\delta_{C'}(\gamma)=\delta_{C'}(\gamma_2\gamma)=\delta_{C'}((\gamma+b)\gamma_1)=\delta_{C'}(\gamma+b)+\delta_{C'}(\gamma_1).\]
Since $\gamma$ and $\gamma+b$ define the same path in $Q(C)$, we get the assertion.
\end{proof}

\begin{Def}
Let $b\in B$. Take $x\in L\oplus L'$ and a path $\gamma\colon x\to x+b$ in $\widehat Q(\widehat C)$. We define
\[\kappa_{C'}(b):=\delta_{C'}(\gamma)\in F.\]
Such $\gamma$ exists by Lemma \ref{connQC} and this definition is independent of the choices of $x$ and $\gamma$.
\end{Def}

We can easily check that the map $\kappa_{C'}\colon B\to F$ is a group homomorphism.

\subsection{Every cut of $Q(C)$ arises from a complete set of representatives}

We now prove the converse to Proposition \ref{completeRepCut}.  
Let $C'\subseteq Q(C)_1$ be an arbitrary cut. We let
\[
  K_{C'}:=\Im\kappa_{C'}\subseteq F.
\]

\begin{Lem}\label{kDliftAP}
The subgroup $K_{C'}\subseteq F$ satisfies
\[
  K_{C'}+\mathbb Z\mathbf p=A_P(M)+\mathbb Z\mathbf p\text{ and }
  K_{C'}\cap\mathbb Z\mathbf p=0.
\]
Consequently, there exists a unique $u(C')\in N$ such that
\begin{equation}\label{KDformula}
  K_{C'}=
  \{A_P(m)-\langle m,u(C')\rangle\mathbf p\mid m\in M\}.
\end{equation}
\end{Lem}

\begin{proof}
First, we prove $K_{C'}+\mathbb Z\mathbf{p}\subseteq A_P(M)+\mathbb Z\mathbf p$. It is enough to show that for any $b\in B$, $x\in L\oplus L'$ and a path $\gamma\colon x\to x+b$ in $\widehat{Q}(\widehat{C})$, we have $\lambda(\gamma)\in A_P(M)+\mathbb Z\mathbf p$. Since $\gamma$ is a cycle in $Q(C)$, $\deg_H(\lambda(\gamma))=0$ holds. This implies the assertion.

Next, we prove $K_{C'}+\mathbb Z\mathbf{p}\supseteq A_P(M)+\mathbb Z\mathbf p$. Take $m\in M$. Take an integer $n\gg0$ such that $a:=A_P(m)+n\mathbf{p}\in F_{\ge0}$. Then by Lemma \ref{pathpropQC}, since $\deg_H(a)=0$, for $h\in J$, there exists a cycle $\gamma\colon h\to h$ in $Q(C)$ such that $\lambda(\gamma)=a$. Thus $a\in K_{C'}+\mathbb Z\mathbf{p}$ holds.

$K_{C'}\cap\mathbb Z\mathbf p=0$ follows by comparing ranks.

The final assertion follows exactly as in the
proof of Proposition \ref{liftlatticepoint}: for each $m\in M$ there
is a unique integer $c_{C'}(m)$ such that
$A_P(m)-c_{C'}(m)\mathbf p\in K_{C'}$, and the map
$c_{C'}\colon M\to\mathbb Z$ is a homomorphism.  Thus
$c_{C'}(m)=\langle m,u(C')\rangle$ for a unique $u(C')\in N$.
\end{proof}

\begin{Lem}\label{uDliesinP}
For every cut $C'\subseteq Q(C)_1$, the lattice point $u(C')$ lies in $P$.
\end{Lem}

\begin{proof}
By Proposition \ref{liftlatticepoint}, it is enough to prove
\[
  K_{C'}\cap F_{>0}=\emptyset.
\]
Assume that there exists $w\in K_{C'}\cap F_{>0}$.  Choose $b\in B$ and a path $\gamma\colon x\to x+b$
in $\widehat Q(\widehat C)$ such that $w=\delta_{C'}(\gamma)$.
Remark that $\gamma$ is a cycle in $Q(C)$.  Put
\[
  a:=\lambda(\gamma)\in F_{\ge0},
  \qquad
  n:=\varepsilon_{C'}(\gamma).
\]
Then $w=a-n\mathbf p$ holds. Since $w\in F_{>0}$, every coordinate of $w$ is at least one.  Hence
\[
  a-(n+1)\mathbf p=w-\mathbf p\in F_{\ge0}.
\]
Moreover, $a-(n+1)\mathbf p$ has $H$-degree zero.  By Lemma
\ref{pathpropQC}, there exists a cycle $\eta$ at the same vertex as
$\gamma$ such that
\[
  \lambda(\eta)=a-(n+1)\mathbf p.
\]
Again by Lemma \ref{pathpropQC}, there exists an elementary cycle $\omega$ at
that vertex.  Then the path
\[
  \omega^{n+1}\eta
\]
has the same source, the same target, and the same label $a$ as $\gamma$.
Thus we have
\[
  \varepsilon_{C'}(\gamma)
  =
  \varepsilon_{C'}(\omega^{n+1}\eta).
\]
However, each elementary cycle contains exactly one arrow of $C'$, so
\[
  \varepsilon_{C'}(\omega^{n+1}\eta)
  =
  (n+1)+\varepsilon_{C'}(\eta)
  \ge n+1,
\]
whereas $\varepsilon_{C'}(\gamma)=n$.  This is a contradiction.  Hence
$K_{C'}\cap F_{>0}=\emptyset$.
\end{proof}

This leads us to consider the lattice points in $P$ as a way to organize cuts of $Q(C)$.

\begin{Def}
Let $C'\subseteq Q(C)_1$ be a cut. We call $u(C')\in P\cap N$ a {\it type} of $C'$.
\end{Def}

A typical example of a cut of type $u\in P$ is given by Proposition \ref{completeRepCut}.

\begin{Lem}\label{uniquelatpt}
Let $u\in P\cap N$ and $J'\in\J_{u,J}$. Then the type of $C'(J')$ is $u$.
\end{Lem}
\begin{proof}
By Proposition \ref{liftlatticepoint}, it is enough to show $A_u(M)=K_{C'(J')}$. Since $A_u(M)+\mathbb{Z}\mathbf{p}=K_{C'(J')}+\mathbb{Z}\mathbf{p}$ and $A_u(M)\cap\mathbb{Z}\mathbf{p}=K_{C'(J')}\cap\mathbb{Z}\mathbf{p}=0$ hold, it is enough to show $K_{C'(J')}\subseteq A_u(M)$.

Take $b\in B$, $x\in L\oplus L'$ and a path $\gamma\colon x\to x+b$ in $\widehat Q(\widehat C)$. Take a lift $\vec{g}\in J'$ of $x+B\in(L\oplus L')/B\cong H/\mathbb{Z}\vec{s}$. Then by the definition of $C'(J')$, we have
\[\vec{g}+(\lambda(\gamma)+A_u(M))\in J'+\varepsilon_{C'(J')}(\gamma)\vec{p}.\]
Since $\gamma$ is a cycle in $Q(C)$, we have
\[\lambda(\gamma)\in A_u(M)+\mathbb{Z}\mathbf{p}.\]
Combining these, we can conclude
\[\vec{g}+(\lambda(\gamma)+A_u(M))=\vec{g}+\varepsilon_{C'(J')}(\gamma)\vec{p}_u.\]
This implies
\[\kappa_{C'(J')}(b)=\lambda(\gamma)-\varepsilon_{C'(J')}(\gamma)\mathbf{p}\in A_u(M).\qedhere\]
\end{proof}

We will see that any cut $C'$ of type $u=u(C')$ can be constructed in this way. Let $G_{C'}:=F/K_{C'}$ and let $\vec{x}_{i,C'}$, $\vec{x}'_{i',C'}$ be the images of
$e_i$, $e'_{i'}$.  We denote by $q_{C'}\colon G_{C'}\to H$ the natural map and
write $\vec\lambda_{C'}(\gamma)$ and $\vec\delta_{C'}(\gamma)$ for the images of $\lambda(\gamma)$ and $\delta_{C'}(\gamma)$ in $G_{C'}$ respectively.  By Lemma \ref{kDliftAP}, this lift is isomorphic to the lift $G_u$.

We now construct the complete representative attached to $C'$. For this purpose, we need the following lemma.
\begin{Lem}
Let $\gamma_1,\gamma_2$ be paths in $Q(C)$ with $s(\gamma_1)=s(\gamma_2)$ and $t(\gamma_1)=t(\gamma_2)$. Then we have
\[\vec\delta_{C'}(\gamma_1)=\vec\delta_{C'}(\gamma_2)\in G_{C'}.\]
\end{Lem}
\begin{proof}
Take lifts $\widetilde\gamma_i$ in $\widehat Q(\widehat C)$ of $\gamma_i$ for $i=1,2$ with $s(\widetilde\gamma_1)=s(\widetilde\gamma_2)$. By Lemma \ref{connQC}, we can take a path $\gamma\colon t(\widetilde\gamma_1)\to t(\widetilde\gamma_2)$ in $\widehat Q(\widehat C)$. Then by Proposition \ref{pathindep}, we have
\[\delta_{C'}(\gamma_2)=\delta_{C'}(\widetilde\gamma_2)=\delta_{C'}(\gamma\widetilde\gamma_1)=\delta_{C'}(\gamma)+\delta_{C'}(\gamma_1).\]
Since $\gamma$ is a cycle in $Q(C)$, we have $\delta_{C'}(\gamma)\in K_{C'}$. This implies the assertion.
\end{proof}

Choose a base vertex $h_0\in J$ and an element $\widetilde h_0\in G_{C'}$ with
$q_{C'}(\widetilde h_0)=h_0$.  For $h\in J$, choose any path $\gamma\colon h_0\to h$ in $Q(C)$, and define
\[
  \widetilde h:=\widetilde h_0+
  \vec\delta_{C'}(\gamma)\in G_{C'}.
\]
Remark that the definition is independent of the choice of $\gamma$ by the previous lemma.  Put
\[
  J'_{C'}:=\{\widetilde h\mid h\in J\}\subseteq q_{C'}^{-1}(J).
\]

\begin{Prop}\label{conversecut}
The subset $J'_{C'}$ is a $Q(C)$-complete representative in
$q_{C'}^{-1}(J)$, and
\[
  C'(J'_{C'})=C'.
\]
In particular, every cut of $Q(C)$ of type $u$ is obtained from $J'\in\mathcal J_{u,J}$.
\end{Prop}

\begin{proof}
By construction, $q_{C'}$ sends $\widetilde h$ to $h$, so $q_{C'}|_{J'_{C'}}$ is a
bijection onto $J$.  Let $\alpha:h\to h'$ be an arrow of $Q(C)$.  The definitions
give
\[
  \widetilde h'
  =
  \widetilde h+\vec\lambda_{C'}(\alpha)-\varepsilon_{C'}(\alpha)\vec p_{C'}
\]
in $G_{C'}$.  Equivalently,
\[
  \widetilde h+\vec\lambda_{C'}(\alpha)
  =
  \widetilde h'+\varepsilon_{C'}(\alpha)\vec p_{C'}.
\]
Since $\varepsilon_{C'}(\alpha)\in\{0,1\}$, this proves that $J'_{C'}$ is
$Q(C)$-complete.  Moreover, by the definition of the cut associated with a
complete representative, the arrow $\alpha$ belongs to $C'(J'_{C'})$ exactly when
$\varepsilon_{C'}(\alpha)=1$, that is, exactly when $\alpha\in C'$.
\end{proof}

With these preparations, we obtain the following statement. 

\begin{Prop}\label{QCcutJcorr}
For $u\in P\cap N$, there is a surjective map
\[
  C'(-)\colon\mathcal J_{u,J}
  \longrightarrow
  \{\text{cuts of }Q(C)\text{ of type }u\}.
\]
For $J'_1,J'_2\in\J_{u,J}$, $C'(J'_1)=C'(J'_2)$ holds if and only if $J'_2=J'_1+n\vec p_u$ holds for some $n\in\mathbb Z$.
\end{Prop}

\begin{proof}
Surjectivity is Proposition \ref{conversecut}.  Let
$C'(J'_1)=C'(J'_2)=C'$. By replacing $J'_1$ with $J'_1+n\vec p_u$ for some $n\in\mathbb Z$, we may assume $J'_1\cap J'_2\ne\emptyset$. Take $\widetilde{h}_0\in J'_1\cap J'_2$ and put $h_0:=q_u(\widetilde{h}_0)\in J$. Take $h\in J$ and choose any path $\gamma\colon h_0\to h$ in $Q(C)$. Then by the definition of $C'(J'_i)$, we have
\[\widetilde{h}_0+\vec\lambda(\gamma)\in J'_i+\varepsilon_{C'}(\gamma)\vec{p}_u.\]
Thus $\widetilde{h}_0+\vec\delta_{C'}(\gamma)\in J'_i$ and $q_u(\widetilde{h}_0+\vec\delta_{C'}(\gamma))=h$ hold. Since the restriction $q_u|_{J'}\colon J'_i\to J$ is bijective, we can conclude $J'_1=J'_2$.
\end{proof}

If $u$ is an interior lattice point of $P$, then we obtain the cut-upper set correspondence for $Q(C)$.

\begin{Thm}\label{cutupcorr2}(Cut-upper set correspondence)
Let $u\in(\Int P)\cap N$ be an interior lattice point of $P$. Then there is a surjective map
\[
  C'(-)\colon\I_{q_u^{-1}(J)}
  \longrightarrow
  \{\text{cuts of }Q(C)\text{ of type }u\};
  \qquad
  I'\longmapsto C'(I'\cap(I'^c+\vec p_u)).
\]
For $I'_1,I'_2\in\I_{q_u^{-1}(J)}$, $C'(I'_1)=C'(I'_2)$ holds if and only if $I'_2=I'_1+n\vec p_u$ holds for some $n\in\mathbb Z$.
\end{Thm}
\begin{proof}
The assertion follows from Proposition \ref{QCcutJcorr}, Proposition \ref{QCGJX} and Theorem \ref{upJX}.
\end{proof}

\subsection{Higher representation infinite algebras of type $\widetilde A\widetilde A$}

The purpose of this subsection is to formalize the dimer-cut construction in the rank-one toric higher-dimensional setting. As a result, for $d\ge2$, $d$-representation infinite algebras of type $\widetilde A\widetilde A$ are introduced. More precisely, it is determined by a cut of the quiver $Q(C)$.

First, recall from \cite{Tom25d} the definition of a $(d+1)$-Calabi--Yau algebra $\Gamma(B,C)$. Let $I(C)\subseteq kQ(C)$ be the ideal generated by the elements of the form $\gamma-\gamma'$, where $\gamma$ and $\gamma'$ are paths in $Q(C)$ of positive length such that $s(\gamma)=s(\gamma')$, $t(\gamma)=t(\gamma')$ and $\lambda(\gamma)=\lambda(\gamma')$ hold. Observe that $I(C)\subseteq kQ(C)_{\ge2}$ holds. We put
\[\Gamma(B,C):=kQ(C)/I(C).\]
Then by \cite[6.14]{Tom25d}, $\Gamma(B,C)$ is a non-commutative crepant resolution of the $(d+1)$-dimensional Gorenstein toric singularity corresponding to the polytope $P$. Thus $\Gamma(B,C)$ is a $(d+1)$-Calabi--Yau algebra.

Next, for each cut $C'$ of $Q(C)$, we equip $\Gamma(B,C)$ with a $\mathbb{Z}_{\ge0}$-grading. For a path $\gamma$ in $Q(C)$, we put
\[|\gamma|:=\varepsilon_{C'}(\gamma).\]
This gives a $\mathbb{Z}_{\ge0}$-grading on the quiver $Q(C)$ and the path algebra $kQ(C)$. Since the ideal $I(C)\subseteq kQ(C)$ is homogeneous with respect to this degree by (2) in Definition \ref{defcut}, this $\mathbb{Z}_{\ge0}$-grading descends to the algebra $\Gamma(B,C)$. Consider its degree $0$ part:
\[A(B,C,C'):=\Gamma(B,C)_0.\]
Then this algebra is finite dimensional if and only if the type $u(C')$ of $C'$ lies in the interior of $P$. See also \cite[3.5]{Nak22}.

\begin{Prop}\label{doubleAtildeacy}
Let $C'$ be a cut of $Q(C)$ and $u:=u(C')\in P\cap N$ its type.
\begin{enumerate}
\item If $u\in\Int(P)$, then the degree $0$ part of the quiver $Q(C)$ is acyclic. In particular, the algebra $A(B,C,C')$ is finite dimensional.
\item If $u\notin\Int(P)$, then the algebra $A(B,C,C')$ is infinite dimensional.
\end{enumerate}
\end{Prop}
\begin{proof}
By Proposition \ref{conversecut}, we can take $J'\in\J_{u,J}$ such that $C'(J')=C'$ holds.

(1) Assume that the quiver $Q(C)$ has a cycle $\gamma$ of positive length of degree $0$ at a vertex $h\in J$. Take the lift $\widetilde{h}\in J'$ of $h$. Then by the definition of $C'(J')$, since $\varepsilon_{C'}(\gamma)=0$, we have
\[\widetilde h+\vec\lambda(\gamma)\in J'.\]
This, together with $q_u(\vec\lambda(\gamma))=0$, implies $\widetilde h+\vec\lambda(\gamma)=\widetilde h$. Thus $\lambda(\gamma)\in A_u(M)$ holds, which contradicts to Proposition \ref{liftlatticepoint}(3).

(2) By Proposition \ref{liftlatticepoint}(3), there exists $0\ne a\in F_{\ge0}$ whose image in $G_u$ is $0$. Take $h\in J$. Since $\deg_H(a)=0$, by Lemma \ref{pathpropQC}, there exists a cycle $\gamma$ in $Q(C)$ at the vertex $h$ with $\lambda(\gamma)=a$. Take the lift $\widetilde h\in J'$ of $h$. By the definition of $C'(J')$, we have
\[\widetilde h=\widetilde h+\vec\lambda(\gamma)\in J'+\varepsilon_{C'}(\gamma)\vec p_u.\]
This, together with $\widetilde h\in J'$, implies $\varepsilon_{C'}(\gamma)=0$. Thus $\gamma\in A(B,C,C')$ holds. By \cite[6.14]{Tom25d}, $\gamma$ can be viewed as a non-zero element of the corresponding Gorenstein toric singularity $R$ since $\gamma$ is a cycle. Thus the subalgebra $k[\gamma]\subseteq A(B,C,C')$ is isomorphic to the polynomial algebra in one variable. Therefore the algebra $A(B,C,C')$ is infinite dimensional.
\end{proof}

In what follows, we assume $u=u(C')$ lies in the interior of $P$. To see $A(B,C,C')$ is $d$-representation infinite, we verify that $\Gamma(B,C)$ is a $(d+1)$-Calabi--Yau algebra of Gorenstein parameter $1$.

\begin{Prop}\label{CYGor1}
Assume $u=u(C')$ lies in the interior of $P$. Then the $\mathbb{Z}_{\ge0}$-graded algebra $\Gamma(B,C)$ is a $(d+1)$-Calabi--Yau algebra of Gorenstein parameter $1$.
\end{Prop}
\begin{proof}
Put $\Gamma:=\Gamma(B,C)$.  Let $S:=k[x_0,\ldots,x_l,x'_0,\ldots,x'_{l'}]$ and regard $S$ as a $G_u$-graded polynomial ring by $\deg x_i=\vec x_i$ and $\deg x'_{i'}=\vec x'_{i'}.$ We set
\[
  R:=S^{(\mathbb Z\vec p_u)}
    =\bigoplus_{n\ge 0} S_{n\vec p_u},
\]
where the summand $S_{n\vec p_u}$ is placed in degree $n$.

We first record the canonical module of $R$ with this grading.  Since
$u\in \operatorname{Int}(P)$, Proposition \ref{liftlatticepoint}(3) gives
\[
  A_u(M)\cap F_{\ge 0}=0.
\]
In particular $R_0=k$, and the above grading is positive and locally finite.
By the standard toric description of the canonical module of a normal affine
semigroup ring, the canonical module of $R$ is spanned by the
monomials whose exponents lie in the relative interior of the corresponding
semigroup.  In the present notation this says
\[
  \omega_R
  =
  \bigoplus_{n\ge 0}
  \left\langle
    x^a\in S_{n\vec p_u}
    \ \middle|\
    a_i>0 \text{ for all } i
    \text{ and } a'_{i'}>0 \text{ for all } i'
  \right\rangle_k .
\]
Multiplication by $x_0\cdots x_l x'_0\cdots x'_{l'}$ identifies this module with $R(-1)$.  Hence
\[
  \omega_R \cong R(-1)
\]
as graded $R$-modules.  Thus $R$ is a $(d+1)$-dimensional Gorenstein
$\mathbb Z_{\ge 0}$-graded normal domain of Gorenstein parameter $1$.

By \cite[6.14]{Tom25d}, the algebra $\Gamma$ is an NCCR of $R$,
more precisely
\[
  \Gamma \cong \End_R(M)
\]
for a finitely generated graded reflexive $R$-module $M$, and $\Gamma$ is a
maximal Cohen--Macaulay $R$-module of finite global dimension.  Moreover, by
\cite[3.8(1)]{Han25}, we have an isomorphism
\[
  \Hom_R(\Gamma,R)\cong \Gamma
\]
as graded $\Gamma$-bimodules.  Since $\omega_R\cong R(-1)$, this is equivalent
to
\[
  \Hom_R(\Gamma,\omega_R)\cong \Gamma(-1)
\]
as graded $\Gamma$-bimodules.  Therefore, by the Calabi--Yau criterion for
symmetric orders over graded Gorenstein rings
\cite[3.8(2)]{Han25}
or, equivalently, \cite[3.15]{HI22}, the graded algebra $\Gamma$
is bimodule $(d+1)$-Calabi--Yau of Gorenstein parameter $1$.

For the convenience of the reader, we prove that $\pvd^\mathbb{Z}\Gamma$ has a Serre functor $(-1)[d+1]$ directly. Remark that we have $\RHom_R(\Gamma,R)\cong\Gamma$. First, for $Y\in\per^\mathbb{Z}\Gamma$, we have
\[\RHom_\Gamma(Y,\Gamma)\cong\RHom_\Gamma(Y,\RHom_R(\Gamma,R))\cong\RHom_R(Y,R).\]
Thus for $Z\in\D^b(\mod^\mathbb{Z}\Gamma)$, we obtain
\begin{align*}
\RHom_R(\RHom_\Gamma(Y,Z),R)&\cong\RHom_R(Z\otimes^\mathbb{L}_\Gamma\RHom_\Gamma(Y,\Gamma),R) \\
&\cong\RHom_\Gamma(Z,\RHom_R(\RHom_\Gamma(Y,\Gamma),R))\\
&\cong\RHom_\Gamma(Z,\RHom_R(\RHom_R(Y,R),R))\\
&\cong\RHom_\Gamma(Z,Y).
\end{align*}
Remark that since $R$ has Gorenstein parameter $1$, for $W\in\pvd^\mathbb{Z}(R)$, we have
\[\RHom_R(W,R)\cong D(W(-1)[d+1]).\]
Therefore for $X\in\pvd^\mathbb{Z}(\Gamma)$, since $\RHom_\Gamma(Y,X)\in\pvd^\mathbb{Z}(R)$, we have
\[\RHom_\Gamma(X,Y)\cong\RHom_R(\RHom_\Gamma(Y,X),R)\cong D\RHom_\Gamma(Y,X(-1)[d+1]).\qedhere\]
\end{proof}

Then we can prove the following.

\begin{Thm}\label{AAdtame}
Let $C'$ be a cut of $Q(C)$ whose type lies in the interior of $P$. 
\begin{enumerate}
\item The finite dimensional algebra $A(B,C,C')$ is a $d$-representation tame algebra.
\item We have the following isomorphism as a graded algebra.
\[\Pi_{d+1}(A(B,C,C'))\cong\Gamma(B,C)\]
\end{enumerate}
\end{Thm}
\begin{proof}
We put $A:=A(B,C,C')=\Gamma(B,C)_0$ with respect to the $\mathbb Z_{\ge 0}$-grading induced by $C'$. By Proposition \ref{doubleAtildeacy}, $A$ is a finite dimensional algebra.

By Proposition \ref{CYGor1}, the graded algebra $\Gamma(B,C)$ is bimodule $(d+1)$-Calabi--Yau of Gorenstein parameter $1$.  We now apply the
standard correspondence between graded bimodule $(d+1)$-Calabi--Yau algebras
of Gorenstein parameter $1$ and $d$-representation infinite algebras, as in
\cite[4.36]{HIO}; equivalently, this is the implication used in the proof of \cite[4.36]{HIO} and follows from the Minamoto--Mori correspondence \cite[4.2]{MM}. This yields that the degree-zero part
$A=\Gamma(B,C)_0$ is $d$-representation infinite and that there is an isomorphism
of graded algebras
\[
  \Pi_{d+1}(A)\cong\Gamma(B,C).
\]

Let us also note that no separability or perfectness issue arises in the present setting. The algebra $A$ is a finite dimensional algebra given by an acyclic quiver with an admissible relation, hence
\[
  A/\operatorname{rad}A\cong k^{Q(C)_0}
\]
is a separable $k$-algebra. Thus the usual proof of the above
correspondence applies over the present arbitrary base field.

Finally, by \cite[6.14]{Tom25d}, the algebra $\Gamma(B,C)$ is an NCCR of the commutative Gorenstein toric ring $R$. In particular, $\Gamma(B,C)$ is a
module-finite $R$-algebra.  Since $\Pi_{d+1}(A)\cong \Gamma$
and $R$ is commutative noetherian, the $(d+1)$-preprojective algebra of $A$ is
module-finite over a commutative noetherian ring. This proves that $A$ is
$d$-representation tame.
\end{proof}

\begin{Def}\label{defdrepinfAA}
Let $C'$ be a cut of $Q(C)$ whose type $u:=u(C')$ lies in the interior of $P$. We call the algebra $A(B,C,C')$ a {\it $d$-representation infinite algebra of type $\widetilde A\widetilde A$}.
\end{Def}

In the remark following Theorem \ref{enddtiltrk2}, we will see that we can give another proof that our $A(B,C,C')$ is $d$-representation infinite by using Proposition \ref{SerreRI}.

\begin{Rem}
The Gabriel quiver of $A(B,C,C')$ is the degree $0$ part of the graded quiver $Q(C)$, which is acyclic by Proposition \ref{doubleAtildeacy}. This supports the question of Herschend, Iyama and Oppermann \cite[5.9]{HIO} asking whether the Gabriel quivers of higher representation infinite algebras are acyclic. However, a counterexample to this question has been found recently by the author \cite{Tom25c}.
\end{Rem}

Typical examples of higher representation infinite algebras of type $\widetilde A\widetilde A$ are given by tensor products of two higher representation infinite algebras of type $\widetilde A$.

\begin{Prop}\label{tensorAandA}
Let $A(B_i,C_i)$ be a $d_i$-representation infinite algebra of type $\widetilde A$ for $i=1,2$. Then the tensor product
\[A(B_1,C_1)\otimes_kA(B_2,C_2)\]
is a $(d_1+d_2)$-representation infinite algebra of type $\widetilde A\widetilde A$.
\end{Prop}
\begin{proof}
Put $B:=B_1\oplus B_2\subseteq L_1\oplus L_2$ and $\widehat{C}:=(\widehat C_1\times L_2)\sqcup(L_1\times \widehat C_2)\subseteq\widehat Q_1$. Then $\widehat C$ is a cut of $\widehat Q$. For $0\le i\le d_1$ and $0\le i'\le d_2$, if we put
\[V_{1i}:=\{x_1\in L_1\mid (x_1\to x_1+\alpha_i)\in\widehat C_1\}\quad\text{ and }V_{2i'}:=\{x_2\in L_2\mid (x_2\to x_2+\alpha'_{i'})\in\widehat C_2\},\]
then we have
\[\widehat Q(\widehat C)_1=\widehat Q\setminus\widehat C\sqcup\bigsqcup_{\substack{0\le i\le d_1\\0\le i'\le d_2}}\{(x_1,x_2)\to(x_1,x_2)+\alpha_i+\alpha'_{i'}\mid x_1\in V_{1i},x_2\in V_{2i'}\}.\]
If we put
\begin{align*}
\widehat C'&:=\widehat Q(\widehat C)_1\setminus(\widehat Q\setminus\widehat C) \\
&=\bigsqcup_{\substack{0\le i\le d_1\\0\le i'\le d_2}}\{(x_1,x_2)\to(x_1,x_2)+\alpha_i+\alpha'_{i'}\mid x_1\in V_{1i},x_2\in V_{2i'}\},
\end{align*}
then $C':=\widehat C'/B$ is a cut of $Q(C)$. Moreover, one checks
\[A(B_1,C_1)\otimes_kA(B_2,C_2)\cong A(B,C,C').\qedhere\]
\end{proof}

\begin{Rem}
Proposition \ref{tensorAandA} should be regarded only as a source of basic
examples. The class of algebras of type $\widetilde A\widetilde A$ is
substantially larger. Indeed, in the tensor-product construction the
Gabriel quiver of the degree-zero algebra is the product quiver of the two
factors, whereas a general second cut $C'$ may leave mixed arrows in
degree zero. Examples \ref{ExHirz}, \ref{Exstackysurface} and \ref{Ex3fold} show that
such choices occur naturally from toric Fano stacks of Picard number two.
\end{Rem}

In Theorem \ref{dAPRdoubleAtilde}, we will see that the endomorphism algebra of a $d$-APR tilting module of a $d$-representation infinite algebra of type $\widetilde A\widetilde A$ is again a $d$-representation infinite algebra of type $\widetilde A\widetilde A$ with the same $B,C$ and type $u\in P$. Moreover, in Theorem \ref{dAPRconndoubleAtilde}, we will see that all $d$-representation infinite algebras of type $\widetilde A\widetilde A$ with the same $B,C$ and type $u$ are connected by a finite sequence of iterated $d$-APR tilts.

\section{Tilting theory for smooth toric Fano stacks of Picard number two}

In this section, we prove the existence and give a classification of $d$-tilting bundles consisting of line bundles on smooth toric Fano stacks of Picard number two. Moreover, we show that the endomorphism algebras of these $d$-tilting bundles are $d$-representation infinite algebras of type $\widetilde{A}\widetilde{A}$, and that all such algebras are obtained in this way.

\subsection{Classification of $d$-tilting bundles consisting of line bundles}

Let $G$ be a finitely generated abelian group of rank two and let $l,l'\ge1$ be integers. Let $\vec{x}_0,\cdots,\vec{x}_l,\vec{x}'_0,\cdots,\vec{x}'_{l'}\in G$ be elements satisfying the conditions (G1), (G2), (G3) and (G4). These data correspond to a simplicial lattice polytope $P$ of dimension $d:=l+l'$ with $d+2$ vertices containing the origin as an interior point.

Let $\vec{p}:=\sum_{i=0}^l\vec x_i+\sum_{i'=0}^{l'}\vec x'_{i'}\in G$, $q\colon G\to H:=G/\mathbb{Z}\vec{p}$ and $\pi\colon H\to H/H_{\rm tors}\cong\mathbb{Z}$. By the condition (G4), we may assume
\[\pi(q(\vec{x}_i))>0,\pi(q(\vec{x}'_{i'}))<0\ (0\le i\le l,0\le i'\le l').\]

Here, $q(\vec{x}_0), \cdots, q(\vec{x}_l), q(-\vec{x}'_0),\cdots, q(-\vec{x}'_{l'})\in H$ satisfy (G1), (G2) and (G3). Thus $H$ has the following partial order.
\[h_1\geq h_2\Leftrightarrow h_1-h_2\in\sum_{i=0}^l\mathbb{Z}_{\geq0}q(\vec{x}_i)+\sum_{i'=0}^{l'}\mathbb{Z}_{\geq0}q(-\vec{x}'_{i'})\]
Put $s:=\sum_{i=0}^lq(\vec{x}_i)=\sum_{i'=0}^{l'}q(-\vec{x}'_{i'})\in H$. Then $\mathbb{Z}$ acts on $H$ by $n\cdot h:=h+ns$. This action satisfies the conditions (A1),(A2) and (A3).

Let $\X:=\X(P)$ be the smooth toric Fano stack corresponding to the polytope $P$. To calculate the cohomology groups of line bundles on $\X$, we first determine the topological space $X_c$ for $c\in\mathbb{Z}^{d+2}$. This is almost proved in \cite[4.2]{Tom25d}.

\begin{Lem}\label{caltop}
Let $c=(a,a')\in\mathbb{Z}^{d+2}$ where $a=(a_i)_{i=0}^l\in\mathbb{Z}^{l+1}$ and $a'=(a'_{i'})_{i'=0}^{l'}\in\mathbb{Z}^{l'+1}$.
\begin{enumerate}
\item If there exist $0\leq i_0\neq i_1\leq l$ such that $a_{i_0}\geq0$ and $a_{i_1}<0$ hold, then $X_c$ is contractible.
\item If there exist $0\leq i'_0\neq i'_1\leq l'$ such that $a'_{i'_0}\geq0$ and $a'_{i'_1}<0$ hold, then $X_c$ is contractible.
\item If $a\in\mathbb{Z}_{<0}^{l+1}$ and $a'\in\mathbb{Z}_{<0}^{l'+1}$ hold, then $X_c=\emptyset$
\item If $a\in\mathbb{Z}_{\ge0}^{l+1}$ and $a'\in\mathbb{Z}_{\ge0}^{l'+1}$ hold, then $X_c$ is homeomorphic to the $(d-1)$-dimensional sphere $S^{d-1}$.
\item If $a\in\mathbb{Z}_{\ge0}^{l+1}$ and $a'\in\mathbb{Z}_{<0}^{l'+1}$ hold, then $X_c$ is homeomorphic to the $(l-1)$-dimensional sphere $S^{l-1}$.
\item If $a\in\mathbb{Z}_{<0}^{l+1}$ and $a'\in\mathbb{Z}_{\ge0}^{l'+1}$ hold, then $X_c$ is homeomorphic to the $(l'-1)$-dimensional sphere $S^{l'-1}$.
\end{enumerate}
\end{Lem}
\begin{proof}
All the statements except for (4) are proved in \cite[4.2]{Tom25d} since our $X_c$ here coincides with that of \cite{Tom25d} since $c\notin\mathbb{Z}_{\ge0}^{d+2}$. (4) follows since $X_c$ coincides with the boundary $\partial P$ of $P$.
\end{proof}

Using Lemma \ref{caltop}, we give a vanishing criterion for certain singular cohomology groups of $X_c$.

\begin{Lem}\label{vancoh}
For $\vec{g}\in G$, the following conditions are equivalent.
\begin{enumerate}
\item For any $0\leq r\leq d-2$, and $c=(a,a')\in\mathbb{Z}^{d+2}$ where $a=(a_i)_{i=0}^l\in\mathbb{Z}^{l+1}$ and $a'=(a'_{i'})_{i'=0}^{l'}\in\mathbb{Z}^{l'+1}$ with $\sum_{i=0}^la_i\vec{x}_i+\sum_{i'=0}^{l'}a'_{i'}\vec{x}_{i'}\in\vec{g}+\mathbb{Z}\vec{p}$, we have $\widetilde H_r(X_c;k)=0$.
\item $q(\vec{g})\ngeq s$ and $q(\vec{g})\nleq -s$ hold.
\end{enumerate}
\end{Lem}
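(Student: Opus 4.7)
The plan is to read off the face structure of $\partial P$ from the Gale data, classify the reduced homology of $X_a$ by the sign pattern of $a$, and translate the surviving nonvanishing cases into the inequalities on $q(\vec{g})$ stated in (2).

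First I would establish that $I\subseteq\{1,\ldots,d+2\}$ indexes a face of $\partial P$ if and only if $I\not\supseteq\{1,\ldots,l\}$ and $I\not\supseteq\{l+1,\ldots,d+2\}$. Since $P$ is a simplicial $d$-polytope with $d+2$ vertices containing the origin in its interior, the unique (up to scale) Radon relation among its vertices has positive and negative supports $\{1,\ldots,l\}$ and $\{l+1,\ldots,d+2\}$ (these are precisely the sign parts read off from $\pi$). Hence $P$ is combinatorially the free sum $\Delta^{l-1}\oplus\Delta^{l'-1}$, whose facets are the complements of pairs $\{i,j\}$ with $i\le l<j$, and the claim about faces follows.

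For $a\in\mathbb{Z}^{d+2}$ write $A:=\{i\mid a_i\ge 0\}$, $A_+:=A\cap\{1,\ldots,l\}$, $A_-:=A\cap\{l+1,\ldots,d+2\}$. From the face description, $X_a$ equals $\partial P\simeq S^{d-1}$ when $A_+=\{1,\ldots,l\}$ and $A_-=\{l+1,\ldots,d+2\}$; it is the simplicial join $\partial\Delta^{l-1}\ast\Delta(A_-)$ when $A_+=\{1,\ldots,l\}$ and $A_-\subsetneq\{l+1,\ldots,d+2\}$ (with the analogous statement when the roles of the two sides are exchanged); and it is the full simplex on $A$ (contractible, or empty when $A=\emptyset$) in all remaining cases. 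Since the join of any space with a nonempty simplex is contractible, $\tilde{H}_\ast(X_a;k)$ is concentrated in a single degree and is isomorphic to $k$ there: degree $d-1$ in the first case, degree $l-2$ when $A=\{1,\ldots,l\}$, degree $l'-2$ when $A=\{l+1,\ldots,d+2\}$, degree $-1$ when $A=\emptyset$, and $0$ otherwise.

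Because $l,l'\ge 2$ and $l+l'=d+2$, the degrees $l-2$ and $l'-2$ lie in $[0,d-2]$ while $-1$ and $d-1$ do not. Hence condition (1) is equivalent to the nonexistence of $a\in\mathbb{Z}^{d+2}$ with $\sum_i a_i\vec{x_i}\in\vec{g}+\mathbb{Z}\vec{p}$ whose sign pattern is either ($a_i\ge 0$ for $i\le l$, $a_i<0$ for $i>l$) or its reverse. Substituting $a_i=b_i$ for $i\le l$ and $a_i=-1-b_i$ for $i>l$ with $b_i\in\mathbb{Z}_{\ge 0}$, and applying $q$ together with $\sum_i q(\vec{x_i})=0$ and $\sum_{i>l}q(-\vec{x_i})=s$, one obtains
\[
q(\vec{g})\;=\;\sum_i a_i\,q(\vec{x_i})\;=\;s\;+\;\sum_{i\le l}b_i\,q(\vec{x_i})\;+\;\sum_{i>l}b_i\,q(-\vec{x_i}).
\]
Thus the first sign pattern exists iff $q(\vec{g})-s$ lies in the positive cone generated by $\{q(\vec{x_i})\}_{i\le l}\cup\{q(-\vec{x_i})\}_{i>l}$, i.e.\ iff $q(\vec{g})\ge s$, and symmetrically the reverse pattern exists iff $q(\vec{g})\le -s$. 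Combining gives $(1)\Leftrightarrow(2)$.

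The main obstacle I anticipate is the first step: cleanly identifying the face structure of $\partial P$ from the Gale-dual sign data (in particular, knowing that the minimal non-faces are exactly the two sign supports). Once that combinatorial description is secured, the reduced-homology bookkeeping and the algebraic translation through $q$ are routine.
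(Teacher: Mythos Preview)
Your proof is correct and follows the natural combinatorial route: identifying the two minimal non-faces $\{1,\ldots,l\}$ and $\{l+1,\ldots,d+2\}$ of $\partial P$, reading off the homotopy type of $X_a$ from the sign pattern of $a$, and translating the two surviving sphere cases $S^{l-2}$ and $S^{l'-2}$ into the inequalities on $q(\vec{g})$. The paper's own proof defers all of this to \cite[4.1,4.2]{Tom25d} (only remarking that $X_a\simeq S^{d-1}$ when $a\in\mathbb{Z}_{\ge0}^n$), so your argument is essentially an explicit unpacking of that reference and agrees with it; the point you flag as the main obstacle (the face structure) is confirmed in the paper by the identity $SR(P)=V\big((x_1,\ldots,x_l)(x_{l+1},\ldots,x_{d+2})\big)$ used in Lemma~\ref{invflvanq}.
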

\begin{proof}
By Lemma \ref{calcoh}, $\widetilde H_r(X_c;k)$ does not vanish for some $0\leq r\leq d-2$ if and only if either of the following conditions is satisfied.
\begin{enumerate}
\item[(5)] $a\in\mathbb{Z}_{\ge0}^{l+1}$ and $a'\in\mathbb{Z}_{<0}^{l'+1}$
\item[(6)] $a\in\mathbb{Z}_{<0}^{l+1}$ and $a'\in\mathbb{Z}_{\ge0}^{l'+1}$
\end{enumerate}
Observe that for $\vec{g}\in G$, there exists $c=(a,a')\in\mathbb{Z}^{d+2}$ satisfying (5) (respectively, (6)) with $q(\vec g)=\sum_{i=0}^la_iq(\vec{x}_i)+\sum_{i'=0}^{l'}a'_{i'}q(\vec{x}_{i'})$ if and only if $q(\vec g)\ge s$ (respectively, $q(\vec g)\le -s$) holds.
\end{proof}

As a corollary, we obtain the following vanishing criteria for cohomology groups of line bundles on $\X$.

\begin{Cor}\label{rigid}
For a subset $J'\subseteq G$, the following conditions are equivalent.
\begin{enumerate}
\item For any $\vec{g},\vec{h}\in J', n\in\mathbb{Z}$ and $0<r<d$, we have $\Ext_\X^r(\O_\X(\vec{g}),\O_\X(\vec{h}+n\vec{p}))=0$.
\item For any $\vec{g},\vec{h}\in J', 0\leq r\leq d-2$ and $a\in\mathbb{Z}^n$ with $\sum_ia_i\vec{x}_i\in\vec{h}-\vec{g}+\mathbb{Z}\vec{p}$, we have $\tilde{H}_r(X_a;k)=0$.
\item For any $\vec{g},\vec{h}\in J'$, we have $q(\vec{g})\ngeq q(\vec{h})+s$.
\item There exists $I\in\I_H$ such that $q(J')\subseteq J(I)$ holds.
\end{enumerate}
\end{Cor}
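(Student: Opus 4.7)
The plan is to run the chain of equivalences (1)$\Leftrightarrow$(2)$\Leftrightarrow$(3)$\Leftrightarrow$(4) using the three tools already developed in the preceding text: the cohomology formula in Proposition \ref{calcoh}, the combinatorial vanishing criterion in Lemma \ref{vancoh}, and the bijection $\I_H \leftrightarrow \J_H$ in Theorem \ref{upJX}.

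For (1)$\Leftrightarrow$(2), first note that since $\O_\X(\vec{g})$ is locally free, $\Ext^r_\X(\O_\X(\vec{g}),\O_\X(\vec{h}+n\vec{p})) \cong H^r(\X,\O_\X(\vec{h}-\vec{g}+n\vec{p}))$. Applying Proposition \ref{calcoh} rewrites this as $\bigoplus_{a\colon\sum_i a_i\vec{x_i}=\vec{h}-\vec{g}+n\vec{p}} \tilde H_{d-r-1}(X_a;k)$. As $r$ runs over $0<r<d$ the index $d-r-1$ runs over $0\le d-r-1\le d-2$, and as $n$ runs over $\mathbb{Z}$ the condition $\sum_i a_i\vec{x_i}=\vec{h}-\vec{g}+n\vec{p}$ is replaced by $\sum_i a_i\vec{x_i}\in\vec{h}-\vec{g}+\mathbb{Z}\vec{p}$. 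This gives the equivalence.

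For (2)$\Leftrightarrow$(3), I apply Lemma \ref{vancoh} with $\vec{h}-\vec{g}$ playing the role of $\vec{g}$ there; this yields that (2) is equivalent to requiring, for all $\vec{g},\vec{h}\in J'$, both $q(\vec{h}-\vec{g})\ngeq s$ and $q(\vec{h}-\vec{g})\nleq -s$. Unwinding, these conditions read $q(\vec{h})\ngeq q(\vec{g})+s$ and $q(\vec{g})\ngeq q(\vec{h})+s$, and since we range over all ordered pairs in $J'$ the two families coincide after swapping $\vec{g}$ and $\vec{h}$. Hence (2) is equivalent to (3).

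For (3)$\Leftrightarrow$(4), observe that (3) is literally the defining condition for $q(J')\in\widetilde{\J}_H$. Given the action of $\mathbb{Z}$ on $H$ via $s$ (which, as noted before the corollary, satisfies (A1)--(A3)), \cite[1.5]{Tom25d} asserts that every element of $\widetilde{\J}_H$ is contained in some element of $\J_H$, and Theorem \ref{upJX} identifies $\J_H$ with $\{J(I)\mid I\in\I_H\}$; this gives (3)$\Rightarrow$(4). The converse is immediate because $J(I)\in\widetilde{\J}_H$ and the condition defining $\widetilde{\J}_H$ passes to subsets.

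No part of this looks substantial: the three equivalences are essentially plug-ins of results already stated. The only subtlety that needs care is the symmetry collapse in (2)$\Leftrightarrow$(3) and the verification that the $\mathbb{Z}$-action on $H$ satisfies (A1)--(A3) so that Theorem \ref{upJX} applies directly; both are easy but worth writing out explicitly.
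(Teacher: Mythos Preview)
Your proposal is correct and follows essentially the same approach as the paper: the paper's proof reads simply ``(1)$\Leftrightarrow$(2) follows from Proposition \ref{calcoh}. (2)$\Leftrightarrow$(3) follows from Lemma \ref{vancoh}. (3)$\Leftrightarrow$(4) is \cite[1.4]{Tom25d},'' and you have spelled out exactly these three steps with the appropriate unpacking. The only discrepancy is that for (3)$\Leftrightarrow$(4) the paper invokes \cite[1.4]{Tom25d} directly (which, judging from its use elsewhere in the paper, already packages the statement that $J\in\widetilde{\J}_H$ iff $J\subseteq J(I)$ for some $I\in\I_H$), whereas you cite \cite[1.5]{Tom25d} together with Theorem~\ref{upJX}; check the numbering, but the content is the same.
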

\begin{proof}
(1)$\Leftrightarrow$(2) follows from Proposition \ref{calcoh}. (2)$\Leftrightarrow$(3) follows from Lemma \ref{vancoh}. (3)$\Leftrightarrow$(4) is \cite[1.4]{Tom25d}.
\end{proof}

We show the following lemma which is an important step to prove that our tilting bundles generate the derived category.

\begin{Lem}\label{CTthick}
For any $I\in\I_H$, we have
\[\thick\{\O_\X(\vec{g})\mid q(\vec{g})\in J(I)\}=\D^b(\Coh\X).\]
\end{Lem}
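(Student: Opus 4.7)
The plan is to reduce the problem to the standard generation result that $\{\O_\X(\vec{g})\mid\vec{g}\in G\}$ thickly generates $\D^b(\Coh\X)$ (e.g.\ via Kawamata's exceptional collections \cite{Kaw06}, or directly from the full Koszul on all $d+2$ variables) and then show that every $\O_\X(\vec{g})$ with $\vec{g}\in G$ already lies in $\thick\{\O_\X(\vec{h})\mid q(\vec{h})\in J(I)\}$. The first step is to observe that since $P$ is a simplicial polytope with $d+2$ vertices in dimension $d$, the unique affine dependency among the $v_i$ (determined by the sign pattern of $\pi(\vec{x_i})$) forces $\{v_1,\dots,v_l\}$ and $\{v_{l+1},\dots,v_{l+l'}\}$ to be the two minimal non-faces of $P$. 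Consequently $V(x_1,\dots,x_l)\subseteq SR(P)$ and $V(x_{l+1},\dots,x_{l+l'})\subseteq SR(P)$, so the two Koszul complexes $K(x_1,\dots,x_l)\otimes\O_\X(\vec{g})$ and $K(x_{l+1},\dots,x_{l+l'})\otimes\O_\X(\vec{g})$ are exact on $\X$. This yields
\[
\O_\X(\vec{g})\in\thick\{\O_\X(\vec{g}-\textstyle\sum_{i\in T}\vec{x_i})\mid\emptyset\neq T\subseteq\{1,\dots,l\}\}
\]
and the analogous inclusion for the second Koszul.

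Next, for each $\vec{g}\in G$ let $n(\vec{g})\in\mathbb{Z}$ be the unique integer with $q(\vec{g})-n(\vec{g})s\in J(I)$; this exists uniquely because $J(I)$ is a complete representative of $H/\mathbb{Z}s$, by the rank-one analogue of Proposition \ref{GJX}. I would then perform an outer induction on $|n(\vec{g})|$: the case $n(\vec{g})=0$ is trivial, and for the inductive step I treat $n(\vec{g})\geq1$ using the first Koszul (the case $n(\vec{g})\leq-1$ being symmetric via the second). Writing $h_0=q(\vec{g})-n(\vec{g})s\in J(I)$ and $A=\sum_{i\in T}q(\vec{x_i})\in H$ with $0<A\leq s$, a direct case split on whether $h_0-A\in I$ shows that each intermediate term $\vec{g}''=\vec{g}-\sum_{i\in T}\vec{x_i}$ satisfies $n(\vec{g}'')\in\{n(\vec{g})-1,n(\vec{g})\}$; the terms realizing $n(\vec{g})-1$ are handled immediately by the outer induction hypothesis.

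The hard part will be the intermediate terms whose $n$-value equals $n(\vec{g})$, which I plan to handle by a secondary induction on the projection $\pi(\vec{g})\in\mathbb{Z}$. Two observations are crucial: (i) since $\pi(A)>0$, each application of Koszul 1 strictly decreases $\pi$; and (ii) the set of $\vec{g}'\in G$ with $n(\vec{g}')=n$ has $\pi$-values in the finite set $\pi(J(I))+n\pi(s)\subseteq\mathbb{Z}$, so $\pi$ is bounded below within a fixed level of $n$. Thus for the minimum-$\pi$ element of the level $\{n(\cdot)=n\}$, Koszul 1 can only produce intermediate terms with $\pi$ strictly below this minimum, forcing such terms to lie outside the level and hence to have $n$-value $n-1$, where they are covered by the outer hypothesis; the remaining elements of the level are then treated in order of increasing $\pi$ via the inner hypothesis. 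This closes the case $n\geq1$, and the case $n\leq-1$ is the mirror image, using the second Koszul which strictly increases $\pi$ and for which each level is bounded above.
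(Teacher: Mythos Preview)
Your argument is correct and complete. Both your proof and the paper's rest on the same two facts---(a) the irrelevant locus satisfies $V(x_1,\dots,x_l)\cup V(x_{l+1},\dots,x_{d+2})=SR(P)$, so both partial Koszul complexes become exact on $\X$; and (b) line bundles generate $\D^b(\Coh\X)$---but the inductive schemes differ. The paper argues at the level of upper sets in $H$: starting from a minimal element $m\in I$, one Koszul relation shows $\thick\{\O_\X(\vec g)\mid q(\vec g)\in J(I)\}$ contains all $\O_\X(\vec g)$ with $q(\vec g)\in J(\mu_m^-(I))$, and likewise in reverse; one then invokes the connectivity result \cite[1.10]{Tom25d} that any two members of $\I_H$ are linked by finitely many such mutations to sweep out all of $G$. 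Your approach replaces that external connectivity lemma by a self-contained double induction: an outer induction on $|n(\vec g)|$, where $n(\vec g)$ records which $s$-translate of $J(I)$ contains $q(\vec g)$, and an inner induction on $\pi(\vec g)$ within a fixed level, using that the first Koszul strictly lowers $\pi$ while the level sets have finite (hence bounded) $\pi$-image. What your argument buys is independence from \cite{Tom25d}; what the paper's argument buys is a direct link to the mutation calculus used elsewhere (Theorems~\ref{dAPRAtilde} and~\ref{derequiAtilde}), so that the same move simultaneously proves generation here and realises $d$-APR tilting there.
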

\begin{proof}
Put $\T:=\thick\{\O_\X(\vec{g})\mid q(\vec{g})\in J(I)\}\subseteq\D^b(\Coh\X)$. Since $\thick\{\O_\X(\vec{g})\mid\vec{g}\in G\}=\D^b(\Coh\X)$ holds, it is enough to show that $\O_\X(\vec{g})\in\T$ holds for any $\vec{g}\in G$. Consider the graded Koszul complex of a regular sequence $x_0,\cdots, x_l\in S:=k[x_0,\cdots,x_l,x'_0,\cdots,x'_{l'}]$.
\[0\to S(-\vec{x}_0-\cdots-\vec{x}_l)\to\cdots\to S\to S/(x_0,\cdots, x_l)\to0\]
This yields an exact sequence
\[0\to\O_\X(-\vec{x}_0-\cdots-\vec{x}_l)\to\cdots\to\O_\X\to0\]
in $\Coh\X$. Let $m\in I$ be a minimal element and take $\vec{m}\in q^{-1}(m)$. We obtain an exact sequence
\[0\to\O_\X(\vec{m})\to\cdots\to\O_\X(\vec{m}+\vec{x}_0+\cdots+\vec{x}_l)\to0.\]
By the minimality of $m\in I$, for any proper subset $\Lambda\subsetneq\{0,\cdots,l\}$, we have $\vec{m}+\sum_{i\in\Lambda}\vec{x}_i\in q^{-1}(J(I))$. Thus $\O_\X(\vec{m}+\vec{x}_0+\cdots+\vec{x}_l)\in\T$ holds. This means that for any $\vec{g}\in q^{-1}(J(\mu_m^-(I)))$, we have $\O_\X(\vec{g})\in\T$. Moreover, the converse holds: for $I_0\in\I_\H$ and a minimal element $m_0\in I_0$, if $\O_\X(\vec{g})\in\T$ holds for any $\vec{g}\in q^{-1}(J(\mu_{m_0}^-(I_0)))$, then we have $\O_\X(\vec{g})\in\T$ for any $\vec{g}\in q^{-1}(J(I_0))$. Thus by \cite[1.10]{Tom25d}, we obtain $\O_\X(\vec{g})\in\T$ for any $\vec{g}\in G$.
\end{proof}

Since $G$ has a partial order, for $J\in\J_H$, we can equip $q^{-1}(J)\subseteq G$ with a partial order. Then observe that for $\vec{g},\vec{h}\in q^{-1}(J)$, we have
\[\vec{g}\leq\vec{h}\Leftrightarrow S_{\vec{h}-\vec{g}}\neq0\Leftrightarrow\Hom_\X(\O_\X(\vec{g}),\O_\X(\vec{h}))\neq0.\]
Here, $\mathbb{Z}$ acts on $q^{-1}(J)$ by $n\cdot\vec{g}:=\vec{g}+n\vec{p}$. This action satisfies the conditions (A1),(A2) and (A3).

\begin{Thm}\label{classfitiltrk2}
Let $P$ be a $d$-dimensional simplicial lattice polytope with $d+2$ vertices containing the origin as an interior point and $\X:=\X(P)$. Then for a subset $J'\subseteq G\cong\Pic\X$, the following conditions are equivalent.
\begin{enumerate}
\item $\E(J'):=\bigoplus_{\vec{g}\in J'}\O_\X(\vec{g})\in\D^b(\Coh\X)$ is a $d$-tilting bundle.
\item There exists $J\in\J_H$ containing $q(J')$ such that $J'\in\J_{q^{-1}(J)}$ holds.
\end{enumerate}
\end{Thm}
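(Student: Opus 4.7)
The plan is to prove both implications by first reducing the pretilting part of the $d$-tilting condition to the combinatorial setup via tools already established (Corollary~\ref{rigid} and Serre duality), and then handling generation plus the global-dimension bound via a Beilinson-type framework over the ``slice'' $q^{-1}(J)$, in parallel with the rank-one argument of Theorem~\ref{classfitiltrk1}. For rigidity, Corollary~\ref{rigid} asserts that vanishing of $\Ext^r_\X(\O_\X(\vec{g}), \O_\X(\vec{h}))$ for $0 < r < d$ and all $\vec{g}, \vec{h} \in J'$ (up to $\vec{p}$-twists) is equivalent to the existence of $I \in \I_H$ with $q(J') \subseteq J(I)$; set $J := J(I) \in \J_H$. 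For the top degree, Serre duality via $(-\vec{p})[d]$ gives
\[\Ext^d_\X(\O_\X(\vec{g}), \O_\X(\vec{h})) \cong D\,S_{\vec{g} - \vec{h} - \vec{p}},\]
which vanishes precisely when $\vec{g} \ngeq \vec{h} + \vec{p}$ in $G$, i.e.\ when $J' \in \widetilde{\J}_{q^{-1}(J)}$ under the $\mathbb{Z}\vec{p}$-action on $q^{-1}(J)$ (with partial order inherited from $G$); higher Ext's vanish by dimension.

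With $(J, J')$ as above, the set $q^{-1}(J) \subseteq G$ with its inherited partial order and the $\mathbb{Z}\vec{p}$-action satisfies the analogues of (A1)--(A3), so Theorem~\ref{upJX} gives a bijection $\I_{q^{-1}(J)} \leftrightarrow \J_{q^{-1}(J)}$. To realize this categorically I would introduce a $\mathbb{Z}$-graded dg algebra $\Gamma_J$ modelling the slice: for a lift $J_0 \subseteq q^{-1}(J)$ of $J$, set
\[\Gamma_J := \bigoplus_{n \geq 0} \Hom_\X\!\Bigl(\bigoplus_{\vec{g} \in J_0} \O_\X(\vec{g}),\ \bigoplus_{\vec{h} \in J_0} \O_\X(\vec{h} + n\vec{p})\Bigr),\]
with $\mathbb{Z}_{\geq 0}$-grading by $n$, and verify conditions (D1)--(D3) of Section~4 with Gorenstein parameter $1$ (the $\vec{p}$-shift) and $H^{\neq 0}\Gamma_J = 0$. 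Here Lemma~\ref{CTthick} furnishes the generation needed for (D1), while the Serre functor $(-\vec{p})[d]$ on $\D^b(\Coh\X)$ and the cohomology formula of Proposition~\ref{calcoh} furnish (D2) and (D3). Theorem~\ref{MMcorr}(3) then produces a $d$-tilting object of $\C^{\mathbb{Z}}(\Gamma_J) \simeq \D^b(\Coh\X)$ for each $I' \in \I_{q^{-1}(J)}$, and under the equivalence it is exactly $\E(J(I'))$. This yields $(2) \Rightarrow (1)$.

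For the converse $(1) \Rightarrow (2)$, the rigidity analysis of the first paragraph already gives $J \in \J_H$ and $J' \in \widetilde{\J}_{q^{-1}(J)}$. Maximality $J' \in \J_{q^{-1}(J)}$ follows from the fact that a tilting object is maximal among rigid objects: if $J'$ could be enlarged by some $\vec{g}_0 \in q^{-1}(J) \setminus J'$ inside $\widetilde{\J}_{q^{-1}(J)}$, then $\E(J') \oplus \O_\X(\vec{g}_0)$ would again be pretilting by the first paragraph, forcing $\O_\X(\vec{g}_0) \in \add \E(J')$ and hence $\vec{g}_0 \in J'$, a contradiction.

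The principal difficulty lies in verifying conditions (D1)--(D3) for the slice dg algebra $\Gamma_J$. In the rank-one setting of Theorem~\ref{classfitiltrk1}, these are essentially automatic from the simple form $SR(P) = V(x_0, \dots, x_d)$, whereas here $SR(P)$ is combinatorially richer, and one must carefully track the interaction between the two-level grading (first modulo $\vec{p}$ via $H$, then the $\mathbb{Z}\vec{p}$-shifts inside $q^{-1}(J)$) and the Stanley--Reisner ideal to identify the correct Gorenstein/duality data on $\Gamma_J$. Once this framework is in place, the classification follows immediately from Theorem~\ref{GBei}, Theorem~\ref{MMcorr}, and Theorem~\ref{upJX}, exactly as in the rank-one case.
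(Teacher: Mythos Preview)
Your treatment of rigidity (Corollary~\ref{rigid} for $0<r<d$, Serre duality for $r=d$) and of the implication $(1)\Rightarrow(2)$ via maximality is correct and matches the paper.

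The gap is in $(2)\Rightarrow(1)$, at the generation step. You assert $\C^{\mathbb{Z}}(\Gamma_J)\simeq\D^b(\Coh\X)$ and then invoke Theorem~\ref{MMcorr}(3), but this equivalence is precisely what is difficult, and nothing you cite establishes it. Lemma~\ref{CTthick} shows that the line bundles indexed by $q^{-1}(J)$ generate $\D^b(\Coh\X)$; it does \emph{not} identify $\D^b(\Coh\X)$ with the Verdier quotient $\per^{\mathbb Z}\Gamma_J/\thick^{\mathbb Z}(\Gamma_J)_0$, nor does it verify (D1), which is a compactness statement about $(\Gamma_J)_0$ inside $\per^{\mathbb Z}\Gamma_J$. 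In rank one this identification is automatic because $SR(P)=V(x_0,\dots,x_d)$ forces $\D^b(\Coh\X)=\C^G(S)$ on the nose; in rank two $SR(P)=V\bigl((x_1,\dots,x_l)(x_{l+1},\dots,x_{d+2})\bigr)$ is strictly larger, and the two quotients are a priori different.

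The paper does not attempt the abstract equivalence. Instead it works in $\per^G S$: it recognises $\Gamma=\bigoplus_n\Hom_S^G(Q,Q(n\vec p))$ (with $Q=\bigoplus_{\vec g\in J'}S(\vec g)$) as an NCCR of $R=S^{(\mathbb Z\vec p)}$ via \cite[4.3]{Tom25d}, and proves Gorenstein parameter $1$ by exhibiting $\Hom_R(\Gamma,R)\cong\Gamma$ as graded bimodules and invoking \cite[3.15]{HI}. This forces the minimal graded projective resolution of $\Gamma/\rad\Gamma$ to run from $P_0=\Gamma$ to $P_{d+1}=\Gamma(-1)$. The crucial technical ingredient you are missing is Lemma~\ref{invflvanq}: any $M\in\mod^G S$ with only finitely many nonzero degrees in $q^{-1}(J)$ lies in $\mod^G_{SR(P)}S$, hence vanishes in $\Coh\X$. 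Applying this to $\Gamma/\rad\Gamma$ converts the resolution into an exact sequence $0\to\O_\X(\vec m)\to\E_d\to\cdots\to\E_1\to\O_\X(\vec m+\vec p)\to0$ with $\E_i\in\add\E(J')$; iterating via mutation (\cite[1.10]{Tom25d}) gives $\O_\X(\vec g)\in\thick\E(J')$ for all $\vec g\in q^{-1}(J)$, and Lemma~\ref{CTthick} then finishes generation. Your abstract framework could in principle be completed, but only after proving something equivalent to Lemma~\ref{invflvanq} to control the kernel of the comparison functor.
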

Thus by Theorem \ref{upJX}, $d$-tilting bundles on $\X$ consisting of line bundles correspond bijectively to the pairs $(I,I')$ where $I\in\I_H$ and $I'\in\I_{q^{-1}(J(I))}$.
\begin{proof}[Proof of Theorem \ref{classfitiltrk2}]
(2)$\Rightarrow$(1) Take $J\in\J_H$ and $J'\in\J_{q^{-1}(J)}$. From Corollary \ref{rigid}, we have
\[\Ext_\X^r(\O_\X(\vec{g}),\O_\X(\vec{h}+n\vec{p}))=0\]
for any $\vec{g},\vec{h}\in J', n\in\mathbb{Z}$ and $0<r<d$. In addition, for $\vec{g},\vec{h}\in J'$ and $n\geq0$, we have
\[\Ext_\X^d(\O_\X(\vec{g}),\O_\X(\vec{h}+n\vec{p}))\cong D\Hom_\X(\O_\X(\vec{h}+(n+1)\vec{p}),\O_\X(\vec{g}))=0\]
since $\vec{g}\ngeq\vec{h}+(n+1)\vec{p}$ holds by $J'\in\J_{q^{-1}(J)}$. Thus it is enough to show that $\thick\E(J')=\D^b(\Coh\X)$ holds. By Lemma \ref{CTthick}, it is enough to show that $\O_\X(\vec{g})\in\thick\E(J')$ holds for all $\vec{g}\in q^{-1}(J)$.

Put $\T:=\thick\{S(\vec{g})\mid q(\vec{g})\in J\}\subseteq\per^GS$ and $Q:=\bigoplus_{\vec{g}\in J'}S(\vec{g})\in\proj^GS$. If we define a $\mathbb{Z}_{\geq0}$-graded algebra $\Gamma$ as
\[\Gamma:=\bigoplus_{n\in\mathbb{Z}}\Hom_S^G(Q,Q(n\vec{p}))=\bigoplus_{n\in\mathbb{Z}_{\geq0}}\Hom_S^G(Q,Q(n\vec{p})),\]
then we have a triangle equivalence $F:=\bigoplus_{n\in\mathbb{Z}}\RHom_S^G(Q,-(n\vec{p}))\colon\T\xrightarrow[\simeq]{}\per^\mathbb{Z}\Gamma$. By Proposition \ref{CYGor1}, $\Gamma$ is a $(d+1)$-Calabi--Yau algebra of Gorenstein parameter $1$. This means that if we write the graded minimal projective resolution of $\Gamma/\rad\Gamma$ as 
\[0\to P_{d+1}\to\cdots\to P_0\to\Gamma/\rad\Gamma\to0,\]
then we have $P_0=\Gamma$ and $P_{d+1}=\Gamma(-1)$. We write this resolution as $P_\bullet$ and then $P_\bullet\cong\Gamma/\rad\Gamma$ holds in $\per^\mathbb{Z}\Gamma$. Here, by Lemma \ref{invflvanq}, the image of $F^{-1}(P_\bullet)\cong F^{-1}(\Gamma/\rad\Gamma)$ in $\D^b(\Coh\X)$ vanishes. Take a minimal element $\vec{m}\in J'$. Then $F^{-1}(P_\bullet(1))$ has a direct summand of the form
\[S(\vec{m})\to Q_d\to\cdots Q_1\to S(\vec{m}+\vec{p})\]
with $Q_i\in\add Q$. This means that there exists an exact sequence
\[0\to\O_\X(\vec{m})\to\E_d\to\cdots\to\E_1\to\O_\X(\vec{m}+\vec{p})\to0\]
in $\Coh\X$ where $\E_i\in\add\E(J')$. Thus we obtain $\O_\X(\vec{m}+\vec{p})\in\thick\E(J')$. By combining with the dual argument, we can conclude that $\O_\X(\vec{g})\in\thick\E(J')$ holds for any $\vec{g}\in q^{-1}(J)$ by \cite[1.10]{Tom25d}.

(1)$\Rightarrow$(2) Since $\Ext_\X^r(\O_\X(\vec{g}),\O_\X(\vec{h}+n\vec{p}))=0$ holds for any $\vec{g},\vec{h}\in J', n\in\mathbb{Z}$ and $0<r<d$, by Corollary \ref{rigid}, there exists $I\in\I_H$ such that $q(J')\subseteq J(I)$ holds. For $\vec{g},\vec{h}\in J'$, we have
\[0=D\Ext^d_\X(\O_\X(\vec{g}),\O_\X(\vec{h}))\cong\Hom_\X(\O_\X(\vec{h}+\vec{p}),\O_\X(\vec{g}))\cong S_{\vec{g}-\vec{h}-\vec{p}}.\]
This means $\vec{g}\ngeq\vec{h}+\vec{p}$. Thus $J'\in\widetilde{\J}_{q^{-1}(J(I))}$ holds and by \cite[1.4]{Tom25d}, there exists $J''\in\J_{q^{-1}(J(I))}$ such that we have $J'\subseteq J''$. By (2)$\Rightarrow$(1), $\E(J'')\in\D^b(\Coh\X)$ is a tilting object. This forces $J'=J''$.
\end{proof}

\begin{Lem}\label{invflvanq}
Take $J\in\J_H$. For $M\in\mod^GS$, if $\#\{\vec{g}\in q^{-1}(J)\mid M_{\vec{g}}\neq0\}<\infty$ holds, then we have $M\in\mod^G_{SR(P)}S$.
\end{Lem}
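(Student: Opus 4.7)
The plan is to prove the contrapositive: if $M\notin\mod^G_{SR(P)}S$, then $\{\vec{g}\in q^{-1}(J)\mid M_{\vec{g}}\neq0\}$ is infinite. In the Picard rank two case, since $P$ is simplicial with $d+2$ vertices and exactly two circuits, the Stanley--Reisner ideal $SR(P)\subseteq S$ has the two primitive generators $x_1\cdots x_l$ and $x_{l+1}\cdots x_{d+2}$. Hence $M\in\mod^G_{SR(P)}S$ if and only if both of these monomials act nilpotently on $M$.

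If neither $(x_1\cdots x_l)^N$ nor $(x_{l+1}\cdots x_{d+2})^N$ annihilates $M$ for any $N$, then at least one of them, say $x_1\cdots x_l$, is not nilpotent on $M$, and I would pick a homogeneous $m\in M_{\vec{g_0}}$ with $(x_1\cdots x_l)^n m\neq0$ for all $n\geq0$. This yields the infinite family of non-zero degrees $\vec{g_0}+n\vec{s_+}\in\{\vec{g}\mid M_{\vec{g}}\neq0\}$ with $\vec{s_+}:=\vec{x_1}+\cdots+\vec{x_l}$. Since $q(\vec{s_+})=s$, however, all of these degrees lie in the same $\mathbb{Z}s$-coset of $H$, so at most one of them lies directly in $q^{-1}(J)$.

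The heart of the argument is therefore to exploit the cyclic submodule $Sm\cong S/\mathrm{ann}_S(m)$. The hypothesis $(x_1\cdots x_l)^n m\neq0$ prevents any $x_i$ with $i\leq l$ from belonging to $\mathrm{ann}_S(m)$, so the set $\{\vec{g}\mid(Sm)_{\vec{g}}\neq0\}$ contains the whole translated semigroup $\vec{g_0}+\sum_{i\leq l}\mathbb{Z}_{\geq0}\vec{x_i}$. Combining this with the combinatorial description of $J\in\J_H$ provided by Proposition \ref{GJX} applied to $H$ with action by $s$ (which makes $J$ a complete representative of $H/\mathbb{Z}s$ closed, up to a single $s$-shift, under adding any of the positive-cone generators), I would argue that the $q$-image of the support of $Sm$ must meet infinitely many $\mathbb{Z}s$-cosets of $H$, each of which contains a unique element of $J$ and hence contributes a non-zero element to $\{\vec{g}\in q^{-1}(J)\mid M_{\vec{g}}\neq0\}$.

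The main obstacle is to make sure $\mathrm{ann}_S(m)$ cannot collapse the picture: if too many of the negative-side variables $x_{l+1},\ldots,x_{d+2}$ lie in $\mathrm{ann}_S(m)$, the projected cone might sit inside a proper subgroup of $H/\mathbb{Z}s$. Controlling this case will require a separate argument showing that in such a degenerate situation the other monomial $x_{l+1}\cdots x_{d+2}$ already annihilates a sufficiently large submodule of $M$, which combined with the original non-nilpotence of $x_1\cdots x_l$ on that submodule produces a contradiction with the conclusion that $M$ fails to be $SR(P)$-torsion in the first place.
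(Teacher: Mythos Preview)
Your reduction at the very first step is incorrect. The Stanley--Reisner locus here is $SR(P)=V(\mathfrak a)$ with
\[
\mathfrak a=(x_1,\ldots,x_l)(x_{l+1},\ldots,x_{d+2})=(x_ix_{l+j}\mid 1\le i\le l,\ 1\le j\le l'),
\]
not the ideal generated by the two ``long'' monomials $x_1\cdots x_l$ and $x_{l+1}\cdots x_{d+2}$. Hence the equivalence you assert --- that $M\in\mod^G_{SR(P)}S$ if and only if both $x_1\cdots x_l$ and $x_{l+1}\cdots x_{d+2}$ act nilpotently on $M$ --- is false. A concrete counterexample (recall $l,l'\ge 2$) is $M=S/(x_1,x_{l+1})$: both long monomials vanish identically on $M$, yet $M$ is supported on $V(x_1,x_{l+1})$, which is not contained in $V(\mathfrak a)=V(x_1,\ldots,x_l)\cup V(x_{l+1},\ldots,x_{d+2})$, so $M\notin\mod^G_{SR(P)}S$. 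Thus the contrapositive hypothesis ``some long monomial is not nilpotent'' simply need not hold, and everything you build on it (the cone $\vec{g_0}+\sum_{i\le l}\mathbb Z_{\ge 0}\vec{x_i}$ in the support, the appeal to Proposition~\ref{GJX}) does not get off the ground.

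The paper's argument avoids this by working directly with the actual generators $x_ix_{l+j}$ of $\mathfrak a$. The crucial observation is that for each pair $(i,j)$ one can choose positive integers $a_{ij},b_{ij}$ with $a_{ij}q(\vec{x_i})+b_{ij}q(\vec{x_{l+j}})=0$ in $H$ (possible because $\pi(\vec{x_i})>0>\pi(\vec{x_{l+j}})$ and $H$ has rank one), so that $a_{ij}\vec{x_i}+b_{ij}\vec{x_{l+j}}\in\mathbb Z_{>0}\vec p$. After first translating a given homogeneous $m\in M_{\vec g}$ so that $q(\vec g)\in J$, the elements $(x_i^{a_{ij}}x_{l+j}^{b_{ij}})^n m$ all lie in degrees of $q^{-1}(J)$; the finiteness hypothesis then forces each $x_ix_{l+j}$ to be nilpotent on $m$, i.e.\ $\mathfrak a^Nm=0$. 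This is exactly the refinement your approach is missing: one must control nilpotence of every mixed product $x_ix_{l+j}$, not just of the two full products $x_1\cdots x_l$ and $x_{l+1}\cdots x_{d+2}$.
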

\begin{proof}
Observe that if we put $\mathfrak{a}:=(x_0,\cdots,x_l)(x'_0,\cdots,x'_{l'})$, then $SR(P)=V(\mathfrak{a})$ holds. Thus to get the assertion, it is enough to show that for any homogeneous element $m\in M$, there exists $n\geq0$ such that $\mathfrak{a}^nm=0$. Take $\vec{g}\in G$ with $m\in M_{\vec{g}}$. Remark that there exists $n\in\mathbb{Z}$ such that $q(\vec{g})+ns\in J$ holds. If $n\geq0$ (respectively, $n\leq0$), then $(x_0\cdots x_l)^nm\in M_{q^{-1}(q(\vec{g})+ns)}$ (respectively, $(x'_0\cdots x'_{l'})^{-n}m\in M_{q^{-1}(q(\vec{g})+ns)}$) holds. Thus we may assume first that $q(\vec{g})\in J$ holds.

Take $0\leq i\leq l$ and $0\leq i'\leq l'$. Then there exists $a_{ii'},b_{ii'}>0$ such that $a_{ii'}q(\vec{x}_i)+b_{ii'}q(\vec{x}'_{i'})=0\in H$ holds. Take $c>0$ such that $a_{ii'}\vec{x}_i+b_{ii'}\vec{x}'_{i'}=c\vec{p}$ holds. Then for any $n\geq0$, we have $(x_i^{a_{ii'}}x'^{b_{ii'}}_{i'})^nm\in M_{\vec{g}+nc\vec{p}}$ and $\vec{g}+nc\vec{p}\in q^{-1}(J)$. By our assumption, there exists $n\geq0$ such that $(x_i^{a_{ii'}}x'^{b_{ii'}}_{i'})^nm=0$. This proves the assertion.
\end{proof}

Next, we see that the endomorphism algebras of tilting bundles obtained in Theorem \ref{classfitiltrk2} are $d$-representation infinite algebras of type $\widetilde A\widetilde A$. Compare this with Theorem \ref{enddtiltrk1}.

\begin{Thm}\label{enddtiltrk2}
Let $P$ be a $d$-dimensional simplicial lattice polytope with $d+2$ vertices containing the origin as an interior point and $\X:=\X(P)$. Let $B\subseteq L\oplus L'$ be a cofinite subgroup corresponding to $P$. Then for $I\in\I_H$ and $I'\in\I_{q^{-1}(J(I))}$, we have
\[\End_\X(\E(J(I')))\cong A(B,C(I),C'(I')).\]
In particular, the following statements hold.
\begin{enumerate}
\item The endomorphism algebra of a $d$-tilting bundle consisting of line bundles on a $d$-dimensional smooth toric Fano stack of Picard number two is a $d$-representation infinite algebra of type $\widetilde A\widetilde A$.
\item Conversely, every $d$-representation infinite algebra of type $\widetilde A\widetilde A$ can be obtained in this way.
\end{enumerate}
\end{Thm}
\begin{proof}
By \cite[6.14]{Tom25d}, we have $\End_S^H(\bigoplus_{\vec{g}\in J(I')}S(\vec{g}))\cong\Gamma(B,C(I))$. By taking the degree $0$ part, we obtain
\[\End_S^G(\bigoplus_{\vec{g}\in J(I')}S(\vec{g}))\cong A(B,C(I),C'(I')).\]
The last statements follow by \cite[6.13]{Tom25d}.
\end{proof}

This theorem shows that the smooth toric Fano stacks of Picard number two give geometric models of the higher representation infinite algebras of type $\widetilde A\widetilde A$. Moreover, together with Proposition \ref{SerreRI}, this theorem gives an alternative proof that our $A(B,C,C')$ is certainly $d$-representation infinite (Theorem \ref{AAdtame}).

As an immediate corollary, we obtain the following. We strengthen this corollary in Theorem \ref{dAPRconndoubleAtilde}.

\begin{Cor}
Let $P$ be a $d$-dimensional simplicial lattice polytope with $d+2$ vertices containing the origin as an interior point. Let $B\subseteq L\oplus L'$ be the cofinite subgroup corresponding to $P$. Take $I\in\I_H$ and cuts $C'_1,C'_2$ of $Q(C(I))$ with common type $u\in\Int(P)$. Then the two algebras $A(B,C(I),C'_1)$ and $A(B,C(I),C'_2)$ are derived equivalent.
\end{Cor}

Using the derived equivalence $\D^b(\Coh\X)\simeq\per A$ obtained by Theorem \ref{classfitiltrk2}, we can give a description of the $d$-preprojective component and the $d$-preinjective component $\P,\I\subseteq\mod A$. Remark that we have the following commutative diagram obtained by the uniqueness of the Serre functor.
\[\xymatrix{
\D^b(\Coh\X) \ar[r]_\simeq \ar[d]_{(\vec{p})} & \per A \ar[d]^{\nu_d^{-1}}\\
\D^b(\Coh\X) \ar[r]_\simeq & \per A
}\]

\begin{Prop}
Take $I\in\I_H$ and $I'\in\I_{q^{-1}(J(I))}$. Put $A:=\End_\X(\E(J(I')))$ in the notation of Theorem \ref{classfitiltrk2}. Then the derived equivalence $\D^b(\Coh\X)\simeq\per A$ restricts to equivalences
\[\add\{\O_\X(\vec{g})\mid\vec{g}\in I'\}\simeq\P\text{ and }\add\{\O_\X(\vec{g})\mid\vec{g}\in q^{-1}(J(I))\setminus I'\}\simeq\I[-d].\]
In particular, we obtain an equivalence
\[\add\{\O_\X(\vec{g})\mid\vec{g}\in q^{-1}(J(I))\}\simeq\I[-d]\vee\P.\]
\end{Prop}
\begin{proof}
The assertion follows from the above commutative diagram.
\end{proof}

Next, we investigate the $d$-APR tilts \cite{IO11} of $d$-representation infinite algebras of type $\widetilde A\widetilde A$ through their geometric models. First, we prove that the endomorphism algebra of a $d$-APR tilting module of a $d$-representation infinite algebra of type $\widetilde A\widetilde A$ is again a $d$-representation infinite algebra of type $\widetilde A\widetilde A$ with the same $B,C$ and type $u\in P$.

\begin{Thm}\label{dAPRdoubleAtilde}
Let $P$ be a $d$-dimensional simplicial lattice polytope with $d+2$ vertices containing the origin as an interior point. Let $B\subseteq L\oplus L'$ be the cofinite subgroup corresponding to $P$. Take $I\in\I_H$ and $I'\in\I_{q^{-1}(J(I))}$. Put $A:=A(B,C(I),C'(I'))$. Take a minimal element $\vec{m}\in I'$ and let $T:=\nu_d^{-1}(e_{\vec{m}}A)\oplus\bigoplus_{\vec{g}\in J(I')\setminus\{\vec{m}\}}e_{\vec{g}}A\in\mod A$ be the $d$-APR tilting module with respect to $e_{\vec{m}}A$. Then we have
\[\End_A(T)\cong A(B,C(I),C'(\mu^-_{\vec{m}}(I'))).\]
\end{Thm}
\begin{proof}
Let $\X$ be the corresponding smooth toric Fano stack. Then we have
\[\End_A(T)\cong\End_\X\bigl(\E(J(I')\sqcup\{\vec{m}+\vec{p}\}\setminus\{\vec{m}\})\bigr)\]
by the above commutative diagram. Thus the assertion follows from Theorem \ref{classfitiltrk2}.
\end{proof}

We emphasize that Theorem \ref{dAPRdoubleAtilde} is difficult to prove without using their geometric models. Thanks to Theorem \ref{dAPRdoubleAtilde}, we can prove that all $d$-representation infinite algebras of type $\widetilde A\widetilde A$ having same $B,C$ and the type $u$ are connected by a finite sequence of iterated $d$-APR tilts.

\begin{Thm}\label{dAPRconndoubleAtilde}
Let $P$ be a $d$-dimensional simplicial lattice polytope with $d+2$ vertices containing the origin as an interior point. Let $B\subseteq L\oplus L'$ be the cofinite subgroup corresponding to $P$. Take $I\in\I_H$ and cuts $C'_1,C'_2$ of $Q(C(I))$ with common type $u\in\Int(P)$. Then the two algebras $A(B,C(I),C'_1)$ and $A(B,C(I),C'_2)$ are connected by a finite sequence of iterated $d$-APR tilts.
\end{Thm}
\begin{proof}
By Theorem \ref{cutupcorr2}, we can take $I'_1,I'_2\in\I_{q^{-1}(J(I))}$ such that $C'_i=C'(I'_i)$ holds for $i=1,2$. Then by \cite[1.9]{Tom25d}, $I'_1$ and $I'_2$ can be connected by a finite sequence of mutations by considering $I'_1\cap I'_2$. Thus the assertion follows from Theorem \ref{dAPRdoubleAtilde}.
\end{proof}

\subsection{Examples of dimension $2$}

We classify $2$-tilting bundles consisting of line bundles on some toric stacky surfaces, including $\mathbb{P}^1\times\mathbb{P}^1$ and $\Sigma_1$, and determine their quivers by using Theorem \ref{classfitiltrk2}. We remark that all the endomorphism algebras of the obtained $2$-tilting bundles are $2$-representation infinite algebras of type $\widetilde A\widetilde A$.

\begin{Ex}($d=2$)\label{ExHirz}
We see that Theorem \ref{classfitiltrk2} gives a classification of $2$-tilting bundles consisting of line bundles on the Hirzebruch surfaces $\mathbb{P}^1\times\mathbb{P}^1$ and $\Sigma_1$. This is also known as the classification of geometric helices.

(1) Put $G:=\mathbb{Z}^2$ and $\vec{x}=\vec{y}=(1,0),\vec{z}=\vec{w}=(0,1)\in G$. We view $S:=k[x,y,z,w]$ as a $G$-graded $k$-algebra. Then the resulting toric stack $\X$ is isomorphic to $\mathbb{P}^1\times\mathbb{P}^1$. We have $\vec{p}=(2,2)$ and $H=G/\mathbb{Z}\vec{p}\cong\mathbb{Z}\oplus(\mathbb{Z}/2\mathbb{Z}); (a,b)+\mathbb{Z}\vec{p}\mapsto(a-b,b+2\mathbb{Z})$. If we equip $H$ with our partial order, then the quiver of $H$ becomes the following.
\[\xymatrix{
\cdots \ar@2[r]^x_y \ar@2[dr]^{-z}_{-w} & \circ \ar@2[r]^x_y \ar@2[dr]^{-z}_{-w} & \circ \ar@2[r]^x_y \ar@2[dr]^{-z}_{-w} & \circ \ar@2[r]^x_y \ar@2[dr]^{-z}_{-w} & \circ \ar@2[r]^x_y \ar@2[dr]^{-z}_{-w} & \cdots \\
\cdots \ar@2[r]^x_y \ar@2[ur]^{-z}_{-w} & \circ \ar@2[r]^x_y \ar@2[ur]^{-z}_{-w} & \circ \ar@2[r]^x_y \ar@2[ur]^{-z}_{-w} & \circ \ar@2[r]^x_y \ar@2[ur]^{-z}_{-w} & \circ \ar@2[r]^x_y \ar@2[ur]^{-z}_{-w} & \cdots
}\]
Then there are the following two kinds of non-trivial upper sets in $H$ up to translations.

\[\begin{array}{c c}
\xymatrix{
\circ \ar@2[r]^x_y \ar@2[dr]^{-z}_{-w} & \circ \ar@2[r]^x_y \ar@2[dr]^{-z}_{-w} & \circ \ar@2[r]^x_y \ar@2[dr]^{-z}_{-w} & \cdots \\
\circ \ar@2[r]^x_y \ar@2[ur]^{-z}_{-w} & \circ \ar@2[r]^x_y \ar@2[ur]^{-z}_{-w} & \circ \ar@2[r]^x_y \ar@2[ur]^{-z}_{-w} & \cdots
}&\xymatrix{
\circ \ar@2[r]^x_y \ar@2[dr]^{-z}_{-w} & \circ \ar@2[r]^x_y \ar@2[dr]^{-z}_{-w} & \circ \ar@2[r]^x_y \ar@2[dr]^{-z}_{-w} & \cdots \\
 & \circ \ar@2[r]^x_y \ar@2[ur]^{-z}_{-w} & \circ \ar@2[r]^x_y \ar@2[ur]^{-z}_{-w} & \cdots
}\end{array}\]
Remark that the isomorphism $H\cong\mathbb{Z}\oplus(\mathbb{Z}/2\mathbb{Z})$ sends $s$ to $(2,0)$. If we let $\widehat{C}_1,\widehat{C}_2$ be the cuts of $\widehat{Q}$ corresponding to the two kinds of upper sets in $H$, then $\widehat{Q}(\widehat{C}_1)$ and $\widehat{Q}(\widehat{C}_2)$ are as follows. Here, the black vertices correspond to elements of $B$.
\[
\begin{tikzcd}[row sep=large]
\circ \arrow[d, "w"] 
& \circ \arrow[r, "x"] \arrow[l, "y"] 
& \circ \arrow[d, "w"] 
& \circ \arrow[l, "y"] 
\\
\bullet \arrow[r, "x"] 
& \circ \arrow[u, "z"] \arrow[d, "w"] 
& \bullet \arrow[r, "x"] \arrow[l, "y"] 
& \circ \arrow[u, "z"] \arrow[d, "w"] 
\\
\circ \arrow[u, "z"] \arrow[d, "w"] 
& \circ \arrow[r, "x"] \arrow[l, "y"] 
& \circ \arrow[u, "z"] \arrow[d, "w"] 
& \circ \arrow[l, "y"] 
\\
\bullet \arrow[r, "x"] 
& \circ \arrow[u, "z"] 
& \bullet \arrow[r, "x"] \arrow[l, "y"] 
& \circ \arrow[u, "z"]
\end{tikzcd}
\quad
\begin{tikzcd}[row sep=large]
\circ \arrow[r, "x"] \arrow[d, "w"]
& \circ \arrow[d, "w"]
& \circ \arrow[r, "x"] \arrow[l, "y"] \arrow[d, "w"]
& \circ \arrow[d, "w"]
\\
\bullet \arrow[r, "x"]
& \circ \arrow[ur, "xz"] \arrow[ul, "yz"] \arrow[dr, "xw"] \arrow[dl, "yw"]
& \bullet \arrow[r, "x"] \arrow[l, "y"]
& \circ \arrow[ul, "yz"] \arrow[dl, "yw"]
\\
\circ \arrow[r, "x"] \arrow[u, "z"] \arrow[d, "w"]
& \circ \arrow[u, "z"] \arrow[d, "w"]
& \circ \arrow[r, "x"] \arrow[l, "y"] \arrow[u, "z"] \arrow[d, "w"]
& \circ \arrow[u, "z"] \arrow[d, "w"]
\\
\bullet \arrow[r, "x"]
& \circ \arrow[ur, "xz"] \arrow[ul, "yz"]
& \bullet \arrow[r, "x"] \arrow[l, "y"] 
& \circ \arrow[ul, "yz"]
\end{tikzcd}
\]

Thus there are the following two kinds of sets in $\J_H$ up to translations with the following quivers $Q(C_1)$ and $Q(C_2)$.

\[\begin{array}{c c}
\xymatrix{
\circ \ar@2[r]^x_y & \circ \ar@2[dl]^z_w \\
\circ \ar@2[r]^x_y & \circ \ar@2[ul]^z_w
}&\xymatrix{
\circ \ar@2[r]^x_y & \circ \ar[d]^4 \\
 & \circ \ar@2[r]^x_y \ar@2[ul]^z_w & \circ \ar@2[ul]^z_w
}\end{array}\]
Here, the arrow $\xrightarrow{4}$ means that there are $4$ arrows consisting of $xz,xw,yz$ and $yw$. The quivers of $q^{-1}(J)$ are as follows.
\[\xymatrix{
 & & & \circ \ar@2[r]^x_y & \cdots \\
 & & \circ \ar@2[r]^x_y & \circ \ar@2[u]^z_w \\
 & \circ \ar@2[r]^x_y & \circ \ar@2[u]^z_w \\
\cdots \ar@2[r]^x_y & \circ \ar@2[u]^z_w
}\xymatrix{
 & & & & & \rotatebox{90}{$\ddots$} \\
 & & & \circ \ar@2[r]^x_y & \circ \ar[ur]^4 \\
 & & & \circ \ar@2[r]^x_y \ar@2[u]^z_w & \circ \ar@2[u]^z_w \\
 & \circ \ar@2[r]^x_y & \circ \ar[ur]^4 \\
 & \circ \ar@2[r]^x_y \ar@2[u]^z_w & \circ \ar@2[u]^z_w \\
\rotatebox{90}{$\ddots$} \ar[ur]^4
}\]
In the first case, there are the following two kinds of non-trivial upper sets in $q^{-1}(J)$ up to translations by $\vec{p}$.

\[\xymatrix{
 & & \circ \ar@2[r]^x_y & \cdots \\
 & \circ \ar@2[r]^x_y & \circ \ar@2[u]^z_w \\
\circ \ar@2[r]^x_y & \circ \ar@2[u]^z_w \\
\circ \ar@2[u]^z_w
}\xymatrix{
 & & \circ \ar@2[r]^x_y & \cdots \\
 & \circ \ar@2[r]^x_y & \circ \ar@2[u]^z_w \\
\circ \ar@2[r]^x_y & \circ \ar@2[u]^z_w
}\]

These upper sets correspond to the following cuts of $\widehat Q(\widehat C_1)$.
\[
\begin{tikzcd}[row sep=large]
\circ \arrow[d, "w"] 
& \circ \arrow[r, line width=2pt, no head] \arrow[l, line width=2pt, no head]
& \circ \arrow[d, "w"] 
& \circ \arrow[l, line width=2pt, no head]
\\
\bullet \arrow[r, "x"] 
& \circ \arrow[u, "z"] \arrow[d, "w"] 
& \bullet \arrow[r, "x"] \arrow[l, "y"] 
& \circ \arrow[u, "z"] \arrow[d, "w"] 
\\
\circ \arrow[u, "z"] \arrow[d, "w"] 
& \circ \arrow[r, line width=2pt, no head] \arrow[l, line width=2pt, no head]
& \circ \arrow[u, "z"] \arrow[d, "w"] 
& \circ \arrow[l, line width=2pt, no head]
\\
\bullet \arrow[r, "x"] 
& \circ \arrow[u, "z"] 
& \bullet \arrow[r, "x"] \arrow[l, "y"] 
& \circ \arrow[u, "z"]
\end{tikzcd}
\quad\begin{tikzcd}[row sep=large]
\circ  \arrow[d, line width=2pt, no head]
& \circ \arrow[r, "x"] \arrow[l, "y"] 
& \circ  \arrow[d, line width=2pt, no head]
& \circ \arrow[l, "y"] 
\\
\bullet \arrow[r, "x"] 
& \circ \arrow[u, "z"] \arrow[d, "w"] 
& \bullet \arrow[r, "x"] \arrow[l, "y"] 
& \circ \arrow[u, "z"] \arrow[d, "w"] 
\\
\circ \arrow[u, line width=2pt, no head]  \arrow[d, line width=2pt, no head] 
& \circ \arrow[r, "x"] \arrow[l, "y"] 
& \circ \arrow[u, line width=2pt, no head]  \arrow[d, line width=2pt, no head]
& \circ \arrow[l, "y"] 
\\
\bullet \arrow[r, "x"] 
& \circ \arrow[u, "z"] 
& \bullet \arrow[r, "x"] \arrow[l, "y"] 
& \circ \arrow[u, "z"]
\end{tikzcd}
\]

Therefore there are the following two kinds of $2$-tilting bundles up to translations. All of them are $2$-representation infinite algebras of type $\widetilde A\widetilde A$. Observe that by mutations of non-trivial upper sets in $q^{-1}(J)$, they are mutated to each other, which correspond to $2$-APR tilting mutations.

\[\begin{array}{c c}
\xymatrix{
 & \circ \\
\circ \ar@2[r]^x_y & \circ \ar@2[u]^z_w \\
\circ \ar@2[u]^z_w
}&\xymatrix{
 & \circ \ar@2[r]^x_y & \circ \\
\circ \ar@2[r]^x_y & \circ \ar@2[u]^z_w
}\end{array}\]
In the second case, there are the following five kinds of non-trivial upper sets in $q^{-1}(J)$ up to translations by $\vec{p}$.

\[\xymatrix{
 & & & & \rotatebox{90}{$\ddots$} \\
 & & \circ \ar@2[r]^x_y & \circ \ar[ur]^4 \\
 & & \circ \ar@2[r]^x_y \ar@2[u]^z_w & \circ \ar@2[u]^z_w \\
\circ \ar@2[r]^x_y & \circ \ar[ur]^4 \\
\circ \ar@2[r]^x_y \ar@2[u]^z_w & \circ \ar@2[u]^z_w
}\xymatrix{
 & & & & \rotatebox{90}{$\ddots$} \\
 & & \circ \ar@2[r]^x_y & \circ \ar[ur]^4 \\
 & & \circ \ar@2[r]^x_y \ar@2[u]^z_w & \circ \ar@2[u]^z_w \\
\circ \ar@2[r]^x_y & \circ \ar[ur]^4 \\
 & \circ \ar@2[u]^z_w
}\]
\[\xymatrix{
 & & & & \rotatebox{90}{$\ddots$} \\
 & & \circ \ar@2[r]^x_y & \circ \ar[ur]^4 \\
 & & \circ \ar@2[r]^x_y \ar@2[u]^z_w & \circ \ar@2[u]^z_w \\
\circ \ar@2[r]^x_y & \circ \ar[ur]^4
}\xymatrix{
 & & & \rotatebox{90}{$\ddots$} \\
 & \circ \ar@2[r]^x_y & \circ \ar[ur]^4 \\
 & \circ \ar@2[r]^x_y \ar@2[u]^z_w & \circ \ar@2[u]^z_w \\
\circ \ar[ur]^4 \\
\circ \ar@2[u]^z_w
}\xymatrix{
 & & & \rotatebox{90}{$\ddots$} \\
 & \circ \ar@2[r]^x_y & \circ \ar[ur]^4 \\
 & \circ \ar@2[r]^x_y \ar@2[u]^z_w & \circ \ar@2[u]^z_w \\
\circ \ar[ur]^4
}\]

These upper sets correspond to the following cuts of $\widehat Q(\widehat C_2)$.
\[
\begin{tikzcd}[row sep=large]
\circ \arrow[r, "x"] \arrow[d, "w"]
& \circ \arrow[d, "w"]
& \circ \arrow[r, "x"] \arrow[l, "y"] \arrow[d, "w"]
& \circ \arrow[d, "w"]
\\
\bullet \arrow[r, "x"]
& \circ \arrow[ur, line width=2pt, no head] \arrow[ul, line width=2pt, no head] \arrow[dr, line width=2pt, no head] \arrow[dl, line width=2pt, no head]
& \bullet \arrow[r, "x"] \arrow[l, "y"]
& \circ \arrow[ul, line width=2pt, no head] \arrow[dl, line width=2pt, no head]
\\
\circ \arrow[r, "x"] \arrow[u, "z"] \arrow[d, "w"]
& \circ \arrow[u, "z"] \arrow[d, "w"]
& \circ \arrow[r, "x"] \arrow[l, "y"] \arrow[u, "z"] \arrow[d, "w"]
& \circ \arrow[u, "z"] \arrow[d, "w"]
\\
\bullet \arrow[r, "x"]
& \circ \arrow[ur, line width=2pt, no head] \arrow[ul, line width=2pt, no head]
& \bullet \arrow[r, "x"] \arrow[l, "y"] 
& \circ \arrow[ul, line width=2pt, no head]
\end{tikzcd}
\quad
\begin{tikzcd}[row sep=large]
\circ \arrow[r, line width=2pt, no head] \arrow[d, line width=2pt, no head]
& \circ \arrow[d, "w"]
& \circ \arrow[r, line width=2pt, no head] \arrow[l, line width=2pt, no head] \arrow[d, line width=2pt, no head]
& \circ \arrow[d, "w"]
\\
\bullet \arrow[r, "x"]
& \circ \arrow[ur, "xz"] \arrow[ul, "yz"] \arrow[dr, "xw"] \arrow[dl, "yw"]
& \bullet \arrow[r, "x"] \arrow[l, "y"]
& \circ \arrow[ul, "yz"] \arrow[dl, "yw"]
\\
\circ \arrow[r, line width=2pt, no head] \arrow[u, line width=2pt, no head] \arrow[d, line width=2pt, no head]
& \circ \arrow[u, "z"] \arrow[d, "w"]
& \circ \arrow[r, line width=2pt, no head] \arrow[l, line width=2pt, no head] \arrow[u, line width=2pt, no head] \arrow[d, line width=2pt, no head]
& \circ \arrow[u, "z"] \arrow[d, "w"]
\\
\bullet \arrow[r, "x"]
& \circ \arrow[ur, "xz"] \arrow[ul, "yz"]
& \bullet \arrow[r, "x"] \arrow[l, "y"] 
& \circ \arrow[ul, "yz"]
\end{tikzcd}
\]

\[
\begin{tikzcd}[row sep=large]
\circ \arrow[r, "x"] \arrow[d, line width=2pt, no head]
& \circ \arrow[d, line width=2pt, no head]
& \circ \arrow[r, "x"] \arrow[l, "y"] \arrow[d, line width=2pt, no head]
& \circ \arrow[d, line width=2pt, no head]
\\
\bullet \arrow[r, "x"]
& \circ \arrow[ur, "xz"] \arrow[ul, "yz"] \arrow[dr, "xw"] \arrow[dl, "yw"]
& \bullet \arrow[r, "x"] \arrow[l, "y"]
& \circ \arrow[ul, "yz"] \arrow[dl, "yw"]
\\
\circ \arrow[r, "x"] \arrow[u, line width=2pt, no head] \arrow[d, line width=2pt, no head]
& \circ \arrow[u, line width=2pt, no head] \arrow[d, line width=2pt, no head]
& \circ \arrow[r, "x"] \arrow[l, "y"] \arrow[u, line width=2pt, no head] \arrow[d, line width=2pt, no head]
& \circ \arrow[u, line width=2pt, no head] \arrow[d, line width=2pt, no head]
\\
\bullet \arrow[r, "x"]
& \circ \arrow[ur, "xz"] \arrow[ul, "yz"]
& \bullet \arrow[r, "x"] \arrow[l, "y"] 
& \circ \arrow[ul, "yz"]
\end{tikzcd}
\quad
\begin{tikzcd}[row sep=large]
\circ \arrow[r, line width=2pt, no head] \arrow[d, "w"]
& \circ \arrow[d, "w"]
& \circ \arrow[r, line width=2pt, no head] \arrow[l, line width=2pt, no head] \arrow[d, "w"]
& \circ \arrow[d, "w"]
\\
\bullet \arrow[r, line width=2pt, no head]
& \circ \arrow[ur, "xz"] \arrow[ul, "yz"] \arrow[dr, "xw"] \arrow[dl, "yw"]
& \bullet \arrow[r, line width=2pt, no head] \arrow[l, line width=2pt, no head]
& \circ \arrow[ul, "yz"] \arrow[dl, "yw"]
\\
\circ \arrow[r, line width=2pt, no head] \arrow[u, "z"] \arrow[d, "w"]
& \circ \arrow[u, "z"] \arrow[d, "w"]
& \circ \arrow[r, line width=2pt, no head] \arrow[l, line width=2pt, no head] \arrow[u, "z"] \arrow[d, "w"]
& \circ \arrow[u, "z"] \arrow[d, "w"]
\\
\bullet \arrow[r, line width=2pt, no head]
& \circ \arrow[ur, "xz"] \arrow[ul, "yz"]
& \bullet \arrow[r, line width=2pt, no head] \arrow[l, line width=2pt, no head]
& \circ \arrow[ul, "yz"]
\end{tikzcd}
\quad
\begin{tikzcd}[row sep=large]
\circ \arrow[r, line width=2pt, no head] \arrow[d, "w"]
& \circ \arrow[d, line width=2pt, no head]
& \circ \arrow[r, line width=2pt, no head] \arrow[l, line width=2pt, no head] \arrow[d, "w"]
& \circ \arrow[d, line width=2pt, no head]
\\
\bullet \arrow[r, "x"]
& \circ \arrow[ur, "xz"] \arrow[ul, "yz"] \arrow[dr, "xw"] \arrow[dl, "yw"]
& \bullet \arrow[r, "x"] \arrow[l, "y"]
& \circ \arrow[ul, "yz"] \arrow[dl, "yw"]
\\
\circ \arrow[r, line width=2pt, no head] \arrow[u, "z"] \arrow[d, "w"]
& \circ \arrow[u, line width=2pt, no head] \arrow[d, line width=2pt, no head]
& \circ \arrow[r, line width=2pt, no head] \arrow[l, line width=2pt, no head] \arrow[u, "z"] \arrow[d, "w"]
& \circ \arrow[u, line width=2pt, no head] \arrow[d, line width=2pt, no head]
\\
\bullet \arrow[r, "x"]
& \circ \arrow[ur, "xz"] \arrow[ul, "yz"]
& \bullet \arrow[r, "x"] \arrow[l, "y"] 
& \circ \arrow[ul, "yz"]
\end{tikzcd}
\]

Therefore there are the following five kinds of $2$-tilting bundles up to translations. All of them are $2$-representation infinite algebras of type $\widetilde A\widetilde A$. Observe that by mutations of non-trivial upper sets in $q^{-1}(J)$, they are mutated to each other, which correspond to $2$-APR tilting mutations.

\[\begin{array}{c c}
\xymatrix{
\circ \ar@2[r]^x_y & \circ \\
\circ \ar@2[r]^x_y \ar@2[u]^z_w & \circ \ar@2[u]^z_w
}&\xymatrix{
 & & \circ \\
\circ \ar@2[r]^x_y & \circ \ar[ur]^4 \\
 & \circ \ar@2[u]^z_w
}\end{array}\]

\[\begin{array}{c c c}
\xymatrix{
 & & \circ \ar@2[r]^x_y & \circ \\
\circ \ar@2[r]^x_y & \circ \ar[ur]^4
}&\xymatrix{
 & \circ \\
 & \circ \ar@2[u]^z_w \\
\circ \ar[ur]^4 \\
\circ \ar@2[u]^z_w
}&\xymatrix{
 & \circ \\
 & \circ \ar@2[r]^x_y \ar@2[u]^z_w & \circ \\
\circ \ar[ur]^4
}\end{array}\]

(2) Put $G:=\mathbb{Z}^2$ and $\vec{x}=\vec{y}=(1,0),\vec{z}=(1,1),\vec{w}=(0,1)\in G$. We view $S:=k[x,y,z,w]$ as a $G$-graded $k$-algebra. Then the resulting toric stack $\X$ is isomorphic to $\Sigma_1$. We have $\vec{p}=(3,2)$ and $H=G/\mathbb{Z}\vec{p}\cong\mathbb{Z}; (a,b)+\mathbb{Z}\vec{p}\mapsto 2a-3b$. If we equip $H$ with our partial order, then the quiver of $H$ becomes the following.
\[\xymatrix{
\cdots \ar[r]_{-z} \ar@2@/^18pt/[rr]^x_y \ar@/^-18pt/[rrr]_{-w} & \circ \ar[r]_{-z} \ar@2@/^18pt/[rr]^x_y \ar@/^-18pt/[rrr]_{-w} & \circ \ar[r]_{-z} \ar@2@/^18pt/[rr]^x_y \ar@/^-18pt/[rrr]_{-w} & \circ \ar[r]_{-z} \ar@2@/^18pt/[rr]^x_y \ar@/^-18pt/[rrr]_{-w} & \circ \ar[r]_{-z} \ar@2@/^18pt/[rr]^x_y \ar@/^-18pt/[rrr]_{-w} & \circ \ar[r]_{-z} \ar@2@/^18pt/[rr]^x_y & \circ \ar[r]_{-z} & \cdots
}\]
Then there is the following just one kind of non-trivial upper sets in $H$ up to translations.
\[\xymatrix{
\circ \ar[r]_{-z} \ar@2@/^18pt/[rr]^x_y \ar@/^-18pt/[rrr]_{-w} & \circ \ar[r]_{-z} \ar@2@/^18pt/[rr]^x_y \ar@/^-18pt/[rrr]_{-w} & \circ \ar[r]_{-z} \ar@2@/^18pt/[rr]^x_y \ar@/^-18pt/[rrr]_{-w} & \circ \ar[r]_{-z} \ar@2@/^18pt/[rr]^x_y & \circ \ar[r]_{-z} & \cdots
}\]
Remark that the isomorphism $H\cong\mathbb{Z}$ sends $s$ to $4$. If we let $\widehat{C}$ be the cut of $\widehat{Q}$ corresponding to an upper set in $H$, then $\widehat{Q}(\widehat{C})$ is as follows. Here, the black vertices correspond to elements of $B$.
\[
\begin{tikzcd}[row sep=large]
\bullet \arrow[r, "x"]
& \circ \arrow[dr, "xw"] \arrow[dl, "yw"]
& \bullet \arrow[r, "x"] \arrow[l, "y"]
& \circ \arrow[dr, "xw"] \arrow[dl, "yw"]
& \bullet \arrow[l, "y"]
\\
\circ \arrow[r, "x"] \arrow[u, "z"]
& \circ \arrow[u, "z"] \arrow[d, "w"]
& \circ \arrow[r, "x"] \arrow[l, "y"] \arrow[u, "z"]
& \circ \arrow[u, "z"] \arrow[d, "w"]
& \circ \arrow[l, "y"] \arrow[u, "z"]
\\
\circ \arrow[u, "z"] \arrow[dr, "xw"]
& \bullet \arrow[r, "x"] \arrow[l, "y"]
& \circ \arrow[u, "z"] \arrow[dr, "xw"] \arrow[dl, "yw"]
& \bullet \arrow[r, "x"] \arrow[l, "y"]
& \circ \arrow[u, "z"] \arrow[dl, "yw"]
\\
\circ \arrow[u, "z"] \arrow[d, "w"]
& \circ \arrow[r, "x"] \arrow[l, "y"] \arrow[u, "z"]
& \circ \arrow[u, "z"] \arrow[d, "w"]
& \circ \arrow[r, "x"] \arrow[l, "y"] \arrow[u, "z"]
& \circ \arrow[u, "z"] \arrow[d, "w"]
\\
\bullet \arrow[r, "x"]
& \circ \arrow[u, "z"]
& \bullet \arrow[r, "x"] \arrow[l, "y"]
& \circ \arrow[u, "z"]
& \bullet \arrow[l, "y"]
\end{tikzcd}
\]

Thus there are the following just one kind of non-trivial upper set in $\J_H$ up to translations with the following quiver $Q(C)$.
\[\xymatrix{
\circ \ar@2@/^18pt/[rr]^x_y & \circ \ar[l]_z \ar@2@/^18pt/[rr]^x_y & \circ \ar@3[l]^{xw,yw}_z & \circ \ar[l]_z \ar@/^18pt/[lll]_w
}\]
The quiver of $q^{-1}(J)$ is as follows.
\[\xymatrix{
 & & & & & & & \rotatebox{90}{$\ddots$} \\
 & & & & & \circ \ar@2[r]^x_y & \circ \ar@3[ur]^z_{xw,yw} \\
 & & & & \circ \ar@2[r]^x_y \ar[ur]^z & \circ \ar[u]^w \ar[ur]_z \\
 & & \circ \ar@2[r]^x_y & \circ \ar@3[ur]^z_{xw,yw} \\
 & \circ \ar@2[r]^x_y \ar[ur]^z & \circ \ar[u]^w \ar[ur]_z \\
\rotatebox{90}{$\ddots$} \ar@3[ur]^z_{xw,yw}
}\]
There are the following four kinds of non-trivial upper sets in $q^{-1}(J)$ up to translations by $\vec{p}$.
\[\begin{array}{c c}
\xymatrix{
 & & & & & & \rotatebox{90}{$\ddots$} \\
 & & & & \circ \ar@2[r]^x_y & \circ \ar@3[ur]^z_{xw,yw} \\
 & & & \circ \ar@2[r]^x_y \ar[ur]^z & \circ \ar[u]^w \ar[ur]_z \\
 & \circ \ar@2[r]^x_y & \circ \ar@3[ur]^z_{xw,yw} \\
\circ \ar@2[r]^x_y \ar[ur]^z & \circ \ar[u]^w \ar[ur]_z
}&\xymatrix{
 & & & & & \rotatebox{90}{$\ddots$} \\
 & & & \circ \ar@2[r]^x_y & \circ \ar@3[ur]^z_{xw,yw} \\
 & & \circ \ar@2[r]^x_y \ar[ur]^z & \circ \ar[u]^w \ar[ur]_z \\
\circ \ar@2[r]^x_y & \circ \ar@3[ur]^z_{xw,yw} \\
\circ \ar[u]^w \ar[ur]_z
}\end{array}\]
\[\begin{array}{c c}
\xymatrix{
 & & & & & \rotatebox{90}{$\ddots$} \\
 & & & \circ \ar@2[r]^x_y & \circ \ar@3[ur]^z_{xw,yw} \\
 & & \circ \ar@2[r]^x_y \ar[ur]^z & \circ \ar[u]^w \ar[ur]_z \\
\circ \ar@2[r]^x_y & \circ \ar@3[ur]^z_{xw,yw}
}&\xymatrix{
 & & & & \rotatebox{90}{$\ddots$} \\
 & & \circ \ar@2[r]^x_y & \circ \ar@3[ur]^z_{xw,yw} \\
 & \circ \ar@2[r]^x_y \ar[ur]^z & \circ \ar[u]^w \ar[ur]_z \\
\circ \ar@3[ur]^z_{xw,yw}
}\end{array}\]

These upper sets correspond to the following cuts of $\widehat Q(\widehat C)$.

\[
\begin{tikzcd}[row sep=large]
\bullet \arrow[r, "x"]
& \circ \arrow[dr, line width=2pt, no head] \arrow[dl, line width=2pt, no head]
& \bullet \arrow[r, "x"] \arrow[l, "y"]
& \circ \arrow[dr, line width=2pt, no head] \arrow[dl, line width=2pt, no head]
& \bullet \arrow[l, "y"]
\\
\circ \arrow[r, "x"] \arrow[u, "z"]
& \circ \arrow[u, "z"] \arrow[d, "w"]
& \circ \arrow[r, "x"] \arrow[l, "y"] \arrow[u, "z"]
& \circ \arrow[u, "z"] \arrow[d, "w"]
& \circ \arrow[l, "y"] \arrow[u, "z"]
\\
\circ \arrow[u, line width=2pt, no head] \arrow[dr, line width=2pt, no head]
& \bullet \arrow[r, "x"] \arrow[l, "y"]
& \circ \arrow[u, line width=2pt, no head] \arrow[dr, line width=2pt, no head] \arrow[dl, line width=2pt, no head]
& \bullet \arrow[r, "x"] \arrow[l, "y"]
& \circ \arrow[u, line width=2pt, no head] \arrow[dl, line width=2pt, no head]
\\
\circ \arrow[u, "z"] \arrow[d, "w"]
& \circ \arrow[r, "x"] \arrow[l, "y"] \arrow[u, "z"]
& \circ \arrow[u, "z"] \arrow[d, "w"]
& \circ \arrow[r, "x"] \arrow[l, "y"] \arrow[u, "z"]
& \circ \arrow[u, "z"] \arrow[d, "w"]
\\
\bullet \arrow[r, "x"]
& \circ \arrow[u, line width=2pt, no head]
& \bullet \arrow[r, "x"] \arrow[l, "y"]
& \circ \arrow[u, line width=2pt, no head]
& \bullet \arrow[l, "y"]
\end{tikzcd}
\quad
\begin{tikzcd}[row sep=large]
\bullet \arrow[r, "x"]
& \circ \arrow[dr, "xw"] \arrow[dl, "yw"]
& \bullet \arrow[r, "x"] \arrow[l, "y"]
& \circ \arrow[dr, "xw"] \arrow[dl, "yw"]
& \bullet \arrow[l, "y"]
\\
\circ \arrow[r, line width=2pt, no head] \arrow[u, line width=2pt, no head]
& \circ \arrow[u, "z"] \arrow[d, "w"]
& \circ \arrow[r, line width=2pt, no head] \arrow[l, line width=2pt, no head] \arrow[u, line width=2pt, no head]
& \circ \arrow[u, "z"] \arrow[d, "w"]
& \circ \arrow[l, line width=2pt, no head] \arrow[u, line width=2pt, no head]
\\
\circ \arrow[u, "z"] \arrow[dr, "xw"]
& \bullet \arrow[r, "x"] \arrow[l, "y"]
& \circ \arrow[u, "z"] \arrow[dr, "xw"] \arrow[dl, "yw"]
& \bullet \arrow[r, "x"] \arrow[l, "y"]
& \circ \arrow[u, "z"] \arrow[dl, "yw"]
\\
\circ \arrow[u, "z"] \arrow[d, "w"]
& \circ \arrow[r, line width=2pt, no head] \arrow[l, line width=2pt, no head] \arrow[u, line width=2pt, no head]
& \circ \arrow[u, "z"] \arrow[d, "w"]
& \circ \arrow[r, line width=2pt, no head] \arrow[l, line width=2pt, no head] \arrow[u, line width=2pt, no head]
& \circ \arrow[u, "z"] \arrow[d, "w"]
\\
\bullet \arrow[r, "x"]
& \circ \arrow[u, "z"]
& \bullet \arrow[r, "x"] \arrow[l, "y"]
& \circ \arrow[u, "z"]
& \bullet \arrow[l, "y"]
\end{tikzcd}
\]

\[
\begin{tikzcd}[row sep=large]
\bullet \arrow[r, "x"]
& \circ \arrow[dr, "xw"] \arrow[dl, "yw"]
& \bullet \arrow[r, "x"] \arrow[l, "y"]
& \circ \arrow[dr, "xw"] \arrow[dl, "yw"]
& \bullet \arrow[l, "y"]
\\
\circ \arrow[r, "x"] \arrow[u, line width=2pt, no head]
& \circ \arrow[u, line width=2pt, no head] \arrow[d, line width=2pt, no head]
& \circ \arrow[r, "x"] \arrow[l, "y"] \arrow[u, line width=2pt, no head]
& \circ \arrow[u, line width=2pt, no head] \arrow[d, line width=2pt, no head]
& \circ \arrow[l, "y"] \arrow[u, line width=2pt, no head]
\\
\circ \arrow[u, "z"] \arrow[dr, "xw"]
& \bullet \arrow[r, "x"] \arrow[l, "y"]
& \circ \arrow[u, "z"] \arrow[dr, "xw"] \arrow[dl, "yw"]
& \bullet \arrow[r, "x"] \arrow[l, "y"]
& \circ \arrow[u, "z"] \arrow[dl, "yw"]
\\
\circ \arrow[u, line width=2pt, no head] \arrow[d, line width=2pt, no head]
& \circ \arrow[r, "x"] \arrow[l, "y"] \arrow[u, line width=2pt, no head]
& \circ \arrow[u, line width=2pt, no head] \arrow[d, line width=2pt, no head]
& \circ \arrow[r, "x"] \arrow[l, "y"] \arrow[u, line width=2pt, no head]
& \circ \arrow[u, line width=2pt, no head] \arrow[d, line width=2pt, no head]
\\
\bullet \arrow[r, "x"]
& \circ \arrow[u, "z"]
& \bullet \arrow[r, "x"] \arrow[l, "y"]
& \circ \arrow[u, "z"]
& \bullet \arrow[l, "y"]
\end{tikzcd}
\quad
\begin{tikzcd}[row sep=large]
\bullet \arrow[r, line width=2pt, no head]
& \circ \arrow[dr, "xw"] \arrow[dl, "yw"]
& \bullet \arrow[r, line width=2pt, no head] \arrow[l, line width=2pt, no head]
& \circ \arrow[dr, "xw"] \arrow[dl, "yw"]
& \bullet \arrow[l, line width=2pt, no head]
\\
\circ \arrow[r, "x"] \arrow[u, "z"]
& \circ \arrow[u, line width=2pt, no head] \arrow[d, "w"]
& \circ \arrow[r, "x"] \arrow[l, "y"] \arrow[u, "z"]
& \circ \arrow[u, line width=2pt, no head] \arrow[d, "w"]
& \circ \arrow[l, "y"] \arrow[u, "z"]
\\
\circ \arrow[u, "z"] \arrow[dr, "xw"]
& \bullet \arrow[r, line width=2pt, no head] \arrow[l, line width=2pt, no head]
& \circ \arrow[u, "z"] \arrow[dr, "xw"] \arrow[dl, "yw"]
& \bullet \arrow[r, line width=2pt, no head] \arrow[l, line width=2pt, no head]
& \circ \arrow[u, "z"] \arrow[dl, "yw"]
\\
\circ \arrow[u, line width=2pt, no head] \arrow[d, "w"]
& \circ \arrow[r, "x"] \arrow[l, "y"] \arrow[u, "z"]
& \circ \arrow[u, line width=2pt, no head] \arrow[d, "w"]
& \circ \arrow[r, "x"] \arrow[l, "y"] \arrow[u, "z"]
& \circ \arrow[u, line width=2pt, no head] \arrow[d, "w"]
\\
\bullet \arrow[r, line width=2pt, no head]
& \circ \arrow[u, "z"]
& \bullet \arrow[r, line width=2pt, no head] \arrow[l, line width=2pt, no head]
& \circ \arrow[u, "z"]
& \bullet \arrow[l, line width=2pt, no head]
\end{tikzcd}
\]

Therefore there are the following four kinds of $2$-tilting bundles up to translations. All of them are $2$-representation infinite algebras of type $\widetilde A\widetilde A$. Observe that by mutations of non-trivial upper sets in $q^{-1}(J)$, they are mutated to each other, which correspond to $2$-APR tilting mutations.
\[\begin{array}{c c c c}
\xymatrix{
 & \circ \ar@2[r]^x_y & \circ \\
\circ \ar@2[r]^x_y \ar[ur]^z & \circ \ar[u]^w \ar[ur]_z
}&\xymatrix{
 & & \circ \\
\circ \ar@2[r]^x_y & \circ \ar@3[ur]^z_{xw,yw} \\
\circ \ar[u]^w \ar[ur]_z
}&\xymatrix{
 & & \circ \ar@2[r]^x_y & \circ \\
\circ \ar@2[r]^x_y & \circ \ar@3[ur]^z_{xw,yw}
}&\xymatrix{
 & & \circ \\
 & \circ \ar@2[r]^x_y \ar[ur]^z & \circ \ar[u]^w \\
\circ \ar@3[ur]^z_{xw,yw}
}\end{array}\]
\end{Ex}

Finally, we see a stacky example. 

\begin{Ex}($d=2$)\label{Exstackysurface}
Put $G:=\mathbb{Z}^2$ and $\vec{x}=(1,-1),\vec{y}=(1,0),\vec{z}=(1,1),\vec{w}=(0,1)\in G$. We view $S:=k[x,y,z,w]$ as a $G$-graded $k$-algebra. We have $\vec{p}=(3,1)$ and $H=G/\mathbb{Z}\vec{p}\cong\mathbb{Z}; (a,b)+\mathbb{Z}\vec{p}\mapsto a-3b$. If we equip $H$ with our partial order, then the quiver of $H$ becomes the following.
\[\xymatrix{
\cdots \ar[r]_y \ar@/^-15pt/[rr]^{-z} \ar@/^-21pt/[rrr]_{-w} \ar@/^15pt/[rrrr]^x & \circ \ar[r]_y \ar@/^-15pt/[rr]^{-z} \ar@/^-21pt/[rrr]_{-w} \ar@/^15pt/[rrrr]^x & \circ \ar[r]_y \ar@/^-15pt/[rr]^{-z} \ar@/^-21pt/[rrr]_{-w}  \ar@/^15pt/[rrrr]^x & \circ \ar[r]_y \ar@/^-15pt/[rr]^{-z} \ar@/^-21pt/[rrr]_{-w} \ar@/^15pt/[rrrr]^x & \circ \ar[r]_y \ar@/^-15pt/[rr]^{-z} \ar@/^-21pt/[rrr]_{-w} \ar@/^15pt/[rrrr]^x & \circ \ar[r]_y \ar@/^-15pt/[rr]^{-z} \ar@/^-21pt/[rrr]_{-w} & \circ \ar[r]_y \ar@/^-15pt/[rr]^{-z} & \circ \ar[r]_y & \cdots
}\]
Then there is the following just one kind of non-trivial upper set in $H$ up to translations.
\[\xymatrix{
\circ \ar[r]_y \ar@/^-15pt/[rr]^{-z} \ar@/^-21pt/[rrr]_{-w}  \ar@/^15pt/[rrrr]^x & \circ \ar[r]_y \ar@/^-15pt/[rr]^{-z} \ar@/^-21pt/[rrr]_{-w} \ar@/^15pt/[rrrr]^x & \circ \ar[r]_y \ar@/^-15pt/[rr]^{-z} \ar@/^-21pt/[rrr]_{-w} \ar@/^15pt/[rrrr]^x & \circ \ar[r]_y \ar@/^-15pt/[rr]^{-z} \ar@/^-21pt/[rrr]_{-w} & \circ \ar[r]_y \ar@/^-15pt/[rr]^{-z} & \circ \ar[r]_y & \cdots
}\]
Remark that the isomorphism $H\cong\mathbb{Z}$ sends $s$ to $5$. If we let $\widehat{C}$ be the cut of $\widehat{Q}$ corresponding to an upper set in $H$, then $\widehat{Q}(\widehat{C})$ is as follows. Here, the black vertices correspond to elements of $B$.
\[
\begin{tikzcd}[row sep=large]
\circ \arrow[d, "w"]
& \circ \arrow[l, "y"] \arrow[dr, "xw"]
& \circ \arrow[l, "y"] \arrow[dr, "xw"]
& \bullet \arrow[r, "x"] \arrow[l, "y"]
& \circ \arrow[d, "w"]
& \circ \arrow[l, "y"] \arrow[d, "w"]
& \circ \arrow[l, "y"]
\\
\bullet \arrow[r, "x"]
& \circ \arrow[u, "z"] \arrow[d, "w"]
& \circ \arrow[l, "y"] \arrow[u, "z"] \arrow[d, "w"]
& \circ \arrow[l, "y"] \arrow[u, "z"] \arrow[dr, "xw"]
& \circ \arrow[l, "y"] \arrow[ur, "xz"] \arrow[dr, "xw"]
& \bullet \arrow[r, "x"] \arrow[l, "y"]
& \circ \arrow[u, "z"] \arrow[d, "w"]
\\
\circ \arrow[u, "z"] \arrow[dr, "xw"]
& \circ \arrow[l, "y"] \arrow[ur, "xz"] \arrow[dr, "xw"]
& \bullet \arrow[r, "x"] \arrow[l, "y"]
& \circ \arrow[u, "z"] \arrow[d, "w"]
& \circ \arrow[l, "y"] \arrow[u, "z"] \arrow[d, "w"]
& \circ \arrow[l, "y"] \arrow[u, "z"] \arrow[dr, "xw"]
& \circ \arrow[l, "y"]
\\
\circ \arrow[u, "z"]\arrow[d, "w"]
& \circ \arrow[l, "y"] \arrow[u, "z"] \arrow[d, "w"]
& \circ \arrow[l, "y"] \arrow[u, "z"] \arrow[dr, "xw"]
& \circ \arrow[l, "y"] \arrow[ur, "xz"] \arrow[dr, "xw"]
& \bullet \arrow[r, "x"] \arrow[l, "y"]
& \circ \arrow[u, "z"] \arrow[d, "w"]
& \circ \arrow[l, "y"] \arrow[u, "z"] \arrow[d, "w"]
\\
\circ \arrow[ur, "xz"] \arrow[dr, "xw"]
& \bullet \arrow[r, "x"] \arrow[l, "y"]
& \circ \arrow[u, "z"] \arrow[d, "w"]
& \circ \arrow[l, "y"] \arrow[u, "z"] \arrow[d, "w"]
& \circ \arrow[l, "y"] \arrow[u, "z"] \arrow[dr, "xw"]
& \circ \arrow[l, "y"] \arrow[ur, "xz"] \arrow[dr, "xw"]
& \bullet \arrow[l, "y"]
\\
\circ \arrow[u, "z"] \arrow[d, "w"]
& \circ \arrow[l, "y"] \arrow[u, "z"] \arrow[dr, "xw"]
& \circ \arrow[l, "y"] \arrow[ur, "xz"] \arrow[dr, "xw"]
& \bullet \arrow[r, "x"] \arrow[l, "y"]
& \circ \arrow[u, "z"] \arrow[d, "w"]
& \circ \arrow[l, "y"] \arrow[u, "z"] \arrow[d, "w"]
& \circ \arrow[l, "y"] \arrow[u, "z"]
\\
\bullet \arrow[r, "x"]
& \circ \arrow[u, "z"]
& \circ \arrow[l, "y"] \arrow[u, "z"]
& \circ \arrow[l, "y"] \arrow[u, "z"]
& \circ \arrow[l, "y"] \arrow[ur, "xz"]
& \bullet \arrow[r, "x"] \arrow[l, "y"]
& \circ \arrow[u, "z"]
\\
\end{tikzcd}
\]

Thus there are the following just one kind of non-trivial upper set in $\J_H$ up to translations with the following quiver $Q(C)$.
\[\xymatrix{
\circ \ar[r]^y \ar@/^20pt/[rrrr]^x & \circ \ar@2[r]^y_{xw} \ar@/^12pt/[rr]^{xz} & \circ \ar@2[r]^y_{xw} \ar@/^15pt/[ll]_z & \circ \ar[r]^y \ar@/^15pt/[ll]_z \ar@/^21pt/[lll]^w & \circ \ar@/^15pt/[ll]_z \ar@/^21pt/[lll]^w
}\]
The quiver of $q^{-1}(J)$ is as follows.
\[\xymatrix{
 & & & & & \circ \ar[r]^y \ar[dr]^x & \circ \ar@2[r]^y_{xw} \ar@/^18pt/[rr]^{xz} & \circ \ar@2[r]^y_{xw} & \cdots\\
 & & \circ \ar[r]^y \ar[dr]^x & \circ \ar@2[r]^y_{xw} \ar@/^18pt/[rr]^{xz} & \circ \ar@2[r]^y_{xw} \ar[ur]^z & \circ \ar[r]^y \ar[ur]^z \ar[u]^w & \circ \ar[ur]_z \ar[u]^w \\
\cdots \ar@2[r]^y_{xw} \ar@/^18pt/[rr]^{xz} & \circ \ar@2[r]^y_{xw} \ar[ur]^z & \circ \ar[r]^y \ar[ur]^z \ar[u]^w & \circ \ar[ur]_z \ar[u]^w
}\]
There are the following five kinds of non-trivial upper sets in $q^{-1}(J)$ up to translations by $\vec{p}$.
\[\xymatrix{
 & & & & \circ \ar[r]^y \ar[dr]^x & \circ \ar@2[r]^y_{xw} \ar@/^18pt/[rr]^{xz} & \circ \ar@2[r]^y_{xw} & \cdots\\
 & \circ \ar[r]^y \ar[dr]^x & \circ \ar@2[r]^y_{xw} \ar@/^18pt/[rr]^{xz} & \circ \ar@2[r]^y_{xw} \ar[ur]^z & \circ \ar[r]^y \ar[ur]^z \ar[u]^w & \circ \ar[ur]_z \ar[u]^w \\
\circ \ar@2[r]^y_{xw} \ar[ur]^z & \circ \ar[r]^y \ar[ur]^z \ar[u]^w & \circ \ar[ur]_z \ar[u]^w
}\]
\[\begin{array}{c c}
\xymatrix{
 & & & \circ \ar[r]^y \ar[dr]^x & \circ \ar@2[r]^y_{xw} \ar@/^18pt/[rr]^{xz} & \circ \ar@2[r]^y_{xw} & \cdots\\
\circ \ar[r]^y \ar[dr]^x & \circ \ar@2[r]^y_{xw} \ar@/^18pt/[rr]^{xz} & \circ \ar@2[r]^y_{xw} \ar[ur]^z & \circ \ar[r]^y \ar[ur]^z \ar[u]^w & \circ \ar[ur]_z \ar[u]^w \\
\circ \ar[r]^y \ar[ur]^z \ar[u]^w & \circ \ar[ur]_z \ar[u]^w
}&\xymatrix{
 & & & \circ \ar[r]^y \ar[dr]^x & \circ \ar@2[r]^y_{xw} \ar@/^18pt/[rr]^{xz} & \circ \ar@2[r]^y_{xw} & \cdots\\
\circ \ar[r]^y \ar[dr]^x & \circ \ar@2[r]^y_{xw} \ar@/^18pt/[rr]^{xz} & \circ \ar@2[r]^y_{xw} \ar[ur]^z & \circ \ar[r]^y \ar[ur]^z \ar[u]^w & \circ \ar[ur]_z \ar[u]^w \\
 & \circ \ar[ur]_z \ar[u]^w
}\end{array}\]
\[\begin{array}{c c}
\xymatrix{
 & & \circ \ar[r]^y \ar[dr]^x & \circ \ar@2[r]^y_{xw} \ar@/^18pt/[rr]^{xz} & \circ \ar@2[r]^y_{xw} & \cdots\\
\circ \ar@2[r]^y_{xw} \ar@/^18pt/[rr]^{xz} & \circ \ar@2[r]^y_{xw} \ar[ur]^z & \circ \ar[r]^y \ar[ur]^z \ar[u]^w & \circ \ar[ur]_z \ar[u]^w \\
\circ \ar[ur]_z \ar[u]^w
}&\xymatrix{
 & & \circ \ar[r]^y \ar[dr]^x & \circ \ar@2[r]^y_{xw} \ar@/^18pt/[rr]^{xz} & \circ \ar@2[r]^y_{xw} & \cdots\\
\circ \ar@2[r]^y_{xw} \ar@/^18pt/[rr]^{xz} & \circ \ar@2[r]^y_{xw} \ar[ur]^z & \circ \ar[r]^y \ar[ur]^z \ar[u]^w & \circ \ar[ur]_z \ar[u]^w
}\end{array}\]

These upper sets correspond to the following cuts of $\widehat Q(\widehat C)$.

\[
\begin{tikzcd}[row sep=1.8em, column sep=1.8em]
\circ \arrow[d, "w"]
& \circ \arrow[l, "y"] \arrow[dr, "xw"]
& \circ \arrow[l, line width=2pt, no head] \arrow[dr, line width=2pt, no head]
& \bullet \arrow[r, "x"] \arrow[l, "y"]
& \circ \arrow[d, "w"]
& \circ \arrow[l, "y"] \arrow[d, "w"]
& \circ \arrow[l, "y"]
\\
\bullet \arrow[r, "x"]
& \circ \arrow[u, line width=2pt, no head] \arrow[d, "w"]
& \circ \arrow[l, "y"] \arrow[u, "z"] \arrow[d, "w"]
& \circ \arrow[l, "y"] \arrow[u, "z"] \arrow[dr, "xw"]
& \circ \arrow[l, line width=2pt, no head] \arrow[ur, line width=2pt, no head] \arrow[dr, line width=2pt, no head]
& \bullet \arrow[r, "x"] \arrow[l, "y"]
& \circ \arrow[u, line width=2pt, no head] \arrow[d, "w"]
\\
\circ \arrow[u, "z"] \arrow[dr, "xw"]
& \circ \arrow[l, line width=2pt, no head] \arrow[ur, line width=2pt, no head] \arrow[dr, line width=2pt, no head]
& \bullet \arrow[r, "x"] \arrow[l, "y"]
& \circ \arrow[u, line width=2pt, no head] \arrow[d, "w"]
& \circ \arrow[l, "y"] \arrow[u, "z"] \arrow[d, "w"]
& \circ \arrow[l, "y"] \arrow[u, "z"] \arrow[dr, "xw"]
& \circ \arrow[l, line width=2pt, no head]
\\
\circ \arrow[u, line width=2pt, no head]\arrow[d, "w"]
& \circ \arrow[l, "y"] \arrow[u, "z"] \arrow[d, "w"]
& \circ \arrow[l, "y"] \arrow[u, "z"] \arrow[dr, "xw"]
& \circ \arrow[l, line width=2pt, no head] \arrow[ur, line width=2pt, no head] \arrow[dr, line width=2pt, no head]
& \bullet \arrow[r, "x"] \arrow[l, "y"]
& \circ \arrow[u, line width=2pt, no head] \arrow[d, "w"]
& \circ \arrow[l, "y"] \arrow[u, "z"] \arrow[d, "w"]
\\
\circ \arrow[ur, line width=2pt, no head] \arrow[dr, line width=2pt, no head]
& \bullet \arrow[r, "x"] \arrow[l, "y"]
& \circ \arrow[u, line width=2pt, no head] \arrow[d, "w"]
& \circ \arrow[l, "y"] \arrow[u, "z"] \arrow[d, "w"]
& \circ \arrow[l, "y"] \arrow[u, "z"] \arrow[dr, "xw"]
& \circ \arrow[l, line width=2pt, no head] \arrow[ur, line width=2pt, no head] \arrow[dr, line width=2pt, no head]
& \bullet \arrow[l, "y"]
\\
\circ \arrow[u, "z"] \arrow[d, "w"]
& \circ \arrow[l, "y"] \arrow[u, "z"] \arrow[dr, "xw"]
& \circ \arrow[l, line width=2pt, no head] \arrow[ur, line width=2pt, no head] \arrow[dr, line width=2pt, no head]
& \bullet \arrow[r, "x"] \arrow[l, "y"]
& \circ \arrow[u, line width=2pt, no head] \arrow[d, "w"]
& \circ \arrow[l, "y"] \arrow[u, "z"] \arrow[d, "w"]
& \circ \arrow[l, "y"] \arrow[u, "z"]
\\
\bullet \arrow[r, "x"]
& \circ \arrow[u, line width=2pt, no head]
& \circ \arrow[l, "y"] \arrow[u, "z"]
& \circ \arrow[l, "y"] \arrow[u, "z"]
& \circ \arrow[l, line width=2pt, no head] \arrow[ur, line width=2pt, no head]
& \bullet \arrow[r, "x"] \arrow[l, "y"]
& \circ \arrow[u, line width=2pt, no head]
\\
\end{tikzcd}
\]
\[
\begin{tikzcd}[row sep=1.8em, column sep=1.8em]
\circ \arrow[d, "w"]
& \circ \arrow[l, line width=2pt, no head] \arrow[dr, line width=2pt, no head]
& \circ \arrow[l, "y"] \arrow[dr, "xw"]
& \bullet \arrow[r, "x"] \arrow[l, "y"]
& \circ \arrow[d, "w"]
& \circ \arrow[l, "y"] \arrow[d, "w"]
& \circ \arrow[l, line width=2pt, no head]
\\
\bullet \arrow[r, "x"]
& \circ \arrow[u, "z"] \arrow[d, "w"]
& \circ \arrow[l, "y"] \arrow[u, "z"] \arrow[d, "w"]
& \circ \arrow[l, line width=2pt, no head] \arrow[u, line width=2pt, no head] \arrow[dr, line width=2pt, no head]
& \circ \arrow[l, "y"] \arrow[ur, line width=2pt, no head] \arrow[dr, "xw"]
& \bullet \arrow[r, "x"] \arrow[l, "y"]
& \circ \arrow[u, "z"] \arrow[d, "w"]
\\
\circ \arrow[u, line width=2pt, no head] \arrow[dr, line width=2pt, no head]
& \circ \arrow[l, "y"] \arrow[ur, line width=2pt, no head] \arrow[dr, "xw"]
& \bullet \arrow[r, "x"] \arrow[l, "y"]
& \circ \arrow[u, "z"] \arrow[d, "w"]
& \circ \arrow[l, "y"] \arrow[u, "z"] \arrow[d, "w"]
& \circ \arrow[l, line width=2pt, no head] \arrow[u, line width=2pt, no head] \arrow[dr, line width=2pt, no head]
& \circ \arrow[l, "y"]
\\
\circ \arrow[u, "z"]\arrow[d, "w"]
& \circ \arrow[l, "y"] \arrow[u, "z"] \arrow[d, "w"]
& \circ \arrow[l, line width=2pt, no head] \arrow[u, line width=2pt, no head] \arrow[dr, line width=2pt, no head]
& \circ \arrow[l, "y"] \arrow[ur, line width=2pt, no head] \arrow[dr, "xw"]
& \bullet \arrow[r, "x"] \arrow[l, "y"]
& \circ \arrow[u, "z"] \arrow[d, "w"]
& \circ \arrow[l, "y"] \arrow[u, "z"] \arrow[d, "w"]
\\
\circ \arrow[ur, line width=2pt, no head] \arrow[dr, "xw"]
& \bullet \arrow[r, "x"] \arrow[l, "y"]
& \circ \arrow[u, "z"] \arrow[d, "w"]
& \circ \arrow[l, "y"] \arrow[u, "z"] \arrow[d, "w"]
& \circ \arrow[l, line width=2pt, no head] \arrow[u, line width=2pt, no head] \arrow[dr, line width=2pt, no head]
& \circ \arrow[l, "y"] \arrow[ur, line width=2pt, no head] \arrow[dr, "xw"]
& \bullet \arrow[l, "y"]
\\
\circ \arrow[u, "z"] \arrow[d, "w"]
& \circ \arrow[l, line width=2pt, no head] \arrow[u, line width=2pt, no head] \arrow[dr, line width=2pt, no head]
& \circ \arrow[l, "y"] \arrow[ur, line width=2pt, no head] \arrow[dr, "xw"]
& \bullet \arrow[r, "x"] \arrow[l, "y"]
& \circ \arrow[u, "z"] \arrow[d, "w"]
& \circ \arrow[l, "y"] \arrow[u, "z"] \arrow[d, "w"]
& \circ \arrow[l, line width=2pt, no head] \arrow[u, line width=2pt, no head]
\\
\bullet \arrow[r, "x"]
& \circ \arrow[u, "z"]
& \circ \arrow[l, "y"] \arrow[u, "z"]
& \circ \arrow[l, line width=2pt, no head] \arrow[u, line width=2pt, no head]
& \circ \arrow[l, "y"] \arrow[ur, line width=2pt, no head]
& \bullet \arrow[r, "x"] \arrow[l, "y"]
& \circ \arrow[u, "z"]
\\
\end{tikzcd}
\quad
\begin{tikzcd}[row sep=1.8em, column sep=1.8em]
\circ \arrow[d, line width=2pt, no head]
& \circ \arrow[l, "y"] \arrow[dr, "xw"]
& \circ \arrow[l, "y"] \arrow[dr, "xw"]
& \bullet \arrow[r, "x"] \arrow[l, "y"]
& \circ \arrow[d, "w"]
& \arrow[l, line width=2pt, no head] \arrow[d, line width=2pt, no head]
& \circ \arrow[l, "y"]
\\
\bullet \arrow[r, "x"]
& \circ \arrow[u, "z"] \arrow[d, "w"]
& \arrow[l, line width=2pt, no head] \arrow[u, line width=2pt, no head] \arrow[d, line width=2pt, no head]
& \circ \arrow[l, "y"] \arrow[u, line width=2pt, no head] \arrow[dr, "xw"]
& \circ \arrow[l, "y"] \arrow[ur, "xz"] \arrow[dr, "xw"]
& \bullet \arrow[r, "x"] \arrow[l, "y"]
& \circ \arrow[u, "z"] \arrow[d, "w"]
\\
\circ \arrow[u, line width=2pt, no head] \arrow[dr, "xw"]
& \circ \arrow[l, "y"] \arrow[ur, "xz"] \arrow[dr, "xw"]
& \bullet \arrow[r, "x"] \arrow[l, "y"]
& \circ \arrow[u, "z"] \arrow[d, "w"]
& \circ \arrow[l, line width=2pt, no head] \arrow[u, line width=2pt, no head] \arrow[d, line width=2pt, no head]
& \circ \arrow[l, "y"] \arrow[u, line width=2pt, no head] \arrow[dr, "xw"]
& \circ \arrow[l, "y"]
\\
\circ \arrow[u, "z"]\arrow[d, "w"]
& \circ \arrow[l, line width=2pt, no head] \arrow[u, line width=2pt, no head] \arrow[d, line width=2pt, no head]
& \circ \arrow[l, "y"] \arrow[u, line width=2pt, no head] \arrow[dr, "xw"]
& \circ \arrow[l, "y"] \arrow[ur, "xz"] \arrow[dr, "xw"]
& \bullet \arrow[r, "x"] \arrow[l, "y"]
& \circ \arrow[u, "z"] \arrow[d, "w"]
& \circ \arrow[l, line width=2pt, no head] \arrow[u, line width=2pt, no head] \arrow[d, line width=2pt, no head]
\\
\circ \arrow[ur, "xz"] \arrow[dr, "xw"]
& \bullet \arrow[r, "x"] \arrow[l, "y"]
& \circ \arrow[u, "z"] \arrow[d, "w"]
& \circ \arrow[l, line width=2pt, no head] \arrow[u, line width=2pt, no head] \arrow[d, line width=2pt, no head]
& \circ \arrow[l, "y"] \arrow[u, line width=2pt, no head] \arrow[dr, "xw"]
& \circ \arrow[l, "y"] \arrow[ur, "xz"] \arrow[dr, "xw"]
& \bullet \arrow[l, "y"]
\\
\circ \arrow[u, line width=2pt, no head] \arrow[d, line width=2pt, no head]
& \circ \arrow[l, "y"] \arrow[u, line width=2pt, no head] \arrow[dr, "xw"]
& \circ \arrow[l, "y"] \arrow[ur, "xz"] \arrow[dr, "xw"]
& \bullet \arrow[r, "x"] \arrow[l, "y"]
& \circ \arrow[u, "z"] \arrow[d, "w"]
& \circ \arrow[l, line width=2pt, no head] \arrow[u, line width=2pt, no head] \arrow[d, line width=2pt, no head]
& \circ \arrow[l, "y"] \arrow[u, line width=2pt, no head]
\\
\bullet \arrow[r, "x"]
& \circ \arrow[u, "z"]
& \circ \arrow[l, line width=2pt, no head] \arrow[u, line width=2pt, no head]
& \circ \arrow[l, "y"] \arrow[u, line width=2pt, no head]
& \circ \arrow[l, "y"] \arrow[ur, "xz"]
& \bullet \arrow[r, "x"] \arrow[l, "y"]
& \circ \arrow[u, "z"]
\\
\end{tikzcd}
\]
\[
\begin{tikzcd}[row sep=1.8em, column sep=1.8em]
\circ \arrow[d, "w"]
& \circ \arrow[l, "y"] \arrow[dr, "xw"]
& \circ \arrow[l, "y"] \arrow[dr, "xw"]
& \bullet \arrow[r, line width=2pt, no head] \arrow[l, line width=2pt, no head]
& \circ \arrow[d, "w"]
& \circ \arrow[l, line width=2pt, no head] \arrow[d, "w"]
& \circ \arrow[l, "y"]
\\
\bullet \arrow[r, line width=2pt, no head]
& \circ \arrow[u, "z"] \arrow[d, "w"]
& \circ \arrow[l, line width=2pt, no head] \arrow[u, line width=2pt, no head] \arrow[d, "w"]
& \circ \arrow[l, "y"] \arrow[u, "z"] \arrow[dr, "xw"]
& \circ \arrow[l, "y"] \arrow[ur, "xz"] \arrow[dr, "xw"]
& \bullet \arrow[r, line width=2pt, no head] \arrow[l, line width=2pt, no head]
& \circ \arrow[u, "z"] \arrow[d, "w"]
\\
\circ \arrow[u, "z"] \arrow[dr, "xw"]
& \circ \arrow[l, "y"] \arrow[ur, "xz"] \arrow[dr, "xw"]
& \bullet \arrow[r, line width=2pt, no head] \arrow[l, line width=2pt, no head]
& \circ \arrow[u, "z"] \arrow[d, "w"]
& \circ \arrow[l, line width=2pt, no head] \arrow[u, line width=2pt, no head] \arrow[d, "w"]
& \circ \arrow[l, "y"] \arrow[u, "z"] \arrow[dr, "xw"]
& \circ \arrow[l, "y"]
\\
\circ \arrow[u, "z"]\arrow[d, "w"]
& \circ \arrow[l, line width=2pt, no head] \arrow[u, line width=2pt, no head] \arrow[d, "w"]
& \circ \arrow[l, "y"] \arrow[u, "z"] \arrow[dr, "xw"]
& \circ \arrow[l, "y"] \arrow[ur, "xz"] \arrow[dr, "xw"]
& \bullet \arrow[r, line width=2pt, no head] \arrow[l, line width=2pt, no head]
& \circ \arrow[u, "z"] \arrow[d, "w"]
& \circ \arrow[l, line width=2pt, no head] \arrow[u, line width=2pt, no head] \arrow[d, "w"]
\\
\circ \arrow[ur, "xz"] \arrow[dr, "xw"]
& \bullet \arrow[r, line width=2pt, no head] \arrow[l, line width=2pt, no head]
& \circ \arrow[u, "z"] \arrow[d, "w"]
& \circ \arrow[l, line width=2pt, no head] \arrow[u, line width=2pt, no head] \arrow[d, "w"]
& \circ \arrow[l, "y"] \arrow[u, "z"] \arrow[dr, "xw"]
& \circ \arrow[l, "y"] \arrow[ur, "xz"] \arrow[dr, "xw"]
& \bullet \arrow[l, line width=2pt, no head]
\\
\circ \arrow[u, line width=2pt, no head] \arrow[d, "w"]
& \circ \arrow[l, "y"] \arrow[u, "z"] \arrow[dr, "xw"]
& \circ \arrow[l, "y"] \arrow[ur, "xz"] \arrow[dr, "xw"]
& \bullet \arrow[r, line width=2pt, no head] \arrow[l, line width=2pt, no head]
& \circ \arrow[u, "z"] \arrow[d, "w"]
& \circ \arrow[l, line width=2pt, no head] \arrow[u, line width=2pt, no head] \arrow[d, "w"]
& \circ \arrow[l, "y"] \arrow[u, "z"]
\\
\bullet \arrow[r, line width=2pt, no head]
& \circ \arrow[u, "z"]
& \circ \arrow[l, line width=2pt, no head] \arrow[u, line width=2pt, no head]
& \circ \arrow[l, "y"] \arrow[u, "z"]
& \circ \arrow[l, "y"] \arrow[ur, "xz"]
& \bullet \arrow[r, line width=2pt, no head] \arrow[l, line width=2pt, no head]
& \circ \arrow[u, "z"]
\\
\end{tikzcd}
\quad
\begin{tikzcd}[row sep=1.8em, column sep=1.8em]
\circ \arrow[d, "w"]
& \circ \arrow[l, "y"] \arrow[dr, "xw"]
& \circ \arrow[l, "y"] \arrow[dr, "xw"]
& \bullet \arrow[r, "x"] \arrow[l, line width=2pt, no head]
& \circ \arrow[d, line width=2pt, no head]
& \circ \arrow[l, "y"] \arrow[d, "w"]
& \circ \arrow[l, "y"]
\\
\bullet \arrow[r, "x"]
& \circ \arrow[u, line width=2pt, no head] \arrow[d, line width=2pt, no head]
& \circ \arrow[l, "y"] \arrow[u, line width=2pt, no head] \arrow[d, "w"]
& \circ \arrow[l, "y"] \arrow[u, "z"] \arrow[dr, "xw"]
& \circ \arrow[l, "y"] \arrow[ur, "xz"] \arrow[dr, "xw"]
& \bullet \arrow[r, "x"] \arrow[l, line width=2pt, no head]
& \circ \arrow[u, line width=2pt, no head] \arrow[d, line width=2pt, no head]
\\
\circ \arrow[u, "z"] \arrow[dr, "xw"]
& \circ \arrow[l, "y"] \arrow[ur, "xz"] \arrow[dr, "xw"]
& \bullet \arrow[r, "x"] \arrow[l, line width=2pt, no head]
& \circ \arrow[u, line width=2pt, no head] \arrow[d, line width=2pt, no head]
& \circ \arrow[l, "y"] \arrow[u, line width=2pt, no head] \arrow[d, "w"]
& \circ \arrow[l, "y"] \arrow[u, "z"] \arrow[dr, "xw"]
& \circ \arrow[l, "y"]
\\
\circ \arrow[u, line width=2pt, no head] \arrow[d, line width=2pt, no head]
& \circ \arrow[l, "y"] \arrow[u, line width=2pt, no head] \arrow[d, "w"]
& \circ \arrow[l, "y"] \arrow[u, "z"] \arrow[dr, "xw"]
& \circ \arrow[l, "y"] \arrow[ur, "xz"] \arrow[dr, "xw"]
& \bullet \arrow[r, "x"] \arrow[l, line width=2pt, no head]
& \circ \arrow[u, line width=2pt, no head] \arrow[d, line width=2pt, no head]
& \circ \arrow[l, "y"] \arrow[u, line width=2pt, no head] \arrow[d, "w"]
\\
\circ \arrow[ur, "xz"] \arrow[dr, "xw"]
& \bullet \arrow[r, "x"] \arrow[l, line width=2pt, no head]
& \circ \arrow[u, line width=2pt, no head] \arrow[d, line width=2pt, no head]
& \circ \arrow[l, "y"] \arrow[u, line width=2pt, no head] \arrow[d, "w"]
& \circ \arrow[l, "y"] \arrow[u, "z"] \arrow[dr, "xw"]
& \circ \arrow[l, "y"] \arrow[ur, "xz"] \arrow[dr, "xw"]
& \bullet \arrow[l, line width=2pt, no head]
\\
\circ \arrow[u, line width=2pt, no head] \arrow[d, "w"]
& \circ \arrow[l, "y"] \arrow[u, "z"] \arrow[dr, "xw"]
& \circ \arrow[l, "y"] \arrow[ur, "xz"] \arrow[dr, "xw"]
& \bullet \arrow[r, "x"] \arrow[l, line width=2pt, no head]
& \circ \arrow[u, line width=2pt, no head] \arrow[d, line width=2pt, no head]
& \circ \arrow[l, "y"] \arrow[u, line width=2pt, no head] \arrow[d, "w"]
& \circ \arrow[l, "y"] \arrow[u, "z"]
\\
\bullet \arrow[r, "x"]
& \circ \arrow[u, "z"]
& \circ \arrow[l, "y"] \arrow[u, line width=2pt, no head]
& \circ \arrow[l, "y"] \arrow[u, line width=2pt, no head]
& \circ \arrow[l, "y"] \arrow[ur, "xz"]
& \bullet \arrow[r, "x"] \arrow[l, line width=2pt, no head]
& \circ \arrow[u, "z"]
\\
\end{tikzcd}
\]

Therefore there are the following five kinds of $2$-tilting bundles up to translations. All of them are $2$-representation infinite algebras of type $\widetilde A\widetilde A$. Observe that by mutations of non-trivial upper sets in $q^{-1}(J)$, they are mutated to each other, which correspond to $2$-APR tilting mutations.
\[\begin{array}{c c c}
\xymatrix{
 & \circ \ar[r]^y \ar[dr]^x & \circ \\
\circ \ar@2[r]^y_{xw} \ar[ur]^z & \circ \ar[r]^y \ar[ur]^z \ar[u]^w & \circ \ar[u]^w
}&\xymatrix{
\circ \ar[r]^y \ar[dr]^x & \circ \ar@2[r]^y_{xw} & \circ \\
\circ \ar[r]^y \ar[ur]^z \ar[u]^w & \circ \ar[ur]_z \ar[u]^w
}&\xymatrix{
\circ \ar[r]^y \ar[dr]_x & \circ \ar@2[r]^y_{xw} \ar@/^18pt/[rr]^{xz} & \circ \ar@2[r]^y_{xw} & \circ \\
 & \circ \ar[ur]_z \ar[u]^w
}\end{array}\]
\[\begin{array}{c c}
\xymatrix{
 & & \circ \\
\circ \ar@2[r]^y_{xw} \ar@/^18pt/[rr]^{xz} & \circ \ar@2[r]^y_{xw} \ar[ur]^z & \circ \ar[u]^w \\
\circ \ar[ur]_z \ar[u]^w
}&\xymatrix{
 & & \circ \ar[dr]^x \\
\circ \ar@2[r]^y_{xw} \ar@/^-18pt/[rr]_{xz} & \circ \ar@2[r]^y_{xw} \ar[ur]^z & \circ \ar[r]^y \ar[u]^w & \circ
}\end{array}\]
\end{Ex}

\subsection{Examples of dimension $3$}

In this subsection, we exhibit the classification of $3$-tilting bundles consisting of line bundles on the $\mathbb{P}^1$-bundle $\mathbb{P}_{\mathbb{P}^2}(\O_{\mathbb{P}^2}\oplus\O_{\mathbb{P}^2}(-2))$ over $\mathbb{P}^2$ as an application of Theorem \ref{classfitiltrk2}.

\begin{Ex}($d=3$)\label{Ex3fold}
Put $G:=\mathbb{Z}^2$ and $\vec{x}=\vec{y}=\vec{z}=(1,0),\vec{u}=(-1,1),\vec{v}=(1,1)\in G$. We view $S:=k[x,y,z,u,v]$ as a $G$-graded $k$-algebra. Then the resulting toric stack $\X$ is isomorphic to the $\mathbb{P}^1$-bundle $\mathbb{P}_{\mathbb{P}^2}(\O_{\mathbb{P}^2}\oplus\O_{\mathbb{P}^2}(-2))$ over $\mathbb{P}^2$. We have $\vec{p}=(3,2)$ and $H=G/\mathbb{Z}\vec{p}\cong\mathbb{Z}; (a,b)+\mathbb{Z}\vec{p}\mapsto 2a-3b$. If we equip $H$ with our partial order, then the quiver of $H$ becomes the following.
\[\xymatrix{
\cdots \ar@3@/^15pt/[rr]^{x,y,z} \ar@/^-15pt/[rrrrr]_{-u} \ar[r]_{-v} & \circ \ar@3@/^15pt/[rr]^{x,y,z} \ar@/^-15pt/[rrrrr]_{-u} \ar[r]_{-v} & \circ \ar@3@/^15pt/[rr]^{x,y,z} \ar@/^-15pt/[rrrrr]_{-u} \ar[r]_{-v} & \circ \ar@3@/^15pt/[rr]^{x,y,z} \ar[r]_{-v} & \circ \ar@3@/^15pt/[rr]^{x,y,z} \ar[r]_{-v} & \circ \ar@3@/^15pt/[rr]^{x,y,z} \ar[r]_{-v} & \circ \ar[r]_{-v} & \cdots
}\]
Then there is the following just one kind of non-trivial upper sets in $H$ up to translations.
\[\xymatrix{
\circ \ar@3@/^15pt/[rr]^{x,y,z} \ar@/^-15pt/[rrrrr]_{-u} \ar[r]_{-v} & \circ \ar@3@/^15pt/[rr]^{x,y,z} \ar@/^-15pt/[rrrrr]_{-u} \ar[r]_{-v} & \circ \ar@3@/^15pt/[rr]^{x,y,z} \ar[r]_{-v} & \circ \ar@3@/^15pt/[rr]^{x,y,z} \ar[r]_{-v} & \circ \ar@3@/^15pt/[rr]^{x,y,z} \ar[r]_{-v} & \circ \ar[r]_{-v} & \cdots
}\]
Remark that the isomorphism $H\cong\mathbb{Z}$ sends $s$ to $6$. If we let $\widehat{C}$ be the cut of $\widehat{Q}$ corresponding to an upper set in $H$, then $\widehat{Q}(\widehat{C})$ is as follows. Here, the black vertices correspond to elements of $B$.

\begin{center}
\begin{tikzpicture}[
  x={(2.4cm,0cm)},
  y={(-0.72cm,0.96cm)},
  z={(0cm,2.4cm)},
  >={Stealth[length=2.2mm,width=1.8mm]},
  line cap=round,
  line join=round,
  edge/.style={->, line width=0.95pt},
  rededge/.style={->, red!85!black, line width=1.35pt},
  vertex/.style={circle, fill=black, inner sep=1.25pt},
  every node/.style={font=\scriptsize}
]

\foreach \i in {0,...,4}{
  \foreach \j in {0,...,2}{
    \foreach \k in {0,...,6}{
      \coordinate (\i-\j-\k) at (\i,\j,\k);
    }
  }
}

\foreach \i in {0,...,4}{
  \foreach \j in {2}{
    \foreach \k in {0,...,6}{
      \pgfmathtruncatemacro{\labf}{mod(2*\i+2*\j+\k,6)}
      \ifnum\labf=0
        \ifnum\i<4
          \draw[->, thick] ($(\i-\j-\k)+(0.04,0,0)$) -- ($(\i-\j-\k)+(0.96,0,0)$);
        \fi
        \ifnum\i>0
          \draw[->, thick] ($(\i-\j-\k)+(-0.04,-0.04,0)$) -- ($(\i-\j-\k)+(-0.96,-0.96,0)$);
        \fi
      \fi
      \ifnum\labf=1
        \ifnum\i<4
          \draw[->, thick] ($(\i-\j-\k)+(0.04,0,0)$) -- ($(\i-\j-\k)+(0.96,0,0)$);
        \fi
        \ifnum\i>0
          \draw[->, thick] ($(\i-\j-\k)+(-0.04,-0.04,0)$) -- ($(\i-\j-\k)+(-0.96,-0.96,0)$);
        \fi
        \ifnum\k>0
          \draw[->, thick] ($(\i-\j-\k)+(0,0,-0.04)$) -- ($(\i-\j-\k)+(0,0,-0.96)$);
        \fi
      \fi
      \ifnum\labf=2
        \ifnum\i<4
          \draw[->, thick] ($(\i-\j-\k)+(0.04,0,0)$) -- ($(\i-\j-\k)+(0.96,0,0)$);
        \fi
        \ifnum\i>0
          \draw[->, thick] ($(\i-\j-\k)+(-0.04,-0.04,0)$) -- ($(\i-\j-\k)+(-0.96,-0.96,0)$);
        \fi
        \ifnum\k>0
          \draw[->, thick] ($(\i-\j-\k)+(0,0,-0.04)$) -- ($(\i-\j-\k)+(0,0,-0.96)$);
        \fi
      \fi
      \ifnum\labf=3
        \ifnum\i<4
          \draw[->, thick] ($(\i-\j-\k)+(0.04,0,0)$) -- ($(\i-\j-\k)+(0.96,0,0)$);
        \fi
        \ifnum\i>0
          \draw[->, thick] ($(\i-\j-\k)+(-0.04,-0.04,0)$) -- ($(\i-\j-\k)+(-0.96,-0.96,0)$);
        \fi
        \ifnum\k>0
          \draw[->, thick] ($(\i-\j-\k)+(0,0,-0.04)$) -- ($(\i-\j-\k)+(0,0,-0.96)$);
        \fi
      \fi
      \ifnum\labf=4
        \ifnum\k>0
          \draw[->, thick] ($(\i-\j-\k)+(0,0,-0.04)$) -- ($(\i-\j-\k)+(0,0,-0.96)$);
        \fi
        \ifnum\i>0
        \ifnum\k<6
          \draw[->, thick] ($(\i-\j-\k)+(-0.04,-0.04,0.04)$) -- ($(\i-\j-\k)+(-0.96,-0.96,0.96)$);
        \fi
        \fi
        \ifnum\i<4
        \ifnum\k<6
          \draw[->, thick] ($(\i-\j-\k)+(0.04,0,0.04)$) -- ($(\i-\j-\k)+(0.96,0,0.96)$);
        \fi
        \fi
      \fi
      \ifnum\labf=5
        \ifnum\k>0
          \draw[->, thick] ($(\i-\j-\k)+(0,0,-0.04)$) -- ($(\i-\j-\k)+(0,0,-0.96)$);
        \fi
        \ifnum\k<6
          \draw[->, thick] ($(\i-\j-\k)+(0,0,0.04)$) -- ($(\i-\j-\k)+(0,0,0.96)$);
        \fi
      \fi
    }
  }
}

\foreach \k in {0,...,6}{
  \foreach \j in {1}{
    \foreach \i in {0,...,4}{
      \pgfmathtruncatemacro{\labf}{mod(2*\i+2*\j+\k,6)}
      \ifnum\labf=0
        \ifnum\i<4
          \draw[white, line width=4pt] ($(\i-\j-\k)+(0.04,0,0)$) -- ($(\i-\j-\k)+(0.96,0,0)$);
          \draw[->, thick] ($(\i-\j-\k)+(0.04,0,0)$) -- ($(\i-\j-\k)+(0.96,0,0)$);
        \fi
        \ifnum\i>0
          \draw[white, line width=4pt] ($(\i-\j-\k)+(-0.4,-0.4,0)$) -- ($(\i-\j-\k)+(-0.96,-0.96,0)$);
          \draw[->, thick] ($(\i-\j-\k)+(-0.04,-0.04,0)$) -- ($(\i-\j-\k)+(-0.96,-0.96,0)$);
        \fi
        \draw[->, thick] ($(\i-\j-\k)+(0,0.04,0)$) -- ($(\i-\j-\k)+(0,0.96,0)$);
      \fi
      \ifnum\labf=1
        \ifnum\i<4
          \draw[white, line width=4pt] ($(\i-\j-\k)+(0.4,0,0)$) -- ($(\i-\j-\k)+(0.96,0,0)$);
          \draw[->, thick] ($(\i-\j-\k)+(0.04,0,0)$) -- ($(\i-\j-\k)+(0.96,0,0)$);
        \fi
        \ifnum\i>0
          \draw[white, line width=4pt] ($(\i-\j-\k)+(-0.4,-0.4,0)$) -- ($(\i-\j-\k)+(-0.96,-0.96,0)$);
          \draw[->, thick] ($(\i-\j-\k)+(-0.04,-0.04,0)$) -- ($(\i-\j-\k)+(-0.96,-0.96,0)$);
        \fi
        \ifnum\k>0
          \draw[white, line width=4pt] ($(\i-\j-\k)+(0,0,-0.4)$) -- ($(\i-\j-\k)+(0,0,-0.8)$);
          \draw[->, thick] ($(\i-\j-\k)+(0,0,-0.04)$) -- ($(\i-\j-\k)+(0,0,-0.96)$);
        \fi
        \draw[->, thick] ($(\i-\j-\k)+(0,0.04,0)$) -- ($(\i-\j-\k)+(0,0.96,0)$);
      \fi
      \ifnum\labf=2
        \ifnum\i<4
          \draw[white, line width=4pt] ($(\i-\j-\k)+(0.4,0,0)$) -- ($(\i-\j-\k)+(0.96,0,0)$);
          \draw[->, thick] ($(\i-\j-\k)+(0.04,0,0)$) -- ($(\i-\j-\k)+(0.96,0,0)$);
        \fi
        \ifnum\i>0
          \draw[white, line width=4pt] ($(\i-\j-\k)+(-0.4,-0.4,0)$) -- ($(\i-\j-\k)+(-0.96,-0.96,0)$);
          \draw[->, thick] ($(\i-\j-\k)+(-0.04,-0.04,0)$) -- ($(\i-\j-\k)+(-0.96,-0.96,0)$);
        \fi
        \ifnum\k>0
          \draw[white, line width=4pt] ($(\i-\j-\k)+(0,0,-0.4)$) -- ($(\i-\j-\k)+(0,0,-0.8)$);
          \draw[->, thick] ($(\i-\j-\k)+(0,0,-0.04)$) -- ($(\i-\j-\k)+(0,0,-0.96)$);
        \fi
        \draw[->, thick] ($(\i-\j-\k)+(0,0.04,0)$) -- ($(\i-\j-\k)+(0,0.96,0)$);
      \fi
      \ifnum\labf=3
        \ifnum\i<4
          \draw[white, line width=4pt] ($(\i-\j-\k)+(0.2,0,0)$) -- ($(\i-\j-\k)+(0.96,0,0)$);
          \draw[->, thick] ($(\i-\j-\k)+(0.04,0,0)$) -- ($(\i-\j-\k)+(0.96,0,0)$);
        \fi
        \ifnum\i>0
          \draw[white, line width=4pt] ($(\i-\j-\k)+(-0.04,-0.04,0)$) -- ($(\i-\j-\k)+(-0.96,-0.96,0)$);
          \draw[->, thick] ($(\i-\j-\k)+(-0.04,-0.04,0)$) -- ($(\i-\j-\k)+(-0.96,-0.96,0)$);
        \fi
        \ifnum\k>0
          \draw[white, line width=4pt] ($(\i-\j-\k)+(0,0,-0.2)$) -- ($(\i-\j-\k)+(0,0,-0.8)$);
          \draw[->, thick] ($(\i-\j-\k)+(0,0,-0.04)$) -- ($(\i-\j-\k)+(0,0,-0.96)$);
        \fi
        \draw[->, thick] ($(\i-\j-\k)+(0,0.04,0)$) -- ($(\i-\j-\k)+(0,0.96,0)$);
      \fi
      \ifnum\labf=4
        \ifnum\k>0
          \draw[->, thick] ($(\i-\j-\k)+(0,0,-0.04)$) -- ($(\i-\j-\k)+(0,0,-0.96)$);
        \fi
        \ifnum\i>0
        \ifnum\k<6
          \draw[white, line width=4pt] ($(\i-\j-\k)+(-0.2,-0.2,0.2)$) -- ($(\i-\j-\k)+(-0.96,-0.96,0.96)$);
          \draw[->, thick] ($(\i-\j-\k)+(-0.04,-0.04,0.04)$) -- ($(\i-\j-\k)+(-0.96,-0.96,0.96)$);
        \fi
        \fi
        \ifnum\i<4
        \ifnum\k<6
          \draw[white, line width=4pt] ($(\i-\j-\k)+(0.3,0,0.3)$) -- ($(\i-\j-\k)+(0.8,0,0.8)$);
          \draw[->, thick] ($(\i-\j-\k)+(0.04,0,0.04)$) -- ($(\i-\j-\k)+(0.96,0,0.96)$);
        \fi
        \fi
        \ifnum\k<6
          \draw[white, line width=4pt] ($(\i-\j-\k)+(0,0.2,0.2)$) -- ($(\i-\j-\k)+(0,0.8,0.8)$);
          \draw[->, thick] ($(\i-\j-\k)+(0,0.04,0.04)$) -- ($(\i-\j-\k)+(0,0.96,0.96)$);
        \fi
      \fi
      \ifnum\labf=5
        \ifnum\k>0
          \draw[white, line width=4pt] ($(\i-\j-\k)+(0,0,-0.04)$) -- ($(\i-\j-\k)+(0,0,-0.75)$);
          \draw[->, thick] ($(\i-\j-\k)+(0,0,-0.04)$) -- ($(\i-\j-\k)+(0,0,-0.96)$);
        \fi
        \ifnum\k<6
          \draw[white, line width=4pt] ($(\i-\j-\k)+(0,0,0.3)$) -- ($(\i-\j-\k)+(0,0,0.96)$);
          \draw[->, thick] ($(\i-\j-\k)+(0,0,0.04)$) -- ($(\i-\j-\k)+(0,0,0.96)$);
        \fi
      \fi
    }
  }
}

\foreach \k in {0,...,6}{
  \foreach \j in {0}{
    \foreach \i in {0,...,4}{
      \pgfmathtruncatemacro{\labf}{mod(2*\i+2*\j+\k,6)}
      \ifnum\labf=0
        \ifnum\i<4
          \draw[white, line width=4pt] ($(\i-\j-\k)+(0.04,0,0)$) -- ($(\i-\j-\k)+(0.96,0,0)$);
          \draw[->, thick] ($(\i-\j-\k)+(0.04,0,0)$) -- ($(\i-\j-\k)+(0.96,0,0)$);
        \fi
        \draw[->, thick] ($(\i-\j-\k)+(0,0.04,0)$) -- ($(\i-\j-\k)+(0,0.96,0)$);
      \fi
      \ifnum\labf=1
        \ifnum\i<4
          \draw[white, line width=4pt] ($(\i-\j-\k)+(0.4,0,0)$) -- ($(\i-\j-\k)+(0.96,0,0)$);
          \draw[->, thick] ($(\i-\j-\k)+(0.04,0,0)$) -- ($(\i-\j-\k)+(0.96,0,0)$);
        \fi
        \ifnum\k>0
          \draw[white, line width=4pt] ($(\i-\j-\k)+(0,0,-0.4)$) -- ($(\i-\j-\k)+(0,0,-0.8)$);
          \draw[->, thick] ($(\i-\j-\k)+(0,0,-0.04)$) -- ($(\i-\j-\k)+(0,0,-0.96)$);
        \fi
        \draw[white, line width=4pt] ($(\i-\j-\k)+(0,0.4,0)$) -- ($(\i-\j-\k)+(0,0.85,0)$);
        \draw[->, thick] ($(\i-\j-\k)+(0,0.04,0)$) -- ($(\i-\j-\k)+(0,0.96,0)$);
      \fi
      \ifnum\labf=2
        \ifnum\i<4
          \draw[white, line width=4pt] ($(\i-\j-\k)+(0.4,0,0)$) -- ($(\i-\j-\k)+(0.96,0,0)$);
          \draw[->, thick] ($(\i-\j-\k)+(0.04,0,0)$) -- ($(\i-\j-\k)+(0.96,0,0)$);
        \fi
        \ifnum\k>0
          \draw[white, line width=4pt] ($(\i-\j-\k)+(0,0,-0.4)$) -- ($(\i-\j-\k)+(0,0,-0.8)$);
          \draw[->, thick] ($(\i-\j-\k)+(0,0,-0.04)$) -- ($(\i-\j-\k)+(0,0,-0.96)$);
        \fi
        \draw[->, thick] ($(\i-\j-\k)+(0,0.04,0)$) -- ($(\i-\j-\k)+(0,0.96,0)$);
      \fi
      \ifnum\labf=3
        \ifnum\i<4
          \draw[white, line width=4pt] ($(\i-\j-\k)+(0.2,0,0)$) -- ($(\i-\j-\k)+(0.96,0,0)$);
          \draw[->, thick] ($(\i-\j-\k)+(0.04,0,0)$) -- ($(\i-\j-\k)+(0.96,0,0)$);
        \fi
        \ifnum\k>0
          \draw[white, line width=4pt] ($(\i-\j-\k)+(0,0,-0.2)$) -- ($(\i-\j-\k)+(0,0,-0.8)$);
          \draw[->, thick] ($(\i-\j-\k)+(0,0,-0.04)$) -- ($(\i-\j-\k)+(0,0,-0.96)$);
        \fi
        \draw[->, thick] ($(\i-\j-\k)+(0,0.04,0)$) -- ($(\i-\j-\k)+(0,0.96,0)$);
      \fi
      \ifnum\labf=4
        \ifnum\k>0
          \draw[white, line width=4pt] ($(\i-\j-\k)+(0,0,-0.1)$) -- ($(\i-\j-\k)+(0,0,-0.9)$);
          \draw[->, thick] ($(\i-\j-\k)+(0,0,-0.04)$) -- ($(\i-\j-\k)+(0,0,-0.96)$);
        \fi
        \ifnum\i<4
        \ifnum\k<6
          \draw[white, line width=4pt] ($(\i-\j-\k)+(0.3,0,0.3)$) -- ($(\i-\j-\k)+(0.8,0,0.8)$);
          \draw[->, thick] ($(\i-\j-\k)+(0.04,0,0.04)$) -- ($(\i-\j-\k)+(0.96,0,0.96)$);
        \fi
        \fi
        \ifnum\k<6
          \draw[white, line width=4pt] ($(\i-\j-\k)+(0,0.2,0.2)$) -- ($(\i-\j-\k)+(0,0.65,0.65)$);
          \draw[->, thick] ($(\i-\j-\k)+(0,0.04,0.04)$) -- ($(\i-\j-\k)+(0,0.96,0.96)$);
        \fi
      \fi
      \ifnum\labf=5
        \ifnum\k>0
          \draw[white, line width=4pt] ($(\i-\j-\k)+(0,0,-0.04)$) -- ($(\i-\j-\k)+(0,0,-0.75)$);
          \draw[->, thick] ($(\i-\j-\k)+(0,0,-0.04)$) -- ($(\i-\j-\k)+(0,0,-0.96)$);
        \fi
        \ifnum\k<6
          \draw[white, line width=4pt] ($(\i-\j-\k)+(0,0,0.1)$) -- ($(\i-\j-\k)+(0,0,0.96)$);
          \draw[->, thick] ($(\i-\j-\k)+(0,0,0.04)$) -- ($(\i-\j-\k)+(0,0,0.96)$);
        \fi
      \fi
    }
  }
}

\foreach \i in {0,...,4}{
  \foreach \j in {0,...,2}{
    \foreach \k in {0,...,6}{
    \pgfmathtruncatemacro{\labf}{mod(2*\i+2*\j+\k,6)}
      \ifnum\labf=0
        \node at (\i-\j-\k) {$\bullet$};
      \else
        \node at (\i-\j-\k) {$\circ$};
      \fi
    }
  }
}

\end{tikzpicture}
\end{center}

Here, each arrow of each direction represents the following monomials.

\begin{center}
\begin{tikzpicture}[
  x={(2.4cm,0cm)},
  y={(-0.72cm,0.96cm)},
  z={(0cm,2.4cm)},
  >={Stealth[length=2.2mm,width=1.8mm]},
  line cap=round,
  line join=round,
  edge/.style={->, line width=0.95pt},
  rededge/.style={->, red!85!black, line width=1.35pt},
  vertex/.style={circle, fill=black, inner sep=1.25pt},
  every node/.style={font=\scriptsize}
]

\node at (0,0,0) {$\circ$};

\draw[->, thick] (0.04,0,0) -- node[midway,above] {$x$} (0.96,0,0);
\draw[->, thick] (0,0.04,0) -- node[midway,above] {$y$} (0,0.96,0);
\draw[->, thick] (-0.04,-0.04,0) -- node[midway,above] {$z$} (-0.96,-0.96,0);
\draw[->, thick] (0,0,0.04) -- node[midway,right] {$u$} (0,0,0.96);
\draw[->, thick] (0,0,-0.04) -- node[midway,right] {$v$} (0,0,-0.96);
\draw[->, thick] (0.04,0,0.04) -- node[midway,left] {$xu$} (0.96,0,0.96);
\draw[->, thick] (0,0.04,0.04) -- node[midway,left] {$yu$} (0,0.96,0.96);
\draw[->, thick] (-0.04,-0.04,0.04) -- node[midway,left] {$zu$} (-0.96,-0.96,0.96);
\end{tikzpicture}
\end{center}
Thus there is the following just one kind of non-trivial upper sets in $\J_H$ up to translations with the following quiver $Q(C)$.
\[\xymatrix{
\circ \ar@3@/^15pt/[rr]^{x,y,z} & \circ \ar@3@/^15pt/[rr]^{x,y,z} \ar[l]^v & \circ \ar@3@/^15pt/[rr]^{x,y,z} \ar[l]^v & \circ \ar@3@/^15pt/[rr]^{x,y,z} \ar[l]^v & \circ \ar[l]^v \ar@3@/^10pt/[lll]^{xu,yu,zu} & \circ \ar@/^20pt/[lllll]^u \ar[l]^v
}\]
The quiver of $q^{-1}(J)$ is as follows.
\[\xymatrix{
 & & & & & & \circ \ar@3[r]^{x,y,z} & \circ \ar@3[r]_{x,y,z} & \cdots \\
 & & & & \circ \ar@3[r]^{x,y,z} & \circ \ar@3[r]_{x,y,z} \ar[ur]^v & \circ \ar@3[u]^{xu,yu,zu} \ar[ur]_v \\
 & & & \circ \ar@3[r]_{x,y,z} \ar[ur]^v & \circ \ar@3[r]_{x,y,z} \ar[ur]^v & \circ \ar[ul]^u \ar[ur]_v \\
 & \circ \ar@3[r]^{x,y,z} & \circ \ar@3[r]_{x,y,z} \ar[ur]^v & \circ \ar@3[u]^{xu,yu,zu} \ar[ur]_v \\
\cdots \ar@3[r]_{x,y,z} \ar[ur]^v & \circ \ar@3[r]_{x,y,z} \ar[ur]^v & \circ \ar[ul]^u \ar[ur]_v
}\]
There are the following six kinds of non-trivial upper sets in $q^{-1}(J)$ up to translations by $\vec{p}$.
\[\xymatrix{
 & & & & & \circ \ar@3[r]^{x,y,z} & \circ \ar@3[r]_{x,y,z} & \cdots \\
 & & & \circ \ar@3[r]^{x,y,z} & \circ \ar@3[r]_{x,y,z} \ar[ur]^v & \circ \ar@3[u]^{xu,yu,zu} \ar[ur]_v \\
 & & \circ \ar@3[r]_{x,y,z} \ar[ur]^v & \circ \ar@3[r]_{x,y,z} \ar[ur]^v & \circ \ar[ul]^u \ar[ur]_v \\
\circ \ar@3[r]^{x,y,z} & \circ \ar@3[r]_{x,y,z} \ar[ur]^v & \circ \ar@3[u]^{xu,yu,zu} \ar[ur]_v
}\xymatrix{
 & & & & \circ \ar@3[r]^{x,y,z} & \circ \ar@3[r]_{x,y,z} & \cdots \\
 & & \circ \ar@3[r]^{x,y,z} & \circ \ar@3[r]_{x,y,z} \ar[ur]^v & \circ \ar@3[u]^{xu,yu,zu} \ar[ur]_v \\
 & \circ \ar@3[r]_{x,y,z} \ar[ur]^v & \circ \ar@3[r]_{x,y,z} \ar[ur]^v & \circ \ar[ul]^u \ar[ur]_v \\
\circ \ar@3[r]_{x,y,z} \ar[ur]^v & \circ \ar@3[u]^{xu,yu,zu} \ar[ur]_v
}\]
\[\xymatrix{
 & & & \circ \ar@3[r]^{x,y,z} & \circ \ar@3[r]_{x,y,z} & \cdots \\
 & \circ \ar@3[r]^{x,y,z} & \circ \ar@3[r]_{x,y,z} \ar[ur]^v & \circ \ar@3[u]^{xu,yu,zu} \ar[ur]_v \\
\circ \ar@3[r]_{x,y,z} \ar[ur]^v & \circ \ar@3[r]_{x,y,z} \ar[ur]^v & \circ \ar[ul]^u \ar[ur]_v \\
\circ \ar@3[u]^{xu,yu,zu} \ar[ur]_v
}\xymatrix{
 & & & \circ \ar@3[r]^{x,y,z} & \circ \ar@3[r]_{x,y,z} & \cdots \\
 & \circ \ar@3[r]^{x,y,z} & \circ \ar@3[r]_{x,y,z} \ar[ur]^v & \circ \ar@3[u]^{xu,yu,zu} \ar[ur]_v \\
\circ \ar@3[r]_{x,y,z} \ar[ur]^v & \circ \ar@3[r]_{x,y,z} \ar[ur]^v & \circ \ar[ul]^u \ar[ur]_v
}\]
\[\xymatrix{
 & & \circ \ar@3[r]^{x,y,z} & \circ \ar@3[r]_{x,y,z} & \cdots \\
\circ \ar@3[r]^{x,y,z} & \circ \ar@3[r]_{x,y,z} \ar[ur]^v & \circ \ar@3[u]^{xu,yu,zu} \ar[ur]_v \\
\circ \ar@3[r]_{x,y,z} \ar[ur]^v & \circ \ar[ul]^u \ar[ur]_v
}\xymatrix{
 & & \circ \ar@3[r]^{x,y,z} & \circ \ar@3[r]_{x,y,z} & \cdots \\
\circ \ar@3[r]^{x,y,z} & \circ \ar@3[r]_{x,y,z} \ar[ur]^v & \circ \ar@3[u]^{xu,yu,zu} \ar[ur]_v \\
 & \circ \ar[ul]^u \ar[ur]_v
}\]
These upper sets correspond to the following cuts of $\widehat Q(\widehat C)$.

\]
\end{Ex}

\begin{appendix}

\section{Beilinson-type theorem for $G$-graded dg rings}\label{appenBei}

The purpose of this appendix is to isolate the abstract mechanism behind the upper-set construction used in the main body of the paper. In the Picard-number-one case, tilting bundles are obtained from windows $J(I)=I\cap(I^c+p)$ associated with non-trivial upper sets $I\subseteq G$. The results below show that this construction is not specific to commutative toric geometry: it is a general Beilinson-type window construction for $G_{\ge0}$-graded dg rings.

More precisely, we first prove a Beilinson-type theorem for $G_{\ge0}$-graded dg rings with generalized Gorenstein parameter $p$. We then apply it to connective Calabi--Yau dg algebras and obtain a $G$-graded version of the Minamoto--Mori correspondence. This provides a non-commutative analogue of the rank-one tilting classification in Theorem \ref{classfitiltrk1}.

Let $G$ be a finitely generated abelian group of rank one. We assume that $G$ admits a partial order $\leq$ satisfying $G=\mathbb{Z}G_{\geq0}$ and $x\leq y\Rightarrow x+z\leq y+z$ for any $x,y,z\in G$. Let $\Gamma$ be a $G_{\geq0}$-graded dg ring. We introduce some notations for brevity.

\begin{Def}\label{subGder}
Let $I\subseteq G$ be a subset.
\begin{enumerate}
\item For $X\in\per^G\!\Gamma$, define $\thick^IX:=\thick\{X(-g)\mid g\in I\}\subseteq\per^G\!\Gamma$.
\item $\per^I\Gamma:=\thick^I\Gamma\subseteq\per^G\!\Gamma$
\item $\D^G(\Gamma)_I:=\{X\in\D^G(\Gamma)\mid H^nX_{g}=0\text{ for all }n\in\mathbb{Z}\text{ and }g\in I^c\}$
\item $(\per^G\!\Gamma)_I:=\per^G\!\Gamma\cap\D^G(\Gamma)_I$
\end{enumerate}
\end{Def}

\subsection{Beilinson-type theorem}

Beilinson's celebrated result \cite{Bei} shows that $\bigoplus_{i=0}^d\O_{\mathbb{P}^d}(i)\in\Coh\mathbb{P}^d$ is a tilting bundle. This is significantly generalized to the setting of non-commutative projective schemes of $\mathbb{Z}_{\geq0}$-graded dg rings (or dg categories) in \cite[A.7]{Han24c}. In this section, we give a further generalization to the setting of $G_{\geq0}$-graded dg rings.

As in \cite[A.1]{Han24c}, we say $\Gamma$ has {\it generalized Gorenstein parameter} $p\in G$ if $\RHom_\Gamma(-,\Gamma)$ takes $\D^G(\Gamma)_0$ to $\D^G(\Gamma^{\op})_{-p}$ and $\RHom_{\Gamma^{\op}}(-,\Gamma)$ takes $\D^G(\Gamma^{\op})_0$ to $\D^G(\Gamma)_{-p}$. Then we have an analogous statement to \cite[A.4]{Han24c}.

\begin{Lem}
Let $\Gamma$ be a $G_{\geq0}$-graded dg ring.
\begin{enumerate}
\item The natural functor $\D(\Gamma_0)\to \D^G(\Gamma)_0$ is a triangle equivalence. In particular, we have $\D^G(\Gamma)_0=\Loc\Gamma_0$.
\item $\Gamma$ has Gorenstein parameter $p$ if and only if both $\RHom_\Gamma(\Gamma_0,\Gamma)\in\D^G(\Gamma^{\op})_{-p}$ and $\RHom_{\Gamma^{\op}}(\Gamma_0,\Gamma)\in\D^G(\Gamma)_{-p}$ hold.
\end{enumerate}
\end{Lem}
\begin{proof}
We only prove (1). Observe that the natural functor $\D(\Gamma_0)\to \D^G(\Gamma)_0$ preserves coproducts. The essentially surjectivity can be checked easily. By applying $\RHom^G_\Gamma(-,\Gamma_0)$ to the triangle
\[\Gamma_{>0}\to\Gamma\to\Gamma_0\dashrightarrow,\]
we obtain $\REnd^G_{\Gamma}(\Gamma_0)\cong\Gamma_0$. This means that our functor is fully faithful.
\end{proof}

Here, for any upper set $I\subseteq G$, we have a stable $t$-structure $\per^G\!\Gamma=\per^{I^c}\Gamma\perp\per^I\Gamma$. We can prove a similar result to \cite[A.8(1)]{Han24c}.

\begin{Lem}\label{perupper}
For any upper set $I\subseteq G$, we have $\per^I\Gamma=(\per^G\!\Gamma)_I$.
\end{Lem}

Next, we consider the condition $\Gamma_0\in\per^G\!\Gamma$. Remark that for any upper set $I\subseteq G$, we have a stable $t$-structure $\D^G(\Gamma)=\D^G(\Gamma)_I\perp\D^G(\Gamma)_{I^c}$.

\begin{Prop}\label{tstres}
For a $G_{\geq0}$-graded dg ring $\Gamma$, the following conditions are equivalent.
\begin{enumerate}
\item $\Gamma_0\in\per^G\!\Gamma$
\item $\Gamma_{g}\in\per^G\!\Gamma$ for every $g\in G_{\ge0}$.
\item For any upper set $I\subseteq G$, the stable $t$-structure $\D^G(\Gamma)=\D^G(\Gamma)_I\perp\D^G(\Gamma)_{I^c}$ restricts to $\per^G\!\Gamma$.
\end{enumerate}
If these conditions are satisfied, then we have $\Gamma_g\in\per\Gamma_0$ for every $g\in G_{\ge0}$.
\end{Prop}
\begin{proof}
(3)$\Rightarrow$(1) Consider the triangle $\Gamma_{>0}\to\Gamma\to\Gamma_0\dashrightarrow$. Applying (3) to $I=G_{>0}$, we obtain (1). 

(1)$\Rightarrow$(2) Remark that for any $g\in G_{\geq0}$, the number of elements $h\in G_{\geq0}$ with $g\nleq h$ is finite. Thus as an induction hypothesis, we may assume that $\Gamma_{h}\in\per^G\!\Gamma$ holds for all $h\in G_{\geq0}$ with $g\nleq h$. By the triangle $\Gamma_{\geq g}\to\Gamma\to\Gamma_{\ngeq g}\dashrightarrow$, we have $\Gamma_{\geq g}\in\per^G\!\Gamma$. Then by Lemma \ref{perupper}, we have $\Gamma_{\geq g}\in\per^{\geq g}\Gamma$. By applying $(-)_{g}\colon\D^G(\Gamma)\to\D(\Gamma_0)$, we have $\Gamma_{g}\in\per\Gamma_0$. Thus we obtain $\Gamma_{g}\in\per^G\!\Gamma$.

(2)$\Rightarrow$(3) It is enough to show $\Gamma_{I^c}\in\per^G\!\Gamma$. Since $G_{\geq0}\cap I^c$ is a finite set, this follows from (2).
\end{proof}

For a $G_{\geq0}$-graded dg ring $\Gamma$, we consider the following conditions.

\begin{enumerate}
\item[(D1)] $\Gamma_0\in\per^G\!\Gamma$ and $\Gamma_0^{\op}\in\per^G\!\Gamma^{\op}$.
\item[(D2)]  $\Gamma$ has generalized Gorenstein parameter $p\in G_{>0}$.
\end{enumerate}

As \cite[A.6]{Han24c}, we define $G$-graded cluster categories.

\begin{Def}
Under the setting (D1) and (D2), we define the {\it graded cluster category} as the Verdier quotient
\[\C^G(\Gamma):=\per^G\!\Gamma/\thick^G\!\Gamma_0.\]
\end{Def}

Remark that $\mathbb{Z}$ acts on $G$ by $n\cdot g:=g+np$ and this action satisfies the conditions (A1),(A2) and (A3). Thus for $I\in\I_G$, we write $J(I)=I\cap(I^c+p)\in\J_G$ as in Theorem \ref{upJX}.

We state one of the main theorems in this section which is a generalization of \cite[A.7]{Han24c}.

\begin{Thm}\label{GBei}(Beilinson-type theorem for $G$-graded dg rings)
Assume the setting (D1) and (D2). Take $I\in\I_G$.
\begin{enumerate}
\item We have a weak semi-orthogonal decomposition
\[\per^G\!\Gamma=\thick^I\Gamma_0\perp\per^{J(I)}\Gamma\perp\thick^{I^c}\Gamma_0.\]
\item The composition
\[\per^{J(I)}\Gamma\hookrightarrow\per^G\!\Gamma\to\C^G(\Gamma)\]
is a triangle equivalence. Thus if we put $A:=[\Gamma_{g-h}]_{g,h\in J(I)}$, then we have a triangle equivalence
\[\C^G(\Gamma)\simeq\per A.\]
\end{enumerate}
\end{Thm}

As in the case of (non-commutative) projective geometry, we write $\O_\Gamma\in\C^G(\Gamma)$ for the image of $\Gamma\in\D^G(\Gamma)$ in $\C^G(\Gamma)$. Remark that Theorem \ref{GBei}(2) says that $\bigoplus_{g\in J(I)}\O_\Gamma(-g)$ is a thick generator of $\C^G(\Gamma)$.

Towards this theorem, we make some preparations.

\begin{Lem}\label{Gordual}
Under the setting (D1) and (D2),
\begin{enumerate}
\item $(\per^G\!\Gamma)_{g}=\thick\Gamma_0(-g)$
\item The duality $\RHom_\Gamma(-,\Gamma)\colon\per^G\!\Gamma\xrightarrow[\simeq]{}\per^G\!\Gamma^{\op}$ restricts to a duality $(\per^G\!\Gamma)_{g}\xrightarrow[\simeq]{}(\per^G\!\Gamma^{\op})_{-p-g}$.
\end{enumerate}
\end{Lem}
\begin{proof}
These assertions can be shown in the same way as \cite[A.9]{Han24c}.
\end{proof}

We recall powerful results from \cite{IYang20}.

\begin{Prop}\label{IYang}
Let $\T$ be a triangulated category and $\S\subseteq\T$ a thick subcategory. Assume we have a $t$-structure $\S=\X\perp\Y$. Consider the Verdier quotient $\pi\colon\T\to\T/\S$.
\begin{enumerate}
\item For $M\in{}^\perp\Y[1]\subseteq\T$ and $N\in\X^\perp\subseteq\T$, the map
\[\pi\colon\T(M,N)\to(\T/\S)(\pi(M),\pi(N))\]
is bijective.
\item\cite[1.1]{IYang20} Assume furthermore that we have $t$-structures $\T=\X\perp\X^\perp={}^\perp\Y\perp\Y$. Put $\Z:=\X^\perp\cap{}^\perp\Y[1]$. Then we have $\T=\X\perp\Z\perp\Y[1]$. In particular, the composition
\[\Z\hookrightarrow\T\xrightarrow{\pi}\T/\S\]
is a triangle equivalence.
\end{enumerate}
\end{Prop}
\begin{proof}
For (1), the same proof as \cite[3.3]{IYang20} works.
\end{proof}

Now we can prove Theorem \ref{GBei}.

\begin{proof}[Proof of Theorem \ref{GBei}]
We have stable $t$-structures
\[\thick^G\!\Gamma_0=\thick^I\Gamma_0\perp\thick^{I^c}\Gamma_0\text{ and}\]
\[\per^G\!\Gamma=(\per^G\!\Gamma)_I\perp(\per^G\!\Gamma)_{I^c}=\per^I\Gamma\perp\thick^{I^c}\Gamma_0.\]
By applying $\RHom_\Gamma(-,\Gamma)$ to $\per^G\!\Gamma^{\op}=\per^{-I^c-p}\Gamma^{\op}\perp\thick^{-I-p}\Gamma_0^{\op}$, we obtain a stable $t$-structure
\[\per^G\!\Gamma=\thick^I\Gamma_0\perp\per^{I^c+p}\Gamma.\]
Thus by Proposition \ref{IYang}, we obtain the desired result.
\end{proof}

Remark that the essential surjectivity of the functor $\per^{I\cap(I^c+p)}\Gamma\to\C^G(\Gamma)$ can be proved in the same way as \cite[A.10]{HI}.

\subsection{Minamoto--Mori correspondence}

As an application, we generalize a result in \cite[4.12]{MM}, so called Minamoto--Mori correspondence, to our setting (Theorem \ref{MMcorr}). This can be shown as a non-commutative generalization of Theorem \ref{classfitiltrk1}. To state the theorem, we introduce a definition of connective Calabi--Yau dg algebras which is new even when $G=\mathbb{Z}$. We consider the following conditions for a $G_{\geq0}$-graded connective dg $k$-algebra $\Gamma$. Here, we put
\[\pvd^G\!\Gamma:=\{X\in\D^G(\Gamma)\mid\sum_{i\in\mathbb{Z}}\dim_kH^iX<\infty\}\subseteq\D^G(\Gamma).\]
\begin{enumerate}
\item[(C1)] $\Gamma_g\in\pvd\Gamma_0$ holds for every $g\in G_{\ge0}$.
\item[(C2)] $\pvd^G\!\Gamma$ admits a Serre functor $(-p)[d+1]$.
\end{enumerate}

\begin{Rem}\label{lrsymm}
Since we have a duality $D\colon\pvd^G\!\Gamma\xrightarrow[\simeq]{}\pvd^G\!\Gamma^{\op}$, the condition (C2) is left-right symmetric.
\end{Rem}

\begin{Ex}
Assume a $G_{\geq0}$-graded connective dg algebra $\Gamma$ satisfies the following conditions.
\begin{enumerate}
\item $\Gamma_0$ is proper as a dg $k$-algebra, that is, we have $\sum_{i\in\mathbb{Z}}\dim_kH^i\Gamma_0<\infty$.
\item $\Gamma$ is homologically smooth, that is, $\Gamma\in\per^G\!\Gamma^e$ holds where $\Gamma^e:=\Gamma^{\op}\otimes_k\Gamma$.
\item $\Gamma$ is bimodule $(d+1)$-Calabi--Yau of Gorenstein parameter $p$, that is, $\RHom_{\Gamma^e}(\Gamma,\Gamma^e)\cong\Gamma(p)[-d-1]$ holds in $\D^G(\Gamma^e)$.
\end{enumerate}
Then $\Gamma$ satisfies (C1) and (C2).
\end{Ex}

Consider the following thick subcategory $\T$ of $\D^G(\Gamma)$ which offers us a suitable place for arguments.
\[\T:=\{X\in\D^G(\Gamma)\mid X_{\le g}\in\pvd^G\!\Gamma\text{ holds for every }g\in G\}\]
First, we see that for each object in $\T^{\le0}:=\{X\in\T\mid H^{>0}X=0\}$, we can do an operation like taking a projective cover. A role of projective-like objects is played by objects in
\[\P:=\{\bigoplus_{g\in G}P_g(-g)\mid P_g\in\add\Gamma\subseteq\per^G\!\Gamma, P_{\ll0}=0\}\subseteq\T.\]

\begin{Lem}\label{grprojcover}
Assume a $G_{\geq0}$-graded connective dg algebra $\Gamma$ satisfies (C1). Take $X_0\in\T^{\le0}$. Then we have triangles
\[X_{i+1}\to P_i\to X_i\dashrightarrow(i\ge0)\]
satisfying the following conditions for each $i\ge0$.
\begin{enumerate}
\item $X_i\in\T^{\le0}$
\item The morphism $P_i\to X_i$ is a right $\P$-approximation.
\item If we take a simple module $S\in\simp^G\!H^0\Gamma$, then the homomorphism $\T(X_i,S)\to\T(P_i,S)$ is an isomorphism.
\end{enumerate}
\end{Lem}
\begin{proof}
We may assume $i=0$. Observe that $\rad^G\!H^0\Gamma=\rad H^0\Gamma_0\oplus H^0\Gamma_{>0}$ holds. Take $P_0\in\P$ such that $\frac{H^0P_0}{H^0P_0(\rad^G\!H^0\Gamma)}\cong\frac{H^0X_0}{H^0X_0(\rad^G\!H^0\Gamma)}$ holds in $\Mod^G\!H^0\Gamma$. This induces a projective cover $H^0P_0\to H^0X_0$ in $\Mod^G\!H^0\Gamma$ from which we can obtain a right $\P$-approximation $P_0\to X_0$. Extend it to a triangle $X_1\to P_0\to X_0\dashrightarrow$ and we can check $X_1\in\T^{\le0}$. From construction, the morphism $\Hom^G_{H^0\Gamma}(H^0X_0,S)\to\Hom^G_{H^0\Gamma}(H^0P_0,S)$ is an isomorphism. Thus so is $\T(X_0,S)\to\T(P_0,S)$.
\end{proof}

As an immediate application, we can verify that the condition (C1) ensures that $\pvd^G\!\Gamma$ is Hom-finite. 

\begin{Cor}
Assume a $G_{\geq0}$-graded connective dg algebra $\Gamma$ satisfies (C1). Then $\pvd^G\!\Gamma$ is Hom-finite.
\end{Cor}
\begin{proof}
It is enough to show $\dim_k\T(X,Y)<\infty$ for each $X\in\T$ and $Y\in\pvd^G\!\Gamma$. We may assume $X\in\T^{\leq0}$. Apply Lemma \ref{grprojcover} to $X_0:=X$ and obtain triangles $X_{i+1}\to P_i\to X_i\dashrightarrow$ for each $i\ge0$. Then we have
\[X\in P_0*P_1[1]*\cdots*P_{i-1}[i-1]*X_i[i].\]
Here, since $Y\in\pvd^G\!\Gamma$, we have $\dim_k\T(P[n],Y)<\infty$ for each $P\in\P$ and $n\in\mathbb{Z}$. In addition, we have $\T(\T^{\le-i},Y)=0$ for $i\gg0$. Thus we get the assertion.
\end{proof}

As a next application, we prove $\Gamma_0\in\per^G\!\Gamma$ using the condition (C2).

\begin{Prop}\label{CY0thper}
Assume a $G_{\geq0}$-graded connective dg algebra $\Gamma$ satisfies (C1) and (C2). Then we have 
\[\Gamma_0\in\per^G\!\Gamma.\]
\end{Prop}
\begin{proof}
Apply Lemma \ref{grprojcover} to $X_0:=\Gamma_0$ and obtain triangles $X_{i+1}\to P_i\to X_i\dashrightarrow$ for each $i\ge0$. Take a simple module $S\in\simp^G\!H^0\Gamma$ which we assume to be concentrated in its degree $0\in G$ for simplicity. Then for $n\ge0$ and $g\in G$, we have
\[\D^G(\Gamma)(\Gamma_0,S(g)[n])=\T(X_0,S(g)[n])\cong\T(X_1,S(g)[n-1])\cong\cdots\cong\T(X_n,S(g))\cong\T(P_n,S(g)).\]
On the other hand, by (C2), we have
\[\D^G(\Gamma)(\Gamma_0,S(g)[n])\cong D\D^G(\Gamma)(S(g)[n],\Gamma_0(-p)[d+1]).\]
This means $\D^G(\Gamma)(\Gamma_0,S(g)[>\!d+1])=0$ and $\D^G(\Gamma)(\Gamma_0,S(\ll\!0)[n])=0$. Thus $\T(P_{>d+1},S(g))=0$ and $\T(P_n,S(\ll\!0))=0$ hold. Therefore we obtain $P_{>d+1}=0$ and $P_n\in\add^G\!\Gamma\subseteq\P$ for $0\le n\le d+1$. This implies $\Gamma_0\in P_0*P_1[1]*\cdots*P_{d+1}[d+1]\subseteq\per^G\!\Gamma$.
\end{proof}

Next, we see that $\gl\Gamma_0\le d+1<\infty$ holds thanks to the condition (C2).

\begin{Lem}
Assume a $G_{\geq0}$-graded connective dg algebra $\Gamma$ satisfies (C1) and (C2). Then we have the following assertions.
\begin{enumerate}
\item $\pvd\Gamma_0=\per\Gamma_0$
\item $\pvd^G\!\Gamma=\thick^G\!\Gamma_0\subseteq\per^G\!\Gamma$
\end{enumerate}
\end{Lem}
\begin{proof}
(1) Take $X,Y\in\mod H^0\Gamma_0\subseteq\pvd\Gamma_0$. By (C2), we have
\[\D(\Gamma_0)(X,Y[n])=\D^G(\Gamma)(X,Y[n])\cong D\D^G(\Gamma)(Y[n],X(-p)[d+1]).\]
Thus $\D(\Gamma_0)(X,Y[>\!d+1])=0$ holds. This implies $\gl\Gamma_0\le d+1<\infty$ and $\pvd\Gamma_0=\per\Gamma_0$ by \cite[3.4]{Tom25c}.

(2) This follows by (1) and Proposition \ref{CY0thper}.
\end{proof}

Our next objective is to prove the relative Calabi--Yau property (Proposition \ref{relCY}). The next lemma holds for arbitrary $G_{\ge0}$-graded dg algebras.

\begin{Lem}
Let $\Gamma$ be a $G_{\geq0}$-graded dg algebra. Take $M\in\D^G(\Gamma)$.
\begin{enumerate}
\item For $P\in\per^G\!\Gamma$, the natural morphism
\[\D^G(\Gamma)(P,M)\to\varprojlim_{g\in G}\D^G(\Gamma)(P,M_{\leq g})\]
is an isomorphism.
\item For $X\in\pvd^G\!\Gamma$, the natural morphism
\[\varinjlim_{g\in G}\D^G(\Gamma)(M_{\leq g},X)\to\D^G(\Gamma)(M,X)\]
is an isomorphism.
\end{enumerate}
\end{Lem}
\begin{proof}
(1) Observe that we may assume $P=\Gamma(-h)$ for some $h\in G$. Then the right hand side can be calculated as
\[\varprojlim_{g\in G}\D^G(\Gamma)(\Gamma(-h),M_{\le g})\cong\varprojlim_{g\in G}(H^0M_{\le g})_h\cong H^0M_h,\]
which is isomorphic to the left hand side.

(2) Our proof is essentially the same as \cite[2.6(2)]{IR}. First, we show the surjectivity. Take $[f/s]\in\D^G(\Gamma)(M,X)$ where $f\in\K^G(\Gamma)(M,X')$ is a morphism and $s\in\K^G(\Gamma)(X,X')$ is a quasi-isomorphism. Since $\sum_{i\in\mathbb{Z}}\dim_kH^iX'<\infty$, there exists $g\in G$ such that the morphism $X'\to X'_{\le g}$ is a quasi-isomorphism. Thus we may assume $X'_{\nleq g}=0$. Then $f$ factors through the morphism $\pi_g\colon M\to M_{\le g}$.

Next, we show the injectivity. Take $[f/s]\in\D^G(\Gamma)(M_{\le g},X)$ whose composition with $\pi_g\colon M\to M_{\le g}$ is zero, where $f\in\K^G(\Gamma)(M_{\le g},X')$ is a morphism and $s\in\K^G(\Gamma)(X,X')$ is a quasi-isomorphism. Since $f\pi_g=0\in\D^G(\Gamma)(M,X')$, there exists a quasi-isomorphism $s'\in\K^G(\Gamma)(X',X'')$ such that $s'f\pi_g=0\in\K^G(\Gamma)(M,X'')$. Thus we may first assume $f\pi_g=0\in\K^G(\Gamma)(M,X')$. As above, we may assume $X'_{\nleq g'}=0$ for some $g'\in G$. If we view $f\pi_g\in\C^G(\Gamma)(M,X')$, then there exists a morphism $h\colon M\to X'$ of degree $-1$ such that $f\pi_g=dh$ holds. Since $X'_{\nleq g'}=0$, $h$ factors through $\pi_{g'}\colon M\to M_{\le g'}$. Thus if we take $g'$ so that $g'\geq g$, then this means $f\pi_{gg'}=0\in\K^G(\Gamma)(M_{\le g'},X')$ where $\pi_{gg'}\colon M_{\le g'}\to M_{\le g}$.
\end{proof}

Then we can prove the relative Calabi--Yau property. This is an analogue of \cite[3.1(8)]{IR}.

\begin{Prop}\label{relCY}
Assume a $G_{\geq0}$-graded connective dg algebra $\Gamma$ satisfies the conditions (C1) and (C2). Then for $X\in\pvd^G\!\Gamma$ and $Y\in\T$, we have a functorial isomorphism
\[\D^G(\Gamma)(X,Y)\cong D\D^G(\Gamma)(Y,X(-p)[d+1]).\]
\end{Prop}
\begin{proof}
For $g\in G$, we have an isomorphism
\[\D^G(\Gamma)(X,Y_{\le g})\cong D\D^G(\Gamma)(Y_{\le g},X(-p)[d+1]).\]
By taking a limit, we obtain
\begin{align*}
\D^G(\Gamma)(X,Y)&\cong\varprojlim_{g\in G}\D^G(\Gamma)(X,Y_{\le g}) \\
&\cong\varprojlim_{g\in G}D\D^G(\Gamma)(Y_{\le g},X(-p)[d+1]) \\
&\cong D\varinjlim_{g\in G}\D^G(\Gamma)(Y_{\le g},X(-p)[d+1]) \\
&\cong D\D^G(\Gamma)(Y,X(-p)[d+1]).\qedhere
\end{align*}
\end{proof}

As a corollary, we can show that our conditions (C1) and (C2) imply (D1) and (D2).

\begin{Cor}
Assume a $G_{\geq0}$-graded connective dg algebra $\Gamma$ satisfies the conditions (C1) and (C2). Then $\Gamma$ satisfies the conditions (D1) and (D2).
\end{Cor}
\begin{proof}
By Proposition \ref{CY0thper}, we have $\Gamma_0\in\per^G\!\Gamma$. By Proposition \ref{relCY}, we have
\[\RHom_\Gamma(\Gamma_0,\Gamma)\cong D\RHom_\Gamma(\Gamma,\Gamma_0(-p)[d+1])\cong(D\Gamma_0)(p)[-d-1]\in\D^G(\Gamma^{\op})_{-p}.\]
These together with Remark \ref{lrsymm} show the assertion.
\end{proof}

As another corollary, we can prove $p>0$.

\begin{Cor}
Assume a $G_{\geq0}$-graded dg algebra $\Gamma$ satisfies the conditions (C1) and (C2). Then we have
\[p>0.\]
\end{Cor}
\begin{proof}
Apply Lemma \ref{grprojcover} to $X_0:=H^0\Gamma_0$ and obtain triangles $X_{i+1}\to P_i\to X_i\dashrightarrow$ for each $i\ge0$. Take a simple module $S\in\simp^G\!H^0\Gamma$. For $n\ge0$, we have
\[\D^G(\Gamma)(H^0\Gamma_0,S[n])=\T(X_0,S[n])\cong\T(X_1,S[n-1])\cong\cdots\cong\T(X_n,S)\cong\T(P_n,S).\]

By Proposition \ref{relCY}, we have
\[\D^G(\Gamma)(H^0\Gamma_0,S[n])\cong D\D^G(S[n],H^0\Gamma_0(-p)[d+1]).\]
This means $\D^G(\Gamma)(H^0\Gamma_0,S[n])=0$ holds for $n>d+1$. Thus $P_{>d+1}=0$ holds and we obtain $H^0\Gamma_0\in P_0*P_1[1]*\cdots*P_{d+1}[d+1]$. Thus we have
\[\RHom_\Gamma(H^0\Gamma_0,\Gamma)\in P_{d+1}^*[-d-1]*\cdots*P_1^*[-1]*P_0^*.\]

Here, remark that $P_0=\Gamma$ and $H^0(X_1)_0=0$ hold. Thus by induction, we have $P_n\in\{\bigoplus_{g\in G_{>0}}P_g(-g)\mid P_g\in\add\Gamma\subseteq\per^G\!\Gamma\}$ for each $n>0$. This, together with
\[\RHom_\Gamma(H^0\Gamma_0,\Gamma)\cong D\RHom_\Gamma(\Gamma,H^0\Gamma_0(-p)[d+1])\cong D(H^0\Gamma_0)(p)[-d-1],\]
implies $p>0$.
\end{proof}

With these preparations, we can state the following theorem. Recall that under the conditions (C1) and (C2), we define $\C^G(\Gamma)=\per^G\!\Gamma/\pvd^G\!\Gamma$.

\begin{Thm}\label{MMcorr}(Minamoto--Mori correspondence for $G$-graded Calabi--Yau dg algebras)
Let $\Gamma$ be a $G_{\geq0}$-graded connective dg algebra satisfying (C1) and (C2). Take a non-trivial upper set $I\subseteq G$ and put $\E:=\bigoplus_{g\in J(I)}\O_\Gamma(-g)\in\C^G(\Gamma)$.
\begin{enumerate}
\item The auto-equivalence $(-p)[d]$ of $\C^G(\Gamma)$ is a Serre functor.
\item $\E\in\C^G(\Gamma)$ is a $d$-silting object.
\end{enumerate}
Moreover, we assume $H^{<0}\Gamma=0$.
\begin{enumerate}
\setcounter{enumi}{2}
\item $\E\in\C^G(\Gamma)$ is a $d$-tilting object and $\End_{\C^G(\Gamma)}(\E)\cong[H^0\Gamma_{g-h}]_{g,h\in J(I)}$ is a $d$-representation infinite algebra.
\end{enumerate}
\end{Thm}
\begin{proof}
(1) Remark that $\C^G(\Gamma)$ is Hom-finite by Theorem \ref{GBei}(2). Put $\S:=\pvd^G\!\Gamma$. By \cite[1.3,1.4]{Ami09}, it is enough to show that for every $X,Y\in\per^G\!\Gamma$, there exists a local $\S$-envelope of $Y$ relative to $X$. Observe that there exists a finite subset $J\subseteq G$ such that $X\in\thick^J\Gamma$ holds. We can take $g\in G$ so that $g\nleq h$ holds for any $h\in J$. Consider the exact triangle $Y_{\geq g}\to Y\to Y_{\ngeq g}\dashrightarrow$. By Proposition \ref{tstres}, we have $Y_{\ngeq g}\in\S$. Since $\D^G(\Gamma)(X,Y_{\geq g})=0$ holds, the morphism $Y\to Y_{\ngeq g}$ is a local $\S$-envelope relative to $X$.

(2) $\E\in\C^G(\Gamma)$ is silting by Theorem \ref{GBei}(2). By (1), we show $\C^G(\Gamma)(\E,\E(p)[>0])=0$. Put $\S^{\leq1}:=\{S\in\S\mid H^{\geq2}S=0\}$ and $\S^{\geq2}:=\{S\in\S\mid H^{\leq1}S=0\}$. Then we have a $t$-structure $\S=\S^{\leq1}\perp\S^{\geq2}$. By Proposition \ref{IYang}(1), for $M\in\thick^G\{\Gamma[\geq0]\}$ and $N\in\thick^G\{\Gamma[<d]\}$, the natural map $\D^G(\Gamma)(M,N)\to\C^G(\Gamma)(\pi(M),\pi(N))$ is bijective where $\pi\colon\per^G\!\Gamma\to\C^G(\Gamma)$ is the natural functor. Thus for any $g\in G$ and $i<d$, we have $\C^G(\Gamma)(\O_\Gamma,\O_\Gamma(g)[i])\cong H^i\Gamma_{g}$. Therefore for $0<i<d$, we obtain $\C^G(\Gamma)(\O_\Gamma,\O_\Gamma(g)[i])=0$. Take $g,h\in J(I)$ and $i\geq0$. Then by (1), we have $\C^G(\Gamma)(\Gamma(-g),\Gamma(-h+p)[d+i])\cong D\C^G(\Gamma)(\Gamma(-h+p)[d+i],\Gamma(-g-p)[d])\cong H^{-i}\Gamma_{-g+h-2p}$. If $-g+h-2p\geq0$ holds, then we have $h-p\geq g+p$, but this contradicts to $h-p\notin I$ and $g+p\in I$. Thus we obtain $-g+h-2p\ngeq0$ and $H^{-i}\Gamma_{-g+h-2p}=0$.

(3) By (2), it is enough to show that $\C^G(\Gamma)(\E,\E(np)[<0])=0$ holds for every $n\geq0$. As in the proof of (2), for any $g\in G$, we have $\C^G(\Gamma)(\O_\Gamma,\O_\Gamma(g)[<0])\cong H^{<0}\Gamma_{g}=0$.
\end{proof}

\end{appendix}

\bibliographystyle{alpha} 
\bibliography{reference}

\end{document}